\documentclass[12pt]{article}
\usepackage[english]{babel}
\usepackage{float}
\usepackage[a4paper, top=2.5cm,bottom=2.5cm, left=3cm, right=3cm]{geometry}
\usepackage{amsthm}
\usepackage{amsfonts}
\usepackage{amsmath}
\usepackage{physics}
\usepackage{bbm}
\usepackage{amssymb}
\usepackage{todonotes}
\usepackage{graphicx}
\usepackage{derivative}
\usepackage{cancel}
\usepackage{chngcntr}
\usepackage{tikz-cd}
\usepackage{hyperref}
\usepackage[nameinlink,noabbrev]{cleveref} 
\usepackage{listings}
\usepackage{amssymb}
\usepackage{algpseudocode}
\usepackage{algorithm}
\usepackage{comment}
\usepackage{csquotes}
\usepackage{subcaption}
\usepackage{setspace}
\usepackage{mathtools}
\usepackage{adjustbox}
\usepackage[
backend=biber,
style=alphabetic,
sorting=nyt,
natbib=true 
]{biblatex}
\usepackage{xcolor}
\usepackage{tikz}
\usetikzlibrary{arrows,shapes,positioning}
\usepackage{fancyhdr}

\theoremstyle{definition}
\newtheorem{definition}{Definition}[section]

\theoremstyle{remark}
\newtheorem{oss}{Remark}[section]

\theoremstyle{plain}
\newtheorem{prop}{Proposition}[section]
\newtheorem{thm}{Theorem}[section]

\newtheorem{cor}{Corollary}[section]
\newtheorem{lem}{Lemma}[section]

\crefname{thm}{Theorem}{Theorems}
\Crefname{thm}{Theorem}{Theorems}
\crefname{prop}{Proposition}{Propositions}
\Crefname{prop}{Proposition}{Propositions}
\crefname{lem}{Lemma}{Lemmas}
\Crefname{lem}{Lemma}{Lemmas}
\crefname{cor}{Corollary}{Corollaries}
\Crefname{cor}{Corollary}{Corollaries}
\crefname{oss}{Remark}{Remarks}
\Crefname{oss}{Remark}{Remarks}

\DeclareMathOperator*{\diam}{diam}

\newcommand{\Ord}{\mathrm{Ord}}
\newcommand{\Ext}{\mathrm{Ext}}
\newcommand{\Rel}{\mathrm{Rel}}

\newcommand{\res}{res}
\newcommand{\dist}{dist}
\newcommand{\rch}{rch}

\newcommand{\id}{\mathrm{id}}

\newcommand{\N}{\mathbb{N}}
\newcommand{\R}{\mathbb{R}}

\title{Building confidence regions for Reeb graphs using the interleaving distance}
\author{Matteo Pegoraro\thanks{Institute of Computing, Faculty of Informatics, Università della Svizzera Italiana, Lugano, Switzerland.},
Alberto Conforti\thanks{Department of Economics, Management and Statistics,
University of Milano--Bicocca, Milan, Italy.},
Mathieu Carrière\thanks{DataShape team,
Centre Inria d'Université Côte d'Azur, Sophia-Antipolis, France.}}
\date{}

\begin{document}

\maketitle

\begin{abstract}
We develop confidence regions for Reeb graphs from finite samples using the
interleaving distance.  Given a point cloud equipped with a filter function, we
construct a finite proximity graph, extend the filter linearly, and use the
Reeb cosheaf of the resulting filtered graph as the primary estimator.  Mapper
graphs are then treated as controlled cover-based coarsenings of this 
estimator, separating the statistical approximation problem from the
visualization problem.  We prove stability bounds for the
Reeb estimators obtained both using intrinsic and extrinsic metrics, the latter under positive-reach assumptions, and derive
interleaving-distance confidence regions from either \((a,b)\)-standard sampling
assumptions or subsampling-based Hausdorff scale estimates.  We also compare this
object-level metric viewpoint with persistence-based guarantees by showing that
the extended-persistence pseudometric is bounded by twice the interleaving
distance, with sharp constant \(1\) for the \(H_0\)-related components.  Numerical
experiments illustrate how statistically significant features can be identified
and then projected to Mapper graphs for interpretation.
\end{abstract}

\section{Introduction}

Reeb graphs provide a compact topological summary of a real-valued function on a
space by recording how the connected components of its level sets appear, merge,
and disappear as the function value varies. Originating in shape analysis and
scientific computing \cite{biasotti}, they have become a central object in
topological data analysis. In practice, Reeb graphs are accessed through finite
approximations, most notably Mapper graphs \cite{singh}, which are widely used
for exploratory analysis, visualization, clustering, and feature selection in
high-dimensional settings \cite{yao,lum}. These tools have supported applications
ranging from biomolecular dynamics \cite{yao} to modern biological pipelines such
as single-cell Hi-C contact map analysis \cite{carriere2020hic}, topology-driven
clustering of global gene expression profiles \cite{nicolau_topology_2011,
jeitziner2019ttmap}, and human cell differentiation trajectories from single-cell
RNA-seq data~\cite{rizvi_single-cell_2017, kandror_enhancer_2026}.

A main challenge for using Reeb- and Mapper-type summaries in statistics is to
provide principled parameter selection and uncertainty quantification: given a
finite sample drawn from an unknown space, what can be said about the underlying
Reeb graph? A foundational contribution in this direction is the work of
Carri\`ere, Michel, and Oudot \cite{JMLR:v19:17-291}, which provides statistical
guarantees and parameter-selection procedures for Mapper and derives confidence
sets for Reeb graphs using bottleneck-type constructions. Their analysis
crucially leverages the stability of persistence-based invariants and yields
practically useful confidence statements.

A key point of departure of the present article and other recent works \cite{brown2020probabilisticconvergencestabilityrandom, bjerkevik2025reeb} with respect to \cite{JMLR:v19:17-291}
 is the metric used to compare
Reeb-type objects. The bottleneck distance derived from persistence-based
approaches yields only a \emph{pseudo}-metric between Reeb or Mapper graphs:
distinct non-isomorphic Reeb graphs can have distance zero because persistence
signatures do not separate all Reeb graphs, see for example \Cref{fig:pseudo_metric}. In contrast, the \emph{interleaving
distance} for constructible cosheaves introduced in the categorified Reeb-graph
framework of de Silva, Munch, and Patel \cite{de_Silva_2016} gives a stronger
object-level geometry. On constructible cosheaves, and hence on Reeb graphs via
the Reeb--cosheaf equivalence, distance zero is equivalent to isomorphism:
\begin{equation}\label{eq:intro_metric_zero}
d_I(\mathcal{F},\mathcal{G})=0 \quad\Longrightarrow\quad
\mathcal{F}\cong \mathcal{G}.
\end{equation}
As a consequence, an interleaving-distance confidence ball gives a much more interpretable
confidence statement for the underlying Reeb-type object.

\begin{figure}
    \centering
    \includegraphics[width=0.6\textwidth]{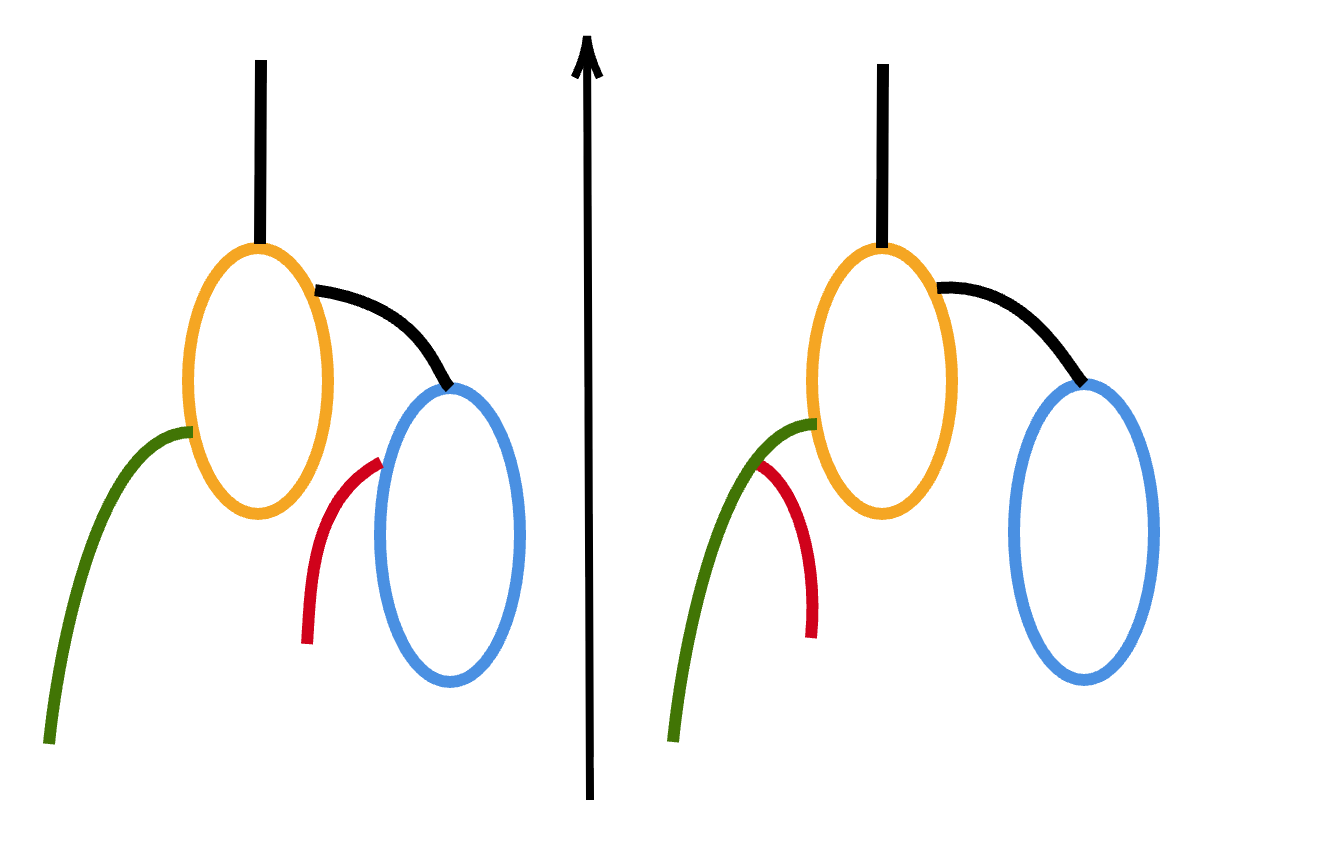}
    \caption{Two non-isomorphic Reeb graphs with identical barcode representations. Consequently, they cannot be distinguished by the bottleneck distance used in \cite{JMLR:v19:17-291}. Matching colors identify the corresponding topological features in the extended-barcode decomposition, up to the merging structure of path-connected components, which must be inferred using the elder rule.}
    \label{fig:pseudo_metric}
\end{figure}

Beyond its foundational role, the interleaving distance has become increasingly
important from computational and methodological standpoints. Recent work proposes
computable surrogates and bounds for interleavings in Mapper-like settings
\cite{munch}. Moreover, interleaving-based distances for merge trees and related summaries
have also been developed from theoretical and algorithmic viewpoints
\cite{morozov2013interleaving, agarwal2018gh, touliwang2019esa,
gasparovic2019intrinsic, curry2021decorated, teo}.

\paragraph{Contributions.}
This article develops an interleaving-distance framework for confidence regions for Reeb graphs.  The basic data object is a filtered proximity graph.  Given a finite sample \(S_n\subseteq X\), a scale \(\delta>0\), and either the intrinsic metric \(d_X\) or an ambient Euclidean metric, we form the graph \(\Gamma_\delta^{S_n,\rho}\) by connecting sample points at distance at most \(2\delta\).  The sampled filter values extend linearly to a PL function \(\hat f_n^\rho\) on this graph, and we use the Reeb cosheaf of the finite filtered graph
$(|\Gamma_\delta^{S_n,\rho}|,\hat f_n^\rho)
$
as the primary estimator.  Mapper graphs are then treated as controlled cover-based coarsenings of this PL--Reeb estimator: they are used for visualization and interpretation, while the approximation and confidence statements are made at the PL level.

The main contributions are the following.

\begin{itemize}
\item We establish a general comparison between the extended-persistence pseudometric
\(d_\Delta\) and the interleaving distance. For constructible
\(\mathbb R\)-spaces \((X,f)\) and \((Y,g)\), we prove
\[
d_\Delta\!\left(\mathcal F_{(X,f)},\mathcal F_{(Y,g)}\right)
\le
2\,d_I\!\left(\mathcal F_{(X,f)},\mathcal F_{(Y,g)}\right),
\]
together with the sharp constant-\(1\) estimate for the \(H_0\)-related
components \(\Ord_0\), \(\Ext_0\), and \(\Rel_1\), and a finer comparison through
the interleaving distances between the sublevel and superlevel merge trees of
\(f\) and \(g\). This gives a direct quantitative link between Reeb graphs,
merge trees, and persistence representations, while also connecting our
object-level confidence regions with the Mapper framework
of~\cite{JMLR:v19:17-291}.

\item We prove stability theorems for the intrinsic and extrinsic PL--Reeb estimators.  If \(\delta\) is an intrinsic covering radius for \(S_n\), the intrinsic estimator is controlled in interleaving distance by a scale \(\mu_\delta\) determined by the modulus of continuity of \(f\) and by the variation of \(f\) along the graph-edge realizations.  Under a concavity assumption on the modulus, and in particular for \(1\)-Lipschitz filters, this gives a \(\delta\)-scale bound.  Under positive reach, a Euclidean covering radius gives an extrinsic bound through the reach-distortion scale \(\eta_\tau(\delta)=2\tau\arcsin(\delta/\tau)\).

\item We separate two tasks that are often combined in Mapper constructions: approximating the target Reeb object from the sample and compressing the resulting object into a lower-resolution summary.  Once \(\mathcal R_\delta^{S_n,\rho}\) has been constructed, any finite nice open cover \(\mathcal U\) of \(\mathbb R\) gives the Mapper coarsening
$\mathcal M_{\mathcal U}(\mathcal R_\delta^{S_n,\rho}).
$

By the Mapper stability theorem of~\cite{brown2020probabilisticconvergencestabilityrandom}, the interleaving discrepancy introduced by this coarsening is at most \(\res_{\mathcal R_\delta^{S_n,\rho}}(\mathcal U)\).

\item We derive interleaving-distance confidence regions for the unknown Reeb graph.  Classical \((a,b)\)-standard assumptions and subsampling-based Hausdorff quantile estimates in the spirit of~\cite{fasy} both yield choices of \(\delta\) for the intrinsic and extrinsic PL estimators.  The corresponding Mapper coarsenings inherit the same guarantees up to the cover resolution.  Through the comparison \(d_\Delta\le 2d_I\), these regions also imply confidence statements for extended-persistence signatures.

\item We provide numerical experiments illustrating the resulting workflow: PL graphs are used for approximation and statistical inference, whereas Mapper coarsenings are used to visualize significant features.  The experiments compare intrinsic and extrinsic metrics, deterministic and probabilistic scale choices, and rate-corrected subsampling radii; the validation of the subsampling corrections is reported in the appendix.
\end{itemize}

\paragraph{Related work.}
In the statistical Reeb/Mapper literature, the closest works to ours are
\cite{JMLR:v19:17-291},
\cite{brown2020probabilisticconvergencestabilityrandom}, and
\cite{bjerkevik2025reeb}. The practical relevance of
\cite{JMLR:v19:17-291} is clear: it provides a statistical analysis of
Mapper, parameter-selection procedures, and confidence statements for topological
features. Its guarantees, however, are expressed through persistence-based
bottleneck bounds and, at the sample sizes considered in its experiments, the
resulting theoretically justified confidence regions are reported to be too
conservative to allow for interpretation. The numerical analysis therefore relies
instead on a bottleneck bootstrap whose validity for Mapper is left open. Our
framework gives object-level interleaving confidence regions, and its sharper
deterministic bounds remain practically informative in the same finite-sample
regime. The inequality \(d_\Delta\le 2d_I\), together with the constant-\(1\)
control of the \(H_0\)-related components and the finer comparison through
sublevel and superlevel merge trees, makes the relationship with
persistence-based guarantees explicit.

The approach of
\cite{brown2020probabilisticconvergencestabilityrandom} is also formulated in
interleaving distance, but relies on a density-based recovery pipeline. By
contrast, our estimators are defined directly from the observed point cloud.
The Mapper transformation of
\cite{brown2020probabilisticconvergencestabilityrandom} nevertheless enters our
framework as a controlled coarsening of the finite PL--Reeb estimator.

The recent and independently developed work
\cite{bjerkevik2025reeb} has a more direct overlap with our deterministic
stability results. It proves a general transfer principle for a space
\(X\subseteq Y\) that is obtained as a suitable deformation retract of \(Y\):
if \(g\colon Y\to\mathbb R\) and the target filter is
\(f=g|_X\), the variation of \(g\) along the retraction controls the
interleaving distance between the Reeb graphs \(\mathfrak R(Y,g)\) and
\(\mathfrak R(X,f)\). Combined with existing geometric reconstruction
theorems, this yields sample-based approximation results in which \(Y\) is a
metric thickening of the sample, including Euclidean positive-reach and
Riemannian settings. The closest overlap is therefore with our extrinsic theorem:
both approaches obtain deterministic interleaving bounds from geometric control
of a finite sample. The constructions are nevertheless different. In \cite{bjerkevik2025reeb}, the estimator is the Reeb graph of a metric thickening and requires a Lipschitz filter defined on that thickening. The authors also treat samples lying within a prescribed ambient distance of the target. In our framework, by contrast, the primary estimator is the Reeb cosheaf of a finite proximity graph equipped with the PL extension of the filter values observed on the sample, and we develop this construction for both intrinsic and extrinsic sample metrics. Lastly, although the two works
address overlapping deterministic approximation questions, they were developed
independently: the present project began in spring 2025 in the context of A.C.’s master’s thesis \cite{tesi_alberto}.

\paragraph{Outline.}
In \Cref{sec:background}, we recall Reeb graphs, constructible cosheaves, Mapper,
and the intrinsic--extrinsic metric comparison under positive reach. In
\Cref{sec:dDelta_le_2dI}, we prove the comparison \(d_\Delta\le 2d_I\). In
\Cref{sec:Mapper_cosheaf}, we define the PL-Reeb estimators and their Mapper
coarsenings, prove the corresponding stability theorems, and discuss efficient
computation. In \Cref{conf}, we derive confidence regions by choosing
\(\delta\) through probabilistic control of the relevant Hausdorff distance:
intrinsic for the intrinsic graph, and Euclidean for the extrinsic graph under
positive reach. We also record the conversion from Euclidean to intrinsic
sampling scales. Finally, \Cref{expes}
presents the numerical experiments.

\section{Background}\label{sec:background}

\subsection{Reeb graphs}

We recall a few definitions and results that will be useful in our setting. We work with a class of well-behaved topological spaces, namely constructible \(\mathbb{R}\)-spaces (see Subsection~2.2 in~\cite{de_Silva_2016}). We denote by \(\mathbb{R}\mathbf{-Top}^c\) the category whose objects are constructible \(\mathbb{R}\)-spaces.

\begin{definition}
Let \((X,f) \in \mathbb{R}\mathbf{-Top}^c\) (\(f\) is also referred to as a filter function). Define an equivalence relation \(\sim_f\) on \(X\) by declaring \(x \sim_f y\) if and only if there exists \(a\in\mathbb{R}\) such that \(x\) and \(y\) lie in the same path-connected component of the level set \(f^{-1}(a)\). Let
\[
q\colon X \to X/\!\sim_f
\]
denote the quotient map. Since \(f\) is constant on each equivalence class by construction, it induces a well-defined continuous map
\[
\bar f \colon X/\!\sim_f \to \mathbb{R}
\qquad\text{such that}\qquad
\bar f \circ q = f.
\]
The Reeb graph of \((X,f)\) is
\[
\mathfrak R(X,f)\coloneqq (X/\!\sim_f,\bar f).
\]
\end{definition}

\begin{figure}
    \centering
    \includegraphics[width = 0.6\textwidth]{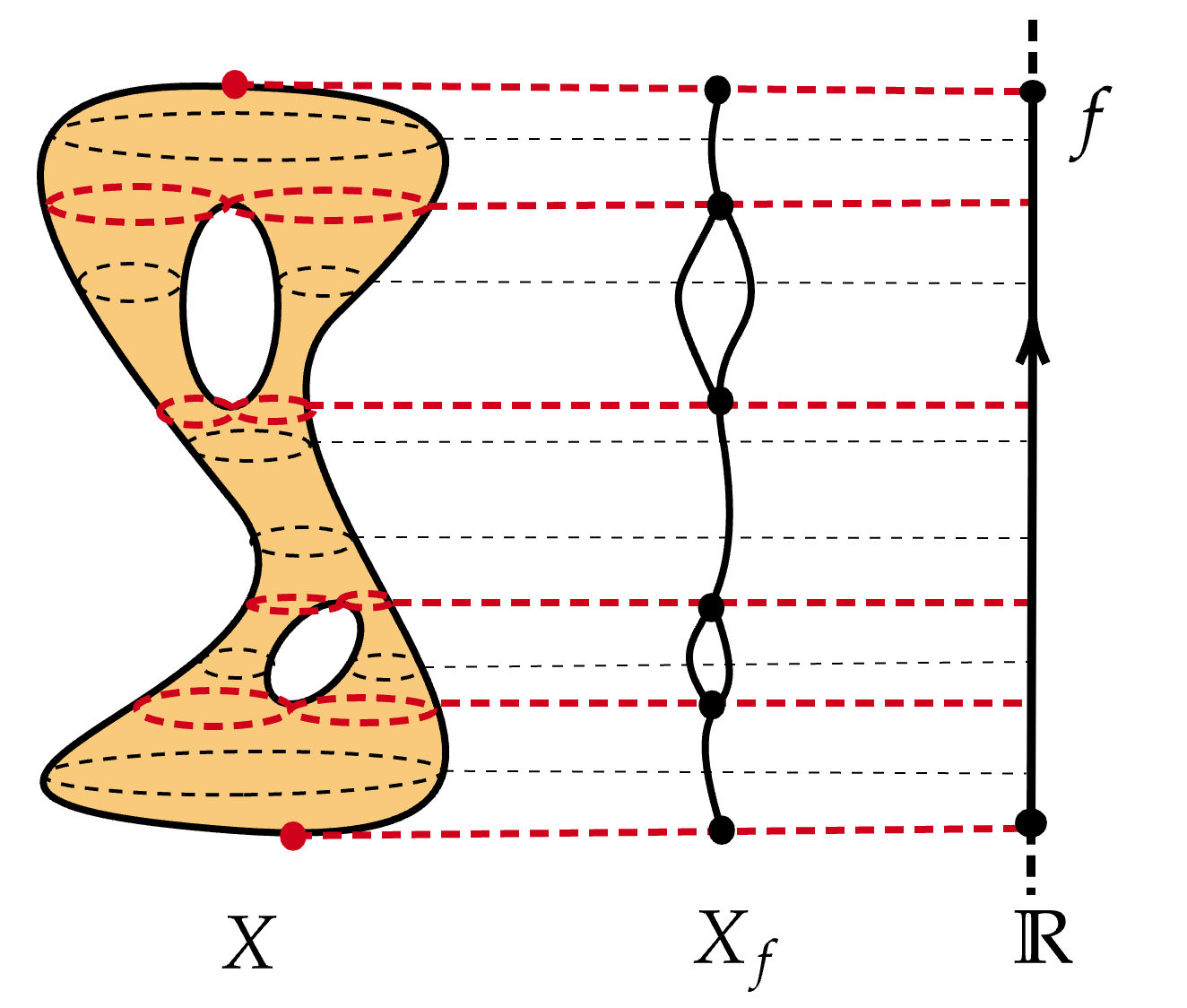}
    \caption{The Reeb graph of a torus of genus \(2\): the function used to compute the graph is the height function.}
    \label{fig:reeb}
\end{figure}

\begin{oss}
In general, the quotient \(X/\!\sim_f\) can be defined for an arbitrary topological space \(X\). The restriction to constructible \(\mathbb{R}\)-spaces ensures that the quotient inherits the structure of a combinatorial graph.
\end{oss}

Reeb graphs arising from constructible \(\mathbb{R}\)-spaces form a category, denoted \(\mathbf{Reeb}\). It is a full subcategory of \(\mathbb{R}\mathbf{-Top}^c\): objects of \(\mathbf{Reeb}\) are objects of \(\mathbb{R}\mathbf{-Top}^c\), and morphisms are the same. The Reeb construction defines a functor
\[
\mathfrak R\colon \mathbb{R}\mathbf{-Top}^c\longrightarrow\mathbf{Reeb}.
\]
Indeed, a morphism \(\varphi\colon(X,f)\to(Y,g)\) induces a unique morphism
\[
\mathfrak R(\varphi)\colon\mathfrak R(X,f)\longrightarrow\mathfrak R(Y,g)
\]
such that
\[
\mathfrak R(\varphi)\circ q_X=q_Y\circ\varphi,
\]
where
\[
q_X\colon X\longrightarrow X/\!\sim_f,
\qquad
q_Y\colon Y\longrightarrow Y/\!\sim_g
\]
are the corresponding Reeb quotient maps.

Throughout the paper, if \(R=(G,h_R)\) is a Reeb graph, we routinely identify
\(R\) with its underlying topological graph \(G\) whenever it occurs in a
topological construction. In particular, we use expressions such as
\(R_{\le t}\), \(H_k(R)\), and \(R\times[-\delta,\delta]\). The same convention
applies to \(\mathfrak R(X,f)=(X/\!\sim_f,\bar f)\).

In~\cite{de_Silva_2016} the authors also introduced the notion of a constructible cosheaf, which we will use throughout. Let \(\mathbf{Int}\) denote the category whose objects are open intervals in \(\mathbb{R}\) and whose morphisms are inclusions.

\begin{definition}
Consider \(\mathcal{F}\in\mathbf{Set}^{\mathbf{Int}}\), i.e.\ a functor \(\mathcal{F}\colon \mathbf{Int}\to \mathbf{Set}\), and let \(S\coloneqq \{a_0<a_1<\cdots<a_n\}\subseteq \mathbb{R}\) be a finite set. We say that \(\mathcal{F}\) is a constructible cosheaf (with respect to \(S\)) if:
\begin{itemize}
    \item for every open interval \(I\in \mathbf{Int}\) and every cover \(\mathcal{U}\) of \(I\) that is closed under finite intersections,
    \[
        \mathcal{F}(I)\;\cong\;\varinjlim_{U\in \mathcal{U}} \mathcal{F}(U);
    \]
    \item if \(I\subseteq J\) are open intervals and \(I\cap S = J\cap S\), then the map \(\mathcal{F}(I\subseteq J)\) is an isomorphism;
    \item if \(I\subseteq (-\infty,a_0)\) or \(I\subseteq (a_n,+\infty)\), then \(\mathcal{F}(I)=\emptyset\).
\end{itemize}
\end{definition}

We call \(S\) the set of critical values. We write \(\mathbf{Csh}^c\) for the category of constructible cosheaves, with morphisms given by natural transformations.

A fundamental example is the Reeb cosheaf~\citep{de_Silva_2016}. Given a constructible \(\mathbb{R}\)-space \(X\) with structure map \(f\colon X\to \mathbb{R}\), the associated Reeb cosheaf \(\mathcal{F}_{(X,f)}\) is defined by
\[
\mathcal{F}_{(X,f)}(I)\coloneqq \pi_0\bigl(f^{-1}(I)\bigr),
\qquad
\mathcal{F}_{(X,f)}(I\subseteq J)\coloneqq \pi_0\bigl(f^{-1}(I)\subseteq f^{-1}(J)\bigr),
\]
for every inclusion \(I\subseteq J\) of intervals. For a Reeb graph \(R=(G,h_R)\), we write
\[
\mathcal F_R(I)\coloneqq\pi_0\bigl(h_R^{-1}(I)\bigr)
\]
for its Reeb cosheaf.

In~\cite{de_Silva_2016} it is shown that the functor
\begin{align*}
\mathcal C\colon \mathbf{Reeb} &\longrightarrow \mathbf{Csh}^c,\\
R &\longmapsto \mathcal{F}_{R}
\end{align*}
is an equivalence of categories.  We denote by
\[
\mathcal D\colon \mathbf{Csh}^c \longrightarrow \mathbf{Reeb}
\]
a quasi-inverse, namely the display locale functor. In this sense, Reeb graphs admit equivalent geometric and algebraic descriptions.

Reeb quotients of constructible spaces also enjoy the following nice property.

\begin{lem}[Reeb quotients preserve path components]
\label{lem:reeb_quotient_pi0}
Let \(f\colon X\to\mathbb R\) be a constructible $\R$-space, set
$R\coloneqq\mathfrak R(X,f),
$
and let \(q\colon X\to R\) be the Reeb quotient map. Then \(q\) induces a bijection
\[
\pi_0(q)\colon \pi_0(X)\longrightarrow \pi_0(R).
\]
The same holds after restricting \(f\) to any constructible subspace \(Y\subseteq X\).
\end{lem}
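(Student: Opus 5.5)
The plan is to prove separately that \(\pi_0(q)\) is surjective and that it is injective, and then check that the argument goes through unchanged for a constructible subspace.

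\emph{Surjectivity.} The quotient map \(q\) is surjective. Hence every path component of \(R\) contains a point \(q(x)\), and that component equals \(\pi_0(q)([x])\). Well-definedness of \(\pi_0(q)\) is immediate, since \(q\) is continuous and therefore maps paths to paths.

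\emph{Injectivity.} This is the main content. Suppose \(x,y\in X\) and \(q(x)\), \(q(y)\) lie in the same path component of \(R\). I want to show that \(x\) and \(y\) lie in the same path component of \(X\). Equivalently, fix a path component \(C\subseteq X\) and show that \(q(C)\) is a union of path components of \(R\), i.e. that \(q(C)\) is both open and closed in \(R\) and is path connected.

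\begin{itemize}
\item \emph{Saturation.} \(C\) is \(\sim_f\)-saturated. Indeed, if \(x\sim_f x'\), then \(x\) and \(x'\) lie in the same path component of a level set \(f^{-1}(a)\). That component is a path-connected subset of \(X\), so \(x'\in C\).
\item \emph{Clopenness in \(X\).} By constructibility, \(X\) is built from finitely many pieces of the form \(V_i\times\{a_i\}\) and \(E_i\times[a_i,a_{i+1}]\), glued via continuous attaching maps, with the \(V_i,E_i\) locally path connected (finite polyhedra in de Silva--Munch--Patel). Hence \(X\) is locally path connected, and its path components are open, hence also closed.
\item \(q^{-1}(q(C))=C\) by saturation. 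By the definition of the quotient topology, \(q(C)\) is therefore open, and likewise closed, in \(R\).
\item \(q(C)\) is path connected, being the continuous image of a path-connected set.
\end{itemize}
Consequently, the path component of \(R\) containing \(q(x)\) is exactly \(q(C)\), where \(C\) is the component of \(x\). If \(q(y)\) lies in \(q(C)\), then saturation gives \(y\in C\), which is injectivity.

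\emph{Subspaces.} For a constructible subspace \(Y\subseteq X\), the pair \((Y,f|_Y)\) is itself a constructible \(\mathbb R\)-space, and its Reeb quotient is \(\mathfrak R(Y,f|_Y)\). Applying the argument above to this pair gives the claim.

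The step I expect to need the most care is local path connectedness of constructible spaces, which is what makes path components clopen. I would verify it directly from the finite cell (adjunction-space) description: points interior to a slab \(E_i\times(a_i,a_{i+1})\) have product neighborhoods. At a point of a critical fiber \(V_i\times\{a_i\}\), a neighborhood consists of a neighborhood in \(V_i\) together with the half-open collars of the edges attached to it. Each collar is path connected because the attaching maps are continuous and \(V_i\) is locally path connected.
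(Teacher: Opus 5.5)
Your proof is correct, but it takes a different route from the paper's.

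The paper proves injectivity by lifting paths. It subdivides a path in $R$ into pieces lying in a single edge or a single vertex, lifts the edge pieces using the product structure that constructibility gives over intervals of regular values, and splices consecutive lifts together inside the vertex fibers $q^{-1}(v)$, which are path components of level sets.

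You argue at the point-set level instead. Each path component $C$ of $X$ is $\sim_f$-saturated, because the classes of $\sim_f$ are path connected. It is also clopen, because $X$ is locally path connected. Hence $q(C)$ is clopen in the quotient topology and path connected, so it is exactly one path component of $R$.

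Your route needs much less input. It does not use that $R$ is a finite graph with edges over regular intervals, nor the local product structure. It works verbatim for any locally path-connected space with any continuous filter, so the subspace clause only requires $Y$ to be locally path connected. In exchange, the paper's lifting argument is the template it reuses for surjectivity of $q_*$ on $H_1$ (lift edges, then correct inside vertex fibers), which a clopen argument cannot provide.

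Two minor remarks on your sketch. First, in de Silva--Munch--Patel the pieces $V_i,E_i$ are locally path-connected compact spaces rather than polyhedra. This is harmless, since local path connectedness is all you use. Second, at a point of $V_i\times\{a_i\}$ you should check that your ``neighborhood in $V_i$ plus half-open collars'' sets really form a neighborhood basis. One way is to let the collar height vary from point to point; another is a tube-lemma argument giving a uniform height. Path connectedness then comes from vertical segments down to the fiber, followed by a path in the chosen path-connected neighborhood in $V_i$.
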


\begin{proof}
Since \(q\) is continuous, \(\pi_0(q)\) is well-defined. Surjectivity follows
from the surjectivity of \(q\).

For injectivity, let \(x,x'\in X\), and suppose that \(q(x)\) and \(q(x')\)
lie in the same path component of \(R\). Choose a path
\(\alpha\colon[0,1]\to R\) from \(q(x)\) to \(q(x')\). Since \(R\) is a finite graph, there is a path from \(q(x)\) to \(q(x')\)
which is a concatenation of finitely many pieces, each contained either in a
single edge of \(R\) or in a vertex.

If such a subpath is contained in an edge, then the edge lies over an interval
of regular values. Over the corresponding regular band, constructibility
identifies the relevant component of \(X\) with a product over that interval.
Thus the subpath lifts to a path in the corresponding component of \(X\), once
its initial point in the appropriate fiber has been chosen. If a subpath is
contained in a vertex \(v\), then \(q^{-1}(v)\) is a path-connected component of
the level set \(f^{-1}(\bar f(v))\), so any two lifted endpoints over \(v\) can
be joined inside \(q^{-1}(v)\).

Starting from \(x\), we lift the subdivided subpaths successively. At each
vertex, we join the endpoint of the previous lifted subpath to the chosen
initial point of the next one inside the corresponding vertex fiber. The
concatenation gives a path in \(X\) from \(x\) to \(x'\). Hence \(x\) and
\(x'\) lie in the same path component of \(X\), proving injectivity.
\end{proof}

Applying \Cref{lem:reeb_quotient_pi0} to the constructible subspaces
\(f^{-1}(I)\subseteq X\), naturally with respect to inclusions of intervals,
gives a canonical isomorphism of constructible cosheaves
\begin{equation}\label{eq:cosheaf_identity}
\mathcal F_{(X,f)}
\cong
\mathcal F_{\mathfrak R(X,f)}.    
\end{equation}

We now recall the smoothing operation on both sides of this equivalence.

\begin{definition}
Let \(\mathcal G\in \mathbf{Csh}^c\) and \(\varepsilon\ge 0\). For an interval \(I=(a,b)\), set
\[
I^\varepsilon\coloneqq (a-\varepsilon,b+\varepsilon).
\]
The \emph{\(\varepsilon\)-smoothing} of \(\mathcal G\) is the constructible cosheaf
\[
\mathcal S_\varepsilon(\mathcal G)\colon \mathbf{Int}\to\mathbf{Set},
\qquad
\mathcal S_\varepsilon(\mathcal G)(I)\coloneqq \mathcal G(I^\varepsilon),
\]
with structure maps induced by inclusions. There is a canonical natural transformation
\[
\mathcal G\Rightarrow \mathcal S_\varepsilon(\mathcal G),
\]
given intervalwise by the structure maps \(\mathcal G(I)\to \mathcal G(I^\varepsilon)\).
\end{definition}

If \(\mathcal D(\mathcal G)=R=(G,h_R)\), then
\(\mathcal S_\varepsilon(\mathcal G)\) is geometrically realized by the Reeb
graph of the constructible \(\mathbb R\)-space
\(G\times[-\varepsilon,\varepsilon]\) equipped with the function
\[
H_\varepsilon(x,t)\coloneqq h_R(x)+t.
\]

This motivates the corresponding smoothing operation on Reeb graphs.

\begin{definition}
Let \(R=(G,h_R)\in\mathbf{Reeb}\) and let \(\varepsilon\ge 0\). The \emph{\(\varepsilon\)-smoothing} of \(R\) is the Reeb graph
\[
U_\varepsilon(R)\coloneqq \mathcal D\!\bigl(\mathcal S_\varepsilon(\mathcal C(R))\bigr).
\]
Equivalently, \(U_\varepsilon(R)\) is the Reeb graph of the constructible
\(\mathbb R\)-space \(G\times[-\varepsilon,\varepsilon]\) endowed with the
function \(H_\varepsilon(x,t)=h_R(x)+t\). The canonical natural transformation \(\mathcal C(R)\Rightarrow \mathcal S_\varepsilon(\mathcal C(R))\) corresponds, under the equivalence of categories, to a canonical morphism
\[
\zeta_R^\varepsilon\colon R\to U_\varepsilon(R).
\]
\end{definition}

We can now define the interleaving distance on cosheaves and, equivalently, on Reeb graphs.

\begin{definition}\label{def:interleaving}
Let \(\mathcal{F},\mathcal{G}\in \mathbf{Csh}^c\) and let \(\varepsilon\ge 0\). An \emph{\(\varepsilon\)-interleaving} between \(\mathcal F\) and \(\mathcal G\) consists of natural transformations
\[
\varphi\colon \mathcal F\Rightarrow \mathcal S_\varepsilon(\mathcal G),
\qquad
\psi\colon \mathcal G\Rightarrow \mathcal S_\varepsilon(\mathcal F),
\]
such that the two composites
\[
\mathcal F \xRightarrow{\ \varphi\ } \mathcal S_\varepsilon(\mathcal G)
\xRightarrow{\ \mathcal S_\varepsilon(\psi)\ } \mathcal S_{2\varepsilon}(\mathcal F)
\]
and
\[
\mathcal G \xRightarrow{\ \psi\ } \mathcal S_\varepsilon(\mathcal F)
\xRightarrow{\ \mathcal S_\varepsilon(\varphi)\ } \mathcal S_{2\varepsilon}(\mathcal G)
\]
are the canonical natural transformations induced by the structure maps \(I\subseteq I^{2\varepsilon}\).

The \emph{interleaving distance} between \(\mathcal F\) and \(\mathcal G\) is then defined by
\[
d_I(\mathcal F,\mathcal G)\coloneqq
\inf\Bigl\{\varepsilon\ge 0\ \big|\ \text{there exists an \(\varepsilon\)-interleaving between \(\mathcal F\) and \(\mathcal G\)}\Bigr\}.
\]
\end{definition}

Equivalently, an \(\varepsilon\)-interleaving between Reeb graphs \(R\) and \(S\) consists of morphisms
\[
\Phi\colon R\to U_\varepsilon(S),
\qquad
\Psi\colon S\to U_\varepsilon(R),
\]
such that
\[
U_\varepsilon(\Psi)\circ \Phi=\zeta_R^{2\varepsilon},
\qquad
U_\varepsilon(\Phi)\circ \Psi=\zeta_S^{2\varepsilon}.
\]
Under the equivalence between \(\mathbf{Reeb}\) and \(\mathbf{Csh}^c\), this geometric notion is exactly equivalent to the cosheaf-theoretic one.

In~\cite{de_Silva_2016} it is proved that \(d_I\) defines a metric on \(\mathbf{Csh}^c\) up to isomorphism. By transport through the equivalence of categories, we use the same notation \(d_I\) for the induced interleaving distance on Reeb graphs.

\subsection{Mapper graphs and covers}\label{sec:mapper}
Mapper graphs were introduced in \cite{singh} as modifications of the Reeb graphs to the setting of finite metric spaces (such as, e.g., point clouds).
We now recall the standard construction.
Let $X$ be a topological space and let $S_n\subset X$ be a point cloud equipped with pairwise dissimilarities. Given a (filter) function $f\colon X\to \mathbb{R}$, we evaluate $f$ on the points of $S_n$ and build a graph from the induced cover-and-cluster algorithm described below.

\begin{algorithm}[H]
    \caption{Mapper graph algorithm}
    \begin{algorithmic}
        \State $\bullet$ Cover the set of values $f(S_n)$ with intervals $I_1,\ldots,I_S$ such that $I_s\cap I_{s'}\neq\emptyset$ if and only if $|s-s'|=1$;
        \State $\bullet$ For each $s\in\{1,\ldots,S\}$, apply a clustering algorithm to the sample preimage $S_n\cap f^{-1}(I_s)$, thereby obtaining clusters
        \[
        Q_{s,1},\ldots,Q_{s,k_s};
        \]
        the resulting family
        \[
        \mathcal{Q}\coloneqq \{Q_{1,1},\ldots,Q_{1,k_1},\ldots,Q_{S,1},\ldots,Q_{S,k_S}\}
        \]
        is a pullback cover of $S_n$;
        \State $\bullet$ Build the Mapper graph as the \emph{nerve} of the pullback cover, i.e.: each cluster $Q_{s,k}$ corresponds to a vertex $v_{s,k}$, and two vertices $v_{s,k}$ and $v_{s',k'}$ are joined by an edge if and only if $Q_{s,k}\cap Q_{s',k'}\neq \emptyset$.
    \end{algorithmic}
\end{algorithm}

We now recall some definitions and results from~\cite{brown2020probabilisticconvergencestabilityrandom}, where the authors give a categorified version of the Mapper construction. Let $\mathcal{U}$ be an open cover of $\mathbb{R}$. Using $\mathcal{U}$, one defines an endofunctor of $\mathbf{Int}$ as follows. For $I\in \mathbf{Int}$ set
\[
\mathcal{I}_{\mathcal{U}}(I)\coloneqq \bigcup_{x\in I}\ \bigcap_{V\in \mathcal{U}_x} V,
\qquad
\text{where }\ \mathcal{U}_x \coloneqq \{\,V\in \mathcal{U}\mid x\in V\,\}.
\]
Intuitively, $\mathcal{I}_{\mathcal{U}}$ enlarges an interval by taking its closure with respect to the cover $\mathcal{U}$.

The associated Mapper transformation is defined by precomposition with $\mathcal{I}_{\mathcal{U}}$:
\[
\mathcal{M}_{\mathcal{U}}\colon \mathbf{Set}^{\mathbf{Int}} \longrightarrow \mathbf{Set}^{\mathbf{Int}},
\qquad
\mathcal{M}_{\mathcal{U}}(\mathcal{C}) \coloneqq \mathcal{C}\circ \mathcal{I}_{\mathcal{U}},
\]
that is,
\[
\mathcal{M}_{\mathcal{U}}(\mathcal{C})(I) \coloneqq \mathcal{C}\bigl(\mathcal{I}_{\mathcal{U}}(I)\bigr),
\qquad
I\in \mathbf{Int}.
\]

\begin{oss}
In~\cite{brown2020probabilisticconvergencestabilityrandom} this construction is referred to as the \emph{Mapper functor}. In our context, where the objects of interest are (constructible) cosheaves, this terminology may be slightly misleading: depending on the level of abstraction, one may refer either to the endofunctor $\mathcal{M}_{\mathcal{U}}\colon \mathbf{Set}^{\mathbf{Int}}\to \mathbf{Set}^{\mathbf{Int}}$ itself, or to its value $\mathcal{M}_{\mathcal{U}}(\mathcal{C})$ on a given cosheaf $\mathcal{C}$, as “the Mapper functor”. To avoid this ambiguity, we will refer to $\mathcal{M}_{\mathcal{U}}$ as the Mapper transformation.
\end{oss}

We now recall a result from~\cite{brown2020probabilisticconvergencestabilityrandom} that will be used in the next section.

\begin{prop}[Adapted from Proposition~4 and Theorem~1 in~\cite{brown2020probabilisticconvergencestabilityrandom}]\label{brobro}
Let $\mathcal{U}$ be a finite nice open cover of $\mathbb{R}$ (i.e., all non-empty finite intersections of elements of $\mathcal{U}$ are contractible, and $\mathcal{U}$ is locally finite). Then the Mapper transformation
$\mathcal{M}_{\mathcal{U}}\colon \mathbf{Set}^{\mathbf{Int}}\to \mathbf{Set}^{\mathbf{Int}}$
restricts to a functor from the category of cosheaves to the category $\mathbf{Csh}^c$ of constructible cosheaves.
Moreover, for any cosheaf $\mathcal{C}$, the set of critical values of $\mathcal{M}_{\mathcal{U}}(\mathcal{C})$ is contained in the set of boundary points of the open sets in $\mathcal{U}$, and
\begin{equation}\label{eq1}
\textstyle
d_I\!\bigl(\mathcal{C},\, \mathcal{M}_{\mathcal{U}}(\mathcal{C})\bigr)
\leq \res_{\mathcal{C}}(\mathcal{U}),
\end{equation}
where
\begin{equation*}
\textstyle
\res_{\mathcal{C}}(\mathcal{U})\coloneqq \max\{\diam(V)\mid V\in \mathcal{U}_{\mathcal{C}}\},
\qquad
\mathcal{U}_{\mathcal{C}} \coloneqq \{V\in \mathcal{U}\mid \mathcal{C}(V)\neq \emptyset\,\}.
\end{equation*}
\end{prop}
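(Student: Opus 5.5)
We follow the structure of the argument in \cite{brown2020probabilisticconvergencestabilityrandom} and treat the statement in three parts: functoriality and constructibility, the location of critical values, and the interleaving bound.

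\textbf{Functoriality and constructibility.} First, \(\mathcal I_{\mathcal U}\) is an endofunctor of \(\mathbf{Int}\): it is monotone, \(I\subseteq\mathcal I_{\mathcal U}(I)\), and \(\mathcal I_{\mathcal U}(I)\) is an open interval. The set \(\bigcap_{V\in\mathcal U_x}V\) is a finite intersection of intervals containing \(x\). By local finiteness it is open, and by niceness it is a (contractible) interval. Taking the union over \(x\in I\) of overlapping intervals containing the points of the connected set \(I\) again gives an interval. Precomposition with a functor then sends natural transformations to natural transformations, so \(\mathcal M_{\mathcal U}\) is a functor.

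For constructibility, let \(B\) be the finite set of boundary points of elements of \(\mathcal U\). The value \(\bigcap_{V\in\mathcal U_x}V\) depends only on which elements contain \(x\), and this membership is locally constant off \(B\). Hence if \(I\subseteq J\) and \(I\cap B=J\cap B\), then \(\mathcal I_{\mathcal U}(I)=\mathcal I_{\mathcal U}(J)\), and the structure map is the identity. This gives the claim that the critical values lie in \(B\). The emptiness condition outside \([\min B,\max B]\) follows analogously, using the convention that \(\mathcal C\) vanishes on intervals it does not see. For the cosheaf gluing condition, I would reduce to checking covers by pairs of overlapping intervals; for constructible data this is equivalent to the full condition. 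The image of such a pair under \(\mathcal I_{\mathcal U}\) is again an overlapping cover of \(\mathcal I_{\mathcal U}(I)\). The delicate point, which I expect to be the main obstacle, is whether \(\mathcal I_{\mathcal U}\) commutes with intersections of the pair. Here niceness is used: \(\mathcal I_{\mathcal U}(I\cap J)=\mathcal I_{\mathcal U}(I)\cap\mathcal I_{\mathcal U}(J)\) holds whenever \(I\cap J\neq\emptyset\). Granting this, the pushout for \(\mathcal C\) transfers directly.

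\textbf{Interleaving bound.} Set \(\varepsilon=\res_{\mathcal C}(\mathcal U)\). I would show \(\mathcal I_{\mathcal U}(I)\subseteq I^{\varepsilon}\) whenever \(\mathcal C(\mathcal I_{\mathcal U}(I))\neq\emptyset\), or at least that the relevant maps factor through \(I^\varepsilon\) on the part of the line where \(\mathcal C\) is nonempty. Each added piece \(\bigcap_{V\in\mathcal U_x}V\) lies in a single \(V\ni x\). When \(\mathcal C(V)\neq\emptyset\), that \(V\) has diameter at most \(\varepsilon\) and so stays within distance \(\varepsilon\) of \(x\in I\). Pieces over which \(\mathcal C\) is empty contribute nothing; justifying this uses the cosheaf condition to discard them.

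This yields natural maps
\[
\varphi\colon\mathcal C\Rightarrow\mathcal S_\varepsilon(\mathcal M_{\mathcal U}\mathcal C),
\qquad
\psi\colon\mathcal M_{\mathcal U}\mathcal C\Rightarrow\mathcal S_\varepsilon(\mathcal C).
\]
The map \(\varphi\) is induced by \(I\subseteq\mathcal I_{\mathcal U}(I)\subseteq\mathcal I_{\mathcal U}(I^\varepsilon)\), and \(\psi\) by \(\mathcal I_{\mathcal U}(I)\subseteq I^\varepsilon\). Every map in sight is a structure map of \(\mathcal C\) induced by an inclusion of intervals. Both composites are therefore the structure maps for \(I\subseteq I^{2\varepsilon}\), by functoriality of \(\mathcal C\) on the poset \(\mathbf{Int}\). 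This gives \(d_I\le\varepsilon\).
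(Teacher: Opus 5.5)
The paper gives no proof of its own here; it imports the result from \cite{brown2020probabilisticconvergencestabilityrandom}. So your argument has to stand on its own. Functoriality, and the invariance of $\mathcal I_{\mathcal U}(I)$ when $I\subseteq J$ and $I\cap B=J\cap B$, are fine. The step you flag as delicate is where it breaks: niceness does \emph{not} give $\mathcal I_{\mathcal U}(I\cap J)=\mathcal I_{\mathcal U}(I)\cap\mathcal I_{\mathcal U}(J)$.

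Take the nice cover $\mathcal U=\{(-\infty,1),(0,10),(4,6),(9,\infty)\}$. Points of $[1,4]\cup[6,9]$ lie only in $(0,10)$, so their pieces $\bigcap_{V\in\mathcal U_x}V$ equal $(0,10)$. Points of $(4,6)$ have piece $(4,6)$. For $I=(3,5)$ and $J=(4.5,7)$ this gives $\mathcal I_{\mathcal U}(I)=\mathcal I_{\mathcal U}(J)=\mathcal I_{\mathcal U}(I\cup J)=(0,10)$, but $\mathcal I_{\mathcal U}(I\cap J)=(4,6)$.

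This is not only an obstacle to your route. Let $\mathcal C$ be the Reeb cosheaf of one point at height $2$, so $\res_{\mathcal C}(\mathcal U)=10$. Then $\mathcal M_{\mathcal U}(\mathcal C)$ is a singleton on $I$, $J$ and $I\cup J$, and empty on $I\cap J$. Its colimit over $\{I,J,I\cap J\}$ therefore has two points, and the cosheaf axiom fails. So the constructibility claim, with the definitions as stated, needs an extra hypothesis on $\mathcal U$. One option is the chain-type covers of the Mapper algorithm: $V_1,\dots,V_k$ ordered left to right, none contained in another, with $V_s\cap V_{s'}\neq\varnothing$ iff $|s-s'|\le 1$. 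There the pieces are the $V_s$ and the $V_s\cap V_{s+1}$, and your identity can be checked directly using that pairwise-intersecting intervals have a common point.

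The emptiness axiom also needs an argument rather than a convention. Every element of $\mathcal U$ meeting $(-\infty,\min B)$ is unbounded below, so for $I\subseteq(-\infty,\min B)$ the interval $\mathcal I_{\mathcal U}(I)$ is unbounded, and $\mathcal C$ of it may be non-empty. For example, take a point at height $0$ with $\mathcal U=\{(-\infty,1),(0,\infty)\}$. The axiom holds when $\mathcal C$ vanishes on the unbounded elements of $\mathcal U$, e.g.\ when $\res_{\mathcal C}(\mathcal U)<\infty$.

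The interleaving step has a separate gap. The inclusion $\mathcal I_{\mathcal U}(I)\subseteq I^\varepsilon$ can fail even when $\mathcal C(\mathcal I_{\mathcal U}(I))\neq\varnothing$. Take $\mathcal U=\{(-\infty,1),(0,2),(1.5,\infty)\}$ and $\mathcal C$ a point at height $1.2$, so $\varepsilon=\res_{\mathcal C}(\mathcal U)=2$. Then $I=(-5,1.1)$ gives $\mathcal I_{\mathcal U}(I)=(-\infty,2)\not\subseteq(-7,3.1)$. So $\psi$ is not a structure map, and ``every map in sight is a structure map'' is false.

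The repair is what you gesture at, but it must be carried out:
\begin{itemize}
\item If a piece $A_x=\bigcap_{V\in\mathcal U_x}V$ with $x\in I$ has $\mathcal C(A_x)\neq\varnothing$, then every $V\ni x$ lies in $\mathcal U_{\mathcal C}$. Hence $A_x\subseteq(x-\varepsilon,x+\varepsilon)\subseteq I^\varepsilon$.
\item By the cosheaf property of $\mathcal C$, $\mathcal C(\mathcal I_{\mathcal U}(I))$ is the colimit over the pieces $A_x$, $x\in I$, and their intersections.
\item Use the structure maps into $\mathcal C(I^\varepsilon)$ on the sets with non-empty value (all of which lie in $I^\varepsilon$), and empty maps on the others. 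These form a cocone, and this defines $\psi_I$.
\item Naturality and both interleaving identities are then checked on representatives in some $\mathcal C(A_x)$. For the composite starting at $\mathcal C(I)$, first write an element of $\mathcal C(I)$ as the image of an element of some $\mathcal C(A_x\cap I)$, using the cover $\{A_x\cap I\}_{x\in I}$ of $I$.
\end{itemize}
This proves the bound using only that $\mathcal C$ is a cosheaf, independently of whether $\mathcal M_{\mathcal U}(\mathcal C)$ is one.
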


\subsection{Positive reach and metric equivalence between Euclidean and intrinsic distances}\label{sec:metric_equivalence}

In this subsection we recall the metric-distortion result of~\cite{boissonnatReachMetricDistortion2019} for closed sets of positive reach.
This will later allow us to convert Euclidean covering and proximity scales into
intrinsic ones.

Let \(X\subset \R^m\) be a closed set. Its \emph{medial axis} is the set of
points of \(\R^m\) having more than one closest point in \(X\). We write
\[
\dist(z,X)\coloneqq \inf_{x\in X}\|z-x\|
\]
for the Euclidean distance from \(z\in\R^m\) to \(X\). The \emph{reach} of
\(X\)~\cite{federer1959curvature} is then defined by
\[
\rch(X)\coloneqq
\inf\{\dist(z,X)\mid z \text{ belongs to the medial axis of }X\}.
\]

Following~\cite{boissonnatReachMetricDistortion2019}, for a closed set \(X\subset \R^m\) and \(x,y\in X\), we denote by \(d_X(x,y)\) the intrinsic distance in \(X\), namely
\begin{equation}\label{eq:d_X}
d_X(x,y)\coloneqq \inf\{L(\gamma)\mid \gamma:[0,1]\to X \text{ is a continuous path with }
\gamma(0)=x,\ \gamma(1)=y\},
\end{equation}
with the convention that \(d_X(x,y)=+\infty\) if there is no path in \(X\) joining \(x\) to \(y\).
Here the length of a path \(\gamma:[0,1]\to \R^m\) is
\[
L(\gamma)\coloneqq \sup\Bigl\{\sum_{i=1}^k \|\gamma(t_i)-\gamma(t_{i-1})\| \;\Bigm|\;
0=t_0<\cdots<t_k=1,\ k\in\N\Bigr\},
\]
possibly equal to \(+\infty\).
As recalled in~\cite{boissonnatReachMetricDistortion2019}, whenever there exists at least one path in \(X\) from \(x\) to \(y\) of finite length, a minimizing geodesic exists.

We need the following lemma.

\begin{lem}\label{lem:intrinsic_balls_closed}
Let \(X\subset \mathbb R^m\) be compact, and suppose that the intrinsic distance
\(d_X\), defined in~\Cref{eq:d_X}, is finite on \(X\times X\). Then, for every
\(s\in X\) and every \(\delta\ge 0\), the set
\[
B_{d_X}(s,\delta)
=
\{x\in X\mid d_X(s,x)\le \delta\}
\]
is closed in \(X\) with respect to the subspace topology.
\end{lem}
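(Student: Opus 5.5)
Take a sequence \(x_k\in B_{d_X}(s,\delta)\) with \(x_k\to x\) in \(X\); it suffices to show \(d_X(s,x)\le\delta\). Because \(X\) is a compact subset of \(\mathbb R^m\), the subspace topology is metrizable, so sequential closedness gives closedness. Since \(d_X\) is finite on \(X\times X\), the existence of minimizing geodesics recalled above yields, for each \(k\), a path \(\gamma_k\colon[0,1]\to X\) from \(s\) to \(x_k\) with \(L(\gamma_k)=d_X(s,x_k)\le\delta\). Reparametrizing proportionally to arc length, each \(\gamma_k\) becomes \(\delta\)-Lipschitz with respect to the Euclidean norm. (If \(L(\gamma_k)=0\), take \(\gamma_k\) constant.)

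The family \(\{\gamma_k\}\) is therefore equicontinuous and takes values in the compact set \(X\). By Arzelà--Ascoli, a subsequence converges uniformly to a continuous \(\gamma\colon[0,1]\to X\); the limit lies in \(X\) because \(X\) is closed. This limit satisfies \(\gamma(0)=s\) and \(\gamma(1)=\lim x_k=x\).

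The step needing care is lower semicontinuity of length under uniform convergence. For any partition \(0=t_0<\dots<t_N=1\),
\[
\sum_{i=1}^N\|\gamma(t_i)-\gamma(t_{i-1})\|=\lim_k\sum_{i=1}^N\|\gamma_k(t_i)-\gamma_k(t_{i-1})\|\le\liminf_k L(\gamma_k)\le\delta.
\]
Taking the supremum over partitions gives \(L(\gamma)\le\delta\), so \(d_X(s,x)\le L(\gamma)\le\delta\) and \(x\in B_{d_X}(s,\delta)\).

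The same argument works without invoking existence of minimizers: take \(\gamma_k\) with \(L(\gamma_k)\le d_X(s,x_k)+1/k\le\delta+1/k\). The Lipschitz constants are then uniformly bounded by \(\delta+1\), and the limit again has length at most \(\delta\). So the only genuine obstacle is the constant-speed reparametrization of a rectifiable path. This is a standard fact: define the reparametrization via the arc-length function \(\sigma(t)=L(\gamma_k|_{[0,t]})\), which is continuous and nondecreasing, and factor \(\gamma_k\) through it.
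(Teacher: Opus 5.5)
Your proof is correct and follows essentially the same route as the paper: minimizing geodesics parametrized proportionally to arc length, uniform Lipschitz bounds, Arzelà--Ascoli, and lower semicontinuity of length under uniform convergence. The paper phrases the argument as lower semicontinuity of \(d_X(s,\cdot)\) and then deduces closedness, whereas you work directly with the ball; your remark that near-minimizing paths of length at most \(d_X(s,x_k)+1/k\) suffice (so minimizers are not needed) is a valid minor variant.
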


\begin{proof}
We first prove that \(d_X(s,\cdot)\) is lower semicontinuous with respect to the
subspace topology on \(X\). 
That is, we show that:
\[
d_X(s,x)\le
\liminf_{n\to\infty}d_X(s,x_n)
\]
for any \(x_n\to x\) in \(X\).

Let \(x_n\to x\) in \(X\), where convergence is with
respect to the topology inherited from \(\mathbb R^m\). If
\[
\liminf_{n\to\infty} d_X(s,x_n)=+\infty,
\]
there is nothing to prove. Otherwise, after passing to a subsequence realizing
the liminf, we may suppose that
\[
d_X(s,x_n)\to \lambda_\ast
=
\liminf_{n\to\infty} d_X(s,x_n)
<+\infty.
\]
For every \(n\), let
\[
\gamma_n\colon[0,1]\to X
\]
be a minimizing \(d_X\)-geodesic from \(s\) to \(x_n\), parametrized
proportionally to arclength. Thus
\[
L(\gamma_n)=d_X(s,x_n).
\]
In particular, the lengths \(L(\gamma_n)\) are uniformly bounded. Since each \(\gamma_n\) is parametrized proportionally to arclength, for all
\(0\le s\le t\le 1\) we have
\[
L\!\left(\gamma_n|_{[s,t]}\right)=(t-s)L(\gamma_n).
\]
Consequently,
\[
\|\gamma_n(t)-\gamma_n(s)\|
\le
L\!\left(\gamma_n|_{[s,t]}\right)
=
(t-s)L(\gamma_n)
\le
C|t-s|,
\]
where \(C\) is a uniform upper bound for the lengths \(L(\gamma_n)\). Thus the
family \(\{\gamma_n\}_n\) is uniformly \(C\)-Lipschitz as a family of maps into
\(\mathbb R^m\), and in particular it is equicontinuous.
Since \(X\) is compact, the images of the
\(\gamma_n\)'s are contained in a fixed compact subset of \(\mathbb R^m\). By the
Arzelà--Ascoli theorem, after passing to a further subsequence, the paths
\(\gamma_n\) converge uniformly to a continuous path
\[
\gamma\colon[0,1]\to X.
\]
Moreover,
\[
\gamma(0)=s,
\qquad
\gamma(1)=x,
\]
because \(\gamma_n(0)=s\), \(\gamma_n(1)=x_n\), and \(x_n\to x\) in the subspace
topology. Let
\[
0=t_0<t_1<\cdots<t_k=1
\]
be any partition of \([0,1]\). Since \(\gamma_n\to\gamma\) uniformly, we have
\[
\sum_{i=1}^k
\|\gamma(t_i)-\gamma(t_{i-1})\|
=
\lim_{n\to\infty}
\sum_{i=1}^k
\|\gamma_n(t_i)-\gamma_n(t_{i-1})\|.
\]
For every \(n\), the sum on the right is bounded above by \(L(\gamma_n)\). Hence
\[
\sum_{i=1}^k
\|\gamma(t_i)-\gamma(t_{i-1})\|
\le
\liminf_{n\to\infty}L(\gamma_n).
\]
Taking the supremum over all partitions of \([0,1]\), we obtain
\[
L(\gamma)\le \liminf_{n\to\infty}L(\gamma_n)=
\lambda_\ast.
\]

Therefore
\[
d_X(s,x)\le L(\gamma)\le \lambda_\ast
=
\liminf_{n\to\infty}d_X(s,x_n).
\]
Hence \(d_X(s,\cdot)\) is lower semicontinuous with respect to the subspace
topology on \(X\).

Now let \(x_n\in B_{d_X}(s,\delta)\) and suppose that \(x_n\to x\) in \(X\) with
respect to the subspace topology. By lower semicontinuity,
\[
d_X(s,x)
\le
\liminf_{n\to\infty}d_X(s,x_n)
\le
\delta.
\]
Thus \(x\in B_{d_X}(s,\delta)\). Since \(X\) is metrizable as a subspace of
\(\mathbb R^m\), sequential closedness is equivalent to closedness, and therefore
\(B_{d_X}(s,\delta)\) is closed in \(X\).
\end{proof}


The key metric result for us is the following theorem of~\cite{boissonnatReachMetricDistortion2019}, which characterizes the reach in terms of metric distortion.

\begin{thm}[Theorem~1 in~\cite{boissonnatReachMetricDistortion2019}]\label{thm:metric_distortion_reach}
Let \(X\subset \R^m\) be a closed set. Then
\[
\rch(X)=\sup\Bigl\{r>0 \ \Bigm| \ \forall x,y\in X,\ \|x-y\|<2r
\Longrightarrow d_X(x,y)\le 2r\arcsin\!\Bigl(\frac{\|x-y\|}{2r}\Bigr)\Bigr\},
\]
with the convention that the supremum of the empty set is \(0\).
\end{thm}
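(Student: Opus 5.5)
We prove the two inequalities \(\rch(X)\le \sup\{\dots\}\) and \(\rch(X)\ge\sup\{\dots\}\) separately. Write \(\Sigma\) for the set of admissible radii \(r\) in the statement, and \(\theta_r(c)\coloneqq 2r\arcsin(c/2r)\) for the length of the shorter great-circle arc subtending a chord of length \(c<2r\) on a sphere of radius \(r\). We use that, for fixed \(c\), the map \(r\mapsto\theta_r(c)\) is strictly decreasing on \((c/2,\infty)\).

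\emph{Step 1: \(\sup\Sigma\le\rch(X)\).} Suppose \(r\in\Sigma\) with \(r>\rch(X)\). By definition of the reach there is a point \(z\) of the medial axis with \(d\coloneqq \dist(z,X)<r\), together with two distinct closest points \(x,y\in X\). Both lie on the sphere \(\partial B(z,d)\), and the open ball \(B(z,d)\) misses \(X\). Moreover \(c\coloneqq\|x-y\|\le 2d<2r\), so the hypothesis \(r\in\Sigma\) gives \(d_X(x,y)\le\theta_r(c)<\infty\). Hence a minimizing geodesic \(\gamma\subset X\) exists. Since \(\gamma\) avoids \(B(z,d)\), radial projection onto \(\partial B(z,d)\) is \(1\)-Lipschitz along \(\gamma\). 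The length of \(\gamma\) is therefore at least the spherical distance \(\theta_d(c)\), which is strictly larger than \(\theta_r(c)\) because \(d<r\). (If \(c=2d\), the lower bound is \(\pi d\), which again exceeds \(\theta_r(c)\).) This contradicts \(L(\gamma)\le\theta_r(c)\), so every \(r\in\Sigma\) satisfies \(r\le\rch(X)\).

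\emph{Step 2: \(\rch(X)\le\sup\Sigma\).} Fix \(r<\rch(X)\) and \(x,y\in X\) with \(c=\|x-y\|<2r\); we construct a path of length at most \(\theta_r(c)\). The midpoint \(m\) of \([x,y]\) lies within distance \(r\) of \(X\), so it has a unique projection \(z=\pi_X(m)\). By Federer's characterization of positive reach, the open ball of radius \(r\) tangent to \(X\) at \(z\), with center \(z+r(m-z)/\|m-z\|\), contains no point of \(X\). The key lemma is that, since \(x,y\) lie outside this ball and \(z\) is the closest point to \(m\), we obtain \(\|x-z\|,\|z-y\|\le 2r\sin(\phi/2)\), where \(c=2r\sin\phi\). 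In other words, each new chord is bounded by the chord subtending half the original arc on the \(r\)-sphere, and the new chords are again \(<2r\).

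Iterating bisection then produces, at level \(k\), points \(p^{(k)}_j\in X\) indexed by dyadic parameters \(j/2^k\). Consecutive points satisfy \(\|p^{(k)}_{j+1}-p^{(k)}_j\|\le 2r\sin(\phi/2^k)\), so all polygon lengths are bounded by \(2^k\cdot 2r\sin(\phi/2^k)\le 2r\phi=\theta_r(c)\). These bounds make the map on dyadics uniformly Lipschitz. It therefore extends to a continuous path in the closed set \(X\), whose length, as a supremum over partitions approximable by dyadic ones, is at most \(\theta_r(c)\). Thus \(r\in\Sigma\), and letting \(r\uparrow\rch(X)\) gives the claim. The case \(\rch(X)=0\) is trivial.

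\emph{Main obstacle.} The main obstacle is the half-chord estimate in Step 2. It requires a careful planar computation in the plane through \(x\), \(y\) and the tangent-ball center, using both the empty-ball property and the fact that \(\|m-z\|\le\|m-x\|\). I would first try to reduce it to the worst case in which \(x\) and \(y\) lie on the sphere of radius \(r\) itself.
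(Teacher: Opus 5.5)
The paper does not prove this statement; it quotes it from \cite{boissonnatReachMetricDistortion2019}, so there is no in-paper argument to compare with. Your argument is sound and uses the same two mechanisms as the cited source. For $\sup\Sigma\le\rch(X)$ you use the empty sphere around a medial-axis point. For the reverse inequality you use iterated midpoint projection together with Federer's empty tangent ball. In Step~1 you do not even need a minimizing geodesic: the radial-projection bound applies to every path in $X$ from $x$ to $y$, and hence directly to $d_X(x,y)$.

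The one step you leave open, the half-chord estimate, does hold. It follows from a short computation using exactly the two ingredients you name.

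If $m\in X$, then $z=m$ and $\|x-z\|=c/2=r\sin\phi\le 2r\sin(\phi/2)$.

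Otherwise write $x=m+w$ and $y=m-w$ with $\|w\|=c/2$. Put $\rho=\|m-z\|$ and $u=(m-z)/\rho$. Then $z=m-\rho u$, the tangent-ball centre is $o=m+(r-\rho)u$, and $\rho\le\|m-x\|=c/2<r$. The conditions $\|x-o\|\ge r$ and $\|y-o\|\ge r$ read (upper sign for $x$, lower for $y$)
\[
\frac{c^2}{4}\mp 2(r-\rho)\langle w,u\rangle+(r-\rho)^2\ge r^2 .
\]
Adding them gives $(r-\rho)^2\ge r^2-c^2/4$. Hence $\rho\le\rho_*\coloneqq r-\sqrt{r^2-c^2/4}$, since the other branch is excluded by $\rho<r$. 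Taken separately, the two conditions give
\[
|\langle w,u\rangle|\le \frac{c^2/4-2r\rho+\rho^2}{2(r-\rho)}.
\]
Substitute this into $\|x-z\|^2=c^2/4+\rho^2+2\rho\langle w,u\rangle$ and $\|y-z\|^2=c^2/4+\rho^2-2\rho\langle w,u\rangle$. This yields
\[
\max\bigl\{\|x-z\|^2,\|y-z\|^2\bigr\}\le\frac{r\,(c^2/4-\rho^2)}{r-\rho}.
\]
The derivative of the right-hand side in $\rho$ has the sign of $\rho^2-2r\rho+c^2/4$, which is nonnegative on $[0,\rho_*]$. So the bound is at most its value at $\rho_*$, namely $2r\rho_*=2r^2(1-\cos\phi)=4r^2\sin^2(\phi/2)$.

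The extremal configuration is precisely your worst case: $x,y$ on the $r$-sphere and $z$ the midpoint of the arc. Since $2r\rho_*$ is increasing in $c$, the estimate propagates through the induction even though the level-$k$ chords are only bounded above by $2r\sin(\phi/2^k)$. The dyadic map is then $2r\phi$-Lipschitz, and the rest of your Step~2 goes through as written.
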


In particular, if \(X\) has positive reach \(\tau=\rch(X)>0\), then for every \(x,y\in X\) with \(\|x-y\|<2\tau\),
\begin{equation}\label{eq:metric_distortion_tau}
d_X(x,y)\le 2\tau\arcsin\!\Bigl(\frac{\|x-y\|}{2\tau}\Bigr).
\end{equation}
For later use, define
\[
\psi_\tau(\delta)
\coloneqq
2\tau\arcsin\!\left(\frac{\delta}{2\tau}\right),
\qquad
0\le \delta<2\tau.
\]
Then \eqref{eq:metric_distortion_tau} reads
\[
d_X(x,y)\le \psi_\tau(\|x-y\|)
\qquad
\text{whenever }\|x-y\|<2\tau.
\]

Since every path in \(X\) joining \(x\) and \(y\) has length at least \(\|x-y\|\), one also always has
\begin{equation}\label{eq:eucl_lower_intrinsic}
\|x-y\|\le d_X(x,y).
\end{equation}

For the scale estimates needed later in the paper, it is convenient to remove the explicit dependence on the reach, provided that one works below the reach scale.

\begin{cor}\label{cor:metric_equivalence_no_reach}
Let \(X\subset \R^m\) be a closed set with positive reach, and let \(\delta>0\) satisfy
\[
\delta\le \rch(X).
\]
Then for every \(x,y\in X\) with \(\|x-y\|\le \delta\), one has
\[
\|x-y\|\le d_X(x,y)\le 2\delta\arcsin\!\Bigl(\frac{\|x-y\|}{2\delta}\Bigr)\le \frac{\pi}{3}\,\|x-y\|.
\]
In particular, on pairs of points of \(X\) at Euclidean distance at most \(\delta\), the Euclidean and intrinsic distances are bi-Lipschitz equivalent with constants \(1\) and \(\pi/3\).
\end{cor}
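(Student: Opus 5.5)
The lower bound \(\|x-y\|\le d_X(x,y)\) is exactly \eqref{eq:eucl_lower_intrinsic}, so only the two upper inequalities need an argument.

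\textbf{Middle inequality.} Set \(\tau=\rch(X)\ge\delta>0\). The plan is to use \Cref{thm:metric_distortion_reach} with the radius \(r=\delta\) in place of \(\tau\).

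The supremum in that theorem is taken over a set of admissible radii \(r\). I first check that this set is downward closed. Put \(u=\|x-y\|\) and \(g_r(u)\coloneqq 2r\arcsin(u/(2r))\). For fixed \(u\), \(g_r(u)\) is nonincreasing in \(r\). Indeed, writing \(s=u/(2r)\in[0,1)\), we have \(g_r(u)=u\cdot\arcsin(s)/s\), and \(\arcsin(s)/s\) is increasing in \(s\) by convexity of \(\arcsin\) on \([0,1]\). Now let \(r'<r\) and suppose \(r\) is admissible. For \(\|x-y\|<2r'\) we also have \(\|x-y\|<2r\), so
\[
d_X(x,y)\le g_r(u)\le g_{r'}(u),
\]
and \(r'\) is admissible as well.

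Hence every \(r<\tau\) is admissible. To reach the endpoint \(r=\tau\), fix \(x,y\) with \(u<2\tau\). For every \(r\) with \(u/2<r<\tau\) we get \(d_X(x,y)\le g_r(u)\). Letting \(r\uparrow\tau\) and using continuity of \(g_r(u)\) in \(r\) gives the bound at \(r=\tau\); this is the inequality \eqref{eq:metric_distortion_tau}.

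Apply downward closedness once more with \(r=\delta\le\tau\). If \(\delta<\tau\), then \(\delta\) is admissible directly. If \(\delta=\tau\), we use \eqref{eq:metric_distortion_tau} itself. The hypothesis \(\|x-y\|\le\delta<2\delta\) ensures the condition \(\|x-y\|<2\delta\) holds. This yields
\[
d_X(x,y)\le 2\delta\arcsin\bigl(\|x-y\|/(2\delta)\bigr).
\]

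\textbf{Last inequality.} Again write \(s=\|x-y\|/(2\delta)\in[0,1/2]\). Then
\[
2\delta\arcsin(s)=\|x-y\|\cdot\frac{\arcsin(s)}{s},
\]
with the ratio interpreted as \(1\) at \(s=0\). Since \(\arcsin(s)/s\) is increasing on \((0,1]\), on \((0,1/2]\) it is at most its value at \(s=1/2\), namely \((\pi/6)/(1/2)=\pi/3\).

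The monotonicity used twice above holds because \(\arcsin\) is convex on \([0,1]\) with \(\arcsin 0=0\), so the slope \((\arcsin s-\arcsin 0)/s\) is nondecreasing in \(s\).

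The only step with some subtlety is the endpoint case \(\delta=\tau\), where the supremum might not be attained; the limiting argument above handles it.

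The bi-Lipschitz statement follows immediately by combining the lower bound with the final upper bound \(d_X(x,y)\le(\pi/3)\|x-y\|\).
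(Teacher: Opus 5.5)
Your proof is correct and follows essentially the same route as the paper: it applies \Cref{thm:metric_distortion_reach}, then uses that \(r\mapsto 2r\arcsin(s/(2r))\) is nonincreasing to pass from \(\rch(X)\) to \(\delta\), and then uses that \(\arcsin(u)/u\) is increasing to get the constant \(\pi/3\). Your downward-closedness and limiting argument also proves that the bound holds at \(r=\rch(X)\) itself; the paper assumes this without comment when it reads off \eqref{eq:metric_distortion_tau} from the supremum characterization.
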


\begin{proof}
Fix \(x,y\in X\) with \(\|x-y\|\le \delta\le \rch(X)\). Then clearly
\[
\|x-y\|<2\rch(X),
\]
so \Cref{thm:metric_distortion_reach} gives
\[
d_X(x,y)\le 2\rch(X)\arcsin\!\Bigl(\frac{\|x-y\|}{2\rch(X)}\Bigr).
\]
As observed in~\cite{boissonnatReachMetricDistortion2019}, for fixed \(s\ge 0\) the function
\[
r\longmapsto 2r\arcsin\!\Bigl(\frac{s}{2r}\Bigr)
\]
is decreasing on \((s/2,+\infty)\). Applying this with \(s=\|x-y\|\le \delta\) and using \(\delta\le \rch(X)\), we obtain
\[
d_X(x,y)\le 2\delta\arcsin\!\Bigl(\frac{\|x-y\|}{2\delta}\Bigr).
\]
Moreover, since \(\|x-y\|/(2\delta)\in[0,1/2]\) and the function \(u\mapsto \arcsin(u)/u\) is increasing on \((0,1)\), we have
\[
\arcsin(u)\le \frac{\pi}{3}u,\qquad u\in[0,1/2].
\]
Substituting \(u=\|x-y\|/(2\delta)\) yields
\[
2\delta\arcsin\!\Bigl(\frac{\|x-y\|}{2\delta}\Bigr)
\le \frac{\pi}{3}\,\|x-y\|.
\]
The lower bound \(\|x-y\|\le d_X(x,y)\) is \Cref{eq:eucl_lower_intrinsic}.
\end{proof}

\section{Comparing the bottleneck pseudometric \texorpdfstring{\(d_\Delta\)}{dDelta} with the interleaving distance \texorpdfstring{\(d_I\)}{dI}}
\label{sec:dDelta_le_2dI}

Throughout this section, let
\[
R=(G,h_R),
\qquad
S=(G',h_S)
\]
be finite Reeb graphs. By the convention fixed in the background, \(R\) and
\(S\) also denote their underlying topological graphs whenever they occur in
topological constructions. Note that \(h_R\) and \(h_S\) are monotone on every open edge of the corresponding
Reeb graph. Via the equivalence between \(\mathbf{Reeb}\) and
\(\mathbf{Csh}^c\), we use interchangeably the geometric smoothing notation
\(U_\varepsilon\) on Reeb graphs and the shift/smoothing notation
\(\mathcal S_\varepsilon\) on constructible cosheaves.

For a Reeb graph \(R\), we denote by
\[
\Ord_0(R),\qquad \Ext_0(R),\qquad \Rel_1(R),\qquad \Ext_1(R)
\]
the four components of its extended-persistence signature. See \cite{JMLR:v19:17-291} for more details. We write \(d_B\) for
the bottleneck distance between persistence diagrams and define
\[
d_\Delta(R,S)
\coloneqq
\max\left\{
\begin{array}{l}
d_B\!\bigl(\Ord_0(R),\Ord_0(S)\bigr),\\[0.3em]
d_B\!\bigl(\Ext_0(R),\Ext_0(S)\bigr),\\[0.3em]
d_B\!\bigl(\Rel_1(R),\Rel_1(S)\bigr),\\[0.3em]
d_B\!\bigl(\Ext_1(R),\Ext_1(S)\bigr).
\end{array}
\right\}.
\]
Equivalently, using the Reeb--cosheaf equivalence, we use the same notation
for the induced pseudometric on the associated constructible cosheaves.

The goal of this section is to compare \(d_\Delta\) with the interleaving
distance \(d_I\).

As a first step, we reduce the comparison to the case of path-connected Reeb
graphs.

\begin{prop}
\label{prop:interleaving_components}
Let
\[
R=\bigsqcup_{i=1}^m R_i,
\qquad
S=\bigsqcup_{j=1}^n S_j
\]
be the decompositions of two finite Reeb graphs into path-connected components.
Then
\[
d_I(R,S)
=
\begin{cases}
\displaystyle
\min_{\sigma:\{1,\dots,m\}\xrightarrow{\cong}\{1,\dots,n\}}\;
\max_{1\le i\le m} d_I(R_i,S_{\sigma(i)}),
& m=n,\\[1.2em]
+\infty,
& m\ne n.
\end{cases}
\]
Equivalently, one may allow matchings with the empty Reeb graph by setting
\(d_I(\varnothing,\varnothing)=0\) and
\(d_I(R_i,\varnothing)=+\infty\) for every non-empty component \(R_i\).
\end{prop}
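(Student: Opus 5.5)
Our plan is to work entirely in the geometric picture of \(\varepsilon\)-interleavings, i.e.\ morphisms \(\Phi\colon R\to U_\varepsilon(S)\) and \(\Psi\colon S\to U_\varepsilon(R)\) with \(U_\varepsilon(\Psi)\circ\Phi=\zeta_R^{2\varepsilon}\) and \(U_\varepsilon(\Phi)\circ\Psi=\zeta_S^{2\varepsilon}\). We rely on two facts. First, smoothing commutes with disjoint unions: \(U_\varepsilon(\bigsqcup_i R_i)=\bigsqcup_i U_\varepsilon(R_i)\). This holds because \(R_i\times[-\varepsilon,\varepsilon]\) are the path components of \(R\times[-\varepsilon,\varepsilon]\), and a level-set component of \(H_\varepsilon\) lies in a single one of them. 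Second, \(U_\varepsilon(R_i)\) is path-connected, and \(\zeta^{\varepsilon}_R\) sends \(R_i\) into \(U_\varepsilon(R_i)\). Both follow from \Cref{lem:reeb_quotient_pi0} applied to \(R\times[-\varepsilon,\varepsilon]\), since the Reeb quotient induces a bijection on \(\pi_0\), and \(\zeta\) is induced by the inclusion \(x\mapsto(x,0)\).

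\emph{The inequality ``\(\le\)''.} Let \(m=n\) and let \(\sigma\) be a bijection with \(\max_i d_I(R_i,S_{\sigma(i)})<\varepsilon\). Since the existence of an \(\varepsilon\)-interleaving is monotone in \(\varepsilon\) (compose with the canonical maps), we may pick \(\varepsilon\)-interleavings \((\Phi_i,\Psi_i)\) between \(R_i\) and \(S_{\sigma(i)}\). Their disjoint unions \(\Phi=\bigsqcup\Phi_i\) and \(\Psi=\bigsqcup\Psi_i\) are morphisms \(R\to U_\varepsilon(S)\) and \(S\to U_\varepsilon(R)\) by the first fact. The interleaving identities hold componentwise, so \((\Phi,\Psi)\) is an \(\varepsilon\)-interleaving. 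Taking the infimum over \(\varepsilon\) and then the minimum over \(\sigma\) gives ``\(\le\)''.

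\emph{The inequality ``\(\ge\)'' and the case \(m\neq n\).} Suppose an \(\varepsilon\)-interleaving \((\Phi,\Psi)\) exists. Each \(R_i\) is path-connected, so \(\Phi(R_i)\) lies in a single component \(U_\varepsilon(S_{\sigma(i)})\); this defines \(\sigma\). Define \(\tau\) from \(\Psi\) in the same way. The identity \(U_\varepsilon(\Psi)\circ\Phi=\zeta^{2\varepsilon}_R\) maps \(R_i\) into \(U_{2\varepsilon}(R_i)\). On the other hand, it maps \(R_i\) into \(U_{2\varepsilon}(R_{\tau\sigma(i)})\), because \(U_\varepsilon(\Psi)\) restricted to \(U_\varepsilon(S_j)\) equals \(U_\varepsilon(\Psi|_{S_j})\), which lands in \(U_{2\varepsilon}(R_{\tau(j)})\). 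These two components are disjoint unless they coincide, so \(\tau\sigma=\id\); symmetrically \(\sigma\tau=\id\). Hence \(m=n\), and whenever \(m\ne n\) no finite \(\varepsilon\) works, so \(d_I=+\infty\).

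Finally, restricting gives \(\Phi_i\colon R_i\to U_\varepsilon(S_{\sigma(i)})\) and \(\Psi_{\sigma(i)}\colon S_{\sigma(i)}\to U_\varepsilon(R_i)\), and these satisfy the interleaving identities. Thus \(d_I(R_i,S_{\sigma(i)})\le\varepsilon\) for all \(i\), which yields ``\(\ge\)'' after taking the infimum (the minimum over finitely many bijections is attained). The reformulation with empty graphs is immediate, since \(d_I(R_i,\varnothing)=+\infty\) forces every finite-cost matching to be a bijection between nonempty components.

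The main obstacle is the functoriality step: verifying rigorously that \(U_\varepsilon\) of a morphism respects the component decomposition, i.e.\ \(U_\varepsilon(\Psi)|_{U_\varepsilon(S_j)}=U_\varepsilon(\Psi|_{S_j})\). We plan to check this on the cosheaf side. There \(\mathcal F_R=\coprod_i\mathcal F_{R_i}\) as set-valued cosheaves, \(\mathcal S_\varepsilon\) preserves coproducts, and a natural transformation out of a coproduct is determined by its restrictions to the summands. Connectedness of \(R_i\) then corresponds to \(\mathcal F_{R_i}(\mathbb R)\) being a singleton, which is what forces \(\Phi\) to land in one summand.
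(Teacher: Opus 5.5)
Your proposal is correct and follows the paper's proof essentially step for step. Both arguments use the component-index maps induced by $\Phi$ and $\Psi$, use the identities $U_\varepsilon(\Psi)\circ\Phi=\zeta_R^{2\varepsilon}$ and $U_\varepsilon(\Phi)\circ\Psi=\zeta_S^{2\varepsilon}$ to show these maps are inverse bijections (so $m\neq n$ rules out any finite interleaving), and then restrict interleavings to matched components or take their disjoint unions. Your extra points fill in details the paper leaves implicit rather than changing the route: monotonicity in $\varepsilon$ gives a common scale for all components, and you check on the cosheaf side that $U_\varepsilon$ respects the component decomposition.
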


\begin{proof}
Smoothing preserves path-connected components:
\[
U_\varepsilon(R)\cong \bigsqcup_{i=1}^m U_\varepsilon(R_i),
\qquad
U_\varepsilon(S)\cong \bigsqcup_{j=1}^n U_\varepsilon(S_j).
\]

Since each \(R_i\) is path-connected, its image under a morphism
\(\Phi\colon R\to U_\varepsilon(S)\) is path-connected. Hence it is contained in
a unique path-connected component of \(U_\varepsilon(S)\). Using the
decomposition above, there is therefore a unique index
\[
\alpha_\Phi(i)\in \{1,\dots,n\}
\]
such that
\[
\Phi(R_i)\subseteq U_\varepsilon(S_{\alpha_\Phi(i)}).
\]
Similarly, a morphism \(\Psi\colon S\to U_\varepsilon(R)\) determines a unique
map
\[
\beta_\Psi\colon \{1,\dots,n\}\longrightarrow \{1,\dots,m\}
\]
by the condition
\[
\Psi(S_j)\subseteq U_\varepsilon(R_{\beta_\Psi(j)}).
\]

Assume now that \(\Phi\) and \(\Psi\) form an \(\varepsilon\)-interleaving. For
each \(i\), the composite
\[
U_\varepsilon(\Psi)\circ \Phi\colon
R_i\longrightarrow U_{2\varepsilon}(R)
\]
has image contained in
\[
U_{2\varepsilon}\!\left(R_{\beta_\Psi(\alpha_\Phi(i))}\right),
\]
by the definitions of \(\alpha_\Phi\) and \(\beta_\Psi\). On the other hand, the
interleaving identity gives
\[
U_\varepsilon(\Psi)\circ\Phi=\zeta_R^{2\varepsilon}.
\]
Since the smoothing map \(\zeta_R^{2\varepsilon}\) sends \(R_i\) into
\(U_{2\varepsilon}(R_i)\), and since the components of \(U_{2\varepsilon}(R)\)
are precisely the \(U_{2\varepsilon}(R_k)\), we must have
\[
\beta_\Psi(\alpha_\Phi(i))=i.
\]
Thus \(\beta_\Psi\circ\alpha_\Phi=\id_{\{1,\dots,m\}}\). The same argument applied
to
\[
U_\varepsilon(\Phi)\circ\Psi=\zeta_S^{2\varepsilon}
\]
shows that
\[
\alpha_\Phi\circ\beta_\Psi=\id_{\{1,\dots,n\}}.
\]
Therefore \(\alpha_\Phi\) and \(\beta_\Psi\) are inverse bijections between the
component sets. In particular, if \(m\ne n\), no finite interleaving exists.

When \(m=n\), restricting an \(\varepsilon\)-interleaving of \(R\) and \(S\) to the
matched components gives an \(\varepsilon\)-interleaving between
\(R_i\) and \(S_{\alpha(i)}\) for every \(i\). Conversely, the disjoint union of
componentwise \(\varepsilon\)-interleavings along any bijection
\(\sigma:\{1,\dots,m\}\xrightarrow{\cong}\{1,\dots,n\}\) gives an
\(\varepsilon\)-interleaving between \(R\) and \(S\). Taking infima over
\(\varepsilon\) and then minimizing over \(\sigma\) gives the formula.

\end{proof}

As a consequence of \Cref{prop:interleaving_components}, any time we compare Reeb graphs with a different number of path-connected components, we always have $d_\Delta < d_I$, as $d_\Delta$ is always finite. Moreover, the case where the number of path-connected components is equal but  bigger than $1$, can be reduced to a comparison of path-connected Reeb graphs.
Hence, in the rest of this section we assume that \(R\) and \(S\) are
path-connected.

The comparison has two different strengths. The diagrams
governed by connected components, namely \(\Ord_0\), \(\Ext_0\), and \(\Rel_1\),
are controlled with the sharp constant \(1\). More precisely, we prove in this
section that every \(\varepsilon\)-interleaving between \(R\) and \(S\) induces
bounds
\[
\max\left\{
\begin{array}{l}
d_B\!\bigl(\Ord_0(R),\Ord_0(S)\bigr),\\
d_B\!\bigl(\Ext_0(R),\Ext_0(S)\bigr),\\
d_B\!\bigl(\Rel_1(R),\Rel_1(S)\bigr)
\end{array}
\right\}
\le \varepsilon.
\]
The loop component \(\Ext_1\) is different. Its proof requires a separate
argument, where we
show that every \(\varepsilon\)-interleaving gives
\[
d_B\!\bigl(\Ext_1(R),\Ext_1(S)\bigr)\le 2\varepsilon.
\]
Combining the estimates yields the global comparison
\[
d_\Delta(R,S)\le 2d_I(R,S).
\]

The factor \(2\) in the loop component is not merely an artifact of the proof.
The interleaving identities control the composite of two \(\varepsilon\)-smoothing
maps, and hence a loop may disappear only after \(2\varepsilon\)-smoothing. Since
smoothing shrinks an \(\Ext_1\)-bar from both ends, a \(2\varepsilon\)-smoothing
moves its upper endpoint down by \(2\varepsilon\) and its lower endpoint up by
\(2\varepsilon\). Thus a loop killed at this stage can have persistence
\(4\varepsilon\), and therefore diagonal bottleneck cost \(2\varepsilon\). In this
sense, the interleaving can eliminate the loop through two successive smoothing
steps, while the bottleneck distance pays for deleting the corresponding point in a
single diagonal match.
This two-sided pinching phenomenon does not occur for path-connected components.

\subsection{Smoothing and persistence modules}
\label{subsec:smoothing_persistence_modules}

We first record the precise smoothing facts used below. Let \(R\) be a Reeb
graph and let \(\delta\ge 0\). Recall that
\[
U_\delta(R)=
\mathcal D\!\bigl(\mathcal S_\delta(\mathcal C(R))\bigr).
\]
Set
\[
T:=R\times[-\delta,\delta],
\qquad
H(x,u):=h_R(x)+u.
\]
Then \(U_\delta(R)\) is geometrically realized as the Reeb graph of \(H\). We write
\[
q\colon T\longrightarrow U_\delta(R)
\]
for the Reeb quotient map and \(h_\delta\colon U_\delta(R)\to\mathbb R\) for
the induced function, so that
\[
h_\delta\circ q=H.
\]
The canonical natural transformation
\(\mathcal C(R)\Rightarrow \mathcal S_\delta(\mathcal C(R))\) corresponds to the
canonical smoothing morphism
\[
\zeta_R^\delta\colon R\longrightarrow U_\delta(R),
\qquad
\zeta_R^\delta=q\circ i,
\qquad
 i(x)=(x,0).
\]
Smoothing is functorial: a morphism \(F\colon R\to S\) induces
\[
U_\delta(F)\colon U_\delta(R)\longrightarrow U_\delta(S),
\]
and the square with the two smoothing morphisms commutes.  This is the
geometric form of functoriality of the cosheaf smoothing functor; see
\cite{de_Silva_2016}.

We will use the following comparison in the smoothing cylinder. It shows that
the sublevel sets of the smoothing cylinder deformation retract onto copies of
the \(\delta\)-shifted sublevel sets of the original graph. To define these
copies, we send the shifted sublevel set into the cylinder by a section which
agrees with the zero section where the zero section already lies in the
truncated cylinder; on the remaining part, the section \text{``moves diagonally''} in the
cylinder, using the available vertical room staying at the same height, as shown in \Cref{fig:iota}.

\begin{figure}
    \centering
    \includegraphics[width = 0.6\textwidth]{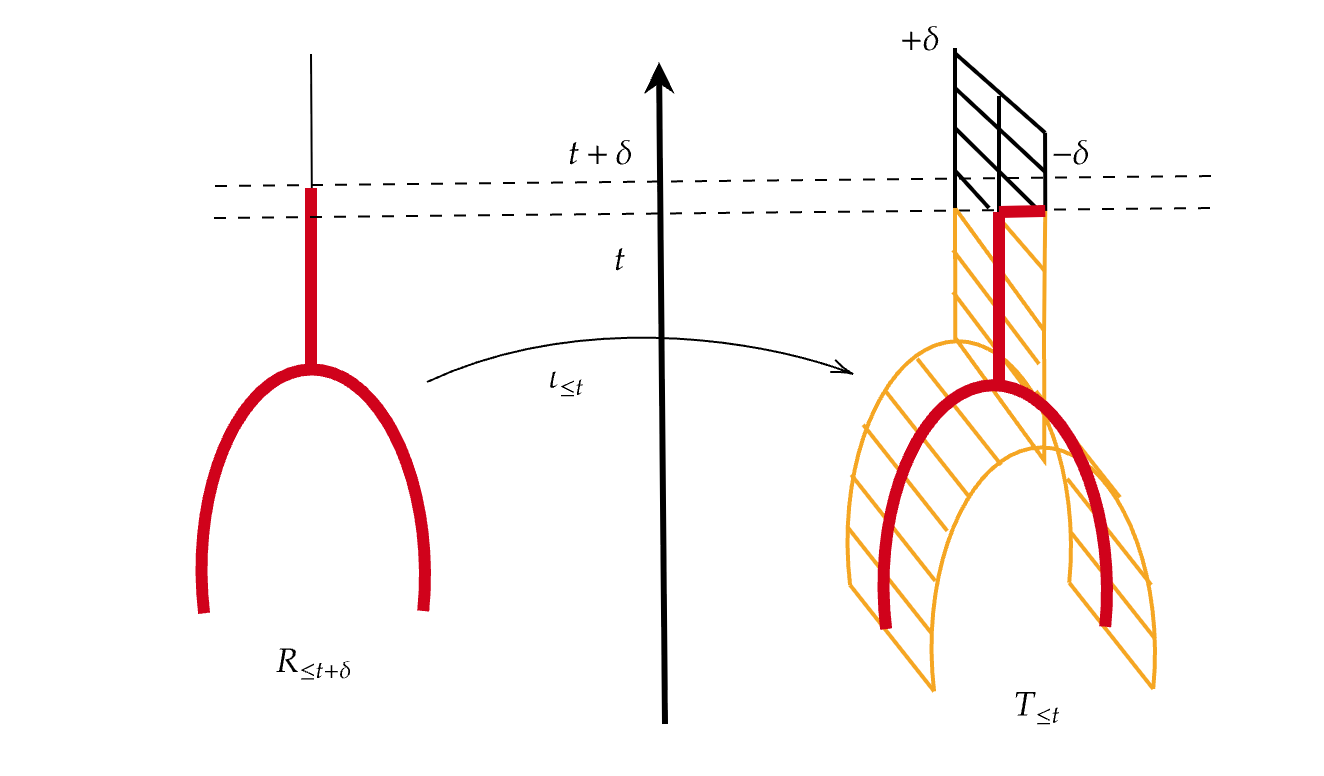}
    \caption{A visual representation of the map $\iota_{\le t}$ involved in \Cref{lem:filtered_cylinder_comparison}.}
    \label{fig:iota}
\end{figure}

\begin{lem}[Filtered cylinder comparison]
\label{lem:filtered_cylinder_comparison}
In the cylinder model for \(U_\delta(R)\), with \(H\), \(q\), and
\(\zeta_R^\delta\) as above, fix \(t\in\mathbb R\) and set
\[
T_{\le t}:=H^{-1}((-\infty,t])
=
\{(x,u)\in T:h_R(x)+u\le t\},
\]
and
\[
T_{\ge t}:=H^{-1}([t,+\infty))
=
\{(x,u)\in T:h_R(x)+u\ge t\}.
\]
Then the following hold.
\begin{enumerate}
    \item The projection to the first factor restricts to a homotopy
    equivalence
    \[
    p_{\le t}\colon T_{\le t}\longrightarrow R_{\le t+\delta}.
    \]
    A homotopy inverse is
    \[
    \iota_{\le t}\colon R_{\le t+\delta}\longrightarrow T_{\le t},
    \qquad
    \iota_{\le t}(x)=\bigl(x,\min\{0,t-h_R(x)\}\bigr).
    \]
    Moreover, the maps
    \[
    q\circ\iota_{\le t} ,
    \qquad
    \zeta_R^\delta|_{R_{\le t+\delta}}
    \]
    are homotopic as maps \(R_{\le t+\delta}\to U_\delta(R)\).

    \item The projection to the first factor restricts to a homotopy
    equivalence
    \[
    p_{\ge t}\colon T_{\ge t}\longrightarrow R_{\ge t-\delta}.
    \]
    A homotopy inverse is
    \[
    \iota_{\ge t}\colon R_{\ge t-\delta}\longrightarrow T_{\ge t},
    \qquad
    \iota_{\ge t}(x)=\bigl(x,\max\{0,t-h_R(x)\}\bigr).
    \]
    Moreover, the maps
    \[
    q\circ\iota_{\ge t},
    \qquad
    \zeta_R^\delta|_{R_{\ge t-\delta}}
    \]
    are homotopic as maps \(R_{\ge t-\delta}\to U_\delta(R)\).

    \item The quotient map induces bijections
    \[
    \pi_0(T_{\le t})
    \cong
    \pi_0\bigl((U_\delta(R))_{\le t}\bigr),
    \qquad
    \pi_0(T_{\ge t})
    \cong
    \pi_0\bigl((U_\delta(R))_{\ge t}\bigr).
    \]
    Consequently,
    \begin{equation}\label{eq:ordinary_H0_smoothing_identification}
    H_0\bigl((U_\delta(R))_{\le t};\Bbbk\bigr)
    \cong
    H_0\bigl(R_{\le t+\delta};\Bbbk\bigr),
    \end{equation}
    and
    \begin{equation}\label{eq:superlevel_H0_smoothing_identification}
    H_0\bigl((U_\delta(R))_{\ge t};\Bbbk\bigr)
    \cong
    H_0\bigl(R_{\ge t-\delta};\Bbbk\bigr).
    \end{equation}
\end{enumerate}
These identifications commute with the sublevel and superlevel structure maps
and are natural with respect to morphisms of Reeb graphs.
\end{lem}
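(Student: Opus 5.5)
The plan is to prove part 1 directly with an explicit fiberwise straight-line homotopy in the cylinder, to obtain part 2 by applying part 1 to $(R,-h_R)$, and to deduce part 3 from the first two parts together with \Cref{lem:reeb_quotient_pi0}.

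\textbf{Part 1: well-definedness and the two composites.} First I check that both maps land where they should.

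For $(x,u)\in T_{\le t}$ we have $h_R(x)\le t-u\le t+\delta$, so $p_{\le t}$ maps into $R_{\le t+\delta}$.

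For $x\in R_{\le t+\delta}$, set $u=\min\{0,t-h_R(x)\}$. Then $u\le 0$ and $u\ge t-h_R(x)\ge -\delta$, so $u\in[-\delta,\delta]$. Moreover $h_R(x)+u\le t$ in both cases of the minimum, so $\iota_{\le t}(x)\in T_{\le t}$. Continuity of $\iota_{\le t}$ is clear.

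The composite $p_{\le t}\circ\iota_{\le t}$ is the identity on $R_{\le t+\delta}$.

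For the other composite, $\iota_{\le t}\circ p_{\le t}(x,u)=(x,\min\{0,t-h_R(x)\})$. I use the straight-line homotopy in the second coordinate,
\[
K_s(x,u)=\bigl(x,(1-s)u+s\min\{0,t-h_R(x)\}\bigr),\qquad s\in[0,1].
\]
It stays in $T_{\le t}$ because the constraint $h_R(x)+v\le t$ together with $v\in[-\delta,\delta]$ defines a convex set of $v$ for each fixed $x$. Both endpoints $u$ and $\min\{0,t-h_R(x)\}$ lie in that set: the second because $t-h_R(x)\ge -\delta$ whenever $(x,u)\in T_{\le t}$. This gives the homotopy equivalence.

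\textbf{Part 1: the homotopy $q\circ\iota_{\le t}\simeq\zeta_R^\delta|_{R_{\le t+\delta}}$.} Here I use the homotopy $(x,s)\mapsto q\bigl(x,s\min\{0,t-h_R(x)\}\bigr)$ in the full cylinder $T$, where no height constraint is needed. It runs from $q\circ i=\zeta_R^\delta$ to $q\circ\iota_{\le t}$.

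\textbf{Part 2.} This follows by applying part 1 to $(R,-h_R)$, with $t$ replaced by $-t$ and $u$ by $-u$, or by repeating the argument verbatim with $\max$ in place of $\min$.

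\textbf{Part 3: the $\pi_0$ bijections.} The restriction of $H$ to $T_{\le t}$ is a constructible $\mathbb R$-space, as a closed sublevel set of a constructible space. The restriction of $q$ to $T_{\le t}$ is exactly the Reeb quotient of that subspace: the level sets $H^{-1}(a)$ with $a\le t$ lie entirely in $T_{\le t}$, so their path components are unchanged. Its image is $(U_\delta(R))_{\le t}=q(T_{\le t})$. The second statement of \Cref{lem:reeb_quotient_pi0} then gives $\pi_0(T_{\le t})\cong\pi_0((U_\delta(R))_{\le t})$.

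Composing with the homotopy equivalence $p_{\le t}$ from part 1 yields \eqref{eq:ordinary_H0_smoothing_identification} after applying $H_0(\,\cdot\,;\Bbbk)$. The superlevel identity \eqref{eq:superlevel_H0_smoothing_identification} is obtained symmetrically from part 2.

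\textbf{Naturality.} Compatibility with the sublevel structure maps holds because the inclusions $T_{\le t}\subseteq T_{\le t'}$ commute with $p$ on the nose. Naturality with respect to a morphism $F\colon R\to S$ holds because $F\times\mathrm{id}$ commutes with both projections and with $H$, and it induces $U_\delta(F)$ on the quotients.

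\textbf{Main obstacle.} I expect the only delicate point to be justifying that $q|_{T_{\le t}}$ is the Reeb quotient map of the truncated space, which is what \Cref{lem:reeb_quotient_pi0} requires. I would handle it exactly as above, by noting that for $a\le t$ the full level set $H^{-1}(a)$ lies in $T_{\le t}$, so the equivalence relation $\sim_H$ restricted to $T_{\le t}$ coincides with $\sim_{H|_{T_{\le t}}}$. Constructibility of $T_{\le t}$ would be verified from the product structure of $R\times[-\delta,\delta]$ over regular intervals.
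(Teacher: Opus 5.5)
Your proposal is correct and follows the paper's proof essentially step by step. The common steps are: the straight-line vertical homotopy kept inside $T_{\le t}$ by convexity of the admissible fibre; the homotopy from the zero section to $\iota_{\le t}$ taken in the full cylinder and pushed down by $q$; the reduction of the superlevel case to $-h_R$; \Cref{lem:reeb_quotient_pi0} applied to the restricted quotient $q|_{T_{\le t}}$; and naturality via $F\times\mathrm{id}$. You also justify that $q|_{T_{\le t}}$ is the Reeb quotient of $(T_{\le t},H|_{T_{\le t}})$, because every level set $H^{-1}(a)$ with $a\le t$ lies entirely in $T_{\le t}$. That is a useful detail the paper only asserts.
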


\begin{proof}
We prove the sublevel statements first. The image of
\(p_{\le t}\colon T_{\le t}\to R\) is exactly \(R_{\le t+\delta}\). Indeed, if
\((x,u)\in T_{\le t}\), then
\[
h_R(x)\le t-u\le t+\delta.
\]
Conversely, if \(x\in R_{\le t+\delta}\), then \(t-h_R(x)\ge -\delta\), and hence
\[
\min\{0,t-h_R(x)\}\in[-\delta,\delta].
\]
Moreover,
\[
h_R(x)+\min\{0,t-h_R(x)\}\le t.
\]
Thus \(\iota_{\le t}\) is well-defined, has image in \(T_{\le t}\), and satisfies
\[
p_{\le t}\circ \iota_{\le t}=\id_{R_{\le t+\delta}}.
\]

We now check that \(\iota_{\le t}\) is a homotopy inverse to \(p_{\le t}\). Define
\[
K^-\colon T_{\le t}\times[0,1]\longrightarrow T
\]
by
\[
K^-((x,u),a)
=
\bigl(x,(1-a)u+a\min\{0,t-h_R(x)\}\bigr).
\]
We claim that \(K^-\) actually takes values in \(T_{\le t}\). If
\((x,u)\in T_{\le t}\), then
\[
u\le t-h_R(x).
\]
Also,
\[
\min\{0,t-h_R(x)\}\le t-h_R(x).
\]
Therefore every convex combination
\[
(1-a)u+a\min\{0,t-h_R(x)\}
\]
is still at most \(t-h_R(x)\). Hence
\[
h_R(x)+(1-a)u+a\min\{0,t-h_R(x)\}\le t,
\]
so the homotopy stays inside \(T_{\le t}\). Since both vertical coordinates belong
to \([-\delta,\delta]\), the whole segment also stays inside the smoothing
cylinder. At \(a=0\) this homotopy is the identity on \(T_{\le t}\), while at
\(a=1\) it is \(\iota_{\le t}\circ p_{\le t}\). Together with
\[
p_{\le t}\circ\iota_{\le t}=\id_{R_{\le t+\delta}},
\]
this proves that
\[
p_{\le t}\colon T_{\le t}\to R_{\le t+\delta}
\]
is a homotopy equivalence with homotopy inverse \(\iota_{\le t}\).

We next compare the map \(q\circ\iota_{\le t}\) with the smoothing morphism. To do
so, we compare the two maps from \(R_{\le t+\delta}\) into the cylinder: the
section \(\iota_{\le t}\) and the zero section. The latter, after composition with
the Reeb quotient map \(q\), is precisely the restriction of the smoothing
morphism \(\zeta_R^\delta\).

For \(x\in R_{\le t+\delta}\), consider the path
\[
a\longmapsto
\bigl(x,a\min\{0,t-h_R(x)\}\bigr),
\qquad a\in[0,1].
\]
This path is contained in the full cylinder \(T\). Indeed,
if \(h_R(x)\le t\), then \(\min\{0,t-h_R(x)\}=0\), so the path is constant at
\((x,0)\). If \(t<h_R(x)\le t+\delta\), then
\[
t-h_R(x)\in[-\delta,0],
\]
so the path runs from \((x,0)\) to
\[
\bigl(x,t-h_R(x)\bigr)=\iota_{\le t}(x).
\]
Thus the zero section
\[
i|_{R_{\le t+\delta}}\colon R_{\le t+\delta}\to T
\]
is homotopic in the full cylinder to \(\iota_{\le t}\). Applying the quotient map
\(q\), we obtain a homotopy in \(U_\delta(R)\) between the maps
\[
\zeta_R^\delta|_{R_{\le t+\delta}}
=
q\circ i|_{R_{\le t+\delta}},
\qquad
q\circ\iota_{\le t}.
\]
Consequently, \(q\circ\iota_{\le t}\) and
\(\zeta_R^\delta|_{R_{\le t+\delta}}\) induce the same map on \(\pi_0\), and
therefore the same map on \(H_0(-;\Bbbk)\).

The restriction
\[
q|_{T_{\le t}}\colon T_{\le t}\longrightarrow (U_\delta(R))_{\le t}
\]
is the Reeb quotient of the restricted constructible space
$(T_{\le t},H|_{T_{\le t}}).
$
By \Cref{lem:reeb_quotient_pi0}, it induces a bijection
\[
\pi_0(q|_{T_{\le t}})\colon
\pi_0(T_{\le t})
\longrightarrow
\pi_0\bigl((U_\delta(R))_{\le t}\bigr).
\]

On the other hand, the deformation retraction constructed above gives a
bijection
\[
\pi_0(p_{\le t})\colon
\pi_0(T_{\le t})
\longrightarrow
\pi_0(R_{\le t+\delta}),
\]
whose inverse is induced by \(\iota_{\le t}\). Hence the composite
\[
\pi_0(R_{\le t+\delta})
\xrightarrow{\pi_0(\iota_{\le t})}
\pi_0(T_{\le t})
\xrightarrow{\pi_0(q|_{T_{\le t}})}
\pi_0\bigl((U_\delta(R))_{\le t}\bigr)
\]
is a bijection. This composite is induced by the map
\[
q\circ\iota_{\le t}\colon R_{\le t+\delta}\longrightarrow (U_\delta(R))_{\le t}.
\]
By the homotopy constructed above, this map induces the same map on \(\pi_0\)
as
$\zeta_R^\delta|_{R_{\le t+\delta}}.
$
Hence, we obtain the canonical identification
\[
H_0\bigl(R_{\le t+\delta};\Bbbk\bigr)
\cong
H_0\bigl((U_\delta(R))_{\le t};\Bbbk\bigr),
\]
or equivalently
\[
H_0\bigl((U_\delta(R))_{\le t};\Bbbk\bigr)
\cong
H_0\bigl(R_{\le t+\delta};\Bbbk\bigr).
\]
This is \eqref{eq:ordinary_H0_smoothing_identification}.

The superlevel statements are obtained by applying the same argument to the
height function \(-h_R\). Equivalently, one uses
\[
\iota_{\ge t}(x)=\bigl(x,\max\{0,t-h_R(x)\}\bigr),
\qquad x\in R_{\ge t-\delta}.
\]
The vertical homotopy from \((x,u)\) to \(\iota_{\ge t}(x)\) inside \(T_{\ge t}\) is
obtained by the same formula with \(\max\{0,t-h_R(x)\}\) in place of
\(\min\{0,t-h_R(x)\}\). Repeating the same arguments as above yields
\[
H_0\bigl((U_\delta(R))_{\ge t};\Bbbk\bigr)
\cong
H_0\bigl(R_{\ge t-\delta};\Bbbk\bigr),
\]
which is \eqref{eq:superlevel_H0_smoothing_identification}.

Finally, we verify naturality. If \(s\le t\), the inclusions of truncated
sublevel cylinders and sublevel sets give a commutative diagram
\[
\begin{tikzcd}
T_{\le s} \arrow[r,hook] \arrow[d,"p_{\le s}"']
&
T_{\le t} \arrow[d,"p_{\le t}"]
\\
R_{\le s+\delta} \arrow[r,hook]
&
R_{\le t+\delta}.
\end{tikzcd}
\]
Likewise, the restrictions of the Reeb quotient map fit into the commutative
diagram
\[
\begin{tikzcd}
T_{\le s} \arrow[r,hook] \arrow[d,"q"']
&
T_{\le t} \arrow[d,"q"]
\\
(U_\delta(R))_{\le s} \arrow[r,hook]
&
(U_\delta(R))_{\le t}.
\end{tikzcd}
\]
Since the vertical maps induce bijections on path components, the
identifications
\[
H_0\bigl((U_\delta(R))_{\le t};\Bbbk\bigr)
\cong
H_0\bigl(R_{\le t+\delta};\Bbbk\bigr)
\]
commute with the sublevel structure maps. The same argument applies to the
superlevel identifications.

Now let \(F\colon R\to S\) be a morphism of Reeb graphs. Set
\[
T_R:=R\times[-\delta,\delta],
\qquad
T_S:=S\times[-\delta,\delta].
\]
Since \(F\) preserves the height functions, the map
\[
\widetilde F
\coloneqq
F\times\id_{[-\delta,\delta]}
\colon
T_R
\longrightarrow
T_S
\]
preserves the induced height functions on the smoothing cylinders. Hence, for
every \(t\), it restricts to maps
\[
\widetilde F_{\le t}\colon (T_R)_{\le t}\to (T_S)_{\le t},
\qquad
\widetilde F_{\ge t}\colon (T_R)_{\ge t}\to (T_S)_{\ge t}.
\]
For the sublevel sets, these maps fit into the commutative diagrams
\[
\begin{tikzcd}
(T_R)_{\le t}
    \arrow[r,"\widetilde F_{\le t}"]
    \arrow[d,"p_{\le t}^R"']
&
(T_S)_{\le t}
    \arrow[d,"p_{\le t}^S"]
\\
R_{\le t+\delta}
    \arrow[r,"F"]
&
S_{\le t+\delta}
\end{tikzcd}
\qquad
\begin{tikzcd}
(T_R)_{\le t}
    \arrow[r,"\widetilde F_{\le t}"]
    \arrow[d,"q_R"']
&
(T_S)_{\le t}
    \arrow[d,"q_S"]
\\
(U_\delta(R))_{\le t}
    \arrow[r,"U_\delta(F)"]
&
(U_\delta(S))_{\le t}.
\end{tikzcd}
\]
The first diagram follows from
\[
p_{\le t}^S\circ\widetilde F_{\le t}
=
F\circ p_{\le t}^R,
\]
while the second is the restriction of naturality of the Reeb quotient 
\[
U_\delta(F)\circ q_R
=
q_S\circ\widetilde F.
\]
Since the projections are homotopy equivalences and the quotient maps induce
bijections on path components, passing to \(H_0\) and composing the induced
isomorphisms gives the commutative diagram
\[
\begin{tikzcd}
H_0\bigl((U_\delta(R))_{\le t};\Bbbk\bigr)
    \arrow[r,"(U_\delta(F))_*"]
    \arrow[d,"\cong"']
&
H_0\bigl((U_\delta(S))_{\le t};\Bbbk\bigr)
    \arrow[d,"\cong"]
\\
H_0(R_{\le t+\delta};\Bbbk)
    \arrow[r,"F_*"]
&
H_0(S_{\le t+\delta};\Bbbk).
\end{tikzcd}
\]
The same argument applies to the superlevel identifications. Hence the
identifications are natural with respect to morphisms of Reeb graphs.
\end{proof}

Before continuing, we note that the canonical \(H_0\)-identifications above are
specific to path components.  In general there is no canonical smooth-to-shift
morphism on the extended \(H_1\)-module without choosing interval decompositions.

\subsection{The \texorpdfstring{\(H_0\)}{H0}-related components}

Let \(\Phi\colon R\to U_\varepsilon(S)\) and
\(\Psi\colon S\to U_\varepsilon(R)\) be an \(\varepsilon\)-interleaving.  We first consider ordinary sublevel \(H_0\).
Using \eqref{eq:ordinary_H0_smoothing_identification} with \(\delta=\varepsilon\),
the map \(\Phi\) induces maps
\[
H_0(R_{\le t})
\longrightarrow
H_0\bigl((U_\varepsilon(S))_{\le t}\bigr)
\cong
H_0(S_{\le t+\varepsilon}),
\]
and \(\Psi\) induces maps in the opposite direction.  By functoriality of
smoothing and the interleaving identities
\[
U_\varepsilon(\Psi)\circ\Phi=\zeta_R^{2\varepsilon},
\qquad
U_\varepsilon(\Phi)\circ\Psi=\zeta_S^{2\varepsilon},
\]
the two composites are the ordinary structure maps
\[
H_0(R_{\le t})\longrightarrow H_0(R_{\le t+2\varepsilon}),
\qquad
H_0(S_{\le t})\longrightarrow H_0(S_{\le t+2\varepsilon}).
\]
Thus the ordinary sublevel \(H_0\)-modules of \(R\) and \(S\) are
\(\varepsilon\)-interleaved.  Algebraic stability gives
\begin{equation}\label{eq:ord0_bound}
d_B\!\bigl(\Ord_0(R),\Ord_0(S)\bigr)\le \varepsilon .
\end{equation}

The same argument applied to the functions \(-h_R\) and \(-h_S\), or equivalently
to the superlevel identifications \eqref{eq:superlevel_H0_smoothing_identification},
gives an \(\varepsilon\)-interleaving of the ordinary \(H_0\)-modules of the
superlevel filtrations.  Under the standard extended-persistence identification,
these modules encode the relative component \(\Rel_1\).  Hence
\begin{equation}\label{eq:rel1_bound}
d_B\!\bigl(\Rel_1(R),\Rel_1(S)\bigr)\le \varepsilon .
\end{equation}

Finally, since the Reeb graphs are connected in this section, the extended
\(H_0\)-component consists of the single point
\[
\Ext_0(R)=\{(\min h_R,\max h_R)\},
\qquad
\Ext_0(S)=\{(\min h_S,\max h_S)\}.
\]
The existence of a morphism \(R\to U_\varepsilon(S)\) implies
\[
\min h_S-\varepsilon\le \min h_R,
\qquad
\max h_R\le \max h_S+\varepsilon,
\]
and the morphism \(S\to U_\varepsilon(R)\) gives the reverse inequalities.
Therefore
\[
|\min h_R-\min h_S|\le\varepsilon,
\qquad
|\max h_R-\max h_S|\le\varepsilon,
\]
and so
\begin{equation}\label{eq:ext0_bound}
d_B\!\bigl(\Ext_0(R),\Ext_0(S)\bigr)\le \varepsilon .
\end{equation}

\subsubsection{Conclusion of the \(H_0\)-related comparison}

The arguments above prove the sharp estimate for the \(H_0\)-related components.
Namely, for every \(\varepsilon\)-interleaving
\[
\Phi\colon R\to U_\varepsilon(S),
\qquad
\Psi\colon S\to U_\varepsilon(R),
\]
combining \eqref{eq:ord0_bound}, \eqref{eq:rel1_bound}, and
\eqref{eq:ext0_bound}, we obtain
\begin{equation}\label{eq:extended_H0_bound}
\max\left\{
\begin{array}{l}
d_B\!\bigl(\Ord_0(R),\Ord_0(S)\bigr),\\[0.3em]
d_B\!\bigl(\Ext_0(R),\Ext_0(S)\bigr),\\[0.3em]
d_B\!\bigl(\Rel_1(R),\Rel_1(S)\bigr)
\end{array}
\right\}
\le \varepsilon.
\end{equation}
Letting \(\varepsilon\downarrow d_I(R,S)\), we obtain
\[
\max\left\{
\begin{array}{l}
d_B\!\bigl(\Ord_0(R),\Ord_0(S)\bigr),\\[0.3em]
d_B\!\bigl(\Ext_0(R),\Ext_0(S)\bigr),\\[0.3em]
d_B\!\bigl(\Rel_1(R),\Rel_1(S)\bigr)
\end{array}
\right\}
\le d_I(R,S).
\]

\begin{oss}[Merge trees]
For clarity and completeness, we record a finer interpretation of the preceding
argument, which gives a complete description of the situation for the
\(H_0\)-related components. This observation is not needed for the remainder of
the paper, so we do not introduce merge trees or their interleaving distance in
detail.

Let \(M(h_R)\) and \(M(h_S)\) denote the merge trees associated with the sublevel-set
filtrations of \(h_R\) and \(h_S\), respectively, and define \(M(-h_R)\) and \(M(-h_S)\)
analogously. The proof of \Cref{lem:filtered_cylinder_comparison}, together with the interleaving
argument above, takes place already at the level of path-connected components.
It therefore shows that every \(\varepsilon\)-interleaving between \(R\) and
\(S\) induces \(\varepsilon\)-interleavings
\[
M(h_R)\longleftrightarrow M(h_S),
\qquad
M(-h_R)\longleftrightarrow M(-h_S).
\]
Consequently,
\begin{equation*}\label{eq:extended_H0_merge_tree_bound}
\max\left\{
\begin{array}{l}
d_B\!\bigl(\Ord_0(R),\Ord_0(S)\bigr),\\[0.3em]
d_B\!\bigl(\Ext_0(R),\Ext_0(S)\bigr),\\[0.3em]
d_B\!\bigl(\Rel_1(R),\Rel_1(S)\bigr)
\end{array}
\right\}
\le
\max\left\{
\begin{array}{l}
d_I\!\bigl(M(h_R),M(h_S)\bigr),\\[0.3em]
d_I\!\bigl(M(-h_R),M(-h_S)\bigr)
\end{array}
\right\}
\le
d_I(R,S).
\end{equation*}
\end{oss}

\subsection{The \texorpdfstring{\(H_1\)}{H1}-component}
\label{subsec:h1_component_newborn_corrected}

We now treat the \(\Ext_1\)-component. As already mentioned, no canonical
smooth-to-shift morphism on the \(\Ext_1\)-persistence module is available. We
instead combine the two-step smoothing identity with a Hall-rank argument: long
bars survive the \(2\varepsilon\)-smoothing composite, and bad intermediate bars
can only contribute to strictly persistence-decreasing matrix entries.

In the results below, we will need the following assumption: in each of the
four sublevel-set filtrations associated with \(h_R\), \(-h_R\), \(h_S\), and
\(-h_S\), the ordinary \(H_1\)-birth times are pairwise distinct, and, for each
of \(R\) and \(S\), no vertex can simultaneously support an ordinary
\(H_1\)-birth for the filtration of the structure map and an ordinary
\(H_1\)-birth for the filtration of its negative.
The genericity of this assumption is proved in
\Cref{subsec:h1_genericity_newborn}.

\paragraph{Cycle representatives for \(\Ext_1\)-bars.}
Let \(R\) be a Reeb graph. For a \(1\)-cycle
\(\gamma\in Z_1(R;\Bbbk)\), set
\[
 b(\gamma)\coloneqq \max h_R(\operatorname{supp}\gamma),
 \qquad
 d(\gamma)\coloneqq \min h_R(\operatorname{supp}\gamma).
\]
We shall use the following structural description of the \(\Ext_1\)-barcode. The
point is that one can choose cycle representatives whose homology classes are
adapted simultaneously to the sublevel and superlevel flags.

\begin{lem}[Cycle representatives for \(\Ext_1\)-bars]
\label{lem:cycle_representatives_ext1_bars}
Let \(R\) be a Reeb graph. Assume that the ordinary \(H_1\)-birth times of
the sublevel filtration of \(h_R\) are pairwise distinct, and that the ordinary
\(H_1\)-birth times of the sublevel filtration of \(-h_R\) are pairwise distinct.
Then the points of \(\Ext_1(R)\) can be indexed as
\((b_i,d_i)\), \(i=1,\ldots,m\), with \(b_i>d_i\), and admit \(1\)-cycle
representatives
\[
\gamma_1,\ldots,\gamma_m\in Z_1(R;\Bbbk)
\]
such that
\[
b_i=b(\gamma_i),
\qquad
 d_i=d(\gamma_i).
\]
Moreover:
\begin{enumerate}
    \item ordered by increasing \(b_i\), the classes \([\gamma_i]\) form a
    newborn-cycle basis for the ordinary sublevel \(H_1\)-persistence of \(h_R\);
    equivalently, for every \(t\),
    \[
    \operatorname{im}\bigl(H_1(R_{\le t};\Bbbk)\to H_1(R;\Bbbk)\bigr)
    =
    \operatorname{span}\{[\gamma_i]: b_i\le t\};
    \]
    \item ordered by decreasing \(d_i\), the same classes \([\gamma_i]\) form a
    newborn-cycle basis for the superlevel filtration of \(h_R\), equivalently for
    the ordinary sublevel persistence of \(-h_R\): for every \(t\),
    \[
    \operatorname{im}\bigl(H_1(R_{\ge t};\Bbbk)\to H_1(R;\Bbbk)\bigr)
    =
    \operatorname{span}\{[\gamma_i]: d_i\ge t\}.
    \]
\end{enumerate}
\end{lem}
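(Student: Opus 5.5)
We will deduce the lemma from two flags in the finite-dimensional space \(V\coloneqq H_1(R;\Bbbk)\), together with the standard description of \(\Ext_1\) for graphs. Because \(R\) is a finite graph, the sublevel filtration of \(h_R\) produces no \(H_1\)-deaths. Hence the maps \(H_1(R_{\le t})\to V\) are injective. Their images form an increasing flag
\[
A_t\coloneqq\operatorname{im}\bigl(H_1(R_{\le t})\to V\bigr),
\]
which jumps by exactly one dimension at each ordinary \(H_1\)-birth time; this uses the distinctness hypothesis. Similarly, the superlevel images
\[
B_t\coloneqq\operatorname{im}\bigl(H_1(R_{\ge t})\to V\bigr)
\]
form a flag that grows as \(t\) decreases. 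It also jumps by one dimension at each birth time of \(-h_R\).

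In extended persistence of a graph, every class of \(V\) is born in the ordinary part and dies in the relative part. The \(\Ext_1\) diagram is the pairing between these two flags. Concretely, the multiplicity of bars with \(b_i\le t\) and \(d_i\ge s\) is \(\dim(A_t\cap B_s)\). This follows from the identification \(H_1(R_{\le t})\to H_1(R, R_{\ge s})\) through \(V\), whose kernel is \(B_s\) by the long exact sequence and the vanishing of \(H_2\).

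The main step is to build a basis adapted to both flags at once. This is the classical Bruhat/Schubert-cell fact: any two complete flags in \(V\) admit a common adapted basis \(v_1,\dots,v_m\), with \(A_t\) and \(B_s\) both spanned by subsets of the \(v_i\). Following the Hall-rank viewpoint, we prove it by induction on \(m\). Let \(b\) be the largest birth time of \(h_R\). For each \(s\), the rank function \(\dim(A_t\cap B_s)\) determines a unique \(s^\ast\) at which \(\dim(A_b\cap B_s)\) jumps while \(\dim(A_{b^-}\cap B_s)\) does not. We pick
\[
v\in (A_b\cap B_{s^\ast})\setminus\bigl(A_{b^-}+ (A_b\cap B_{s^{\ast+}})\bigr),
\]
then pass to the quotient flags on \(V/\langle v\rangle\), or equivalently restrict to \(A_{b^-}\), and recurse. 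Distinctness of birth times makes each flag complete, so the pairs \((b_i,d_i)\) read off this way are well-defined.

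Next we realize the basis vectors geometrically. Since \(v_i\in A_{b_i}\cap B_{d_i}\), there is a cycle supported in \(R_{\le b_i}\) representing \(v_i\), and another supported in \(R_{\ge d_i}\). We need a single cycle \(\gamma_i\) supported in \(h_R^{-1}([d_i,b_i])\). Here \(H_1\) of \(R\) is just \(Z_1\), since \(R\) is a graph and there are no boundaries. Two cycles representing the same class are therefore equal as chains. So the cycle representing \(v_i\) is unique, and its support lies in \(R_{\le b_i}\cap R_{\ge d_i}\).

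It remains to show \(b(\gamma_i)=b_i\) and \(d(\gamma_i)=d_i\), not just the inequalities. Because \(v_i\notin A_{b_i^-}\), the support cannot lie in \(R_{<b_i}\). We also need \(R_{\le b_i - \eta}\) to capture cycles of \(R_{<b_i}\) for small \(\eta>0\), which holds by finiteness. This gives \(\max h_R(\operatorname{supp}\gamma_i)=b_i\), and symmetrically \(\min=d_i\).

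Properties 1 and 2 then hold by construction of the adapted basis. The strict inequality \(b_i>d_i\) holds because a nonzero cycle in a graph cannot lie in a single level set of a function that is monotone on edges.

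The main obstacle we expect is matching this basis to the extended-persistence convention of \cite{JMLR:v19:17-291}, namely that \(\Ext_1\) points are \((b_i,d_i)\) with exactly these multiplicities. We plan to settle it by the rank identity \(\dim(A_t\cap B_s)=\#\{i:b_i\le t,\ d_i\ge s\}\) computed through the exact sequence above, rather than working with explicit extended boundary matrices.
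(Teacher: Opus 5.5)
Your proof is correct, but it reaches the simultaneously adapted basis by a different route from the paper. The paper takes an interval decomposition of the degree-one extended persistence module. The \(\Ext_1\)-summands, restricted to the middle index \(H_1(R;\Bbbk)\), give a basis \(\xi_i\) that is automatically adapted to two families of subspaces: the sublevel images, and the kernels of \(H_1(R)\to H_1(R,R_{\ge t})\), which are the superlevel images. In that route the bars are the \(\Ext_1\)-points by definition.

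You instead construct the basis from the two flags by the elementary common-adapted-basis lemma. That lemma holds for any two filtrations, so the completeness you derive from distinctness is not actually needed. You then have to check separately that the jump pairs reproduce \(\Ext_1(R)\) with multiplicities, through the identity \(\dim(A_t\cap B_s)=\#\{i:b_i\le t,\ d_i\ge s\}\). That identity still rests on the rank invariant determining the barcode of a one-parameter module. So you do not avoid the decomposition theorem; you only confine it to a counting statement.

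What your route buys is explicitness. The paper passes quickly over the fact that the superlevel images are exactly the kernels of the maps into the relative part; you derive it from the long exact sequence. You also justify \(b_i>d_i\) from strict monotonicity of \(h_R\) on edges. After that, both arguments conclude the same way: \(B_1(R;\Bbbk)=0\) gives a unique cycle representative whose support is pinned between the two levels.
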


\begin{proof}
Let \(V=H_1(R;\Bbbk)\). We write
\[
F_R^{\le t}
:=
\operatorname{im}\bigl(H_1(R_{\le t};\Bbbk)\to V\bigr),
\qquad
G_R^{\ge t}
:=
\operatorname{im}\bigl(H_1(R_{\ge t};\Bbbk)\to V\bigr).
\]
Choose a barcode decomposition of the degree-one extended-persistence module of
\(R\). The summands corresponding to \(\Ext_1\)-points give a basis
\(\xi_1,\ldots,\xi_m\) of \(V\), indexed by the points \((b_i,d_i)\) such that
\[
F_R^{\le t}=\operatorname{span}\{\xi_i:b_i\le t\},
\qquad
G_R^{\ge t}=\operatorname{span}\{\xi_i:d_i\ge t\}.
\]
The second equality is just the same barcode decomposition read for \(-h_R\): a
birth at level \(-d_i\) for \(-h_R\) corresponds, in the original height parameter,
to appearance in the superlevel at threshold \(d_i\).

Since \(R\) is a
graph, it has no \(2\)-simplices, so \(B_1(R;\Bbbk)=0\). Thus each class in
\(H_1(R;\Bbbk)\) has a unique simplicial \(1\)-cycle representative. Let
\(\gamma_i\) be the representative of \(\xi_i\). Because
\(\xi_i\in F_R^{\le b_i}\) and \(\xi_i\notin F_R^{<b_i}\), the cycle
\(\gamma_i\) is supported in \(R_{\le b_i}\) and in no strict sublevel set below
\(b_i\). Hence
$\max h_R(\operatorname{supp}\gamma_i)=b_i.
$

Similarly, \(\xi_i\in G_R^{\ge d_i}\) and \(\xi_i\notin G_R^{>d_i}\), so
\(\gamma_i\) is supported in \(R_{\ge d_i}\) and in no strict superlevel set above
\(d_i\). Hence
$\min h_R(\operatorname{supp}\gamma_i)=d_i.
$
\end{proof}

\paragraph{Smoothing and long \(\Ext_1\)-bars.}
We now recall the smoothing formula for extended persistence in
\cite[Corollary~7]{alharbi2024realizable}: the \(\Ext_1\)-barcode of
\((U_\delta(R),h_\delta)\) is obtained from the \(\Ext_1\)-barcode of
\(R\) as follows: an \(\Ext_1\)-point \((b,d)\), with \(b>d\), is sent, at
the level of barcodes, to \((b-\delta,d+\delta)\) if \(b-d>2\delta\), and is
removed if \(b-d\le 2\delta\).

We stress that this is used only as a statement about the barcodes of the two
objects \(R\) and \((U_\delta(R),h_\delta)\): to the best of our
understanding, \cite{alharbi2024realizable} does not prove that the map on
persistence modules induced by the smoothing morphism sends each barcode summand
of \(R\) to the corresponding shifted barcode summand of
\((U_\delta(R),h_\delta)\).

For the Hall argument below, we shall use \(\delta=2\varepsilon\). We call an
\(\Ext_1\)-bar \emph{long} if its persistence is larger than \(4\varepsilon\),
and \emph{short} otherwise. Thus the long bars are precisely the bars which
survive \(2\varepsilon\)-smoothing at the barcode level.

\begin{lem}[Short \(\Ext_1\)-bars are diagonal-disposable]
\label{lem:short_ext1_bars_diagonal_disposable}
Let \(R\) be a Reeb graph and let \((b,d)\in\Ext_1(R)\) be an \(\Ext_1\)-point. If
$b-d\le 4\varepsilon,
$
then this point has diagonal bottleneck cost at most \(2\varepsilon\). Moreover,
it is removed by \(2\varepsilon\)-smoothing.
\end{lem}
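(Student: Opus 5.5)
The statement has two parts, and both follow directly from facts already recorded. I would prove them separately.

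For the diagonal bottleneck cost, I use the standard convention: the cost of matching a diagram point \((b,d)\) to the diagonal is its \(\ell^\infty\)-distance to the diagonal. That distance is half its persistence. Because \(\Ext_1\)-points satisfy \(b>d\) by \Cref{lem:cycle_representatives_ext1_bars}, the persistence of \((b,d)\) is \(|b-d|=b-d\). The cost is therefore
\[
\tfrac12 (b-d)\le \tfrac12\cdot 4\varepsilon = 2\varepsilon .
\]
The only care needed is to state that the \(\ell^\infty\) nearest diagonal point is \(\bigl(\tfrac{b+d}{2},\tfrac{b+d}{2}\bigr)\). This is independent of the orientation convention \(b>d\) used for \(\Ext_1\)-points.

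For removal under smoothing, I invoke the smoothing formula of \cite[Corollary~7]{alharbi2024realizable} with \(\delta=2\varepsilon\). An \(\Ext_1\)-point \((b,d)\) of \(R\) survives in \(\Ext_1(U_{2\varepsilon}(R))\), as \((b-2\varepsilon,d+2\varepsilon)\), exactly when \(b-d>2\delta=4\varepsilon\). Otherwise it is deleted. Since the hypothesis is \(b-d\le 4\varepsilon\), the point is deleted. Taken together with the long bars, this shows that the short bars are exactly those with no counterpart in the barcode of \(U_{2\varepsilon}(R)\).

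I expect the only subtlety to be interpretive rather than technical. As stressed just before the statement, the formula is a barcode-level statement. So ``removed by \(2\varepsilon\)-smoothing'' must be read as a statement about the \(\Ext_1\)-diagram of \((U_{2\varepsilon}(R),h_{2\varepsilon})\), not about the image of a specific summand under \((\zeta_R^{2\varepsilon})_*\). I would make this explicit in the proof. I would also treat the boundary case \(b-d=4\varepsilon\) according to the non-strict inequality in the cited formula, under which that point is removed.
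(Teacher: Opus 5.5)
Your proposal is correct and matches the paper's proof. The diagonal cost is $(b-d)/2\le 2\varepsilon$, and the barcode-level smoothing formula of \cite{alharbi2024realizable} with $\delta=2\varepsilon$ deletes every $\Ext_1$-point with $b-d\le 4\varepsilon$. Your caveat that removal concerns the diagram of $(U_{2\varepsilon}(R),h_{2\varepsilon})$, and not the action of $(\zeta_R^{2\varepsilon})_*$ on summands, agrees with the paper's remark before the lemma. One small point: $b>d$ is simply the paper's convention for $\Ext_1$-points, so you do not need \Cref{lem:cycle_representatives_ext1_bars}, whose genericity hypotheses are not assumed here.
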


\begin{proof}
The cost of matching \((b,d)\) to the diagonal is $(b-d)/2$,
which is at most \(2\varepsilon\) by assumption. By the smoothing formula in \cite{alharbi2024realizable},
\(2\varepsilon\)-smoothing keeps an \(\Ext_1\)-bar only if its persistence is
larger than \(4\varepsilon\). Hence every bar with \(b-d\le 4\varepsilon\) is
removed by \(2\varepsilon\)-smoothing.
\end{proof}

\paragraph{Filtered surjectivity of smoothing.}
We next prove the homological input which replaces any direct assumption about
how smoothing acts on barcode summands.

Let \(R\) be a constructible Reeb graph. For \(t\in\mathbb R\), set
\[
F_R^{\le t}
:=
\operatorname{im}\Bigl(
H_1(R_{\le t};\Bbbk)\to H_1(R;\Bbbk)
\Bigr),
\qquad
F_R^{<t}
:=
\operatorname{im}\Bigl(
H_1(R_{<t};\Bbbk)\to H_1(R;\Bbbk)
\Bigr),
\]
and
\[
\widetilde F_{R,\delta}^{\le t}
:=
\operatorname{im}\Bigl(
H_1((U_\delta(R))_{\le t};\Bbbk)\to H_1(U_\delta(R);\Bbbk)
\Bigr),
\]
\[
\widetilde F_{R,\delta}^{<t}
:=
\operatorname{im}\Bigl(
H_1((U_\delta(R))_{<t};\Bbbk)\to H_1(U_\delta(R);\Bbbk)
\Bigr).
\]
Similarly, define the superlevel images
\[
G_R^{\ge t}
:=
\operatorname{im}\Bigl(
H_1(R_{\ge t};\Bbbk)\to H_1(R;\Bbbk)
\Bigr),
\qquad
G_R^{>t}
:=
\operatorname{im}\Bigl(
H_1(R_{>t};\Bbbk)\to H_1(R;\Bbbk)
\Bigr),
\]
and
\[
\widetilde G_{R,\delta}^{\ge t}
:=
\operatorname{im}\Bigl(
H_1((U_\delta(R))_{\ge t};\Bbbk)\to H_1(U_\delta(R);\Bbbk)
\Bigr),
\]
\[
\widetilde G_{R,\delta}^{>t}
:=
\operatorname{im}\Bigl(
H_1((U_\delta(R))_{>t};\Bbbk)\to H_1(U_\delta(R);\Bbbk)
\Bigr).
\]

\begin{lem}[Reeb quotient maps are surjective on \(H_1\)]
\label{lem:reeb_quotient_surjective_H1}
Let \((X,f)\) be a constructible \(\mathbb R\)-space, set
$R\coloneqq\mathfrak R(X,f),
$
and let \(q_X\colon X\to R\) be the Reeb quotient map. Then
\[
(q_X)_*\colon H_1(X;\Bbbk)\to H_1(R;\Bbbk)
\]
is surjective.
\end{lem}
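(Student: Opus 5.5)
The approach is to lift cycles from \(R\) to \(X\) edge by edge, using the same lifting mechanism as in the proof of \Cref{lem:reeb_quotient_pi0}. Since \(R\) is a finite graph, \(H_1(R;\Bbbk)\) has no boundaries, so it is spanned by the classes of finitely many simple closed edge-loops. It therefore suffices to show that every closed edge-path \(\alpha\) in \(R\) is homotopic, in \(R\), to \(q_X\circ\tilde\alpha\) for some loop \(\tilde\alpha\) in \(X\). Then \([\alpha]=(q_X)_*[\tilde\alpha]\) under the Hurewicz map, and linearity gives surjectivity on \(H_1(-;\Bbbk)\).

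To build \(\tilde\alpha\), subdivide \(\alpha\) into pieces \(\alpha_1,\dots,\alpha_k\). Each piece runs along a single open edge \(e\) of \(R\) between two vertices, and \(e\) lies over an interval of regular values \((a_j,a_{j+1})\) of \(f\). By constructibility, \(f^{-1}((a_j,a_{j+1}))\cong Y_j\times(a_j,a_{j+1})\), and the edge \(e\) corresponds to one path component \(C\) of \(Y_j\). Choose a point \(y_e\in C\) and lift the interior of the edge to \(\{y_e\}\times(a_j,a_{j+1})\). Constructibility also gives the attaching maps of \(Y_j\times[a_j,a_{j+1}]\) to the critical fibres, so this lift extends continuously to the closed interval. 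Its endpoints land in the critical level sets \(f^{-1}(a_j)\) and \(f^{-1}(a_{j+1})\), inside the path components \(q_X^{-1}(v)\) corresponding to the endpoint vertices \(v\). Because \(q_X\) restricted to the product band collapses \(\{y\}\times\{s\}\) to the point of \(e\) at height \(s\), the lift \(\tilde\alpha_i\) satisfies \(q_X\circ\tilde\alpha_i=\alpha_i\) up to reparametrization. The same holds if \(\alpha_i\) reverses orientation.

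Next, join consecutive lifts. At each vertex \(v\) traversed by \(\alpha\), the end of \(\tilde\alpha_i\) and the start of \(\tilde\alpha_{i+1}\) both lie in \(q_X^{-1}(v)\). This set is a path component of a level set, so we can connect the two points by a path \(\beta_v\subseteq q_X^{-1}(v)\), and \(q_X\circ\beta_v\) is constant at \(v\). At the final vertex, we likewise close up the end of \(\tilde\alpha_k\) with the start of \(\tilde\alpha_1\). The concatenation \(\tilde\alpha=\tilde\alpha_1\beta_{v_1}\tilde\alpha_2\cdots\tilde\alpha_k\beta_{v_k}\) is a loop in \(X\). Its image \(q_X\circ\tilde\alpha\) is \(\alpha\) with constant segments inserted, which is a reparametrization of \(\alpha\) and hence homotopic to it. This completes the argument.

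The main obstacle is the edge-lifting step, specifically justifying that the product lift extends continuously up to the critical fibres and lands in the correct vertex fibre \(q_X^{-1}(v)\). I would handle this with the explicit description of constructible \(\mathbb R\)-spaces in \cite{de_Silva_2016}: \(X\) is an iterated adjunction of the cylinders \(Y_j\times[a_j,a_{j+1}]\) to the critical fibres \(X_j\) via continuous attaching maps. The closed cylinder segment \(\{y_e\}\times[a_j,a_{j+1}]\) then maps continuously into \(X\), and its endpoints are the attaching-map images of \(y_e\). These lie in the components of \(X_j\) and \(X_{j+1}\) that are glued to \(C\), and by the definition of the Reeb quotient those components are exactly the vertex fibres at the ends of \(e\).
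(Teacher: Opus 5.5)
Your argument is correct and is essentially the paper's proof: you lift each edge through the product structure over its regular band, then join consecutive lifted endpoints inside the path-connected vertex fibres $q_X^{-1}(v)$. The only difference is bookkeeping. You reduce to simple closed edge-loops and conclude by naturality of the Hurewicz map, whereas the paper lifts an arbitrary simplicial cycle $\sum_e c_e e$ at the chain level and cancels the boundary terms with correction chains $C_v$ based at a chosen point $p_v$ of each vertex fibre; your use of the adjunction model to extend the edge lifts continuously to the critical fibres is, if anything, more careful than the paper's.
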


\begin{proof}
Choose a simplicial structure on \(R\) whose vertices contain all points over
critical values of \(f\), so that each open edge of \(R\) lies over an interval
of regular values. Let
$z=\sum_e c_e e
$
be a simplicial \(1\)-cycle in \(R\), with each edge \(e\) oriented from
\(s(e)\) to \(t(e)\).

We use the same constructibility argument as in the proof of
\Cref{lem:reeb_quotient_pi0}. If \(e\) is an open edge of \(R\), then there is
an interval \(I\subset\mathbb R\) of regular values such that \(e\) is a
path-connected component of \(\bar f^{-1}(I)\), while \(q_X^{-1}(e)\) is the
corresponding path-connected component of \(f^{-1}(I)\). By constructibility,
this component is a product over \(I\). Hence, for every \(e\) with
\(c_e\neq 0\), we may choose a path
\[
\widetilde e\subset q_X^{-1}(e)
\]
joining a point \(\widetilde s_e\in q_X^{-1}(s(e))\) to a point
\(\widetilde t_e\in q_X^{-1}(t(e))\).

The lifted edge chain has boundary
\[
\partial\sum_e c_e\widetilde e
=
\sum_e c_e(\widetilde t_e-\widetilde s_e),
\]
which lies in the fibers over the vertices of \(R\). Fix such a vertex \(v\).
By definition of the Reeb quotient, \(q_X^{-1}(v)\) is a path-connected
component of the level set \(f^{-1}(\bar f(v))\). 
Choose a point \(p_v\in q_X^{-1}(v)\). For each edge with \(s(e)=v\), choose
a path \([p_v,\widetilde s_e]\subset q_X^{-1}(v)\), and for each edge with
\(t(e)=v\), choose a path \([\widetilde t_e,p_v]\subset q_X^{-1}(v)\). Set
\[
C_v
\coloneqq
\sum_{s(e)=v} c_e [p_v,\widetilde s_e]
+
\sum_{t(e)=v} c_e [\widetilde t_e,p_v].
\]
Then
\[
\partial C_v
=
\sum_{s(e)=v} c_e(\widetilde s_e-p_v)
+
\sum_{t(e)=v} c_e(p_v-\widetilde t_e).
\]
Adding \(\partial C_v\) to the boundary of the lifted edge chain cancels all
terms supported at the lifted endpoints over \(v\). The remaining
coefficient of \(p_v\) is
$\sum_{t(e)=v} c_e-\sum_{s(e)=v} c_e,
$
which is zero because \(z\) is a cycle.

Therefore
\[
\widetilde z
\coloneqq
\sum_e c_e\widetilde e+\sum_v C_v
\]
is a \(1\)-cycle in \(X\). The correction chains \(C_v\) map to vertices of
\(R\), hence contribute no simplicial \(1\)-chain in \(R\), while each
\(\widetilde e\) maps to \(e\). Thus
\[
(q_X)_*[\widetilde z]=[z].
\]
Since \(z\) was arbitrary, \((q_X)_*\) is surjective.
\end{proof}

\begin{lem}[Filtered surjectivity of smoothing]
\label{lem:filtered_surjectivity_smoothing}
For every \(t\in\mathbb R\),
\[
(\zeta_R^\delta)_*
\bigl(F_R^{\le t+\delta}\bigr)
=
\widetilde F_{R,\delta}^{\le t},
\qquad 
(\zeta_R^\delta)_*
\bigl(F_R^{<t+\delta}\bigr)
=
\widetilde F_{R,\delta}^{<t}.
\]
Similarly,
\[
(\zeta_R^\delta)_*
\bigl(G_R^{\ge t-\delta}\bigr)
=
\widetilde G_{R,\delta}^{\ge t},
\qquad
(\zeta_R^\delta)_*
\bigl(G_R^{>t-\delta}\bigr)
=
\widetilde G_{R,\delta}^{>t}.
\]
\end{lem}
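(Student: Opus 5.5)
We work in the cylinder model \(T=R\times[-\delta,\delta]\), \(H(x,u)=h_R(x)+u\), with Reeb quotient \(q\colon T\to U_\delta(R)\) and \(\zeta_R^\delta=q\circ i\). We treat the closed sublevel identity first; the strict sublevel and the two superlevel identities follow by the same argument.

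The first step is the factorization through the truncated cylinder. Consider the commutative square formed by \(q|_{T_{\le t}}\colon T_{\le t}\to (U_\delta(R))_{\le t}\), the inclusions into \(T\) and \(U_\delta(R)\), and \(q\colon T\to U_\delta(R)\). The restriction \(q|_{T_{\le t}}\) is the Reeb quotient of the constructible space \((T_{\le t},H|_{T_{\le t}})\). By \Cref{lem:reeb_quotient_surjective_H1}, it is therefore surjective on \(H_1\). Taking images in \(H_1(U_\delta(R);\Bbbk)\) gives
\[
\widetilde F_{R,\delta}^{\le t}=q_*\Bigl(\operatorname{im}\bigl(H_1(T_{\le t};\Bbbk)\to H_1(T;\Bbbk)\bigr)\Bigr).
\]

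The second step transfers this image to \(R\). By \Cref{lem:filtered_cylinder_comparison}(1), the projection \(p_{\le t}\colon T_{\le t}\to R_{\le t+\delta}\) is a homotopy equivalence with inverse \(\iota_{\le t}\). Hence every class in \(H_1(T_{\le t})\) has the form \((\iota_{\le t})_*\alpha\) with \(\alpha\in H_1(R_{\le t+\delta})\). The same lemma shows that \(q\circ\iota_{\le t}\) is homotopic to \(\zeta_R^\delta|_{R_{\le t+\delta}}\) as maps into \(U_\delta(R)\), and homotopic maps agree on \(H_1\). It follows that
\[
q_*(\iota_{\le t})_*\alpha=(\zeta_R^\delta)_*(j_*\alpha),
\]
where \(j\colon R_{\le t+\delta}\hookrightarrow R\) is the inclusion. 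As \(\alpha\) ranges over \(H_1(R_{\le t+\delta})\), the classes \(j_*\alpha\) range over \(F_R^{\le t+\delta}\). Combining with the first step gives exactly \((\zeta_R^\delta)_*(F_R^{\le t+\delta})=\widetilde F_{R,\delta}^{\le t}\); both inclusions are obtained at once.

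The strict version follows the same pattern. We use \(T_{<t}\) and \(R_{<t+\delta}\), with the same formula for \(\iota\) and the same straight-line homotopies. The only verification needed is that these homotopies preserve strict inequalities: a convex combination of \(u<t-h_R(x)\) and \(\min\{0,t-h_R(x)\}<t-h_R(x)\) remains \(<t-h_R(x)\) when \(h_R(x)<t+\delta\). The other input is that \(q|_{T_{<t}}\) is the Reeb quotient of an open sublevel set. Open sublevel sets of a constructible space are not literally constructible in the closed sense, so here we write \(T_{<t}=\bigcup_{s<t}T_{\le s}\). Since \(H_1\) commutes with this directed union, and since finitely many critical values make the union stabilize at some \(s\) slightly below \(t\), the strict statement reduces to the closed one at level \(s\). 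The superlevel statements are obtained by applying everything to \(-h_R\), using \(\iota_{\ge t}\) and part (2) of \Cref{lem:filtered_cylinder_comparison}.

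The main obstacle is the first step. We must ensure that \(q|_{T_{\le t}}\) really is the Reeb quotient of \(T_{\le t}\), with image \((U_\delta(R))_{\le t}\), so that \Cref{lem:reeb_quotient_surjective_H1} applies. This holds because Reeb equivalence classes in \(T\) lie in single level sets of \(H\): the classes of \(T_{\le t}\) are exactly the classes of \(T\) over values \(\le t\). This is the same identification already used in the proof of \Cref{lem:filtered_cylinder_comparison}(3).
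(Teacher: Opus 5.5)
Your argument is correct and is essentially the paper's proof. The closed case uses the same three ingredients: surjectivity on $H_1$ of the truncated Reeb quotient $q|_{T_{\le t}}$, the homotopy equivalence $p_{\le t}$ with inverse $\iota_{\le t}$, and the homotopy $q\circ\iota_{\le t}\simeq\zeta_R^\delta|_{R_{\le t+\delta}}$. You package these as one chain of equalities rather than two separate inclusions. The strict case is reduced to closed levels $s<t$, as the paper does by exhausting with closed sublevel sets.

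Delete the sentence claiming that the same $\iota$ and straight-line homotopies work on $T_{<t}$. For $t\le h_R(x)<t+\delta$ one has $\min\{0,t-h_R(x)\}=t-h_R(x)$, so $\iota_{\le t}(x)$ has height exactly $t$ and does not lie in $T_{<t}$. What actually proves the strict identities is your reduction to the closed statement at a level $s<t$. Alternatively, simply take the union over $s<t$ of the closed equalities, as in the paper; that route needs no stabilization.
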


\begin{proof}
We prove the closed sublevel statement first. Set
\[
T:=R\times[-\delta,\delta],
\qquad
H(x,u):=h_R(x)+u,
\]
and let
\[
q\colon T\to U_\delta(R)
\]
be the Reeb quotient map. Let
\[
T_{\le t}:=H^{-1}((-\infty,t])
=\{(x,u)\in T:h_R(x)+u\le t\}
\]
denote the truncated sublevel cylinder.
By \Cref{lem:filtered_cylinder_comparison}, the projection identifies
\(T_{\le t}\) up to homotopy with \(R_{\le t+\delta}\), with homotopy inverse
\[
\iota_{\le t}(x)=\bigl(x,\min\{0,t-h_R(x)\}\bigr).
\]
Moreover, \(q\circ\iota_{\le t}\) is homotopic in \(U_\delta(R)\) to
\(\zeta_R^\delta|_{R_{\le t+\delta}}\).

We first prove the inclusion
\[
(\zeta_R^\delta)_*
\bigl(F_R^{\le t+\delta}\bigr)
\subseteq
\widetilde F_{R,\delta}^{\le t}.
\]
Let \(\gamma\) be a cycle contained in \(R_{\le t+\delta}\). Then
\[
(\zeta_R^\delta)_*[\gamma]=q_*i_*[\gamma].
\]
The maps \(i|_{R_{\le t+\delta}}\) and \(\iota_{\le t}\) are homotopic in the full
cylinder, and therefore
\[
q_*i_*[\gamma]=q_*(\iota_{\le t})_*[\gamma]
\]
in \(H_1(U_\delta(R);\Bbbk)\). But
\[
q\circ\iota_{\le t}(R_{\le t+\delta})
\subseteq
(U_\delta(R))_{\le t}.
\]
Hence \((\zeta_R^\delta)_*[\gamma]\) is represented by a cycle contained in
\((U_\delta(R))_{\le t}\), and so belongs to
\(\widetilde F_{R,\delta}^{\le t}\).

Conversely, let \(\xi\in \widetilde F_{R,\delta}^{\le t}\).
Then \(\xi\) is represented by a cycle in
\[
(U_\delta(R))_{\le t}=q(T_{\le t}).
\]
The restriction
\[
q|_{T_{\le t}}\colon T_{\le t}\to q(T_{\le t})
\]
is the Reeb quotient map of the constructible space \((T_{\le t},H|_{T_{\le t}})\).
Therefore, by \Cref{lem:reeb_quotient_surjective_H1}, there exists a cycle
\(\Gamma\in Z_1(T_{\le t};\Bbbk)\) such that
$q_*[\Gamma]=\xi.
$

Since \(T_{\le t}\) deformation retracts onto \(\iota_{\le t}(R_{\le t+\delta})\), the
class \([\Gamma]\) is represented by a cycle of the form
$(\iota_{\le t})_*\gamma
$
for some \(\gamma\in Z_1(R_{\le t+\delta};\Bbbk)\).
Thus
\[
\xi=q_*(\iota_{\le t})_*[\gamma].
\]
Finally, \(q\circ\iota_{\le t}\) is homotopic in \(U_\delta(R)\) to
\(q\circ i=\zeta_R^\delta\) on \(R_{\le t+\delta}\). Hence
\[
q_*(\iota_{\le t})_*[\gamma]
=
(\zeta_R^\delta)_*[\gamma]
\]
in \(H_1(U_\delta(R);\Bbbk)\). Therefore
\[
\xi\in
(\zeta_R^\delta)_*
\bigl(F_R^{\le t+\delta}\bigr).
\]
This proves
\[
(\zeta_R^\delta)_*
\bigl(F_R^{\le t+\delta}\bigr)
=
\widetilde F_{R,\delta}^{\le t}.
\]

The strict sublevel statement follows by taking unions.  Indeed, if a cycle is supported in \(R_{<t+\delta}\), then its support is
compact, and therefore the maximum of \(h_R\) on its support is strictly smaller
than \(t+\delta\). Hence the cycle is already supported in \(R_{\le r+\delta}\)
for some \(r<t\). The same argument applies to cycles in
\((U_\delta(R))_{<t}\). Hence,
\[
F_R^{<t+\delta}
=
\bigcup_{r<t}F_R^{\le r+\delta},
\qquad
\widetilde F_{R,\delta}^{<t}
=
\bigcup_{r<t}\widetilde F_{R,\delta}^{\le r}.
\]
Taking the union of the closed-sublevel equality over all \(r<t\) gives
\[
(\zeta_R^\delta)_*
\bigl(F_R^{<t+\delta}\bigr)
=
\widetilde F_{R,\delta}^{<t}.
\]

The superlevel equality is proved in the same way, using the superlevel
part of \Cref{lem:filtered_cylinder_comparison}.  
\end{proof}

\paragraph{Triangularity of self-smoothing on surviving bars.}
Let \(\gamma_1,\dots,\gamma_m\) be the \(\Ext_1\)-cycle representatives of \(R\) given by
\Cref{lem:cycle_representatives_ext1_bars}. Write \(p_i=(b_i,d_i)\in\Ext_1(R)\) for the bar represented by \(\gamma_i\). For every \(i\) such that
$b_i-d_i>2\delta,
$
let \(p_i^\delta:=(b_i-\delta,d_i+\delta)\) be the corresponding surviving bar of \((U_\delta(R),h_\delta)\), and choose cycle representatives \(\gamma_i^\delta\in Z_1(U_\delta(R);\Bbbk)\) given by \Cref{lem:cycle_representatives_ext1_bars} for the smoothed graph.
Thus \([\gamma_i^\delta]\) is the newborn-cycle basis element in
\(H_1(U_\delta(R);\Bbbk)\) indexed by the surviving bar \(p_i^\delta\).

\begin{lem}[Triangularity of the self-smoothing map]
\label{lem:triangularity_self_smoothing_ext1}
Let \(J\subseteq \{i: b_i-d_i>2\delta\}\).
Set
\[
V_J:=\operatorname{span}\{[\gamma_i]:i\in J\}
\subseteq H_1(R;\Bbbk),
\]
and
\[
V_J^\delta:=\operatorname{span}\{[\gamma_i^\delta]:i\in J\}
\subseteq H_1(U_\delta(R);\Bbbk).
\]
Let \(\pi_J^\delta\colon H_1(U_\delta(R);\Bbbk)\to V_J^\delta\) be the coordinate projection with respect to the newborn-cycle basis of
\(H_1(U_\delta(R);\Bbbk)\). Then the map
\[
\pi_J^\delta\circ(\zeta_R^\delta)_*|_{V_J}
\colon V_J\to V_J^\delta
\]
has full rank \(\#J\). More precisely, order \(J\) by nonincreasing persistence, and use the 
ordered bases
\[
\{[\gamma_i]:i\in J\}
\quad\text{of }V_J,
\qquad
\{[\gamma_i^\delta]:i\in J\}
\quad\text{of }V_J^\delta.
\]
With respect to these ordered bases, the matrix of
$\pi_J^\delta\circ(\zeta_R^\delta)_*|_{V_J}
$
is lower triangular with nonzero diagonal entries.
\end{lem}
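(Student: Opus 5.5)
The plan is to read off the matrix of \((\zeta_R^\delta)_*\) from how it interacts with the sublevel and superlevel flags of \(H_1\), using \Cref{lem:filtered_surjectivity_smoothing} in both directions.

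\textbf{Setup.} By the smoothing formula of \cite{alharbi2024realizable}, the \(\Ext_1\)-points of \((U_\delta(R),h_\delta)\) are exactly the points \(p_k^\delta=(b_k-\delta,d_k+\delta)\) for \(b_k-d_k>2\delta\). So the newborn basis \(\{[\gamma_k^\delta]\}\) of \(H_1(U_\delta(R);\Bbbk)\) is indexed by the surviving indices \(k\). Its birth values \(b_k-\delta\) are pairwise distinct, since the \(b_k\) are. By \Cref{lem:cycle_representatives_ext1_bars}, applied to \(U_\delta(R)\), this single basis is adapted to both flags:
\[
\widetilde F_{R,\delta}^{\le s}=\operatorname{span}\{[\gamma_k^\delta]:b_k-\delta\le s\},
\qquad
\widetilde G_{R,\delta}^{\ge r}=\operatorname{span}\{[\gamma_k^\delta]:d_k+\delta\ge r\}.
\]
The strict versions are analogous. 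Because both subspaces are spanned by subsets of one basis, their intersection is spanned by the intersection of the index sets.

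\textbf{Off-diagonal entries.} Fix \(i\in J\). Since \(\gamma_i\) is supported in \(R_{\le b_i}\cap R_{\ge d_i}\), the inclusion ``\(\subseteq\)'' of \Cref{lem:filtered_surjectivity_smoothing} gives
\[
(\zeta_R^\delta)_*[\gamma_i]\in \widetilde F_{R,\delta}^{\le b_i-\delta}\cap \widetilde G_{R,\delta}^{\ge d_i+\delta}.
\]
Hence \((\zeta_R^\delta)_*[\gamma_i]\) is a combination of those \([\gamma_k^\delta]\) with \(b_k\le b_i\) and \(d_k\ge d_i\). For each such \(k\) we get \(b_k-d_k\le b_i-d_i\). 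If equality holds, then \(b_k=b_i\), and distinctness of births forces \(k=i\). So every \(k\neq i\) that receives a nonzero coefficient has strictly smaller persistence than \(i\). Applying \(\pi_J^\delta\) only discards the coordinates with \(k\notin J\). With \(J\) ordered by nonincreasing persistence, the nonzero entries of column \(i\) therefore lie on the diagonal or strictly below it, and the matrix is lower triangular.

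\textbf{Diagonal entries.} It remains to show that the coefficient of \([\gamma_i^\delta]\) in \((\zeta_R^\delta)_*[\gamma_i]\) is nonzero; this is the main obstacle. I would use the equality form of \Cref{lem:filtered_surjectivity_smoothing} at the birth level. Distinct births give
\[
F_R^{\le b_i}=F_R^{<b_i}\oplus\Bbbk[\gamma_i],
\qquad
\widetilde F_{R,\delta}^{\le b_i-\delta}=\widetilde F_{R,\delta}^{<b_i-\delta}\oplus\Bbbk[\gamma_i^\delta].
\]
The lemma sends \(F_R^{<b_i}\) onto \(\widetilde F_{R,\delta}^{<b_i-\delta}\) and \(F_R^{\le b_i}\) onto \(\widetilde F_{R,\delta}^{\le b_i-\delta}\). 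If \((\zeta_R^\delta)_*[\gamma_i]\) lay in \(\widetilde F_{R,\delta}^{<b_i-\delta}\), then the image of \(F_R^{\le b_i}\) would be contained in \(\widetilde F_{R,\delta}^{<b_i-\delta}\), contradicting surjectivity onto the strictly larger space. Thus \((\zeta_R^\delta)_*[\gamma_i]\notin\widetilde F_{R,\delta}^{<b_i-\delta}\). Since \((\zeta_R^\delta)_*[\gamma_i]\) does lie in \(\widetilde F_{R,\delta}^{\le b_i-\delta}\), the direct-sum decomposition forces its \([\gamma_i^\delta]\)-coordinate to be nonzero. This coordinate survives \(\pi_J^\delta\) because \(i\in J\).

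A lower triangular matrix with nonzero diagonal is invertible, so \(\pi_J^\delta\circ(\zeta_R^\delta)_*|_{V_J}\) has full rank \(\#J\).
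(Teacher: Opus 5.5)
Your proposal is correct and follows essentially the same route as the paper. Triangularity comes from the inclusion part of the filtered-surjectivity lemma, applied to both the sublevel and the superlevel flags and read off in the newborn basis of \(H_1(U_\delta(R);\Bbbk)\) adapted to both; the nonzero diagonal comes from the closed and strict equalities of that lemma at level \(b_i-\delta\), where your contradiction argument restates the paper's observation that the induced surjection \(F_R^{\le b_i}/F_R^{<b_i}\to \widetilde F_{R,\delta}^{\le b_i-\delta}/\widetilde F_{R,\delta}^{<b_i-\delta}\) between one-dimensional spaces must be nonzero.
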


\begin{proof}
Fix \(i\in J\). Since
\[
[\gamma_i]\in F_R^{\le b_i}\setminus F_R^{<b_i},
\]
\Cref{lem:filtered_surjectivity_smoothing}, applied with \(t=b_i-\delta\), gives
\[
(\zeta_R^\delta)_*[\gamma_i]
\in
\widetilde F_{R,\delta}^{\le b_i-\delta}.
\]
The same lemma, in its strict version, gives
\[
(\zeta_R^\delta)_*F_R^{<b_i}
=
\widetilde F_{R,\delta}^{<b_i-\delta}.
\]
The induced map
\[
F_R^{\le b_i}/F_R^{<b_i}
\longrightarrow
\widetilde F_{R,\delta}^{\le b_i-\delta}/
\widetilde F_{R,\delta}^{<b_i-\delta}
\]
is surjective. Both spaces are one-dimensional: the source by the genericity
assumption, and the target by the barcode-level smoothing formula of \cite{alharbi2024realizable} together
with the distinctness of the birth values. Hence this induced map is nonzero.
Consequently, the coefficient of \([\gamma_i^\delta]\) in
$(\zeta_R^\delta)_*[\gamma_i]
$
is nonzero.

Moreover, since
\[
[\gamma_i]\in G_R^{\ge d_i},
\]
the superlevel part of \Cref{lem:filtered_surjectivity_smoothing}, applied with
\(t=d_i+\delta\), gives
\[
(\zeta_R^\delta)_*[\gamma_i]
\in
\widetilde G_{R,\delta}^{\ge d_i+\delta}.
\]
We now use that the newborn-cycle basis in \(H_1(U_\delta(R);\Bbbk)\) is adapted
simultaneously to the sublevel and superlevel flags of the smoothed graph. Namely,
\[
\widetilde F_{R,\delta}^{\le t}
=
\operatorname{span}
\{[\gamma_k^\delta]: b_k-d_k>2\delta\text{ and } b_k-\delta\le t\},
\]
\[
\widetilde G_{R,\delta}^{\ge t}
=
\operatorname{span}
\{[\gamma_k^\delta]: b_k-d_k>2\delta\text{ and } d_k+\delta\ge t\}.
\]
Therefore, consider $k$ such that the coefficient of \([\gamma_k^\delta]\) in
$(\zeta_R^\delta)_*[\gamma_i]
$
is nonzero; then membership in
\(\widetilde F_{R,\delta}^{\le b_i-\delta}\) forces
\[
b_k-\delta\le b_i-\delta,
\]
while membership in \(\widetilde G_{R,\delta}^{\ge d_i+\delta}\) forces
\[
d_k+\delta\ge d_i+\delta.
\]
Equivalently,
\[
b_k\le b_i,
\qquad
 d_k\ge d_i.
\]
Hence
\[
\operatorname{pers}(p_k)=b_k-d_k\le b_i-d_i=\operatorname{pers}(p_i).
\]
Moreover, suppose $\operatorname{pers}(p_i)=\operatorname{pers}(p_k)$.
Then both inequalities above are equalities. Since the birth
values are distinct, this forces \(k=i\). Thus every off-diagonal nonzero
coefficient goes from an input bar to an output bar of strictly smaller
persistence.

Ordering \(J\) by nonincreasing persistence, and using the corresponding ordered
bases the matrix of
$\pi_J^\delta\circ(\zeta_R^\delta)_*|_{V_J}
$
is therefore lower triangular. Its diagonal entries are precisely the nonzero
coefficients of \([\gamma_i^\delta]\) in \((\zeta_R^\delta)_*[\gamma_i]\). Hence
the matrix is lower triangular with nonzero diagonal, and so has rank \(\#J\).
\end{proof}

\paragraph{The Hall-rank argument.}
Let \(\Phi\colon R\to U_\varepsilon(S)\) and
\(\Psi\colon S\to U_\varepsilon(R)\) be an \(\varepsilon\)-interleaving. Choose, for \(R\) and \(S\), the cycle
representative families given by \Cref{lem:cycle_representatives_ext1_bars},
\[
\{\gamma_i^R\}_{i\in I_R},
\qquad
\{\gamma_j^S\}_{j\in I_S}.
\]
Write
\[
p_i=(b_i^R,d_i^R)
=
\bigl(b(\gamma_i^R),d(\gamma_i^R)\bigr),
\qquad
q_j=(b_j^S,d_j^S)
=
\bigl(b(\gamma_j^S),d(\gamma_j^S)\bigr).
\]

For \(J\subseteq \Ext_1(R)\), set
\[
N_{2\varepsilon}(J)
\coloneqq
\{q_j\in\Ext_1(S):\exists p_i\in J
\text{ such that }\|p_i-q_j\|_\infty\le 2\varepsilon\}.
\]

\begin{lem}[Hall rank inequality for \(\Ext_1\)]
\label{lem:ext1_hall_rank_inequality}
Let \(J\subseteq \Ext_1(R)\) be a set of long \(\Ext_1\)-bars of \(R\),
i.e. bars of persistence larger than \(4\varepsilon\). Then
\[
\#J\le \#N_{2\varepsilon}(J).
\]
The same statement holds with \(R\) and \(S\) exchanged.
\end{lem}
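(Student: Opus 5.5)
We prove the Hall rank inequality by factoring the self-smoothing map through \(S\) and reading off which coordinates can be nonzero. With \(\delta=2\varepsilon\), the interleaving identity \(U_\varepsilon(\Psi)\circ\Phi=\zeta_R^{2\varepsilon}\) gives, on \(H_1\),
\[
(\zeta_R^{2\varepsilon})_*=U_\varepsilon(\Psi)_*\circ\Phi_* .
\]
Here \(\Phi_*\colon H_1(R;\Bbbk)\to H_1(U_\varepsilon(S);\Bbbk)\). The image of \(\Phi_*\) is compared with \(H_1(S;\Bbbk)\) through \(\zeta_S^\varepsilon\): by \Cref{lem:filtered_surjectivity_smoothing} with \(t\) arbitrarily large, \((\zeta_S^\varepsilon)_*\) is surjective onto \(H_1(U_\varepsilon(S);\Bbbk)\). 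So every class in \(H_1(U_\varepsilon(S))\) is \((\zeta_S^\varepsilon)_*\) of a combination of the \([\gamma_j^S]\).

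Fix the long set \(J\) and take \(V_J=\operatorname{span}\{[\gamma_i^R]:i\in J\}\). By \Cref{lem:triangularity_self_smoothing_ext1} with \(\delta=2\varepsilon\), the map \(\pi_J^{2\varepsilon}\circ(\zeta_R^{2\varepsilon})_*|_{V_J}\) has rank \(\#J\). By the factorization, this map also factors through the subspace \(W\subseteq H_1(U_\varepsilon(S))\) spanned by the images \(\Phi_*[\gamma_i^R]\), \(i\in J\). The aim is to show that \(W\) lies in \((\zeta_S^\varepsilon)_*\operatorname{span}\{[\gamma_j^S]: q_j\in N_{2\varepsilon}(J)\}\) modulo a part that the projection \(\pi_J^{2\varepsilon}\) kills or that is strictly lower in the triangular order. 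This would give \(\#J\le\dim W'\le\#N_{2\varepsilon}(J)\).

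To locate \(W\), use filtration compatibility. Since \(\Phi\) preserves heights and \([\gamma_i^R]\in F_R^{\le b_i}\cap G_R^{\ge d_i}\), we get \(\Phi_*[\gamma_i^R]\in\widetilde F_{S,\varepsilon}^{\le b_i}\cap\widetilde G_{S,\varepsilon}^{\ge d_i}\). By \Cref{lem:filtered_surjectivity_smoothing} this equals \((\zeta_S^\varepsilon)_*(F_S^{\le b_i+\varepsilon})\cap(\zeta_S^\varepsilon)_*(G_S^{\ge d_i-\varepsilon})\). Then push forward by \(U_\varepsilon(\Psi)_*\). Using \(U_\varepsilon(\Psi)\circ\zeta^\varepsilon_S=\zeta^\varepsilon_{U_\varepsilon(R)}\circ\Psi\) and the height bounds on \(\Psi\), a term \([\gamma_j^S]\) contributes only classes of \(R\)'s \(2\varepsilon\)-smoothing lying in \(\widetilde F_{R,2\varepsilon}^{\le b_j^S}\cap\widetilde G_{R,2\varepsilon}^{\ge d_j^S}\). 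In coordinates of the smoothed newborn basis, this means \(b_k\le b_j^S+2\varepsilon\) and \(d_k\ge d_j^S-2\varepsilon\) for every index \(k\) it hits.

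Next, build the bipartite "support" relation. Write the map \(V_J\to V_J^{2\varepsilon}\) as a product \(B\cdot A\), where \(A\) records the coefficients of \(\Phi_*[\gamma_i^R]\) on lifts of \([\gamma_j^S]\), and \(B\) records the coefficients of \(U_\varepsilon(\Psi)_*\) on those lifts. The constraints give:
\begin{itemize}
\item a nonzero entry \(A_{ji}\) forces \(b_j^S\le b_i+\varepsilon\) and \(d_j^S\ge d_i-\varepsilon\), up to choosing lifts within the filtration pieces;
\item a nonzero entry \(B_{kj}\) forces \(b_k\le b_j^S+\varepsilon\) and \(d_k\ge d_j^S-\varepsilon\).
\end{itemize}
Since the product has nonzero diagonal in the triangular order, the Cauchy--Binet formula (or a Hall/König rank argument applied to \(\operatorname{rank}(BA)\le\) the size of the support through any intermediate set) gives the following. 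For every subset \(J'\subseteq J\), the set of \(j\) with \(A_{ji}\ne0\) and \(B_{kj}\ne0\) for some \(i,k\in J'\) has at least \(\#J'\) elements. A diagonal path \(i\to j\to i\) then gives \(|b_j^S-b_i|\le\varepsilon\), and likewise for \(d\), hence \(\|p_i-q_j\|_\infty\le\varepsilon\le2\varepsilon\). Off-diagonal paths \(i\to j\to k\) with \(k\ne i\) yield only bounds of the form \(b_j^S\le b_i+\varepsilon\) and \(b_j^S\ge b_k-\varepsilon\). These are the "bad intermediate bars" mentioned in the paper.

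\textbf{Main obstacle.} The critical step is to show that the full-rank triangular structure survives when only intermediate \(j\) in \(N_{2\varepsilon}(J)\) are kept. Concretely, restrict \(A\) to rows in \(N_{2\varepsilon}(J)\) and show that the discarded rows contribute to \(BA\) only strictly below the diagonal in the persistence order. A bad \(j\) reached from \(i\) and mapping to \(k\) with \(\operatorname{pers}(p_k)\ge\operatorname{pers}(p_i)\) would need \(b_k\le b_i+2\varepsilon\), \(d_k\ge d_i-2\varepsilon\), combined with the lower-triangularity constraint. The plan is to use the quotient argument \(F^{\le b_i}/F^{<b_i}\) from \Cref{lem:triangularity_self_smoothing_ext1}: track the top-filtration coefficient of \([\gamma_i]\) through \(S\) and show it must pass through a \(j\) whose \(b^S_j\) and \(d^S_j\) are both within \(2\varepsilon\) of \(p_i\), using the genericity assumption. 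Removing bad rows then leaves the diagonal unchanged, so \(\#J\le\#N_{2\varepsilon}(J)\). The symmetric statement follows by exchanging the roles of \(\Phi\) and \(\Psi\).
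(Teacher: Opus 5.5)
Your architecture matches the paper's. You factor $(\zeta_R^{2\varepsilon})_*=(U_\varepsilon(\Psi))_*\circ\Phi_*$ through $S$, split the intermediate coordinates into those indexed by $N_{2\varepsilon}(J)$ and the rest, and aim to show the discarded part contributes only strictly below the diagonal of $\pi_J^{2\varepsilon}\circ(\zeta_R^{2\varepsilon})_*|_{V_J}$. However, the step you call the main obstacle is the whole content of the lemma, and you leave it unproved.

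Your sketched plan (tracking the top coefficient through $F^{\le b_i}/F^{<b_i}$ and invoking genericity) targets the diagonal, which is not where the difficulty lies. Your own remark already settles the diagonal: a path $i\to j\to i$ with both entries nonzero forces $\|q_j-p_i\|_\infty\le\varepsilon$, so bad intermediates never contribute there. What must be excluded is a bad contribution \emph{above} the diagonal. Composing the two constraints into $b_k\le b_i+2\varepsilon$, $d_k\ge d_i-2\varepsilon$ discards exactly the information needed for this.

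The missing idea is that $q_j\notin N_{2\varepsilon}(J)$ means $q_j$ is $2\varepsilon$-far from \emph{every} bar of $J$, hence from both the input $p_i$ and the output $p_k$. From $b_j^S\le b_i+\varepsilon$, $d_j^S\ge d_i-\varepsilon$ and $\|q_j-p_i\|_\infty>2\varepsilon$, only $b_j^S<b_i-2\varepsilon$ or $d_j^S>d_i+2\varepsilon$ remain. Either case gives $\operatorname{pers}(q_j)<\operatorname{pers}(p_i)-\varepsilon$. Symmetrically, $b_k\le b_j^S+\varepsilon$, $d_k\ge d_j^S-\varepsilon$ and $\|q_j-p_k\|_\infty>2\varepsilon$ give $\operatorname{pers}(q_j)>\operatorname{pers}(p_k)+\varepsilon$. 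Hence $\operatorname{pers}(p_k)<\operatorname{pers}(p_i)-2\varepsilon$, so the bad part is strictly lower triangular. The good part then inherits the lower-triangular form and nonzero diagonal from \Cref{lem:triangularity_self_smoothing_ext1}, and no genericity is needed at this step. Your Cauchy--Binet count cannot replace this argument, because it counts all intermediate indices, not only those in $N_{2\varepsilon}(J)$.

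There is also a problem with your intermediate coordinates. You write $\Phi_*[\gamma_i^R]$ via lifts through $(\zeta_S^\varepsilon)_*$ to combinations of the classes $[\gamma_j^S]\in H_1(S;\Bbbk)$. These coefficients are not well defined, since $(\zeta_S^\varepsilon)_*$ is not injective once $S$ has $\Ext_1$-bars of persistence at most $2\varepsilon$. Moreover, your constraint on $A_{ji}$ needs a lift lying in $F_S^{\le b_i+\varepsilon}\cap G_S^{\ge d_i-\varepsilon}$. \Cref{lem:filtered_surjectivity_smoothing} only places $\Phi_*[\gamma_i^R]$ in the intersection of the two images, which need not equal the image of the intersection.

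The paper avoids lifting altogether. It takes coordinates in the newborn-cycle basis of $W=H_1(U_\varepsilon(S);\Bbbk)$ itself, indexed by the bars of $S$ that survive $\varepsilon$-smoothing, with shifted coordinates $(b_j^S-\varepsilon,d_j^S+\varepsilon)$. This basis is adapted simultaneously to both flags of $U_\varepsilon(S)$. So membership $\Phi_*[\gamma_i^R]\in\widetilde F_{S,\varepsilon}^{\le b_i}\cap\widetilde G_{S,\varepsilon}^{\ge d_i}$ directly forces $b_j^S-\varepsilon\le b_i$ and $d_j^S+\varepsilon\ge d_i$ for every nonzero coordinate. The good/bad splitting is then a genuine direct-sum decomposition of $W$.
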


\begin{proof}
Let
\[
V_J
:=
\operatorname{span}\{[\gamma_i^R]:p_i\in J\}
\subseteq H_1(R;\Bbbk).
\]
The interleaving identity gives
\[
R
\xrightarrow{\Phi}
U_\varepsilon(S)
\xrightarrow{U_\varepsilon(\Psi)}
U_{2\varepsilon}(R),
\]
and this composite is the canonical \(2\varepsilon\)-smoothing morphism
\[
\zeta_{R}^{2\varepsilon}\colon R\to U_{2\varepsilon}(R).
\]

For each \(p_i\in J\), let \((\gamma_i^R)^{2\varepsilon}\) denote the
newborn-cycle representative in \(H_1(U_{2\varepsilon}(R);\Bbbk)\)
corresponding, by the barcode-level coupling, to the surviving bar
\[
p_i^{2\varepsilon}
=
(b_i^R-2\varepsilon,d_i^R+2\varepsilon).
\]
Set
\[
V_J^{2\varepsilon}
:=
\operatorname{span}\{[(\gamma_i^R)^{2\varepsilon}]:p_i\in J\}
\subseteq H_1(U_{2\varepsilon}(R);\Bbbk),
\]
and let
\(\pi_J\colon H_1(U_{2\varepsilon}(R);\Bbbk)\to V_J^{2\varepsilon}\)
be the coordinate projection with respect to the newborn-cycle basis of
\(H_1(U_{2\varepsilon}(R);\Bbbk)\).

Define
\[
\Theta_J
:=
\pi_J\circ (U_\varepsilon(\Psi))_*\circ \Phi_*|_{V_J}.
\]
Since
\[
(U_\varepsilon(\Psi))_*\circ\Phi_*=(\zeta_{R}^{2\varepsilon})_*,
\]
we have
\[
\Theta_J
=
\pi_J\circ(\zeta_{R}^{2\varepsilon})_*|_{V_J}.
\]
Because every bar in \(J\) is long, it survives \(2\varepsilon\)-smoothing.
Therefore, by \Cref{lem:triangularity_self_smoothing_ext1}, applied to
\(R\) with \(\delta=2\varepsilon\), the matrix of \(\Theta_J\), ordered by
nonincreasing persistence, is lower triangular with nonzero diagonal entries. Hence
$\operatorname{rank} \Theta_J=\#J.
$

Now consider the intermediate space \(W:=H_1(U_\varepsilon(S);\Bbbk)\).
The newborn-cycle basis of \(W\) is indexed by the \(\Ext_1\)-bars of \(S\) that
survive \(\varepsilon\)-smoothing. If a bar \(q_j=(b_j^S,d_j^S)\) of \(S\)
survives \(\varepsilon\)-smoothing, let \((\gamma_j^S)^\varepsilon\) be the
corresponding newborn-cycle representative in \(W\). Its coordinates in
the smoothed barcode are
\[
q_j^\varepsilon=(b_j^S-\varepsilon,d_j^S+\varepsilon).
\]

Define
\[
W_{\mathrm{good}}
:=
\operatorname{span}\{[(\gamma_j^S)^\varepsilon]:
q_j\in N_{2\varepsilon}(J),\ q_j
\text{ survives }\varepsilon\text{-smoothing}\},
\]
\[
W_{\mathrm{bad}}
:=
\operatorname{span}\{[(\gamma_j^S)^\varepsilon]:
q_j\notin N_{2\varepsilon}(J),\ q_j
\text{ survives }\varepsilon\text{-smoothing}\}.
\]
Thus
\[
W=W_{\mathrm{good}}\oplus W_{\mathrm{bad}}
\]
with respect to the newborn-cycle basis of \(W\). Let
\(\rho_{\mathrm{good}}\colon W\to W_{\mathrm{good}}\) and
\(\rho_{\mathrm{bad}}\colon W\to W_{\mathrm{bad}}\) be the corresponding
coordinate projections. We define
\[
C_{\mathrm{good}}
:=
\pi_J\circ (U_\varepsilon(\Psi))_*
\circ \rho_{\mathrm{good}}\circ \Phi_*|_{V_J},
\]
and
\[
C_{\mathrm{bad}}
:=
\pi_J\circ (U_\varepsilon(\Psi))_*
\circ \rho_{\mathrm{bad}}\circ \Phi_*|_{V_J}.
\]
Since
\[
\rho_{\mathrm{good}}+\rho_{\mathrm{bad}}=\operatorname{id}_W,
\]
we have
\[
\Theta_J=C_{\mathrm{good}}+C_{\mathrm{bad}}.
\]

Since \(C_{\mathrm{good}}\) factors through \(W_{\mathrm{good}}\), and
\[
\dim W_{\mathrm{good}}
\le
\#N_{2\varepsilon}(J),
\]
we have
\[
\operatorname{rank} C_{\mathrm{good}}
\le
\#N_{2\varepsilon}(J).
\]
We prove that
\[
\operatorname{rank} C_{\mathrm{good}}=\#J.
\]

Order the bars in \(J\) by nonincreasing persistence, and use the same order on
the target bars
$p_i^{2\varepsilon},
p_i\in J.
$

We claim that, with respect to this ordering, the matrix of
\(C_{\mathrm{bad}}\) is strictly triangular.

Let \(p_i\in J\), let \(p_k\in J\), and let \(q_j\notin N_{2\varepsilon}(J)\)
be a bar of \(S\) which survives \(\varepsilon\)-smoothing. Suppose that the
coefficient of \([(\gamma_j^S)^\varepsilon]\) in
\[
\Phi_*[\gamma_i^R]\in H_1(U_\varepsilon(S);\Bbbk)
\]
is nonzero, and that the coefficient of
\([(\gamma_k^R)^{2\varepsilon}]\) in
\[
(U_\varepsilon(\Psi))_*[(\gamma_j^S)^\varepsilon]
\in H_1(U_{2\varepsilon}(R);\Bbbk)
\]
is nonzero.  We now show that
\[
\operatorname{pers}(p_k)
<
\operatorname{pers}(p_i)-2\varepsilon.
\]

First, we use the sublevel and superlevel constraints coming from the first
coefficient. Since
\[
[\gamma_i^R]\in F_{R}^{\le b_i^R}\cap G_{R}^{\ge d_i^R},
\]
and \(\Phi\) is a height-preserving morphism,
we have
\[
\Phi_*[\gamma_i^R]
\in
\widetilde F_{S,\varepsilon}^{\le b_i^R}
\cap
\widetilde G_{S,\varepsilon}^{\ge d_i^R}.
\]
With respect to the newborn-cycle basis of
\(H_1(U_\varepsilon(S);\Bbbk)\), these two subspaces are described by
\[
\widetilde F_{S,\varepsilon}^{\le t}
=
\operatorname{span}
\{[(\gamma_\ell^S)^\varepsilon]:
b_\ell^S-d_\ell^S>2\varepsilon
\text{ and } b_\ell^S-\varepsilon\le t\},
\]
and
\[
\widetilde G_{S,\varepsilon}^{\ge t}
=
\operatorname{span}
\{[(\gamma_\ell^S)^\varepsilon]:
b_\ell^S-d_\ell^S>2\varepsilon
\text{ and } d_\ell^S+\varepsilon\ge t\}.
\]
Therefore, if the coefficient of
\([(\gamma_j^S)^\varepsilon]\) in \(\Phi_*[\gamma_i^R]\) is nonzero, then this
basis element must occur in both of the above spans with
\(t=b_i^R\) and \(t=d_i^R\), respectively. Hence
\[
b_j^S-\varepsilon\le b_i^R,
\qquad
d_j^S+\varepsilon\ge d_i^R.
\]
Equivalently,
\[
b_j^S\le b_i^R+\varepsilon,
\qquad
 d_j^S\ge d_i^R-\varepsilon.
\]
Since \([(\gamma_j^S)^\varepsilon]\) is bad, that is
$q_j\notin N_{2\varepsilon}(J)
$
we have,
\[
\|q_j-p_i\|_\infty>2\varepsilon.
\]
Together with the two inequalities above, this forces either
\[
b_j^S<b_i^R-2\varepsilon
\qquad\text{or}\qquad
 d_j^S>d_i^R+2\varepsilon.
\]
Indeed, the alternatives
\[
b_j^S>b_i^R+2\varepsilon
\qquad\text{or}\qquad
 d_j^S<d_i^R-2\varepsilon
\]
are excluded by
\[
b_j^S\le b_i^R+\varepsilon,
\qquad
 d_j^S\ge d_i^R-\varepsilon.
\]
In the first remaining case,
\[
b_j^S-d_j^S
<
(b_i^R-2\varepsilon)-(d_i^R-\varepsilon)
=
(b_i^R-d_i^R)-\varepsilon.
\]
In the second remaining case,
\[
b_j^S-d_j^S
<
(b_i^R+\varepsilon)-(d_i^R+2\varepsilon)
=
(b_i^R-d_i^R)-\varepsilon.
\]
Thus
\[
\operatorname{pers}(q_j)
<
\operatorname{pers}(p_i)-\varepsilon.
\]

Second, we use the sublevel and superlevel constraints coming from the second
coefficient. Since \(U_\varepsilon(\Psi)\) is height-preserving 
and
\[
[(\gamma_j^S)^\varepsilon]
\in
\widetilde F_{S,\varepsilon}^{\le b_j^S-\varepsilon}
\cap
\widetilde G_{S,\varepsilon}^{\ge d_j^S+\varepsilon},
\]
we have
\[
(U_\varepsilon(\Psi))_*[(\gamma_j^S)^\varepsilon]
\in
\widetilde F_{R,2\varepsilon}^{\le b_j^S-\varepsilon}
\cap
\widetilde G_{R,2\varepsilon}^{\ge d_j^S+\varepsilon}.
\]
With respect to the newborn-cycle basis of
\(H_1(U_{2\varepsilon}(R);\Bbbk)\), these two subspaces are described by
\[
\widetilde F_{R,2\varepsilon}^{\le t}
=
\operatorname{span}
\{[(\gamma_\ell^R)^{2\varepsilon}]:
b_\ell^R-d_\ell^R>4\varepsilon
\text{ and } b_\ell^R-2\varepsilon\le t\},
\]
and
\[
\widetilde G_{R,2\varepsilon}^{\ge t}
=
\operatorname{span}
\{[(\gamma_\ell^R)^{2\varepsilon}]:
b_\ell^R-d_\ell^R>4\varepsilon
\text{ and } d_\ell^R+2\varepsilon\ge t\}.
\]
Therefore, if the coefficient of
\([(\gamma_k^R)^{2\varepsilon}]\) in
$(U_\varepsilon(\Psi))_*[(\gamma_j^S)^\varepsilon]
$
is nonzero, then this basis element must occur in both of the above spans with
\(t=b_j^S-\varepsilon\) and \(t=d_j^S+\varepsilon\), respectively. Hence
\[
b_k^R-2\varepsilon\le b_j^S-\varepsilon,
\qquad
d_k^R+2\varepsilon\ge d_j^S+\varepsilon.
\]
Equivalently,
\[
b_j^S\ge b_k^R-\varepsilon,
\qquad
d_j^S\le d_k^R+\varepsilon.
\]
Again, since \(q_j\notin N_{2\varepsilon}(J)\), we have in particular
\[
\|q_j-p_k\|_\infty>2\varepsilon.
\]
Together with the previous two inequalities, this forces either
\[
b_j^S>b_k^R+2\varepsilon
\qquad\text{or}\qquad
 d_j^S<d_k^R-2\varepsilon.
\]
Indeed, the alternatives
\[
b_j^S<b_k^R-2\varepsilon
\qquad\text{or}\qquad
 d_j^S>d_k^R+2\varepsilon
\]
are excluded by
\[
b_j^S\ge b_k^R-\varepsilon,
\qquad
 d_j^S\le d_k^R+\varepsilon.
\]
In the first remaining case,
\[
b_j^S-d_j^S
>
(b_k^R+2\varepsilon)-(d_k^R+\varepsilon)
=
(b_k^R-d_k^R)+\varepsilon.
\]
In the second remaining case,
\[
b_j^S-d_j^S
>
(b_k^R-\varepsilon)-(d_k^R-2\varepsilon)
=
(b_k^R-d_k^R)+\varepsilon.
\]
Thus
\[
\operatorname{pers}(q_j)
>
\operatorname{pers}(p_k)+\varepsilon.
\]

Combining the two persistence inequalities gives
\[
\operatorname{pers}(p_k)
<
\operatorname{pers}(p_i)-2\varepsilon.
\]
Therefore every nonzero matrix entry of \(C_{\mathrm{bad}}\) goes from an input
bar to an output bar of strictly smaller persistence. In particular, using
persistence-ordered bases, a nonzero entry of \(C_{\mathrm{bad}}\) can occur only
strictly below the diagonal. Hence \(C_{\mathrm{bad}}\) is strictly lower
triangular.

On the other hand, by \Cref{lem:triangularity_self_smoothing_ext1}, the matrix
of \(\Theta_J\) in the same ordered bases is lower triangular with nonzero diagonal
entries. Since
$C_{\mathrm{good}}=\Theta_J-C_{\mathrm{bad}},
$
and since \(C_{\mathrm{bad}}\) has zero diagonal, the matrix of
\(C_{\mathrm{good}}\) is lower triangular and has the same diagonal entries as
\(\Theta_J\). In particular, all diagonal entries of \(C_{\mathrm{good}}\) are
nonzero. Therefore
\[
\operatorname{rank} C_{\mathrm{good}}=\#J.
\]
Combining this with
\[
\operatorname{rank} C_{\mathrm{good}}
\le
\#N_{2\varepsilon}(J)
\]
gives
\[
\#J\le \#N_{2\varepsilon}(J).
\]

The proof with \(R\) and \(S\) exchanged is identical, using the other
interleaving composite
\[
S\to U_\varepsilon(R)\to U_{2\varepsilon}(S).
\]
\end{proof}

\begin{lem}[Two-sided Hall criterion for partial bottleneck matchings]
\label{lem:two_sided_hall_partial_matching}
Let \(A\) and \(B\) be finite sets, let \(A_0\subseteq A\) and
\(B_0\subseteq B\), and let \(E\subseteq A\times B\) be a bipartite graph. For
\(I\subseteq A\) and \(J\subseteq B\), write
\[
N(I):=\{b\in B:\exists a\in I\text{ with }(a,b)\in E\},
\]
and
\[
N(J):=\{a\in A:\exists b\in J\text{ with }(a,b)\in E\}.
\]
Assume that
\[
\#I\le \#N(I)
\qquad\text{for every }I\subseteq A_0,
\]
and
\[
\#J\le \#N(J)
\qquad\text{for every }J\subseteq B_0.
\]
Then there exists a matching \(M\subseteq E\) which saturates every vertex of
\(A_0\) and every vertex of \(B_0\).
\end{lem}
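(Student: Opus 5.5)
The proof applies Hall's theorem twice and then combines the two matchings with the classical Dulmage--Mendelsohn (Mendelsohn--Dulmage) exchange argument.

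\emph{Step 1: a matching saturating \(A_0\).} Apply Hall's marriage theorem to the bipartite graph obtained by restricting \(E\) to \(A_0\times B\). The hypothesis \(\#I\le\#N(I)\) for all \(I\subseteq A_0\) is exactly Hall's condition, since neighbourhoods computed in \(E\) and in \(E\cap(A_0\times B)\) agree for subsets of \(A_0\). This gives a matching \(M_1\subseteq E\) that saturates \(A_0\).

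\emph{Step 2: a matching saturating \(B_0\).} Symmetrically, restricting \(E\) to \(A\times B_0\) and using the second hypothesis gives a matching \(M_2\subseteq E\) that saturates \(B_0\).

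\emph{Step 3: combining the two matchings.} Consider the symmetric difference \(M_1\triangle M_2\), viewed as a subgraph of \(E\). Every vertex has degree at most \(2\) in it, so its connected components are alternating paths and even cycles. Construct \(M\) component by component, together with all edges of \(M_1\cap M_2\):
\begin{itemize}
\item On each even cycle, and on each path with an even number of edges, take the \(M_1\)-edges.
\item On each path with an odd number of edges, choose \(M_1\) or \(M_2\) according to which of them covers the relevant endpoints.
\end{itemize}
The resulting \(M\) is a matching, because components are vertex-disjoint and vertices covered by \(M_1\cap M_2\) have degree \(0\) in \(M_1\triangle M_2\).

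\emph{Step 4: checking saturation.} Every vertex of \(A_0\cup B_0\) not covered by \(M_1\cap M_2\) lies in some component, and interior vertices of a path or cycle are covered by both alternatives. It therefore suffices to examine path endpoints. An endpoint \(v\) of a path is covered by only one of \(M_1,M_2\), namely the one owning its end edge.
\begin{itemize}
\item If \(v\in A_0\), then \(v\) is covered by \(M_1\), so its end edge belongs to \(M_1\). If \(v\) were also uncovered by \(M_2\), its end edge is in \(M_1\), which is consistent.
\item If \(v\in B_0\), then \(v\) is covered by \(M_2\), so its end edge belongs to \(M_2\).
\end{itemize}
Suppose an \(A_0\)-endpoint requires \(M_1\) while the other endpoint is a \(B_0\)-vertex requiring \(M_2\). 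Then the two end edges lie in different matchings, so the path has an even number of edges. Since it alternates between \(A\) and \(B\) with even length, both endpoints lie on the same side, which contradicts one being in \(A_0\subseteq A\) and the other in \(B_0\subseteq B\).

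The main obstacle is the same-side case for even paths. Suppose both endpoints are in \(A\), with end edges in \(M_1\) and \(M_2\) respectively. The \(M_2\)-endpoint \(v\in A\) is not covered by \(M_1\), so \(v\notin A_0\) and it imposes no constraint. The other endpoint is handled by choosing \(M_1\). The case with both endpoints in \(B\) is symmetric: one endpoint lies outside \(B_0\), so choosing \(M_2\) works.

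For odd paths, both end edges lie in the same matching. That matching, say \(M_1\), covers both endpoints, while the other endpoints are uncovered by \(M_2\), so the \(B\)-endpoint is not in \(B_0\). Choosing \(M_1\) therefore satisfies all constraints; the case where the end edges lie in \(M_2\) is symmetric.

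After this case check, each component admits a consistent choice, and the resulting \(M\) saturates \(A_0\cup B_0\).
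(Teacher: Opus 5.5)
Your proof is correct, but it takes a different route from the paper's.

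\textbf{Your route.} You apply Hall's theorem on each side to get $M_1$ saturating $A_0$ and $M_2$ saturating $B_0$. You then merge them via the Mendelsohn--Dulmage argument on the components of $M_1\triangle M_2$. The key observation is that no path can have one endpoint in $A_0$ (end edge in $M_1$) and the other in $B_0$ (end edge in $M_2$). Such a path would have even length, which puts both endpoints on the same side.

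\textbf{The paper's route.} It uses Hall only once, to obtain a matching saturating $A_0$. Among such matchings it picks one saturating the most vertices of $B_0$. Assuming some $b_0\in B_0$ is unsaturated, it grows $M$-alternating paths from $b_0$ to build a set $B'\subseteq B_0$ with $\#N(B')<\#B'$. This contradicts the second Hall inequality directly.

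\textbf{What each buys.} Your version is more modular. Hall is a black box on both sides, and the merge is a finite local case check. It also proves slightly more: $M\subseteq M_1\cup M_2$, and $M$ covers every $A$-vertex covered by $M_1$ and every $B$-vertex covered by $M_2$. You also avoid the reachability bookkeeping the paper needs, namely showing that matched partners of $N(B')$ stay in $B'$. The paper's version is a single self-contained extremal augmenting-path argument. It invokes the $B_0$-side hypothesis only once, on a specific deficient set.

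\textbf{A slip to fix.} Your Step~3 rule ``on each path with an even number of edges, take the $M_1$-edges'' is wrong when both endpoints lie in $B$. It would leave the endpoint whose end edge is in $M_2$ uncovered, and that endpoint may lie in $B_0$. Your Step~4 correctly prescribes $M_2$ there. Step~3 should instead say: on each path, use the matching that owns the end edge at an endpoint lying in $A_0\cup B_0$, if there is one. Step~4 shows this choice is well defined and consistent.
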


\begin{proof}
Recall that a matching in \(E\) is a subset \(M\subseteq E\) such that no two
edges in \(M\) share an endpoint. A vertex is saturated by \(M\) if it is
incident to an edge of \(M\). Thus saying that \(M\) saturates \(A_0\) means
that every \(a\in A_0\) is matched by \(M\) to some vertex of \(B\), not
necessarily to a vertex of \(B_0\).

By the first Hall inequality and the ordinary Hall theorem, there exists a
matching \(M\subseteq E\) which saturates \(A_0\). Among all matchings
\(M\subseteq E\) saturating \(A_0\), choose one which saturates the largest
possible number of vertices of \(B_0\).

We claim that \(M\) also saturates \(B_0\). Suppose not, and choose an
unsaturated vertex \(b_0\in B_0\). Consider \(M\)-alternating paths starting at
\(b_0\) whose first edge is not in \(M\). That is, a path
\[
b_0,a_1,b_1,a_2,b_2,\ldots
\]
such that its edges alternately lie outside and inside \(M\), starting with an
edge outside \(M\).

Let \(B'\subseteq B_0\) be the set of vertices of \(B_0\) reachable from
\(b_0\) by such paths, and let \(A'\coloneqq N(B')\subseteq A\).

If some \(a\in A'\) were unsaturated by \(M\), then \(a\notin A_0\), because
\(M\) saturates \(A_0\). Replacing the edges of \(M\) along the alternating path
from \(b_0\) to \(a\) by the edges outside \(M\) along the same path would then
produce another matching in \(E\) still saturating \(A_0\), but saturating one
more vertex of \(B_0\), contradicting the choice of \(M\). Hence every vertex of
\(A'\) is saturated by \(M\).

Moreover, if \(a\in A'\) is matched by \(M\) to a vertex \(b\in B\), then \(b\) is
reachable from \(b_0\) by an alternating path: first reach a vertex of \(B'\)
adjacent to \(a\), then traverse the matched edge from \(a\) to \(b\). If
\(b\notin B_0\), then replacing this matched edge by the preceding unmatched
edge and stopping at \(b\) would again give a matching in \(E\) saturating
\(A_0\) and saturating one more vertex of \(B_0\), contradicting the choice of
\(M\). Thus \(b\in B'\).

Therefore every vertex of \(A'=N(B')\) is matched to a vertex of \(B'\). Since
\(b_0\in B'\) is not saturated, these matched partners lie in the proper subset
\(B'\setminus\{b_0\}\). Hence
\[
\#N(B')=\#A'\le \#B'-1<\#B',
\]
contradicting the second Hall inequality applied to \(B'\subseteq B_0\). Thus
\(M\) saturates all vertices of \(B_0\). Since \(M\) was chosen among matchings
saturating \(A_0\), it saturates every vertex of \(A_0\) and every vertex of
\(B_0\).
\end{proof}

\begin{prop}[The \(\Ext_1\)-component]
\label{prop:ext1_component_2epsilon_bound}
For every \(\varepsilon\)-interleaving
\[
\Phi\colon R\to U_\varepsilon(S),
\qquad
\Psi\colon S\to U_\varepsilon(R),
\]
one has
\[
d_B\bigl(\Ext_1(R),\Ext_1(S)\bigr)
\le 2\varepsilon.
\]
\end{prop}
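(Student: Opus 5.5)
The plan is to assemble a partial bottleneck matching of cost at most \(2\varepsilon\) directly from \Cref{lem:ext1_hall_rank_inequality} and \Cref{lem:two_sided_hall_partial_matching}, and to send every unmatched point to the diagonal at cost at most \(2\varepsilon\) using \Cref{lem:short_ext1_bars_diagonal_disposable}.

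Concretely, set \(A:=\Ext_1(R)\) and \(B:=\Ext_1(S)\), viewed as finite multisets indexed by \(I_R\) and \(I_S\). Let \(E\subseteq A\times B\) be the bipartite graph with an edge \((p_i,q_j)\) whenever \(\|p_i-q_j\|_\infty\le 2\varepsilon\). Let \(A_0\subseteq A\) be the long bars of \(R\), those with persistence \(>4\varepsilon\), and let \(B_0\subseteq B\) be the long bars of \(S\). For \(I\subseteq A_0\), the neighbourhood \(N(I)\) in \(E\) is exactly \(N_{2\varepsilon}(I)\). By \Cref{lem:ext1_hall_rank_inequality}, \(\#I\le\#N(I)\). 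The symmetric statement of the same lemma, with \(R\) and \(S\) exchanged, gives \(\#J\le\#N(J)\) for every \(J\subseteq B_0\).

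\Cref{lem:two_sided_hall_partial_matching} then yields a matching \(M\subseteq E\) saturating all of \(A_0\) and all of \(B_0\). Each matched pair costs at most \(2\varepsilon\) in \(\ell^\infty\) by the definition of \(E\). Every unmatched point lies in \(A\setminus A_0\) or \(B\setminus B_0\), so it is short, and by \Cref{lem:short_ext1_bars_diagonal_disposable} its diagonal cost \((b-d)/2\) is at most \(2\varepsilon\). Hence \(M\), extended by diagonal matches, is a bottleneck matching of cost \(\le 2\varepsilon\), which gives \(d_B(\Ext_1(R),\Ext_1(S))\le 2\varepsilon\).

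The main obstacle is the genericity hypothesis on which the earlier lemmas rest: distinct \(H_1\)-birth times, and no shared birth vertex between the filtrations of \(h\) and \(-h\). If the proposition is meant for arbitrary Reeb graphs, I would first prove it for generic pairs as above. I would then pass to the general case by perturbing \(h_R\) and \(h_S\) by arbitrarily small \(\eta\), relying on the genericity result deferred to \Cref{subsec:h1_genericity_newborn}. This perturbation changes \(d_I\) by at most \(\eta\) on each side, so the interleaving parameter only grows from \(\varepsilon\) to \(\varepsilon+2\eta\). By bottleneck stability of extended persistence, the \(\Ext_1\) diagrams move by at most \(\eta\). Letting \(\eta\to0\) gives \(d_B\le 2\varepsilon\). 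One minor point also needs checking: \Cref{lem:ext1_hall_rank_inequality} was phrased for subsets \(J\) of long bars with multiplicities treated as distinct points. Under genericity, births are distinct, so multiplicities do not arise.
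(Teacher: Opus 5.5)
Your argument is correct and is essentially the paper's proof. It uses the same bipartite graph at threshold \(2\varepsilon\), the two-sided Hall inequalities from \Cref{lem:ext1_hall_rank_inequality}, the saturating matching from \Cref{lem:two_sided_hall_partial_matching}, and diagonal disposal of the short bars via \Cref{lem:short_ext1_bars_diagonal_disposable}. The paper proves the proposition under its standing genericity assumption and removes that assumption only later, for \(d_\Delta\le 2d_I\), through branch-sliding perturbations controlled in \(d_{FD}\); there you should invoke \(d_I,d_\Delta\le 3d_{FD}\) rather than sup-norm stability, which gives \(3\eta\) instead of \(\eta\), but the constant vanishes as \(\eta\to0\), so your limiting argument still goes through.
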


\begin{proof}
Let \(P_R:=\Ext_1(R)\) and \(P_S:=\Ext_1(S)\).
Let
\[
P_R^{\mathrm{long}}
:=
\{p\in P_R:\operatorname{pers}(p)>4\varepsilon\},
\qquad
P_S^{\mathrm{long}}
:=
\{q\in P_S:\operatorname{pers}(q)>4\varepsilon\}.
\]
Consider the bipartite graph with left vertex set \(P_R\), right vertex set
\(P_S\), and an edge between \(p\in P_R\) and \(q\in P_S\) whenever
$\|p-q\|_\infty\le 2\varepsilon.
$
For a subset
$J\subseteq P_R^{\mathrm{long}},
$
its neighbourhood in this graph is exactly \(N_{2\varepsilon}(J)\). Hence
\Cref{lem:ext1_hall_rank_inequality} gives
\[
\#J\le \#N_{2\varepsilon}(J)
\qquad
\text{for every }J\subseteq P_R^{\mathrm{long}}.
\]
The symmetric part of \Cref{lem:ext1_hall_rank_inequality} gives the analogous
inequality for every subset of \(P_S^{\mathrm{long}}\). Therefore
\Cref{lem:two_sided_hall_partial_matching} gives a matching between points of
\(P_R\) and points of \(P_S\) which saturates all long bars on both sides, and
every matched pair has \(\ell^\infty\)-distance at most \(2\varepsilon\).

The only off-diagonal points not saturated by this matching are short bars, that
is, bars of persistence at most \(4\varepsilon\). By
\Cref{lem:short_ext1_bars_diagonal_disposable}, each such bar has diagonal cost
at most \(2\varepsilon\). Matching all remaining short bars to the diagonal
therefore extends the previous partial matching to a bottleneck matching of cost
at most \(2\varepsilon\). Hence
$d_B(P_R,P_S)\le 2\varepsilon.
$
\end{proof}

\subsection{Conclusion of the comparison}
\label{app:conclusion_full_comparison}

We now combine the \(H_0\)-related estimates with the \(\Ext_1\)-estimate proved above. For every
\(\varepsilon\)-interleaving
\[
\Phi\colon R\to U_\varepsilon(S),
\qquad
\Psi\colon S\to U_\varepsilon(R),
\]
by \Cref{eq:extended_H0_bound}, and
\Cref{prop:ext1_component_2epsilon_bound} we have $d_\Delta(R,S)\le2\varepsilon.
$

Letting \(\varepsilon\downarrow d_I(R,S)\), we obtain
\[
d_\Delta(R,S)\le2d_I(R,S)
\]
for combinatorially generic Reeb graphs.

We now remove the genericity assumption. Let \(R^{(n)}\to R\) and
\(S^{(n)}\to S\) be combinatorially generic perturbations in functional
distortion, whose existence follows from the density result proved in \Cref{subsec:h1_genericity_newborn}. Set
\[
a_n\coloneqq d_{FD}(R,R^{(n)}),
\qquad
b_n\coloneqq d_{FD}(S,S^{(n)}),
\]
so that \(a_n,b_n\to0\). By the comparison estimates with \(d_{FD}\),
\[
d_I(R,R^{(n)}),\,d_\Delta(R,R^{(n)})\le 3a_n,
\qquad
d_I(S,S^{(n)}),\,d_\Delta(S,S^{(n)})\le 3b_n.
\]
Using the triangle inequality for \(d_\Delta\), the generic comparison, and then
the triangle inequality for \(d_I\), we obtain
\[
\begin{aligned}
d_\Delta(R,S)
&\le
d_\Delta(R,R^{(n)})
+d_\Delta(R^{(n)},S^{(n)})
+d_\Delta(S^{(n)},S)\\
&\le
3a_n+2d_I(R^{(n)},S^{(n)})+3b_n\\
&\le
3a_n+2\bigl(d_I(R^{(n)},R)+d_I(R,S)
+d_I(S,S^{(n)})\bigr)+3b_n\\
&\le
2d_I(R,S)+9a_n+9b_n.
\end{aligned}
\]
Letting \(n\to\infty\) gives
\[
d_\Delta(R,S)\le 2d_I(R,S).
\]

Equivalently, via the equivalence between Reeb graphs and constructible cosheaves,
we have proved the following comparison.

\begin{thm}\label{thm:dDelta_le_2dI}
For any Reeb graphs \(R\) and \(S\),
\[
d_\Delta\!\left(\mathcal F_R,\mathcal F_S\right)
\le
2\,d_I\!\left(\mathcal F_R,\mathcal F_S\right).
\]
Equivalently,
\[
d_\Delta(R,S)
\le
2d_I(R,S).
\]
In particular, the same conclusion holds for
\(R=\mathfrak R(X,f)\) and \(S=\mathfrak R(Y,g)\), for any constructible
\(\mathbb R\)-spaces \((X,f)\) and \((Y,g)\). Moreover, the components
\(\Ord_0\), \(\Ext_0\), and \(\Rel_1\) satisfy the sharper constant \(1\), while
the factor \(2\) is only used for the \(\Ext_1\)-component. The constant \(2\)
is sharp.
\end{thm}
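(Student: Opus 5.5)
Most of the inequality has already been assembled in the preceding subsections, so the proof is a matter of collecting the pieces and then proving sharpness. First, reduce to path-connected graphs with \Cref{prop:interleaving_components}. If \(R\) and \(S\) have different numbers of components, then \(d_I(R,S)=+\infty\) and there is nothing to prove. If the numbers agree, take an \(\varepsilon\)-interleaving. It induces a bijection \(\sigma\) of components together with componentwise \(\varepsilon\)-interleavings. Extended diagrams are disjoint unions over components, and bottleneck distance is subadditive under such unions. So it suffices to bound each pair \((R_i,S_{\sigma(i)})\), taking the maximum. (One caveat: in the disconnected case, \(\Ext_0\) and \(\Ord_0\) must be read componentwise; the bijection \(\sigma\) supplies the required pairing of the \(\Ext_0\) points.)

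For connected graphs, combine \eqref{eq:extended_H0_bound}, which bounds \(\Ord_0\), \(\Ext_0\) and \(\Rel_1\) by \(\varepsilon\), with \Cref{prop:ext1_component_2epsilon_bound}, which bounds \(\Ext_1\) by \(2\varepsilon\). Both hold under the combinatorial genericity hypothesis. Remove that hypothesis by the perturbation argument of \Cref{app:conclusion_full_comparison}, running it separately for the \(H_0\)-related maximum (with constant \(1\)) and for \(\Ext_1\) (with constant \(2\)). This uses the density of generic graphs in functional distortion and the stability of \(d_I\) and \(d_\Delta\) under \(d_{FD}\). Let \(\varepsilon\downarrow d_I\). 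The statement for \(\mathfrak R(X,f)\) and \(\mathfrak R(Y,g)\) then follows from the cosheaf identification \eqref{eq:cosheaf_identity}.

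The remaining and main step is \textbf{sharpness of the constant \(2\)}. I would build, guided by the pinching heuristic in the text, a pair in which a single loop is killed only after two smoothing steps. Concretely:
\begin{itemize}
\item Let \(R\) be a connected graph containing one loop with \(\Ext_1\) point \((b,d)\), \(b-d=4\varepsilon\).
\item Let \(S\) have no loop, for instance the tree obtained by cutting or collapsing the loop, chosen so that the \(H_0\)-related diagrams agree.
\end{itemize}
Then \(d_\Delta(R,S)=2\varepsilon\), since the only mismatch is the diagonal cost of the loop point. I would aim to exhibit an explicit \(\varepsilon\)-interleaving, which would give \(d_I(R,S)\le\varepsilon\) and hence the ratio \(2\).

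The first candidate is \(S=U_\varepsilon(R)\) with \(R\) a loop of persistence exactly \(2\varepsilon\). By the smoothing formula of \cite{alharbi2024realizable}, that loop is removed in \(U_\varepsilon(R)\). Moreover \(d_I(R,U_\varepsilon(R))\le\varepsilon\) by the standard smoothing interleaving. But this only yields \(d_\Delta=\varepsilon\), i.e. ratio \(1\), so it fails. The construction must genuinely exploit the two-step composite \(U_\varepsilon(\Psi)\circ\Phi=\zeta^{2\varepsilon}_R\). I would try a loop whose two monotone arcs, in the intermediate graph \(S\), are each partially pinched by \(\varepsilon\) from opposite ends. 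This way neither \(\Phi\) nor \(\Psi\) alone needs to collapse the full height \(4\varepsilon\). I would verify the interleaving identities directly on the finite cosheaves, interval by interval.

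I expect finding and verifying this extremal example to be the hardest part, since it is not a consequence of the lemmas above. Should it fail, a fallback is to prove sharpness only asymptotically, by exhibiting a family whose ratio \(d_\Delta/d_I\) tends to \(2\).
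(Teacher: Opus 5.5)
For the inequality, your plan is the paper's argument: reduce to connected graphs with \Cref{prop:interleaving_components}, use the constant-\(1\) bound \eqref{eq:extended_H0_bound} and the Hall-rank bound of \Cref{prop:ext1_component_2epsilon_bound}, remove genericity by \(d_{FD}\)-density, and pass to constructible spaces via \eqref{eq:cosheaf_identity}. The \(H_0\)-related bound never uses genericity, so perturbing for it is harmless but unnecessary.

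\textbf{The gap is sharpness.} You leave it unproved, and the route you then propose (arcs pinched from opposite ends, or an asymptotic fallback) is a detour. Your first bullet is already the extremal example, and checking it is immediate.

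\emph{The example.} Let \(R\) be a circle whose height has minimum \(0\) and maximum \(4\varepsilon\) and is strictly monotone on both arcs. Let \(S=[0,4\varepsilon]\) with the identity height. Then \(\Ord_0\) and \(\Rel_1\) are empty for both, and \(\Ext_0=\{(0,4\varepsilon)\}\) for both. Also \(\Ext_1(R)=\{(4\varepsilon,0)\}\) and \(\Ext_1(S)=\varnothing\), so \(d_\Delta(R,S)=2\varepsilon\).

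\emph{The interleaving.} Take \(\Phi\colon R\to U_\varepsilon(S)\cong[-\varepsilon,5\varepsilon]\) to be the height map. Take \(\Psi\coloneqq\zeta_R^\varepsilon\circ j\), where \(j\colon S\to R\) is the inclusion of one arc. The level set at height \(c\) of the cylinder \(R\times[-2\varepsilon,2\varepsilon]\) is homeomorphic to \(h_R^{-1}([c-2\varepsilon,c+2\varepsilon])\). This set is connected, because a closed interval of length \(4\varepsilon\) meeting \([0,4\varepsilon]\) must contain \(0\) or \(4\varepsilon\). So \(U_{2\varepsilon}(R)\), like \(U_{2\varepsilon}(S)\), is an interval on which the height is injective. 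Height-preserving maps into such a graph are unique. Hence both interleaving identities hold automatically, \(d_I(R,S)\le\varepsilon\), and \(d_\Delta(R,S)\ge 2d_I(R,S)\). So no constant below \(2\) works.

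\emph{Exact value.} For every \(\varepsilon'<\varepsilon\), the loop survives \(2\varepsilon'\)-smoothing. Then \(\zeta_R^{2\varepsilon'}\) is nonzero on \(H_1\), being surjective by \Cref{lem:filtered_surjectivity_smoothing}. Yet it would have to factor through the tree \(U_{\varepsilon'}(S)\). Hence \(d_I(R,S)=\varepsilon\) exactly, independently of the theorem.

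\emph{Why your candidate failed.} Your candidate \(S=U_\varepsilon(R)\) failed only because it shifts \(\Ext_0\) by \(\varepsilon\) and starts from a loop of persistence \(2\varepsilon\) rather than \(4\varepsilon\). No subtler use of the two-step composite is needed: \(\Phi\) alone kills the loop, and the identity \(U_\varepsilon(\Psi)\circ\Phi=\zeta_R^{2\varepsilon}\) is exactly what caps the killable persistence at \(4\varepsilon\).

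The paper itself supports the sharpness claim only with the heuristic two-sided-pinching paragraph in \Cref{sec:dDelta_le_2dI}. This example is what makes that part of the statement rigorous.
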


\section{PL-Reeb estimators and Mapper coarsenings}
\label{sec:Mapper_cosheaf}

This section constructs sample-based Reeb-type estimators from proximity graphs on
a finite sample $S_n$, and then applies Mapper as a controlled coarsening of these estimators.
We keep the intrinsic and extrinsic proximity metrics in parallel, since they
lead to different covering assumptions and stability bounds.

\subsection{Setting}
\label{sec:delta_mapper_setting}

Throughout the section, let \(X\subset \mathbb{R}^m\) be a compact subset such
that the intrinsic distance \(d_X\), defined in~\Cref{eq:d_X}, is finite on
\(X\times X\), and let $S_n\subseteq X$ be a finite sample. Let
$f\colon X\to \mathbb{R}
$
be a continuous function such that \((X,f)\) is a constructible
\(\mathbb{R}\)-space. Throughout this section, we write
$R\coloneqq\mathfrak R(X,f)
$
for its Reeb graph. We set  $f_n\coloneqq f|_{S_n}$.
 We fix a non-decreasing modulus of continuity
\[
\omega_f\colon [0,+\infty)\to[0,+\infty)
\]
for \(f\) with respect to \(d_X\), meaning that
\[
\omega_f(0)=0,
\qquad
|f(x)-f(y)|\le \omega_f\bigl(d_X(x,y)\bigr)
\qquad
\text{for every }x,y\in X.
\]

For the stability estimates we need to control the filter along short intrinsic
geodesics, and also compare it with affine interpolation on graph edges. For
\(\delta>0\), define
\[
\theta_f(\delta)
\coloneqq
\sup
\max\left\{
\frac12 |f(x)-f(y)|,\,
\sup_{t\in[0,1]}
\left|
f(\gamma_{xy}(t))
-
\bigl((1-t)f(x)+t f(y)\bigr)
\right|
\right\},
\]
where the supremum is taken over all pairs \(x,y\in X\) with
\(d_X(x,y)\le 2\delta\), and where \(\gamma_{xy}\) is a chosen minimizing
\(d_X\)-geodesic from \(x\) to \(y\), parametrized proportionally to arc length.
Such geodesics exist for the pairs considered below by the discussion in
\Cref{sec:metric_equivalence}. We set
\[
\mu_\delta\coloneqq
\max\{\omega_f(\delta),\theta_f(\delta)\}.
\]

We record some estimates encoded in this notation. Let
\(x,y\in X\), let \(L=d_X(x,y)\le 2\delta\), and let \(\gamma_{xy}\) be
parametrized proportionally to arc length. If \(p\) is the point with affine
coordinate \(t\in[0,1]\) on an abstract edge \([x,y]\), and if the edge is sent
to \(\gamma_{xy}\), then
\begin{equation}\label{eq:theta_edge_estimate}
\left|
f(\gamma_{xy}(t))
-
\bigl((1-t)f(x)+t f(y)\bigr)
\right|
\le
\theta_f(\delta).
\end{equation}
Moreover, if \(v\in\{x,y\}\) is an endpoint such that the subsegment from \(v\)
to \(\gamma_{xy}(t)\) has length at most \(L/2\), then
\begin{equation}\label{eq:theta_endpoint_estimate}
\left|
\bigl((1-t)f(x)+t f(y)\bigr)-f(v)
\right|
\le
\frac12 |f(x)-f(y)|
\le
\theta_f(\delta).
\end{equation}

The same quantities are always controlled, more coarsely, by the modulus at the
doubled scale. Indeed, for \(z=\gamma_{xy}(t)\),
\[
d_X(z,x)=tL,
\qquad
d_X(z,y)=(1-t)L.
\]
Therefore
\[
\begin{aligned}
&\left|
f(\gamma_{xy}(t))
-
\bigl((1-t)f(x)+t f(y)\bigr)
\right|\\
&\qquad=
\left|
(1-t)\bigl(f(z)-f(x)\bigr)
+
t\bigl(f(z)-f(y)\bigr)
\right|\\
&\qquad\le
(1-t)\,\omega_f(tL)
+
t\,\omega_f((1-t)L)\\
&\qquad\le
(1-t)\,\omega_f(L)+t\,\omega_f(L)
=
\omega_f(L)
\le
\omega_f(2\delta).
\end{aligned}
\]
Together with
\[
\frac12 |f(x)-f(y)|
\le
\omega_f(L)
\le
\omega_f(2\delta),
\]
this gives
\begin{equation}\label{eq:theta_doubled_modulus}
\theta_f(\delta)\le \omega_f(2\delta).
\end{equation}

If \(\omega_f\) is concave, then the preceding estimate improves to
\begin{equation}\label{eq:theta_concave_modulus}
\theta_f(\delta)\le\omega_f(\delta),
\end{equation}
and hence \(\mu_\delta=\omega_f(\delta)\). Indeed,
\[
\frac12|f(x)-f(y)|
\le
\frac12\omega_f(L)
\le
\omega_f(L/2)
\le
\omega_f(\delta),
\]
where we used concavity and \(\omega_f(0)=0\). For the affine-interpolation term,
the estimate displayed above gives
\[
\left|
f(\gamma_{xy}(t))
-
\bigl((1-t)f(x)+t f(y)\bigr)
\right|
\le
(1-t)\,\omega_f(tL)+t\,\omega_f((1-t)L).
\]
By concavity of \(\omega_f\),
\[
(1-t)\,\omega_f(tL)+t\,\omega_f((1-t)L)
\le
\omega_f\bigl((1-t)tL+t(1-t)L\bigr)
=
\omega_f\bigl(2t(1-t)L\bigr).
\]
Since \(2t(1-t)\le 1/2\), monotonicity gives
\[
\omega_f\bigl(2t(1-t)L\bigr)
\le
\omega_f(L/2)
\le
\omega_f(\delta).
\]
Thus \(\theta_f(\delta)\le\omega_f(\delta)\). In particular, if \(f\) is
\(1\)-Lipschitz with respect to \(d_X\), then \(\mu_\delta\le \delta\).

If \(X\) has positive reach \(\tau=\rch(X)>0\) and \(0<\delta<\tau\), we also set
\[
\eta_\tau(\delta)\coloneqq\psi_\tau(2\delta)
=
2\tau\arcsin\!\Bigl(\frac{\delta}{\tau}\Bigr).
\]

To conclude this section, we point out the following fact which will be needed in the following.

\begin{oss}[Intersection graph of a finite closed cover]
\label{rmk:finite_closed_cover_intersection_graph}
Let \(Y\) be a connected space, and let
\[
Y=F_1\cup\cdots\cup F_N
\]
be a finite cover by non-empty closed subsets. We call the \(1\)-skeleton of the
nerve of this cover its intersection graph: its vertices are the sets \(F_i\),
and two vertices \(F_i,F_j\) are joined by an edge if and only if
$F_i\cap F_j\neq\varnothing.
$
Then this intersection graph is connected. Indeed, if it were disconnected, the
sets \(F_i\) could be partitioned into two non-empty classes with no
intersections between the two classes. The unions of the sets in the two classes
would then give a separation of \(Y\) into two disjoint non-empty closed subsets,
contradicting connectedness.
\end{oss}

\subsection{PL-Reeb estimators}
\label{sec:pl_reeb_estimators}

For \(\rho\in\{d_X,\|\cdot\|\}\), let \(\Gamma_\delta^{S_n,\rho}\) be the graph
with vertex set \(S_n\), in which two distinct vertices \(x,y\in S_n\) are joined
by an edge if and only if
$\rho(x,y)\le 2\delta.
$

Let
$\hat f_n^\rho\colon |\Gamma_\delta^{S_n,\rho}|\to\mathbb R
$
be the piecewise-linear extension of \(f_n\) to the geometric realization. We define the intrinsic and extrinsic PL-Reeb
estimators by
\[
\mathcal R_{\delta}^{S_n,d_X}
\coloneqq
\mathcal F_{(|\Gamma_\delta^{S_n,d_X}|,\hat f_n^{d_X})},
\qquad
\mathcal R_{\delta}^{S_n,\|\cdot\|}
\coloneqq
\mathcal F_{(|\Gamma_\delta^{S_n,\|\cdot\|}|,\hat f_n^{\|\cdot\|})}.
\]
Equivalently, for every \(I\in\mathbf{Int}\),
\[
\mathcal R_{\delta}^{S_n,\rho}(I)
=
\pi_0\!\left((\hat f_n^\rho)^{-1}(I)\right).
\]
By definition, \(\mathcal R_{\delta}^{S_n,\rho}\) is a constructible cosheaf:
it is the Reeb cosheaf of the finite filtered graph
\((|\Gamma_\delta^{S_n,\rho}|,\hat f_n^\rho)\). The raw filtered graph need not
itself be a Reeb graph in the strict sense, since \(\hat f_n^\rho\) may be
constant on some edges. Equivalently, one may pass to the Reeb quotient
$\mathfrak R(|\Gamma_\delta^{S_n,\rho}|,\hat f_n^\rho),
$
which collapses all connected horizontal level components and gives the Reeb
graph representing the same cosheaf. For this reason, in the arguments below we work mostly with the filtered
proximity graph \((|\Gamma_\delta^{S_n,\rho}|,\hat f_n^\rho)\) and its
associated Reeb cosheaf, relying on \Cref{eq:cosheaf_identity}, rather than
explicitly with its Reeb quotient.

\subsection{Stability of the PL-Reeb estimators}

Now we present the stability results for the PL-Reeb estimators.

\begin{thm}[Intrinsic PL-Reeb stability]\label{thm:intrinsic_reeb_stability}
Under the setting of \Cref{sec:delta_mapper_setting}, assume that \(\delta\)
is an intrinsic covering radius for \(S_n\), namely
\[
\forall x\in X,\ \exists s\in S_n
\qquad
 d_X(x,s)\le \delta.
\]
Then
\[
d_I\!\left(
\mathcal F_R,
\mathcal R_{\delta}^{S_n,d_X}
\right)
\le
\mu_\delta.
\]
If \(\omega_f\) is concave and \(\omega_f(0)=0\), then
\[
d_I\!\left(
\mathcal F_R,
\mathcal R_{\delta}^{S_n,d_X}
\right)
\le
\omega_f(\delta).
\]
In particular, if \(f\) is \(1\)-Lipschitz with respect to \(d_X\), then
\[
d_I\!\left(
\mathcal F_R,
\mathcal R_{\delta}^{S_n,d_X}
\right)
\le
\delta.
\]
\end{thm}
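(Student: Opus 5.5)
We construct an explicit \(\mu_\delta\)-interleaving at the level of Reeb cosheaves: for every open interval \(I\) we give maps
\[
\varphi_I\colon \pi_0\bigl(f^{-1}(I)\bigr)\to \pi_0\bigl((\hat f_n)^{-1}(I^{\mu})\bigr),
\qquad
\psi_I\colon \pi_0\bigl((\hat f_n)^{-1}(I)\bigr)\to \pi_0\bigl(f^{-1}(I^{\mu})\bigr),
\]
with \(\mu=\mu_\delta\) and \(\hat f_n=\hat f_n^{d_X}\). By \eqref{eq:cosheaf_identity} this suffices. The concave and \(1\)-Lipschitz statements then follow immediately from \eqref{eq:theta_concave_modulus} and the remark after it, since they give \(\mu_\delta=\omega_f(\delta)\le\delta\).

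\emph{The map \(\psi\) (graph to space).} Realize \(|\Gamma_\delta^{S_n,d_X}|\) in \(X\) by a map \(G\colon |\Gamma|\to X\). It is the identity on vertices and sends the edge \([x,y]\) affinely onto the chosen geodesic \(\gamma_{xy}\); this is possible because edges have \(d_X(x,y)\le 2\delta\). By \eqref{eq:theta_edge_estimate}, \(|f\circ G-\hat f_n|\le\theta_f(\delta)\le\mu\) pointwise. Hence \(G\) maps \(\hat f_n^{-1}(I)\) into \(f^{-1}(I^\mu)\), and we set \(\psi_I=\pi_0(G)\). Naturality in \(I\) is clear.

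\emph{The map \(\varphi\) (space to graph).} For a component \(C\) of \(f^{-1}(I)\), pick \(z\in C\) and \(s\in S_n\) with \(d_X(z,s)\le\delta\). Then \(|f(s)-f(z)|\le\omega_f(\delta)\le\mu\), so \(s\in\hat f_n^{-1}(I^\mu)\). Define \(\varphi_I(C)\) to be the component of \(s\).

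Well-definedness is the key step. Given a path \(\alpha\) in \(C\), we subdivide it finely. Consecutive points \(z_k,z_{k+1}\) have nearby samples \(s_k,s_{k+1}\), and we need \(d_X(s_k,s_{k+1})\le 2\delta\) so that they are adjacent. A naive triangle inequality gives only \(2\delta+\) small, so we use \Cref{lem:intrinsic_balls_closed} together with \Cref{rmk:finite_closed_cover_intersection_graph}. The sets \(B_{d_X}(s,\delta)\cap\alpha([0,1])\), for \(s\in S_n\), form a finite closed cover of the connected image of \(\alpha\). Its intersection graph is therefore connected, and two intersecting balls have centers at distance \(\le2\delta\), hence adjacent in \(\Gamma\). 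It remains to check that the edges used stay inside \(\hat f_n^{-1}(I^\mu)\). If \(w\in B(s,\delta)\cap B(s',\delta)\cap\alpha\), then \(f(w)\in I\) and both endpoints have values within \(\omega_f(\delta)\) of \(f(w)\). Since \(\hat f_n\) is affine on the edge, the whole edge lies over the convex hull of \(f(s)\) and \(f(s')\), which is contained in \(I^\mu\). The same argument shows that the choices of \(z\) and \(s\) are irrelevant, and naturality follows.

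\emph{Interleaving identities.} For \(\psi\circ\varphi\), we must show that \(G(s)=s\) lies in the same component of \(f^{-1}(I^{2\mu})\) as \(z\). This holds because the geodesic from \(z\) to \(s\) has length \(\le\delta\), so \(f\) stays within \(\omega_f(\delta)\le\mu\) of \(f(z)\) along it. For \(\varphi\circ\psi\), take a component of \(\hat f_n^{-1}(I)\). It contains a vertex \(s\) (or a point of an edge; we move to the nearer endpoint \(v\), which costs \(\le\frac12|f(x)-f(y)|\le\theta_f(\delta)\) by \eqref{eq:theta_endpoint_estimate}). Apply \(G\) and then choose the sample \(s\) itself, which is allowed by the well-definedness just proved; this returns the original component inside \(\hat f_n^{-1}(I^{2\mu})\).

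The main obstacle is the interior edge points whose component in \(\hat f_n^{-1}(I)\) contains no vertex. For these we must check carefully that the two image paths (through \(G\) and back, versus the smoothing inclusion) connect within \(I^{2\mu}\). We intend to handle this using the half-edge estimate \eqref{eq:theta_endpoint_estimate} combined with the pointwise bound \eqref{eq:theta_edge_estimate}.
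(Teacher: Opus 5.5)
Your proposal is correct and is essentially the paper's proof. The graph-to-space map is the same geodesic realization, and your space-to-graph map coincides with the paper's $\beta_I$: your well-definedness argument is the paper's proof that the samples $\{s\in S_n: B_{d_X}(s,\delta)\cap C\neq\varnothing\}$ span a connected subgraph of $(\hat f_n)^{-1}(I^\mu)$, run on the image of a path in $C$ rather than on $C$ itself.

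The case you flag as the main obstacle closes exactly as in the paper. For $p$ interior to an edge $[x,y]$, the nearer endpoint $v$ satisfies $d_X(v,G(p))\le \tfrac12 d_X(x,y)\le\delta$. So $v$ is an admissible sample for the point $G(p)$ of the component of $f^{-1}(I^\mu)$ containing $G(A)$, and \eqref{eq:theta_endpoint_estimate} places the segment from $p$ to $v$ inside $(\hat f_n)^{-1}(I^\mu)$.
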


\begin{proof}
Write
\[
\Gamma_\delta=\Gamma_\delta^{S_n,d_X},
\qquad
\hat f_n=\hat f_n^{d_X},
\qquad
\mu=\mu_\delta.
\]
For every edge \([x,y]\) of \(\Gamma_\delta\), choose the minimizing
\(d_X\)-geodesic \(\gamma_{xy}\) used in the definition of \(\theta_f\),
parametrized proportionally to arc length. These choices define a continuous map
\[
\Phi\colon |\Gamma_\delta|\to X
\]
which is the identity on vertices and sends the point with affine coordinate
\(t\) on the edge \([x,y]\) to \(\gamma_{xy}(t)\). Since every edge of
\(\Gamma_\delta\) satisfies \(d_X(x,y)\le 2\delta\),
\eqref{eq:theta_edge_estimate} gives, for every \(p\in|\Gamma_\delta|\),
\[
|f(\Phi(p))-\hat f_n(p)|\le \theta_f(\delta)\le \mu.
\]

We construct an interleaving between \(\mathcal R_{\delta}^{S_n,d_X}\) and
\(\mathcal F_R\). See also \Cref{fig:stability} (left) for a visual representation. We first define the morphism from the graph estimator to
the Reeb cosheaf of \((X,f)\). Let \(I\in\mathbf{Int}\), and let \(A\) be a
path-connected component of
\[
(\hat f_n)^{-1}(I)\subseteq |\Gamma_\delta|.
\]
For every \(p\in A\), we have \(\hat f_n(p)\in I\), and the estimate above gives
$f(\Phi(p))\in I^\mu.
$

Hence
\[
\Phi(A)\subseteq f^{-1}(I^\mu).
\]
Moreover, \(\Phi(A)\) is path-connected because \(A\) is path-connected and
\(\Phi\) is continuous. Therefore \(\Phi(A)\) is contained in a unique
path-connected component of \(f^{-1}(I^\mu)\). Denote this component by
$\alpha_I(A).
$

The assignment is compatible with inclusions of intervals. Indeed, if
\(I\subseteq J\) and \(A'\) is the path-connected component of
\((\hat f_n)^{-1}(J)\) containing \(A\), then
\[
\Phi(A)\subseteq \Phi(A').
\]
Thus the component \(\alpha_I(A)\) maps to the component \(\alpha_J(A')\) under
the canonical map
\[
\pi_0(f^{-1}(I^\mu))\longrightarrow \pi_0(f^{-1}(J^\mu)).
\]
Hence the maps \(\alpha_I\) define a natural transformation
\[
\alpha\colon
\mathcal R_{\delta}^{S_n,d_X}
\Rightarrow
\mathcal S_\mu\mathcal F_R.
\]

We now define the morphism in the opposite direction. Let \(I\in\mathbf{Int}\),
and let \(C\) be a path-connected component of \(f^{-1}(I)\). Set
\[
S(C)\coloneqq
\{s\in S_n\mid B_{d_X}(s,\delta)\cap C\neq\varnothing\}.
\]
The set \(S(C)\) is non-empty by the covering-radius assumption. If \(s\in S(C)\),
choose \(c_s\in C\) with \(d_X(s,c_s)\le\delta\). Since \(f_n(s)=f(s)\), we have
\[
|f_n(s)-f(c_s)|
=
|f(s)-f(c_s)|
\le \omega_f(\delta)
\le \mu.
\]
Because \(f(c_s)\in I\), it follows that
$f_n(s)\in I^\mu.
$

We claim that \(S(C)\) spans a connected subgraph of \(\Gamma_\delta\), and that
this subgraph is contained in \((\hat f_n)^{-1}(I^\mu)\). Since \(S_n\) is finite,
the family
\[
\{C\cap B_{d_X}(s,\delta)\}_{s\in S(C)}
\]
is a finite closed cover of the connected space \(C\), by
\Cref{lem:intrinsic_balls_closed}, and each member is non-empty. By \Cref{rmk:finite_closed_cover_intersection_graph}, its intersection graph is connected.
Thus, for any two elements \(s,s'\in S(C)\), there is a finite chain
$s=s_0,s_1,\ldots,s_q=s'
$
in \(S(C)\) such that
\[
B_{d_X}(s_r,\delta)\cap B_{d_X}(s_{r+1},\delta)\neq\varnothing
\]
for every \(r\). Hence
$d_X(s_r,s_{r+1})\le 2\delta,
$
so consecutive vertices are joined by edges of \(\Gamma_\delta\). Moreover, the
endpoint values of each such edge lie in \(I^\mu\). Since \(\hat f_n\) is affine
on the edge and \(I^\mu\) is an interval, the whole edge lies in
$(\hat f_n)^{-1}(I^\mu).
$

Therefore the subgraph spanned by \(S(C)\) is connected and lies in
\((\hat f_n)^{-1}(I^\mu)\). It is consequently contained in a unique
path-connected component of \((\hat f_n)^{-1}(I^\mu)\). Denote this component by
$\beta_I(C).
$

This assignment is again compatible with inclusions. If \(I\subseteq J\), and if
\(C'\) is the path-connected component of \(f^{-1}(J)\) containing \(C\), then
$S(C)\subseteq S(C').
$

Hence the subgraph spanned by \(S(C)\) is contained in the subgraph spanned by
\(S(C')\), and the component \(\beta_I(C)\) maps to the component
\(\beta_J(C')\) under the canonical map
\[
\pi_0((\hat f_n)^{-1}(I^\mu))
\longrightarrow
\pi_0((\hat f_n)^{-1}(J^\mu)).
\]
Thus the maps \(\beta_I\) define a natural transformation
\[
\beta\colon
\mathcal F_R
\Rightarrow
\mathcal S_\mu\mathcal R_{\delta}^{S_n,d_X}.
\]

It remains to check the two interleaving identities. First, let \(I\in
\mathbf{Int}\), and let \(A\) be a path-connected component of
\((\hat f_n)^{-1}(I)\). Let
\[
C_A\coloneqq \alpha_I(A)
\]
be the path-connected component of \(f^{-1}(I^\mu)\) containing \(\Phi(A)\). We
show that the canonical image of \(A\) in
$\pi_0((\hat f_n)^{-1}(I^{2\mu}))
$
coincides with
$\beta_{I^\mu}(\alpha_I(A)).
$

By definition, \(\beta_{I^\mu}(C_A)\) is the component of
\((\hat f_n)^{-1}(I^{2\mu})\) containing the subgraph spanned by \(S(C_A)\).
Thus it is enough to show that \(A\) meets the path-connected component of
\((\hat f_n)^{-1}(I^{2\mu})\) containing this subgraph. Indeed, \(A\) is
path-connected and
\[
A\subseteq(\hat f_n)^{-1}(I)\subseteq(\hat f_n)^{-1}(I^{2\mu}).
\]

Choose \(p\in A\). If \(p\) is a vertex, then \(\Phi(p)=p\in C_A\). Hence
\(p\in S(C_A)\), so \(p\) already belongs to the subgraph spanned by \(S(C_A)\),
and the desired conclusion follows. We may therefore suppose that \(p\) lies in
the interior of an edge \([x,y]\), and set
$q=\Phi(p).
$

The chosen geodesic from \(x\) to \(y\) has length at most \(2\delta\). Therefore
at least one endpoint \(v\in\{x,y\}\) satisfies
$d_X(v,q)\le\delta.
$

Since \(q\in \Phi(A)\subseteq C_A\), this implies \(v\in S(C_A)\). Moreover, by \eqref{eq:theta_endpoint_estimate},
\[
|\hat f_n(p)-f_n(v)|
\le
\frac12 |f_n(x)-f_n(y)|
\le
\theta_f(\delta)
\le
\mu.
\]
Since \(\hat f_n(p)\in I\), both endpoint values of the graph segment from \(p\)
to \(v\) lie in \(I^\mu\). By affineness of \(\hat f_n\) on the edge, the whole
segment lies in
\[
(\hat f_n)^{-1}(I^\mu)
\subseteq
(\hat f_n)^{-1}(I^{2\mu}).
\]
Thus \(A\) meets the path-connected component of
\((\hat f_n)^{-1}(I^{2\mu})\) containing the subgraph spanned by \(S(C_A)\).
Therefore
$\beta_{I^\mu}\circ\alpha_I
$
is the canonical structure map
\[
\mathcal R_{\delta}^{S_n,d_X}(I)
\longrightarrow
\mathcal R_{\delta}^{S_n,d_X}(I^{2\mu}).
\]

Conversely, let \(I\in\mathbf{Int}\), and let \(C\) be a path-connected component
of \(f^{-1}(I)\). Let
\[
A_C\coloneqq \beta_I(C)
\]
be the path-connected component of \((\hat f_n)^{-1}(I^\mu)\) containing the
subgraph spanned by \(S(C)\). We show that the canonical image of \(C\) in
$\pi_0(f^{-1}(I^{2\mu}))
$
coincides with
$\alpha_{I^\mu}(\beta_I(C)).
$

By definition, \(\alpha_{I^\mu}(A_C)\) is the path-connected component of
\(f^{-1}(I^{2\mu})\) containing \(\Phi(A_C)\). It is therefore enough to show
that \(C\) and \(\Phi(A_C)\) lie in the same path-connected component of
\(f^{-1}(I^{2\mu})\).

First, every point of \(C\) is connected inside \(f^{-1}(I^\mu)\) to a vertex in
\(S(C)\). Indeed, let \(c\in C\). By the covering-radius assumption, choose
\(s\in S_n\) with
$d_X(c,s)\le\delta.
$

Then \(s\in S(C)\), and a minimizing \(d_X\)-geodesic from \(c\) to \(s\) has
length at most \(\delta\). Along this geodesic, the \(f\)-values remain in
$I^{\omega_f(\delta)}
\subseteq
I^\mu.
$
Thus \(c\) is connected inside \(f^{-1}(I^\mu)\) to \(s\).

It remains to compare the graph component \(A_C\) with its image under \(\Phi\).
Since \(A_C\subseteq(\hat f_n)^{-1}(I^\mu)\), the estimate
\[
|f(\Phi(p))-\hat f_n(p)|\le\mu
\]
implies
\[
\Phi(A_C)\subseteq f^{-1}(I^{2\mu}).
\]
Moreover, \(A_C\) is path-connected, hence \(\Phi(A_C)\) is path-connected. The
image \(\Phi(A_C)\) contains every vertex of \(S(C)\), because \(\Phi\) is the
identity on vertices. Since every point of \(C\) is connected inside
\(f^{-1}(I^\mu)\subseteq f^{-1}(I^{2\mu})\) to a vertex of \(S(C)\), and since
\(\Phi(A_C)\) is path-connected inside \(f^{-1}(I^{2\mu})\), the sets \(C\) and
\(\Phi(A_C)\) lie in the same path-connected component of
\(f^{-1}(I^{2\mu})\). 

Therefore
$\alpha_{I^\mu}\circ\beta_I
$
is the canonical structure map
\[
\mathcal F_R(I)
\longrightarrow
\mathcal F_R(I^{2\mu}).
\]

Thus \(\alpha\) and \(\beta\) define a \(\mu\)-interleaving, and hence
\[
d_I\!\left(
\mathcal F_R,
\mathcal R_{\delta}^{S_n,d_X}
\right)
\le \mu_\delta.
\]
The concave-modulus and \(1\)-Lipschitz bounds follow from the estimates on
\(\mu_\delta\) established above.
\end{proof}

\begin{cor}[Intrinsic PL-Reeb stability from a Euclidean covering radius]
\label{cor:intrinsic_pl_stability_from_euclidean_radius}
Under the setting of \Cref{sec:delta_mapper_setting}, assume moreover that
\(X\subset\mathbb R^m\) has positive reach \(\tau=\rch(X)>0\). Let
\(0<\delta<2\tau\), and assume that \(\delta\) is a Euclidean covering radius for
\(S_n\), namely
\[
\forall x\in X,\ \exists s\in S_n
\qquad
\|x-s\|\le \delta.
\]
Set
\[
\delta^X
\coloneqq
\psi_\tau(\delta)
=
2\tau\arcsin\!\left(\frac{\delta}{2\tau}\right).
\]
Then
\[
d_I\!\left(
\mathcal F_R,
\mathcal R_{\delta^X}^{S_n,d_X}
\right)
\le
\mu_{\delta^X}.
\]
If \(\omega_f\) is concave and \(\omega_f(0)=0\), then \(\mu_{\delta^X}\) can be
replaced by \(\omega_f(\delta^X)\). In particular, if \(f\) is \(1\)-Lipschitz
with respect to \(d_X\), then
\[
d_I\!\left(
\mathcal F_R,
\mathcal R_{\delta^X}^{S_n,d_X}
\right)
\le
\delta^X.
\]
If moreover \(\delta\le\tau\), then
\[
\delta^X\le \frac{\pi}{3}\delta,
\]
and hence, in the \(1\)-Lipschitz case,
\[
d_I\!\left(
\mathcal F_R,
\mathcal R_{\delta^X}^{S_n,d_X}
\right)
\le
\frac{\pi}{3}\delta.
\]
\end{cor}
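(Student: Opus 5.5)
The plan is to reduce the statement to \Cref{thm:intrinsic_reeb_stability}. We show that a Euclidean covering radius \(\delta\) yields the intrinsic covering radius \(\delta^X=\psi_\tau(\delta)\), and then read off the remaining bounds from estimates already established in \Cref{sec:delta_mapper_setting} and \Cref{sec:metric_equivalence}.

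\textbf{Step 1: transfer the covering radius.} Fix \(x\in X\). By assumption there is \(s\in S_n\) with \(\|x-s\|\le\delta<2\tau\). The strict inequality \(\|x-s\|<2\tau\) is exactly the hypothesis of \eqref{eq:metric_distortion_tau}, which gives
\[
d_X(x,s)\le\psi_\tau(\|x-s\|).
\]
Since \(\psi_\tau\) is non-decreasing on \([0,2\tau)\) (because \(\arcsin\) is increasing), we get \(d_X(x,s)\le\psi_\tau(\delta)=\delta^X\). Hence \(\delta^X\) is an intrinsic covering radius for \(S_n\). It is positive because \(\delta>0\).

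\textbf{Step 2: apply the intrinsic theorem.} We invoke \Cref{thm:intrinsic_reeb_stability} with scale \(\delta^X\). The setting of \Cref{sec:delta_mapper_setting} is unchanged: \(X\) is compact with finite \(d_X\), and geodesics exist for pairs at \(d_X\)-distance at most \(2\delta^X\). This gives the bound \(\mu_{\delta^X}\). In the concave-modulus case, \eqref{eq:theta_concave_modulus} yields \(\mu_{\delta^X}=\omega_f(\delta^X)\). In the \(1\)-Lipschitz case we may take \(\omega_f(r)=r\), which is concave, so the bound becomes \(\delta^X\).

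\textbf{Step 3: compare \(\delta^X\) with \(\delta\).} If \(\delta\le\tau\), then \(u=\delta/(2\tau)\in(0,1/2]\). As in the proof of \Cref{cor:metric_equivalence_no_reach}, the map \(u\mapsto\arcsin(u)/u\) is increasing, so \(\arcsin(u)\le\frac{\pi}{3}u\) on this range. Therefore \(\delta^X=2\tau\arcsin(u)\le\frac{\pi}{3}\delta\), and combining with Step 2 gives the final \(1\)-Lipschitz estimate.

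\textbf{Main obstacle.} The only delicate point is Step 1. The reach bound in \Cref{thm:metric_distortion_reach} requires the strict inequality \(\|x-y\|<2\tau\), and this is guaranteed by \(\delta<2\tau\). We also need monotonicity of \(\psi_\tau\) in its argument. This must not be confused with the monotonicity in \(r\) used in \Cref{cor:metric_equivalence_no_reach}, which runs in the opposite direction.
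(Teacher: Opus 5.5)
Your proposal is correct and follows the paper's proof essentially step for step: you transfer the Euclidean covering radius to the intrinsic radius $\delta^X$ via \Cref{thm:metric_distortion_reach} and monotonicity of $\arcsin$, apply \Cref{thm:intrinsic_reeb_stability} at scale $\delta^X$, and use $\arcsin(u)\le\frac{\pi}{3}u$ on $[0,1/2]$ for the final comparison. Your remark that the monotonicity needed here is in the argument of $\psi_\tau$, not in $r$ as in \Cref{cor:metric_equivalence_no_reach}, correctly spells out a step the paper leaves implicit.
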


\begin{proof}
Let \(x\in X\). By the Euclidean covering-radius assumption, there exists
\(s\in S_n\) such that
$\|x-s\|\le \delta.
$

Since \(S_n\subseteq X\) and \(\delta<2\tau\), \Cref{thm:metric_distortion_reach}
gives
\[
d_X(x,s)
\le
2\tau\arcsin\!\left(\frac{\|x-s\|}{2\tau}\right)
\le
2\tau\arcsin\!\left(\frac{\delta}{2\tau}\right)
=
\delta^X.
\]
Thus \(\delta^X\) is an intrinsic covering radius for \(S_n\). The first bound is
therefore exactly \Cref{thm:intrinsic_reeb_stability} applied with
\(\delta^X\) in place of \(\delta\). The concave-modulus and \(1\)-Lipschitz
variants follow from the corresponding intrinsic stability variants. Finally,
if \(\delta\le\tau\), then \(\delta^X\le(\pi/3)\delta\).
\end{proof}

\begin{thm}[Extrinsic PL-Reeb stability]\label{thm:extrinsic_reeb_stability}
Under the setting of \Cref{sec:delta_mapper_setting}, assume moreover that
\(X\) has positive reach \(\tau=\rch(X)>0\), that \(0<\delta<\tau\), and that
\(\delta\) is a Euclidean covering radius for \(S_n\), namely
\[
\forall x\in X,\ \exists s\in S_n\quad \|x-s\|\le \delta.
\]
Then
\[
d_I\!\left(
\mathcal F_R,
\mathcal R_{\delta}^{S_n,\|\cdot\|}
\right)
\le
\omega_f(\eta_\tau(\delta)).
\]
In particular, if \(f\) is \(1\)-Lipschitz with respect to \(d_X\) and
\(2\delta<\tau\), then
\[
d_I\!\left(
\mathcal F_R,
\mathcal R_{\delta}^{S_n,\|\cdot\|}
\right)
\le
\frac{2\pi}{3}\,\delta.
\]
\end{thm}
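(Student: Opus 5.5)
The plan is to rerun the proof of \Cref{thm:intrinsic_reeb_stability} with Euclidean proximity in place of intrinsic proximity, using \Cref{thm:metric_distortion_reach} to convert Euclidean scales into intrinsic ones. Write \(\Gamma_\delta=\Gamma_\delta^{S_n,\|\cdot\|}\), \(\hat f_n=\hat f_n^{\|\cdot\|}\) and \(\mu\coloneqq\omega_f(\eta_\tau(\delta))\).

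Every edge \([x,y]\) of \(\Gamma_\delta\) has \(\|x-y\|\le 2\delta<2\tau\). By \eqref{eq:metric_distortion_tau} and monotonicity of \(\psi_\tau\), it satisfies
\[
d_X(x,y)\le\psi_\tau(2\delta)=\eta_\tau(\delta).
\]
So each edge can be realized by a minimizing geodesic \(\gamma_{xy}\) of length \(L\le\eta_\tau(\delta)\). This defines a continuous map \(\Phi\colon|\Gamma_\delta|\to X\) that is the identity on vertices. The doubled-modulus computation preceding \eqref{eq:theta_doubled_modulus} gives \(|f(\Phi(p))-\hat f_n(p)|\le\omega_f(L)\le\mu\), and it also gives \(\tfrac12|f(x)-f(y)|\le\mu\). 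With these two bounds, the morphism
\[
\alpha\colon\mathcal R_\delta^{S_n,\|\cdot\|}\Rightarrow\mathcal S_\mu\mathcal F_R
\]
is built exactly as in the intrinsic proof: a component \(A\) of \(\hat f_n^{-1}(I)\) goes to the component of \(f^{-1}(I^\mu)\) containing \(\Phi(A)\).

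For \(\beta\colon\mathcal F_R\Rightarrow\mathcal S_\mu\mathcal R_\delta^{S_n,\|\cdot\|}\), let \(C\) be a component of \(f^{-1}(I)\) and set \(S(C)\coloneqq\{s\in S_n:\bar B_{\|\cdot\|}(s,\delta)\cap C\ne\varnothing\}\). Closed Euclidean balls are closed in \(X\), so \Cref{rmk:finite_closed_cover_intersection_graph} applied to the cover \(\{C\cap\bar B(s,\delta)\}\) gives chains \(s_0,\ldots,s_q\) in \(S(C)\) with \(\|s_r-s_{r+1}\|\le2\delta\), i.e.\ edges of \(\Gamma_\delta\). For \(s\in S(C)\), pick \(c\in C\) with \(\|s-c\|\le\delta\). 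Then
\[
d_X(s,c)\le\psi_\tau(\delta)\le\eta_\tau(\delta),
\qquad
f(s)\in I^\mu .
\]
By affineness, the subgraph spanned by \(S(C)\) is connected and lies in \(\hat f_n^{-1}(I^\mu)\), so \(\beta_I(C)\) is well defined and natural. The identity \(\alpha\circ\beta=\) structure map also transfers directly. Each \(c\in C\) is joined to some \(s\in S(C)\) by a geodesic of length \(\le\psi_\tau(\delta)\), along which \(f\) stays in \(I^\mu\). Moreover \(\Phi(\beta_I(C))\) is path-connected, contains \(S(C)\), and lies in \(f^{-1}(I^{2\mu})\).

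\textbf{Main obstacle.} The other identity, \(\beta_{I^\mu}\circ\alpha_I=\) structure map, is where I expect the real difficulty. In the intrinsic proof, for \(p\) inside an edge \([x,y]\) with \(q=\Phi(p)\), one endpoint \(v\) satisfies \(d_X(v,q)\le\delta\), hence \(v\in S(C_A)\). Here we only get \(\|v-q\|\le d_X(v,q)\le\eta_\tau(\delta)/2=\tau\arcsin(\delta/\tau)\), which can exceed \(\delta\). So \(v\) need not belong to \(S(C_A)\), where \(C_A\) is the component of \(f^{-1}(I^\mu)\) containing \(\Phi(A)\).

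My first attempt is to connect \(p\) to \(S(C_A)\) through a different vertex. By the covering assumption there is \(s\in S_n\) with \(\|s-q\|\le\delta\), and then \(s\in S(C_A)\). I would then try to show that \(s\) is adjacent in \(\Gamma_\delta\) to \(x\) or \(y\), or at least joined to \(v\) within \(\hat f_n^{-1}(I^{2\mu})\). Adjacency would follow if the nearer endpoint satisfies \(\|v-q\|\le\delta\), since then \(\|s-v\|\le 2\delta\) and \(v\) is adjacent to \(s\). To get this I would use the positive-reach geometry: the geodesic between points at chord distance \(\le2\delta\) stays close to the chord, so the point of the geodesic closer to \(v\) should be within Euclidean distance about \(\|x-y\|/2\le\delta\) of \(v\).

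If that fails, the fallback is to reparametrize \(\Phi\) on each edge by Euclidean rather than intrinsic distance from the endpoints, i.e.\ pick the split point of the geodesic to balance \(\|x-\cdot\|\) and \(\|y-\cdot\|\). Then \(\|v-q\|\le\|x-y\|/2\le\delta\) holds by construction. The price is that the edge estimate becomes \(|f(\Phi(p))-\hat f_n(p)|\le\omega_f(L)\), which is still \(\le\mu\). Along the segment from \(p\) to \(v\), \(\hat f_n\) stays within \(\tfrac12|f(x)-f(y)|\le\mu\) of \(\hat f_n(p)\in I\). This places \(v\in S(C_A)\) and the segment inside \(\hat f_n^{-1}(I^{\mu})\), closing the identity.

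Having the \(\mu\)-interleaving gives \(d_I\le\omega_f(\eta_\tau(\delta))\). For the \(1\)-Lipschitz statement, \(2\delta<\tau\) gives \(\delta/\tau\le 1/2\). By the argument of \Cref{cor:metric_equivalence_no_reach} (\(u\mapsto\arcsin(u)/u\) increasing), \(\arcsin(\delta/\tau)\le\tfrac{\pi}{3}\cdot\delta/\tau\). Hence \(\eta_\tau(\delta)\le\tfrac{2\pi}{3}\delta\).
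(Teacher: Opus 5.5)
You correctly located the difficulty, and everything else matches the paper: the construction of \(\Phi\), the bound \(|f\circ\Phi-\hat f_n|\le\mu\), the morphisms \(\alpha,\beta\), the identity \(\alpha_{I^\mu}\circ\beta_I=\) structure map, and the \(\tfrac{2\pi}{3}\delta\) estimate. The gap is in the identity \(\beta_{I^\mu}\circ\alpha_I=\) structure map. Neither your first attempt nor your fallback closes it, because the inequality \(\|v-q\|\le\|x-y\|/2\) goes the wrong way. If \(q\) satisfies \(\|x-q\|=\|y-q\|\), the triangle inequality gives \(\|x-q\|\ge\|x-y\|/2\), with equality only when \(q=(x+y)/2\), which is generally not in \(X\).

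Concretely, let \(X\) be a circle of radius \(\tau\) and let \(\|x-y\|=2\delta\) with \(\sin\theta=\delta/\tau\). The arc midpoint \(q\) then satisfies \(\|x-q\|=\|y-q\|=2\tau\sin(\theta/2)>\tau\sin\theta=\delta\). Reparametrizing or changing the path cannot fix this. Every path in \(X\) from \(x\) to \(y\) meets a point equidistant from \(x\) and \(y\), by the intermediate value theorem applied to \(\|x-\cdot\|-\|y-\cdot\|\). Every point of the path is \(\Phi(p)\) for some \(p\) on the edge. So there is always a \(p\) for which no endpoint of its edge lies within Euclidean distance \(\delta\) of \(\Phi(p)\). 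Hence the fallback claim ``\(\|v-q\|\le\|x-y\|/2\le\delta\) by construction'' is false. The reach heuristic of your first attempt also cannot produce \(\|v-q\|\le\delta\). An endpoint of the edge simply need not lie in \(S(C_A)\).

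The missing idea is to route through \emph{other} sample points, using the slack \(I^{2\mu}\) that this identity allows. Keep the arc-length parametrization, so the nearer endpoint \(v\) satisfies \(d_X(v,q)\le\eta/2\) with \(\eta=\eta_\tau(\delta)\), and the graph segment from \(p\) to \(v\) lies in \(\hat f_n^{-1}(I^\mu)\). Cover the geodesic subsegment from \(v\) to \(q\) by its intersections with the balls \(B_{\|\cdot\|}(s,\delta)\), \(s\in S_n\). By \Cref{rmk:finite_closed_cover_intersection_graph}, the sample points involved span a connected subgraph of \(\Gamma_\delta\). This subgraph contains \(v\) and also the sample point covering \(q\), which lies in \(S(C_A)\) because \(q\in C_A\). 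Each such \(s\) is within Euclidean distance \(\delta\) of some point \(r\) of the subsegment, so \(d_X(s,q)\le\psi_\tau(\delta)+\eta/2\le\eta\), using \(2\arcsin(u/2)\le\arcsin u\). Since \(f(q)\in I^\mu\), this gives \(f(s)\in I^{2\mu}\), and by affineness the whole chain lies in \(\hat f_n^{-1}(I^{2\mu})\). This joins \(p\) to the subgraph spanned by \(S(C_A)\) inside \(\hat f_n^{-1}(I^{2\mu})\), which is exactly what the identity needs. This is how the paper closes the argument.
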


\begin{proof}
The argument follows the proof of \Cref{thm:intrinsic_reeb_stability}, but the
use of the ambient Euclidean graph requires some additional metric-comparison
steps.

Write
\[
\Gamma_\delta=\Gamma_\delta^{S_n,\|\cdot\|},
\qquad
\hat f_n=\hat f_n^{\|\cdot\|},
\qquad
\eta=\eta_\tau(\delta)=\psi_\tau(2\delta),
\qquad
\mu=\omega_f(\eta).
\]
If \([x,y]\) is an edge of \(\Gamma_\delta\), then
$\|x-y\|\le 2\delta.
$
Since \(\delta<\tau\), \Cref{thm:metric_distortion_reach} gives
$d_X(x,y)
\le
\eta.
$

For every edge \([x,y]\) of \(\Gamma_\delta\), choose a minimizing
\(d_X\)-geodesic \(\gamma_{xy}\) from \(x\) to \(y\), parametrized
proportionally to arc length. These choices define a continuous map
\[
\Phi\colon |\Gamma_\delta|\to X
\]
which is the identity on vertices and sends the point with affine coordinate
\(t\) on the edge \([x,y]\) to \(\gamma_{xy}(t)\). Since every edge satisfies
\(d_X(x,y)\le \eta=2(\eta/2)\), \eqref{eq:theta_edge_estimate} gives
\[
|f(\Phi(p))-\hat f_n(p)|
\le
\theta_f(\eta/2)
\]
for every \(p\in|\Gamma_\delta|\). The doubled-scale modulus estimate gives
\[
\theta_f(\eta/2)\le\omega_f(\eta)=\mu,
\]
and therefore
\[
|f(\Phi(p))-\hat f_n(p)|\le\mu
\qquad
\text{for every }p\in|\Gamma_\delta|.
\]
We highlight that this is where the asymmetry with \Cref{thm:intrinsic_reeb_stability} appears:
the \(\theta_f\)-term is evaluated at half the intrinsic edge scale \(\eta\), and
is already dominated by the modulus at scale \(\eta\). The same modulus term is
also needed below for the sample-to-space estimates, so the final extrinsic
constant is \(\omega_f(\eta_\tau(\delta))\), not a maximum of two separate terms as in \Cref{thm:intrinsic_reeb_stability}  with $\mu_\delta$.

We now construct the first morphism of the interleaving. See also \Cref{fig:stability} (left) for a visual representation. Let \(I\in\mathbf{Int}\),
and let \(A\) be a path-connected component of
\[
(\hat f_n)^{-1}(I)\subseteq|\Gamma_\delta|.
\]
For every \(p\in A\), we have \(\hat f_n(p)\in I\), and the estimate above gives
\[
f(\Phi(p))\in I^\mu.
\]
Hence
\[
\Phi(A)\subseteq f^{-1}(I^\mu).
\]
Moreover, \(\Phi(A)\) is path-connected because \(A\) is path-connected and
\(\Phi\) is continuous. Therefore \(\Phi(A)\) is contained in a unique
path-connected component of \(f^{-1}(I^\mu)\). Denote this component by
$\alpha_I(A).
$

This assignment is compatible with inclusions of intervals. Indeed, if
\(I\subseteq J\) and \(A'\) is the path-connected component of
\((\hat f_n)^{-1}(J)\) containing \(A\), then
\[
\Phi(A)\subseteq \Phi(A').
\]
Thus the component \(\alpha_I(A)\) maps to the component \(\alpha_J(A')\) under
the canonical map
\[
\pi_0(f^{-1}(I^\mu))\longrightarrow \pi_0(f^{-1}(J^\mu)).
\]
Hence the maps \(\alpha_I\) define a natural transformation
\[
\alpha\colon
\mathcal R_{\delta}^{S_n,\|\cdot\|}
\Rightarrow
\mathcal S_\mu\mathcal F_R.
\]

We now define the morphism in the opposite direction. Let \(I\in\mathbf{Int}\),
and let \(C\) be a path-connected component of \(f^{-1}(I)\). Set
\[
S(C)\coloneqq
\{s\in S_n\mid B_{\|\cdot\|}(s,\delta)\cap C\neq\varnothing\},
\]
where the ball is taken in the ambient Euclidean metric. The set \(S(C)\) is
non-empty by the Euclidean covering-radius assumption. If \(s\in S(C)\), choose
\(c_s\in C\) with
\[
\|s-c_s\|\le\delta.
\]
Since \(S_n\subseteq X\), \(c_s\in X\), and \(\delta<\tau\),
\Cref{thm:metric_distortion_reach} gives
\[
d_X(s,c_s)
\le
\psi_\tau(\delta)
\le
\psi_\tau(2\delta)
=
\eta.
\]
Therefore
\[
|f_n(s)-f(c_s)|
=
|f(s)-f(c_s)|
\le
\omega_f(\eta)
=
\mu.
\]
Because \(f(c_s)\in I\), it follows that
$f_n(s)\in I^\mu.
$

We claim that \(S(C)\) spans a connected subgraph of \(\Gamma_\delta\), and that
this subgraph is contained in \((\hat f_n)^{-1}(I^\mu)\). Since \(S_n\) is finite,
the family
\[
\{C\cap B_{\|\cdot\|}(s,\delta)\}_{s\in S(C)}
\]
is a finite closed cover of the connected space \(C\), and each member is
non-empty. By \Cref{rmk:finite_closed_cover_intersection_graph}, its intersection graph
is connected. Thus, for any two elements \(s,s'\in S(C)\), there is a finite
chain
$s=s_0,s_1,\ldots,s_q=s'
$
in \(S(C)\) such that
\[
C\cap B_{\|\cdot\|}(s_r,\delta)
\cap B_{\|\cdot\|}(s_{r+1},\delta)
\neq\varnothing
\]
for every \(r\). In particular,
\[
\|s_r-s_{r+1}\|\le 2\delta,
\]
so consecutive vertices are joined by edges of \(\Gamma_\delta\). Moreover, the
endpoint values of each such edge lie in \(I^\mu\). Since \(\hat f_n\) is affine
on the edge and \(I^\mu\) is an interval, the whole edge lies in
$(\hat f_n)^{-1}(I^\mu).
$

Therefore the subgraph spanned by \(S(C)\) is connected and lies in
\((\hat f_n)^{-1}(I^\mu)\). It is consequently contained in a unique
path-connected component of \((\hat f_n)^{-1}(I^\mu)\). Denote this component by
$\beta_I(C).
$

This assignment is compatible with inclusions. If \(I\subseteq J\), and if \(C'\)
is the path-connected component of \(f^{-1}(J)\) containing \(C\), then
$S(C)\subseteq S(C').
$
Hence the subgraph spanned by \(S(C)\) is contained in the subgraph spanned by
\(S(C')\), and the component \(\beta_I(C)\) maps to the component
\(\beta_J(C')\) under the canonical map
\[
\pi_0((\hat f_n)^{-1}(I^\mu))
\longrightarrow
\pi_0((\hat f_n)^{-1}(J^\mu)).
\]
Thus the maps \(\beta_I\) define a natural transformation
\[
\beta\colon
\mathcal F_R
\Rightarrow
\mathcal S_\mu\mathcal R_{\delta}^{S_n,\|\cdot\|}.
\]

It remains to check the two interleaving identities. First, let \(I\in
\mathbf{Int}\), and let \(A\) be a path-connected component of
\((\hat f_n)^{-1}(I)\). Let
\[
C_A\coloneqq \alpha_I(A)
\]
be the path-connected component of \(f^{-1}(I^\mu)\) containing \(\Phi(A)\). We
show that the canonical image of \(A\) in
$\pi_0((\hat f_n)^{-1}(I^{2\mu}))
$
coincides with
$\beta_{I^\mu}(\alpha_I(A)).
$

By definition, \(\beta_{I^\mu}(C_A)\) is the component of
\((\hat f_n)^{-1}(I^{2\mu})\) containing the subgraph spanned by \(S(C_A)\).
Thus it is enough to show that \(A\) meets the path-connected component of
\((\hat f_n)^{-1}(I^{2\mu})\) containing this subgraph. Indeed, \(A\) is
path-connected and
\[
A\subseteq(\hat f_n)^{-1}(I)\subseteq(\hat f_n)^{-1}(I^{2\mu}).
\]

Choose \(p\in A\). If \(p\) is a vertex, then \(\Phi(p)=p\in C_A\). Hence
\(p\in S(C_A)\), so \(p\) already belongs to the subgraph spanned by \(S(C_A)\),
and the desired conclusion follows. We may therefore suppose that \(p\) lies in
the interior of an edge \([x,y]\), and set
$q=\Phi(p).
$

Let \(v\in\{x,y\}\) be an endpoint for which the subsegment of the chosen
geodesic from \(v\) to \(q\) has length at most \(L/2\), where
\(L=d_X(x,y)\). Since \(L\le\eta\), we have
$d_X(v,q)\le \eta/2.
$
Moreover, by \eqref{eq:theta_endpoint_estimate},
\[
|\hat f_n(p)-f_n(v)|
\le
\theta_f(\eta/2)
\le
\omega_f(\eta)
=
\mu.
\]
Since \(\hat f_n(p)\in I\), it follows that \(f_n(v)\in I^\mu\). Therefore the
graph segment from \(p\) to \(v\) lies in
\[
(\hat f_n)^{-1}(I^\mu)
\subseteq
(\hat f_n)^{-1}(I^{2\mu}),
\]
because \(\hat f_n\) is affine on the edge.

Note that, unlike in the intrinsic case, we cannot conclude from this that
\(v\in S(C_A)\): the set \(S(C_A)\) is defined using ambient Euclidean
\(\delta\)-balls, whereas the estimate above only gives the intrinsic bound
\(d_X(v,q)\le \eta/2\), which need not imply \(\|v-q\|\le\delta\). 
Thus, we need to connect \(v\) to the subgraph spanned by \(S(C_A)\), staying inside
\((\hat f_n)^{-1}(I^{2\mu})\). See also \Cref{fig:stability} (right) for a visual representation.

Consider the \(d_X\)-geodesic subsegment from
\(v\) to \(q\), and set
\[
S(v,q)\coloneqq
\{s\in S_n\mid B_{\|\cdot\|}(s,\delta)
\text{ meets this subsegment}\}.
\]
The sets
\[
\text{subsegment}\cap B_{\|\cdot\|}(s,\delta),
\qquad s\in S(v,q),
\]
form a finite closed cover of the subsegment by non-empty sets. By \Cref{rmk:finite_closed_cover_intersection_graph}, their intersection graph is connected.
If two such sets intersect, then the corresponding sample points are at Euclidean
distance at most \(2\delta\), and therefore the corresponding vertices are joined
by an edge of \(\Gamma_\delta\). Thus \(S(v,q)\) spans a connected subgraph of
\(\Gamma_\delta\).

This subgraph contains \(v\), since \(v\in S_n\) and the subsegment contains
\(v\). It also contains a vertex of \(S(C_A)\): indeed, \(q\in C_A\), and the
Euclidean covering-radius assumption gives \(s\in S_n\) with
$\|s-q\|\le\delta.
$

Since \(q\) belongs to the subsegment, this \(s\) belongs to \(S(v,q)\); since
also \(q\in C_A\), it belongs to \(S(C_A)\).

We now check that the connecting subgraph spanned by \(S(v,q)\) lies in
\((\hat f_n)^{-1}(I^{2\mu})\). Let \(s\in S(v,q)\). Choose a point \(r\) on the
subsegment from \(v\) to \(q\) such that
$\|s-r\|\le\delta.
$
By \Cref{thm:metric_distortion_reach},
\[
d_X(s,r)\le \psi_\tau(\delta).
\]
Also, since \(r\) lies on the \(d_X\)-geodesic subsegment from \(v\) to \(q\),
\[
d_X(r,q)\le d_X(v,q)\le \eta/2.
\]
Moreover,
\[
\psi_\tau(\delta)
=
2\tau\arcsin\!\Bigl(\frac{\delta}{2\tau}\Bigr)
\le
\tau\arcsin\!\Bigl(\frac{\delta}{\tau}\Bigr)
=
\eta/2,
\]
where the inequality is \(2\arcsin(u/2)\le \arcsin(u)\) for \(u\in[0,1]\).
Hence
\[
d_X(s,q)
\le
d_X(s,r)+d_X(r,q)
\le
\psi_\tau(\delta)+\eta/2
\le
\eta.
\]
Since \(q\in C_A\subseteq f^{-1}(I^\mu)\), we have \(f(q)\in I^\mu\), and
therefore
\[
|f_n(s)-f(q)|
=
|f(s)-f(q)|
\le
\omega_f(\eta)
=
\mu.
\]
Thus
$f_n(s)\in I^{2\mu}.
$

So all vertices of the subgraph spanned by \(S(v,q)\) have \(f_n\)-values in
\(I^{2\mu}\). Since \(\hat f_n\) is affine on edges and \(I^{2\mu}\) is an
interval, the whole subgraph lies in
$(\hat f_n)^{-1}(I^{2\mu}).
$

We have therefore connected \(p\) to the subgraph spanned by \(S(C_A)\) inside
\((\hat f_n)^{-1}(I^{2\mu})\). Hence
$\beta_{I^\mu}\circ\alpha_I
$
is the canonical structure map
\[
\mathcal R_{\delta}^{S_n,\|\cdot\|}(I)
\longrightarrow
\mathcal R_{\delta}^{S_n,\|\cdot\|}(I^{2\mu}).
\]

Conversely, let \(I\in\mathbf{Int}\), and let \(C\) be a path-connected component
of \(f^{-1}(I)\). Let
\[
A_C\coloneqq \beta_I(C)
\]
be the path-connected component of \((\hat f_n)^{-1}(I^\mu)\) containing the
subgraph spanned by \(S(C)\). We show that the canonical image of \(C\) in
$\pi_0(f^{-1}(I^{2\mu}))
$
coincides with
$\alpha_{I^\mu}(\beta_I(C)).
$

By definition, \(\alpha_{I^\mu}(A_C)\) is the path-connected component of
\(f^{-1}(I^{2\mu})\) containing \(\Phi(A_C)\). It is therefore enough to show
that \(C\) and \(\Phi(A_C)\) lie in the same path-connected component of
\(f^{-1}(I^{2\mu})\).

First, every point of \(C\) is connected inside \(f^{-1}(I^\mu)\) to a vertex in
\(S(C)\). Indeed, let \(c\in C\). By the Euclidean covering-radius assumption,
choose \(s\in S_n\) with
$\|c-s\|\le\delta.
$

Then \(s\in S(C)\). By \Cref{thm:metric_distortion_reach},
\[
d_X(c,s)\le\psi_\tau(\delta)\le\eta.
\]
Let \(\gamma\) be a minimizing \(d_X\)-geodesic from \(c\) to \(s\). For every
point \(r\) on this geodesic,
\[
d_X(r,c)\le d_X(c,s)\le\eta.
\]
Since \(f(c)\in I\), we have
\[
|f(r)-f(c)|\le\omega_f(\eta)=\mu,
\]
and hence \(f(r)\in I^\mu\). Thus \(c\) is connected to \(s\) inside
\(f^{-1}(I^\mu)\).

It remains to compare the graph component \(A_C\) with its image under \(\Phi\).
Since
\[
A_C\subseteq(\hat f_n)^{-1}(I^\mu),
\]
the estimate
\[
|f(\Phi(p))-\hat f_n(p)|\le\mu
\]
implies
\[
\Phi(A_C)\subseteq f^{-1}(I^{2\mu}).
\]
Moreover, \(A_C\) is path-connected, hence \(\Phi(A_C)\) is path-connected. The
image \(\Phi(A_C)\) contains every vertex of \(S(C)\), because \(\Phi\) is the
identity on vertices and \(A_C\) contains the subgraph spanned by \(S(C)\).
Since every point of \(C\) is connected inside
\[
f^{-1}(I^\mu)\subseteq f^{-1}(I^{2\mu})
\]
to a vertex of \(S(C)\), and since all vertices of \(S(C)\) lie in the
path-connected set \(\Phi(A_C)\subseteq f^{-1}(I^{2\mu})\), the sets \(C\) and
\(\Phi(A_C)\) lie in the same path-connected component of
\(f^{-1}(I^{2\mu})\). Therefore
$\alpha_{I^\mu}\circ\beta_I
$
is the canonical structure map
\[
\mathcal F_R(I)
\longrightarrow
\mathcal F_R(I^{2\mu}).
\]

Thus \(\alpha\) and \(\beta\) define a \(\mu\)-interleaving, and hence
\[
d_I\!\left(
\mathcal F_R,
\mathcal R_{\delta}^{S_n,\|\cdot\|}
\right)
\le
\mu
=
\omega_f(\eta_\tau(\delta)).
\]
Finally, if \(f\) is \(1\)-Lipschitz with respect to \(d_X\), then we may take
\(\omega_f(r)=r\), so \(\mu=\eta_\tau(\delta)\). If \(2\delta<\tau\), then
\[
\eta_\tau(\delta)=\psi_\tau(2\delta)\le \frac{2\pi}{3}\delta
\]
by \Cref{cor:metric_equivalence_no_reach}. This gives
\[
d_I\!\left(
\mathcal F_R,
\mathcal R_{\delta}^{S_n,\|\cdot\|}
\right)
\le
\frac{2\pi}{3}\delta.
\]
\end{proof}

\begin{figure}
    \centering
    \begin{subfigure}[b]{0.48\textwidth}
        \centering
        \includegraphics[width=\textwidth]{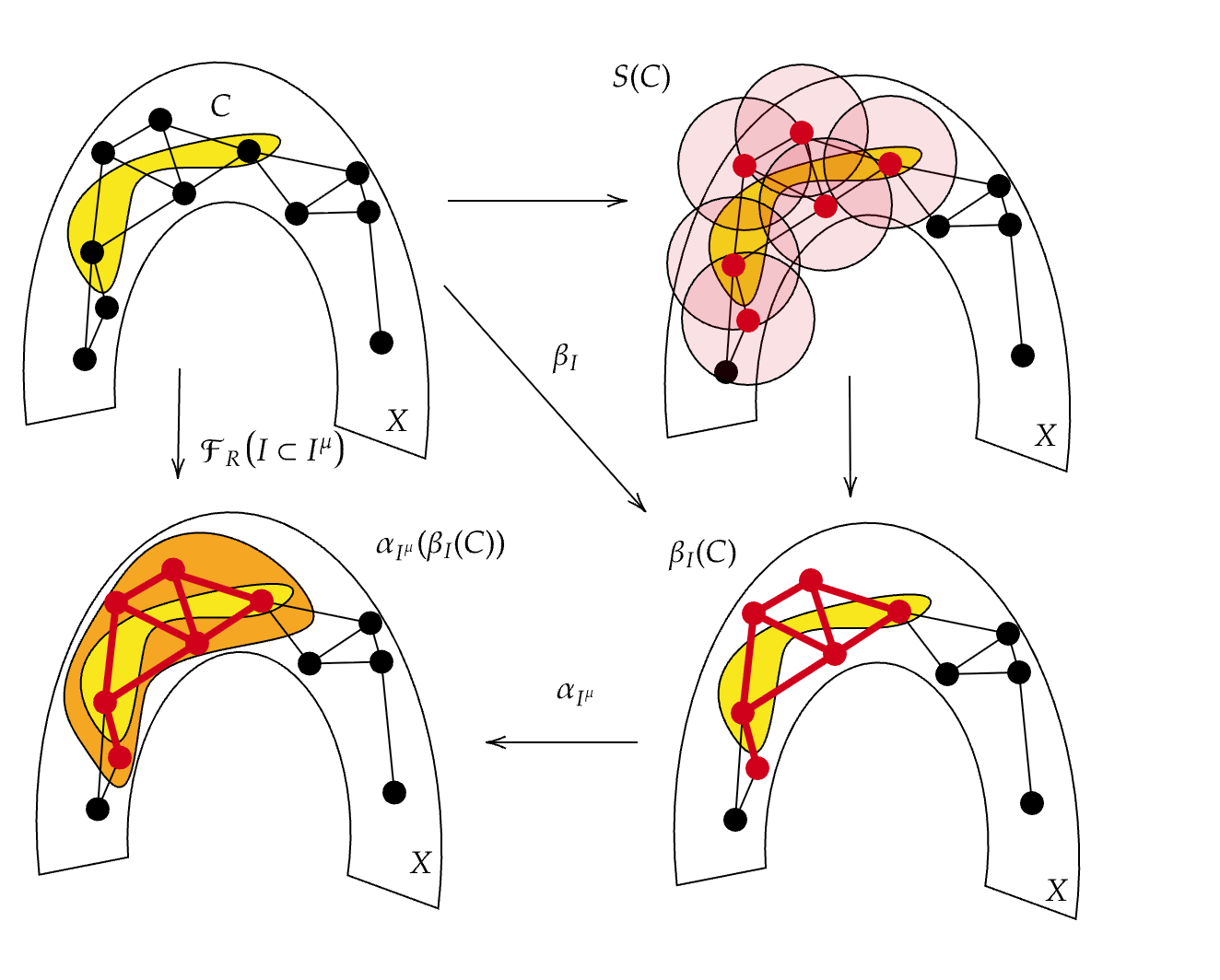}
        \caption{Visual representation of the interleaving maps. The arrows describe the association  $C\in \pi_0(f^{-1}(I))\mapsto \beta_I(C)\mapsto \alpha_{I^\mu}(\beta_I(C))$.}
    \end{subfigure}
    \hfill
    \begin{subfigure}[b]{0.48\textwidth}
        \centering
        \includegraphics[width=\textwidth]{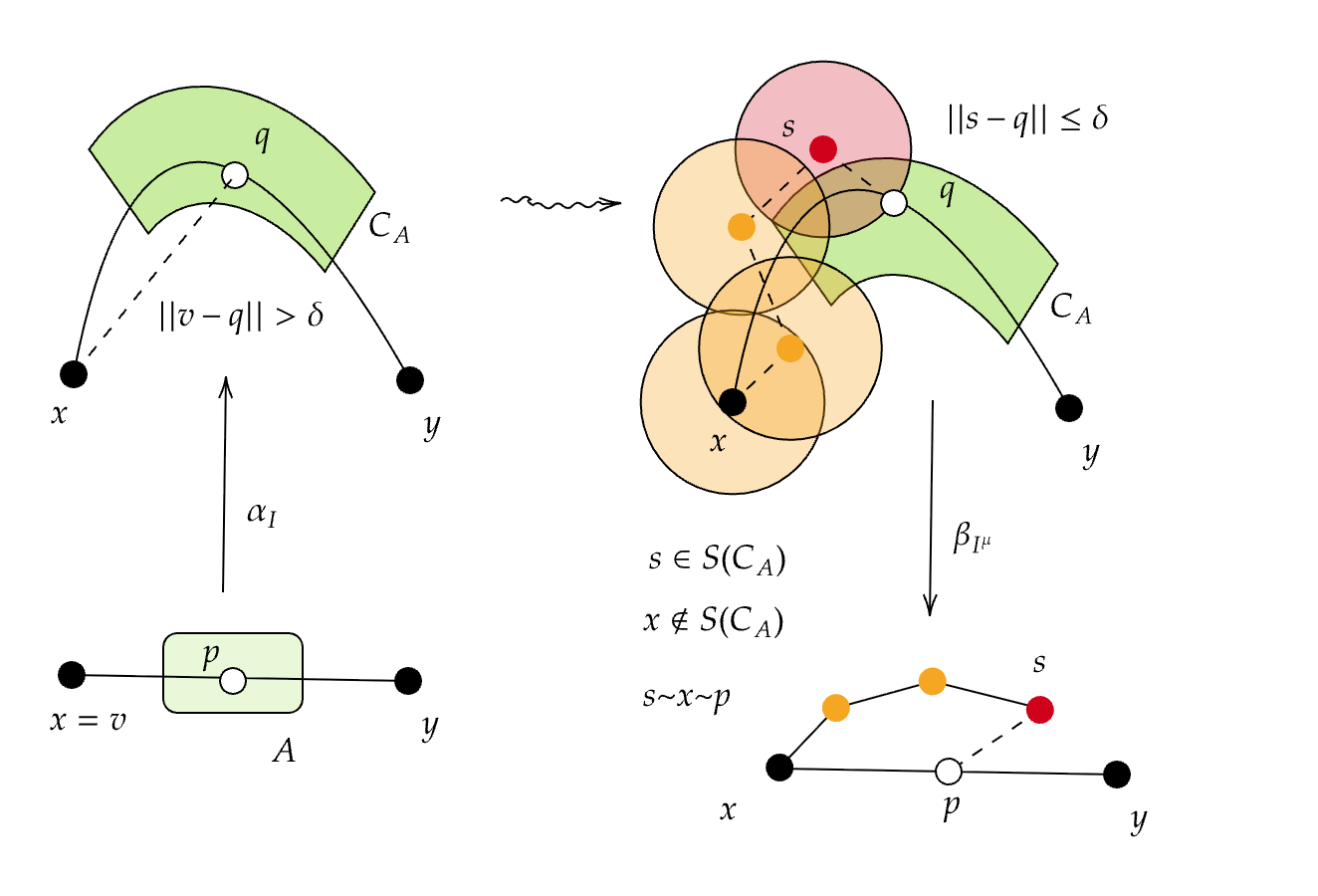}
        \caption{Visual representation of a technical step in the proof of \Cref{thm:extrinsic_reeb_stability}: verifying that there is a path in $\mathcal R_{\delta}^{S_n,\|\cdot\|}$ from  $p\in A, A\in \pi_0 (\hat f_n^{-1}(I))$, to  a vertex in $S(C_A)$.}
    \end{subfigure}
    \caption{Schematic illustrations for the proofs of the stability theorems.}
    \label{fig:stability}
\end{figure}

\subsection{Mapper coarsenings and Reeb-to-Mapper multivalued maps}
\label{sec:mapper_coarsening}

The PL-Reeb estimator and its Mapper coarsenings play complementary roles. The
PL-Reeb estimator carries the object-level stability estimate, while the Mapper
coarsening provides a lower-resolution object that can be substantially smaller
and easier to visualize. The loss incurred by this coarsening is explicit: it is
controlled by the resolution of the chosen cover with respect to the PL-Reeb
cosheaf.

Let \(\mathcal U\) be a finite nice open cover of \(\mathbb R\). We apply the
Mapper transformation associated with \(\mathcal U\) to the PL-Reeb estimators and
set
\[
\widetilde{\mathcal M}^{S_n,\rho}_{\delta,\mathcal U}
\coloneqq
\mathcal M_{\mathcal U}\bigl(\mathcal R_{\delta}^{S_n,\rho}\bigr),
\qquad
\rho\in\{d_X,\|\cdot\|\}.
\]
Equivalently, for \(I\in\mathbf{Int}\),
\[
\widetilde{\mathcal M}^{S_n,\rho}_{\delta,\mathcal U}(I)
=
\pi_0\!\left(
(\hat f_n^\rho)^{-1}(\mathcal I_{\mathcal U}(I))
\right).
\]
The Mapper object is therefore a cover-level coarsening of the PL-Reeb estimator.
The cover is arbitrary in the theory; its contribution to the error bound is
only through its resolution with respect to the PL-Reeb cosheaf.

\begin{prop}[Mapper coarsening error]\label{prop:mapper_coarsening_error}
Let \(\rho\in\{d_X,\|\cdot\|\}\), and let \(\mathcal U\) be a finite nice open
cover of \(\mathbb R\). Then
\[
d_I\!\left(
\mathcal R_{\delta}^{S_n,\rho},
\widetilde{\mathcal M}^{S_n,\rho}_{\delta,\mathcal U}
\right)
\le
\res_{\mathcal R_{\delta}^{S_n,\rho}}(\mathcal U).
\]
In particular, if
\[
\res_{\mathcal R_{\delta}^{S_n,\rho}}(\mathcal U)\le \chi,
\]
then the coarsening error is at most \(\chi\).
\end{prop}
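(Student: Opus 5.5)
This is essentially a direct application of \Cref{brobro}, with \(\mathcal C=\mathcal R_{\delta}^{S_n,\rho}\). The plan is to check that this cosheaf meets the hypotheses of that proposition, then read off the bound.

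First, recall from \Cref{sec:pl_reeb_estimators} that \(\mathcal R_{\delta}^{S_n,\rho}\) is the Reeb cosheaf of the finite filtered graph \((|\Gamma_\delta^{S_n,\rho}|,\hat f_n^\rho)\). The pair \((|\Gamma_\delta^{S_n,\rho}|,\hat f_n^\rho)\) is a constructible \(\mathbb R\)-space: it is a finite graph, \(\hat f_n^\rho\) is affine on each edge, and its critical values are contained in the finite set \(f_n(S_n)\). Hence \(\mathcal R_{\delta}^{S_n,\rho}\in\mathbf{Csh}^c\); in particular it is a cosheaf. By \eqref{eq:cosheaf_identity}, it is canonically isomorphic to the Reeb cosheaf of the Reeb quotient \(\mathfrak R(|\Gamma_\delta^{S_n,\rho}|,\hat f_n^\rho)\).

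Second, since \(\mathcal U\) is assumed finite and nice, \Cref{brobro} applies. It gives that \(\mathcal M_{\mathcal U}(\mathcal R_{\delta}^{S_n,\rho})=\widetilde{\mathcal M}^{S_n,\rho}_{\delta,\mathcal U}\) is a constructible cosheaf, so the interleaving distance is defined. It also gives the inequality \eqref{eq1} with \(\mathcal C=\mathcal R_{\delta}^{S_n,\rho}\), which is exactly the claimed bound. The quantity \(\res_{\mathcal R_{\delta}^{S_n,\rho}}(\mathcal U)\) only involves cover elements \(V\) with \((\hat f_n^\rho)^{-1}(V)\neq\varnothing\). These are finitely many and meet the compact range \(\hat f_n^\rho(|\Gamma_\delta^{S_n,\rho}|)=[\min f_n,\max f_n]\). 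If one of them is unbounded, the right-hand side is \(+\infty\) and the statement holds trivially.

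The final sentence of the proposition then follows from monotonicity: if \(\res_{\mathcal R_{\delta}^{S_n,\rho}}(\mathcal U)\le\chi\), the coarsening error is at most \(\chi\).

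The only point requiring some care is making sure the version of \Cref{brobro} is used with the same conventions as the present paper. Concretely, the interleaving distance in the source is the one on \(\mathbf{Csh}^c\) recalled in \Cref{def:interleaving}, and its input class of cosheaves contains finite PL-filtered graphs. I would address this by noting that constructible cosheaves are in particular cosheaves, so the hypothesis "any cosheaf \(\mathcal C\)" is satisfied.
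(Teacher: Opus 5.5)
Your proposal is correct and matches the paper's proof, which is the one-line application of \Cref{brobro} to the constructible cosheaf \(\mathcal R_{\delta}^{S_n,\rho}\); your check of the hypotheses simply spells that application out. One small slip does not affect the argument: the image of \(\hat f_n^\rho\) equals \([\min f_n,\max f_n]\) only when the proximity graph is connected, and in general it is a finite union of closed intervals contained in that interval.
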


\begin{proof}
This is exactly the stability estimate for the Mapper transformation recalled in
\Cref{brobro}, applied to the cosheaf
\(\mathcal R_{\delta}^{S_n,\rho}\).
\end{proof}

Consequently, any stability estimate for a PL-Reeb estimator immediately gives a
stability estimate for its Mapper coarsening by the triangle inequality. For
example, under the assumptions of \Cref{thm:intrinsic_reeb_stability},
\[
d_I\!\left(
\mathcal F_R,
\widetilde{\mathcal M}^{S_n,d_X}_{\delta,\mathcal U}
\right)
\le
\mu_\delta+
\res_{\mathcal R_{\delta}^{S_n,d_X}}(\mathcal U).
\]
If \(\omega_f\) is concave and \(\omega_f(0)=0\), then \(\mu_\delta\) can be
replaced by \(\omega_f(\delta)\); in particular, if \(f\) is \(1\)-Lipschitz with
respect to \(d_X\), then the right-hand side is bounded by
\[
\delta+
\res_{\mathcal R_{\delta}^{S_n,d_X}}(\mathcal U).
\]
The analogous intrinsic-from-Euclidean and extrinsic bounds are obtained in the
same way, by adding the corresponding Mapper-resolution term to the PL-Reeb
bounds of \Cref{cor:intrinsic_pl_stability_from_euclidean_radius,thm:extrinsic_reeb_stability}.

This gives a precise way to use Mapper as a visualization layer for potentially
complex PL-Reeb estimators. The estimator itself retains the sharper
object-level approximation guarantee, while the cover \(\mathcal U\) determines
the scale at which this estimator is displayed. The additional error controlled by
$\res_{\mathcal R_{\delta}^{S_n,\rho}}(\mathcal U),
$
so the dimensionality reduction and the loss in the guarantee are controlled by
the same explicit quantity.

The controlled link between the two objects can be expressed by a natural
multivalued map from the coarsened Mapper object back to the PL-Reeb estimator.
For every interval \(I\), the inclusion
$I\subseteq\mathcal I_{\mathcal U}(I)
$
induces the structure map
\[
\mathcal R_{\delta}^{S_n,\rho}(I)
\longrightarrow
\mathcal R_{\delta}^{S_n,\rho}(\mathcal I_{\mathcal U}(I))
=
\widetilde{\mathcal M}^{S_n,\rho}_{\delta,\mathcal U}(I).
\]
For a coarsened component
\(A\in\widetilde{\mathcal M}^{S_n,\rho}_{\delta,\mathcal U}(I)\), define
\[
\Xi_I(A)
\coloneqq
\left\{
B\in\mathcal R_{\delta}^{S_n,\rho}(I)
\mid
B\mapsto A
\right\}.
\]
In geometric terms, \(A\) is a component of
\((\hat f_n^\rho)^{-1}(\mathcal I_{\mathcal U}(I))\), and \(\Xi_I(A)\) is the
collection of components of \((\hat f_n^\rho)^{-1}(I)\) contained in it. Hence a
node or component of the coarsened Mapper object can be visualized together with
the PL-Reeb components that it represents.

\section{Confidence regions based on the interleaving distance}\label{conf}

Throughout this section, we retain the setting and notation of
\Cref{sec:delta_mapper_setting}; in particular,
$R\coloneqq\mathfrak R(X,f).
$

The stability results of
\Cref{thm:intrinsic_reeb_stability,thm:extrinsic_reeb_stability} turn covering
radius bounds for the sample into confidence regions for the target Reeb cosheaf.
We present the general mechanism once, and then specialize it below to the
relevant probabilistic regimes.

Let \(\widehat{\mathcal F}\) be a random cosheaf-valued estimator of
\(\mathcal F_R\). Suppose that \(\delta\ge 0\) is a deterministic radius and
that \(E\) is an event such that, on \(E\),
\[
d_I\!\left(\mathcal F_R,\widehat{\mathcal F}\right)
\le \delta.
\]
If \(\mathbb P(E)\ge 1-\alpha\), then the random interleaving ball
\[
B_{d_I}(\widehat{\mathcal F},\delta)
\coloneqq
\left\{
\mathcal G
\middle|
 d_I\!\left(\mathcal G,\widehat{\mathcal F}\right)
 \le \delta
\right\}
\]
is a \((1-\alpha)\)-confidence region for \(\mathcal F_R\). Indeed, on the
event \(E\), the target cosheaf \(\mathcal F_R\) belongs to this ball.

The same principle applies after Mapper coarsening. If \(\mathcal U\) is a
finite nice open cover and
\[
\widehat{\mathcal M}
\coloneqq
\mathcal M_{\mathcal U}(\widehat{\mathcal F}),
\]
then \(\Cref{prop:mapper_coarsening_error}\) gives
\[
d_I\!\left(\widehat{\mathcal F},\widehat{\mathcal M}\right)
\le
\res_{\widehat{\mathcal F}}(\mathcal U).
\]
Therefore, on the same event \(E\),
\[
d_I\!\left(\mathcal F_R,\widehat{\mathcal M}\right)
\le
\delta+\res_{\widehat{\mathcal F}}(\mathcal U).
\]
Thus
$B_{d_I}\!\left(
\widehat{\mathcal M},
\delta+\res_{\widehat{\mathcal F}}(\mathcal U)
\right)$
is the corresponding Mapper-level confidence region. If one uses a deterministic
upper bound \(\chi\ge\res_{\widehat{\mathcal F}}(\mathcal U)\), the radius can be
replaced by \(\delta+\chi\). Below we state only the PL-Reeb confidence radii; the corresponding
Mapper-level radii are obtained by this additive rule.

Let \(P\) be a probability measure supported on \(X\), and assume that
\(S_n=\{X_1,\ldots,X_n\}\) is an i.i.d. sample from \(P\). We write
\[
d_H^{X}(S_n,X)\coloneqq \sup_{x\in X}\inf_{y\in S_n} d_X(x,y)
\]
for the one-sided Hausdorff covering radius in the intrinsic metric. When
\(X\subset\mathbb R^m\), we also write
\[
d_H^{E}(S_n,X)\coloneqq \sup_{x\in X}\inf_{y\in S_n} \|x-y\|
\]
for the corresponding Euclidean covering radius. Since \(S_n\subseteq X\), these
are the relevant one-sided Hausdorff distances from \(X\) to the sample.

Since the intrinsic and extrinsic stability bounds above are driven by covering
radii of \(S_n\), the remaining task is to control these radii probabilistically.
We do this in two standard ways: first through an \((a,b)\)-standard assumption,
which gives an explicit finite-sample tail bound for the relevant Hausdorff
covering radius, and then through the subsampling method of~\cite{fasy}.

\subsection{Using \texorpdfstring{\((a,b)\)}{(a,b)}-standard assumptions}

We first consider the classical \((a,b)\)-standard assumption. The following
Hausdorff tail bound is the one proved in~\cite{chazal15a}; it applies to the
metric in which the \((a,b)\)-standard condition is imposed.

\begin{thm}[Theorem~2 in~\cite{chazal15a}]\label{thm:chazal_hausdorff}
Let \((\mathbb M,\rho)\) be a metric space, and let \(P\) be a probability
measure with compact support \(X\subset\mathbb M\). Suppose that \(P\)
satisfies the \((a,b)\)-standard assumption in the metric \(\rho\), namely
\[
P(B_\rho(x,r))\ge \min\{ar^b,1\}
\qquad
\text{for all }x\in X\text{ and all }r>0.
\]
Let \(S_n=\{X_1,\ldots,X_n\}\) be an i.i.d. sample from \(P\), and set
\[
d_H^\rho(S_n,X)\coloneqq \sup_{x\in X}\inf_{y\in S_n} \rho(x,y).
\]
Then, for every \(\delta>0\),
\[
\mathbb P\!\left(d_H^\rho(S_n,X)>\delta\right)
\le
\min\!\left\{
1,\,
\frac{4^b}{a\delta^b}\exp\!\left(-a\left(\frac{\delta}{2}\right)^b n\right)
\right\}
\eqqcolon \alpha_\delta.
\]
\end{thm}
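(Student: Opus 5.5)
The plan is the standard covering argument for Hausdorff tail bounds: reduce the event \(d_H^\rho(S_n,X)>\delta\) to the event that some ball from a finite family contains no sample point, then apply a union bound.

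\textbf{Step 1 (packing).} Let \(\{x_1,\dots,x_N\}\subseteq X\) be a maximal \(\delta/2\)-separated subset, meaning \(\rho(x_i,x_j)>\delta/2\) for \(i\ne j\). Compactness of \(X\) makes \(N\) finite. By maximality, every \(x\in X\) satisfies \(\rho(x,x_i)\le\delta/2\) for some \(i\).

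\textbf{Step 2 (bounding \(N\)).} The balls \(B_\rho(x_i,\delta/4)\) are pairwise disjoint. The \((a,b)\)-standard assumption gives each of them mass at least \(\min\{a(\delta/4)^b,1\}\). If this minimum equals \(1\), then \(N=1\). Otherwise, disjointness and total mass one give
\[
N\le \frac{1}{a(\delta/4)^b}=\frac{4^b}{a\delta^b}.
\]

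\textbf{Step 3 (reduction).} Suppose every ball \(B_\rho(x_i,\delta/2)\) contains a sample point. Take any \(x\in X\), and pick \(i\) with \(\rho(x,x_i)\le\delta/2\) and a sample point \(y\) with \(\rho(x_i,y)\le\delta/2\). The triangle inequality gives \(\rho(x,y)\le\delta\). Hence \(d_H^\rho(S_n,X)\le\delta\). By contraposition, the event \(\{d_H^\rho(S_n,X)>\delta\}\) is contained in the union over \(i\) of the events that \(B_\rho(x_i,\delta/2)\) contains no sample point.

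\textbf{Step 4 (union bound).} The sample is i.i.d., so each of these emptiness events has probability
\[
\bigl(1-P(B_\rho(x_i,\delta/2))\bigr)^n\le\bigl(1-a(\delta/2)^b\bigr)^n\le\exp\!\bigl(-na(\delta/2)^b\bigr).
\]
This uses the inequality \(1-u\le e^{-u}\). In the case \(a(\delta/2)^b\ge1\), each ball has mass \(1\), so the probability is \(0\) and the bound holds trivially. Summing over the \(N\) balls and using Step 2 yields
\[
\mathbb P\bigl(d_H^\rho(S_n,X)>\delta\bigr)\le \frac{4^b}{a\delta^b}\exp\!\Bigl(-a\bigl(\tfrac{\delta}{2}\bigr)^b n\Bigr).
\]
Taking the minimum with \(1\) gives \(\alpha_\delta\).

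\textbf{Main obstacle.} The only delicate point is measurability of \(d_H^\rho(S_n,X)\). This is handled by expressing it through a countable dense subset of \(X\), which exists because \(X\) is compact and hence separable. Everything else is routine.
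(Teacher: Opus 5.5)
Your proposal is correct. The paper does not reprove this tail bound; it imports it from the cited work. Your maximal-packing and union-bound argument is the standard proof of this bound and yields exactly the stated constants: $4^b/(a\delta^b)$ from the disjoint $\delta/4$-balls, and the exponent $a(\delta/2)^b$ from the empty $\delta/2$-balls.

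In the one regime where the count $N=1$ could exceed $4^b/(a\delta^b)$, namely $a(\delta/4)^b\ge 1$, your closing remark in Step 4 still covers it. That condition forces $a(\delta/2)^b\ge 1$, so the probability is zero.
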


\begin{cor}[Intrinsic confidence region under \((a,b)\)-standard assumptions]\label{cor:ab_intrinsic}
Assume that the hypotheses of \Cref{thm:chazal_hausdorff} hold with
\(\rho=d_X\). Then, for every \(\delta>0\),
\[
\mathbb P\!\left(
 d_I\!\left(\mathcal F_R,\mathcal R_{\delta}^{S_n,d_X}\right)
 \le
 \mu_\delta
\right)
\ge 1-\alpha_\delta.
\]
If \(\omega_f\) is concave and \(\omega_f(0)=0\), then \(\mu_\delta\) can be
replaced by \(\omega_f(\delta)\); in particular, if \(f\) is \(1\)-Lipschitz with
respect to \(d_X\), then the confidence radius can be taken to be \(\delta\).
\end{cor}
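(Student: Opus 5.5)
The plan is to combine the deterministic intrinsic stability result, \Cref{thm:intrinsic_reeb_stability}, with the Hausdorff tail bound of \Cref{thm:chazal_hausdorff} applied in the metric \(\rho=d_X\). Fix \(\delta>0\) and consider the event
\[
E_\delta\coloneqq\bigl\{d_H^{X}(S_n,X)\le\delta\bigr\}.
\]
Since \(S_n\subseteq X\) is finite, the infimum in \(\inf_{y\in S_n}d_X(x,y)\) is a minimum. Hence on \(E_\delta\), for every \(x\in X\), there exists \(s\in S_n\) with \(d_X(x,s)\le\delta\). That is, \(\delta\) is an intrinsic covering radius for \(S_n\), exactly as required in the hypothesis of \Cref{thm:intrinsic_reeb_stability}.

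On \(E_\delta\) the deterministic theorem then gives \(d_I(\mathcal F_R,\mathcal R_\delta^{S_n,d_X})\le\mu_\delta\). The rest of the setting of \Cref{sec:delta_mapper_setting} is not random: \(X\) is compact with finite \(d_X\), \((X,f)\) is constructible, and \(\omega_f\) is fixed. So we get the event inclusion
\[
E_\delta\subseteq\bigl\{d_I(\mathcal F_R,\mathcal R_\delta^{S_n,d_X})\le\mu_\delta\bigr\}.
\]
By \Cref{thm:chazal_hausdorff} with \(\rho=d_X\), we have \(\mathbb P(E_\delta^c)=\mathbb P(d_H^X(S_n,X)>\delta)\le\alpha_\delta\). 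Monotonicity of \(\mathbb P\) then yields the claimed lower bound \(1-\alpha_\delta\).

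Measurability of the event \(\{d_I(\cdots)\le\mu_\delta\}\) could be a concern. I would sidestep it by reading the probability as an inner probability, or by noting that it contains the event \(E_\delta\), whose probability is already controlled in \Cref{thm:chazal_hausdorff}. This is the only delicate point, and it is standard in the confidence-set literature.

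The refinements are immediate from the estimates already established. If \(\omega_f\) is concave with \(\omega_f(0)=0\), then \eqref{eq:theta_concave_modulus} gives \(\mu_\delta=\omega_f(\delta)\). If \(f\) is \(1\)-Lipschitz for \(d_X\), one may take \(\omega_f(r)=r\), which is concave, so \(\mu_\delta\le\delta\). In both cases the event inclusion holds with the smaller radius, and the same probability bound follows.
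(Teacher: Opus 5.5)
Your proposal is correct and follows the paper's argument, which instantiates the general event-inclusion mechanism stated at the start of \Cref{conf}. On the event $\{d_H^X(S_n,X)\le\delta\}$, $\delta$ is an intrinsic covering radius, so \Cref{thm:intrinsic_reeb_stability} yields the $\mu_\delta$ bound, \Cref{thm:chazal_hausdorff} with $\rho=d_X$ bounds the probability of the complement by $\alpha_\delta$, and the refinements follow from \eqref{eq:theta_concave_modulus}.
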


\begin{cor}[Extrinsic confidence region under \((a,b)\)-standard assumptions]\label{cor:ab_extrinsic}
Assume that the hypotheses of \Cref{thm:chazal_hausdorff} hold with
\(\rho=\|\cdot\|\), and suppose moreover that \(X\) has positive reach
\(\tau=\rch(X)>0\). Then, for every \(\delta\in(0,\tau)\),
\[
\mathbb P\!\left(
 d_I\!\left(\mathcal F_R,\mathcal R_{\delta}^{S_n,\|\cdot\|}\right)
 \le
 \omega_f(\eta_\tau(\delta))
\right)
\ge 1-\alpha_\delta.
\]
If \(f\) is \(1\)-Lipschitz and \(2\delta<\tau\), the confidence radius
\(\omega_f(\eta_\tau(\delta))\) can be replaced by \((2\pi/3)\delta\).
\end{cor}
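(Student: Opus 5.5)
The plan is to reduce the statement to \Cref{thm:extrinsic_reeb_stability} by working on a single event. Let
\[
E\coloneqq\{d_H^{E}(S_n,X)\le\delta\}.
\]
This is precisely the event on which \(\delta\) is a Euclidean covering radius for \(S_n\), i.e.\ every \(x\in X\) has some \(s\in S_n\) with \(\|x-s\|\le\delta\). One point needs a word: the infimum defining \(d_H^E\) is attained because \(S_n\) is finite, so the Hausdorff bound gives an actual nearest sample point at distance at most \(\delta\).

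First I would apply \Cref{thm:chazal_hausdorff} with \(\mathbb M=\mathbb R^m\), \(\rho=\|\cdot\|\), and support \(X\), which is compact by the standing assumptions of \Cref{sec:delta_mapper_setting}. This gives
\[
\mathbb P(E^c)=\mathbb P\bigl(d_H^E(S_n,X)>\delta\bigr)\le\alpha_\delta,
\qquad\text{hence}\qquad
\mathbb P(E)\ge 1-\alpha_\delta.
\]

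Next, on \(E\), every hypothesis of \Cref{thm:extrinsic_reeb_stability} holds: positive reach \(\tau\), the range \(0<\delta<\tau\), and the Euclidean covering radius. That theorem then yields deterministically
\[
d_I\bigl(\mathcal F_R,\mathcal R_\delta^{S_n,\|\cdot\|}\bigr)\le\omega_f(\eta_\tau(\delta)).
\]
So the event in the statement contains \(E\), and its probability is at least \(1-\alpha_\delta\).

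For the \(1\)-Lipschitz refinement with \(2\delta<\tau\), the same theorem on the same event gives the radius \((2\pi/3)\delta\), and the probability bound carries over unchanged.

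No step is genuinely hard. The only technical point is measurability of the event \(\{d_I\le\cdot\}\), and I would avoid it by reading the probability as inner probability, or equivalently by noting the inclusion of the measurable event \(E\). The remark in the preceding section then converts this into a confidence ball.
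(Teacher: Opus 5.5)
Your proposal is correct and is essentially the paper's argument. The paper gives no separate proof; it obtains the corollary from the general mechanism stated at the beginning of \Cref{conf}: on the event $\{d_H^{E}(S_n,X)\le\delta\}$, whose probability is at least $1-\alpha_\delta$ by \Cref{thm:chazal_hausdorff}, \Cref{thm:extrinsic_reeb_stability} (including its $1$-Lipschitz refinement) applies deterministically, and your remarks on the attained minimum over the finite sample and on measurability are harmless additions that the paper leaves implicit.
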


In practice, once the confidence level \(\alpha\) is fixed, one may choose
\(\delta_\alpha\) as the smallest solution of \(\alpha_\delta\le \alpha\) in the
relevant metric, and then build the intrinsic or extrinsic PL-Reeb estimator at
that scale.

\subsection{Using subsampling \`a la~\cite{fasy}}\label{sub}

We now turn to the subsampling strategy of~\cite{fasy}. The relevant Hausdorff
distance in that work is the ambient Euclidean Hausdorff distance. Accordingly,
throughout this subsection we assume the hypotheses of Theorem~3 in~\cite{fasy};
in particular, \(X=M\) is a compact manifold without boundary, embedded in
\(\mathbb R^D\), with positive reach, and the sampling distribution is supported
exactly on \(M\) in the noiseless setting.

Let \(b_n\) be such that
\[
b_n=o\!\left(\frac{n}{\log n}\right)
\qquad\text{and}\qquad
b_n\to +\infty.
\]
Draw all \(N=\binom{n}{b_n}\) subsamples of \(S_n\) of cardinality \(b_n\), and
denote them by \(S_{b_n,n}^1,\ldots,S_{b_n,n}^N\).  Define
\[
L_{b_n}(\delta)\coloneqq
\frac{1}{N}\sum_{j=1}^N
\mathbbm{1}_{\{d_H^E(S_{b_n,n}^j,S_n)>\delta\}},
\]
and set
\begin{equation}\label{quant_sub}
\delta_{b_n,\alpha}^{\rm raw}\coloneqq 2\,L_{b_n}^{-1}(\alpha).
\end{equation}
By Theorem~3 in~\cite{fasy}, for all sufficiently large \(n\),
\[
\mathbb P\!\left(d_H^E(S_n,X)>\delta_{b_n,\alpha}^{\rm raw}\right)
\le
\alpha+O\!\left(\frac{b_n}{n}\right)^{1/4}.
\]
This gives the following PL-Reeb confidence regions.

\begin{cor}[Extrinsic confidence region via subsampling]\label{cor:sub_extrinsic}
Assume the standing hypotheses of this subsection. Then, for all sufficiently
large \(n\) such that \(\delta_{b_n,\alpha}^{\rm raw}<\rch(X)\) almost surely,
\[
\mathbb P\!\left(
 d_I\!\left(\mathcal F_R,
 \mathcal R_{\delta_{b_n,\alpha}^{\rm raw}}^{S_n,\|\cdot\|}\right)
 \le
 \omega_f(\eta_{\rch(X)}(\delta_{b_n,\alpha}^{\rm raw}))
\right)
\ge
1-\alpha-O\!\left(\frac{b_n}{n}\right)^{1/4}.
\]
If \(f\) is \(1\)-Lipschitz and \(2\delta_{b_n,\alpha}^{\rm raw}<\rch(X)\) almost surely, the
confidence radius may be replaced by \((2\pi/3)\delta_{b_n,\alpha}^{\rm raw}\).
\end{cor}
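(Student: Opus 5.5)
The proof is a direct combination of the subsampling tail bound of~\cite{fasy} with the deterministic extrinsic stability result, \Cref{thm:extrinsic_reeb_stability}. Write \(\tau=\rch(X)\) and \(\hat\delta=\delta_{b_n,\alpha}^{\rm raw}\), and consider the event
\[
E\coloneqq\bigl\{d_H^E(S_n,X)\le \hat\delta\bigr\}.
\]
By Theorem~3 in~\cite{fasy}, recalled above, for all sufficiently large \(n\) we have \(\mathbb P(E^c)\le \alpha+O\!\left(b_n/n\right)^{1/4}\). Hence it suffices to show that, on \(E\), the interleaving bound holds deterministically.

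On \(E\), every \(x\in X\) has some \(s\in S_n\) with \(\|x-s\|\le\hat\delta\). Since \(S_n\) is finite, the infimum defining the covering radius is attained, so \(\hat\delta\) is a Euclidean covering radius for \(S_n\) in the sense of \Cref{thm:extrinsic_reeb_stability}. By hypothesis \(0<\hat\delta<\tau\) almost surely. If \(\hat\delta=0\), then \(S_n=X\), and we may either perturb \(\hat\delta\) upward or treat this degenerate case separately; we will note this but not dwell on it.

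With these conditions in place, \Cref{thm:extrinsic_reeb_stability}, applied pointwise on \(E\) with \(\delta=\hat\delta\), gives
\[
d_I\bigl(\mathcal F_R,\mathcal R^{S_n,\|\cdot\|}_{\hat\delta}\bigr)\le\omega_f(\eta_\tau(\hat\delta)).
\]
Therefore the event in the statement contains \(E\), and its probability is at least \(1-\alpha-O\!\left(b_n/n\right)^{1/4}\).

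For the \(1\)-Lipschitz refinement, we invoke the second part of \Cref{thm:extrinsic_reeb_stability} under the assumption \(2\hat\delta<\tau\), which yields the radius \((2\pi/3)\hat\delta\) on the same event \(E\).

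The main subtlety is that \(\hat\delta\) is random, whereas the general confidence-region principle stated at the start of this section was phrased for a deterministic radius. This causes no difficulty, because the stability theorem is a deterministic, pointwise statement valid for every realization in which the covering and reach conditions hold. We therefore only need that the event \(E\), and the target event, are measurable. Both are defined in terms of finitely many sample points and the quantile of finitely many subsample distances, so measurability can be assumed or checked routinely. We do not invoke any uniformity in \(\delta\).
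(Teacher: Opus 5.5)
Your proposal is correct and is the argument the paper intends: on the event \(\{d_H^E(S_n,X)\le\delta_{b_n,\alpha}^{\rm raw}\}\), whose complement has probability at most \(\alpha+O((b_n/n)^{1/4})\) by Theorem~3 of \cite{fasy}, intersected with the almost-sure event \(\{\delta_{b_n,\alpha}^{\rm raw}<\rch(X)\}\), you apply \Cref{thm:extrinsic_reeb_stability} realization by realization. As you note, this pointwise application works for a data-dependent radius just as for a deterministic one. Your worry about \(\hat\delta=0\) does not arise: on that event it would force \(X\subseteq S_n\), which is impossible for the positive-dimensional compact manifold assumed in this subsection.
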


The construction of \(\delta_{b_n,\alpha}^{\rm raw}\) is extrinsic. We can also
use it to obtain an intrinsic covering radius by applying the reach distortion
estimate. If \(d_H^E(S_n,M)\le \delta_{b_n,\alpha}^{\rm raw}\) and
\(\delta_{b_n,\alpha}^{\rm raw}<2\rch(M)\), then
\[
d_H^X(S_n,M)
\le
\psi_{\rch(M)}(\delta_{b_n,\alpha}^{\rm raw})
=
2\rch(M)\arcsin\!\left(\frac{\delta_{b_n,\alpha}^{\rm raw}}{2\rch(M)}\right).
\]
Set
\[
\delta_{b_n,\alpha}^{X,{\rm raw}}
\coloneqq
\psi_{\rch(M)}(\delta_{b_n,\alpha}^{\rm raw}).
\]
If one wants a reach-free constant at smaller scales, then
\(\delta_{b_n,\alpha}^{X,{\rm raw}}\le (\pi/3)\delta_{b_n,\alpha}^{\rm raw}\)
whenever \(\delta_{b_n,\alpha}^{\rm raw}\le \rch(M)\), by
\Cref{cor:metric_equivalence_no_reach}.

\begin{cor}[Intrinsic confidence region via subsampling]\label{cor:sub_intrinsic}
Assume the standing hypotheses of this subsection. Then, for all sufficiently
large \(n\) such that \(\delta_{b_n,\alpha}^{\rm raw}<2\rch(X)\) almost surely,
\[
\mathbb P\!\left(
 d_I\!\left(\mathcal F_R,
 \mathcal R_{\delta_{b_n,\alpha}^{X,{\rm raw}}}^{S_n,d_X}\right)
 \le
 \mu_{\delta_{b_n,\alpha}^{X,{\rm raw}}}
\right)
\ge
1-\alpha-O\!\left(\frac{b_n}{n}\right)^{1/4}.
\]
If \(f\) is \(1\)-Lipschitz and the concavity simplification applies, one may use
\(\delta_{b_n,\alpha}^{X,{\rm raw}}\) as the confidence radius.
\end{cor}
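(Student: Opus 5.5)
The plan is to reduce \Cref{cor:sub_intrinsic} to a deterministic statement on a good event, following the general confidence-region mechanism described at the start of \Cref{conf}. Let
\[
E\coloneqq\bigl\{d_H^E(S_n,X)\le \delta_{b_n,\alpha}^{\rm raw}\bigr\}.
\]
By Theorem~3 in~\cite{fasy}, as recalled above, for all sufficiently large \(n\) we have
\[
\mathbb P(E)\ge 1-\alpha-O\!\left(\frac{b_n}{n}\right)^{1/4}.
\]
It therefore suffices to show that, on \(E\), the interleaving bound
\[
d_I\!\left(\mathcal F_R,\mathcal R_{\delta^{X,{\rm raw}}_{b_n,\alpha}}^{S_n,d_X}\right)\le \mu_{\delta^{X,{\rm raw}}_{b_n,\alpha}}
\]
holds deterministically.

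\textbf{Step 1: random radius.} Here \(\delta_{b_n,\alpha}^{\rm raw}\) is random, since it is computed from \(S_n\) through \(L_{b_n}\). I will therefore argue pathwise rather than treating the radius as deterministic. Fix an outcome in \(E\) for which \(\delta_{b_n,\alpha}^{\rm raw}<2\rch(X)\); this inequality holds almost surely by hypothesis. At this outcome, \(\delta\coloneqq\delta_{b_n,\alpha}^{\rm raw}\) is a Euclidean covering radius for \(S_n\), and \(0<\delta<2\tau\).

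The degenerate case \(\delta=0\) would force \(S_n=X\). I will exclude it, since \(X\) is a manifold and hence infinite, or absorb it by a limiting remark.

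\textbf{Step 2: deterministic bound.} At the fixed outcome, apply \Cref{cor:intrinsic_pl_stability_from_euclidean_radius} with \(\delta\). Its proof shows that \(\delta^X=\psi_\tau(\delta)=\delta^{X,{\rm raw}}_{b_n,\alpha}\) is an intrinsic covering radius, and it yields
\[
d_I\!\left(\mathcal F_R,\mathcal R^{S_n,d_X}_{\delta^X}\right)\le\mu_{\delta^X}.
\]
The remaining setting hypotheses are already in place: \(X=M\) is compact, \(d_X\) is finite, and \((X,f)\) is constructible. Finiteness of \(d_X\) needs a short check, because \(M\) might be disconnected. If so, I will note that the setting of \Cref{sec:delta_mapper_setting} is being assumed as part of the standing hypotheses.

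Note that the graph and the radius \(\mu\) are evaluated at the same random scale. The event inclusion
\[
E\cap\{\delta^{\rm raw}<2\tau\}\subseteq\{d_I\le\mu_{\delta^X}\}
\]
then gives the probability bound. The only measurability concern is whether \(\{d_I\le\mu\}\) is an event. I will sidestep this by stating the conclusion as a lower bound via \(\mathbb P(E)\), interpreted as inner probability if necessary.

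\textbf{Step 3: the \(1\)-Lipschitz case.} For \(1\)-Lipschitz \(f\), take \(\omega_f(r)=r\), which is concave. Then \eqref{eq:theta_concave_modulus} gives \(\mu_{\delta^X}\le \delta^X\), so the radius may be replaced by \(\delta_{b_n,\alpha}^{X,{\rm raw}}\).

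The main (mild) obstacle is Step 1: the subsampling quantile is data-dependent. The deterministic stability theorem must be applied outcome by outcome, on the event where the random radius dominates the true Hausdorff covering radius. This is legitimate precisely because \Cref{cor:intrinsic_pl_stability_from_euclidean_radius} is a deterministic statement valid for every admissible \(\delta\).
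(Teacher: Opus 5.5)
Your proposal is correct and follows the paper's own argument. On the event \(\{d_H^E(S_n,X)\le\delta_{b_n,\alpha}^{\rm raw}\}\), whose probability is bounded below by Theorem~3 of~\cite{fasy}, the reach-distortion estimate converts \(\delta_{b_n,\alpha}^{\rm raw}\) into the intrinsic covering radius \(\delta_{b_n,\alpha}^{X,{\rm raw}}=\psi_{\rch(M)}(\delta_{b_n,\alpha}^{\rm raw})\); intrinsic PL--Reeb stability (as packaged in \Cref{cor:intrinsic_pl_stability_from_euclidean_radius}) then gives the bound, and the \(1\)-Lipschitz case follows from the concave-modulus estimate. Your outcome-by-outcome treatment of the data-dependent radius is a welcome addition, since the paper states its general confidence-region mechanism for a deterministic radius and leaves this point implicit.
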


Equivalently, the raw subsampling construction gives the formal confidence balls
\[
\left\{
\mathcal G
\middle|
 d_I\!\left(\mathcal G,
 \mathcal R_{\delta_{b_n,\alpha}^{\rm raw}}^{S_n,\|\cdot\|}\right)
 \le
 \omega_f(\eta_{\rch(X)}(\delta_{b_n,\alpha}^{\rm raw}))
\right\}, \qquad
\left\{
\mathcal G
\middle|
 d_I\!\left(\mathcal G,
 \mathcal R_{\delta_{b_n,\alpha}^{X,{\rm raw}}}^{S_n,d_X}\right)
 \le
 \mu_{\delta_{b_n,\alpha}^{X,{\rm raw}}}
\right\},
\]
with confidence at least
\(1-\alpha-O((b_n/n)^{1/4})\), under the assumptions above.

The raw subsampling radius is the theorem-supported choice, but it is often very
conservative in finite samples. This conservativeness is structural. The
empirical distribution \(L_{b_n}\) is built from the quantities
$d_H^E(S_{b_n,n}^j,S_n),
$
so each subsample of size \(b_n\) is asked to cover the full sample \(S_n\). The
resulting quantile \(L_{b_n}^{-1}(\alpha)\) is therefore governed by a covering
radius at the subsample scale \(b_n\), rather than by the covering radius of the
full sample \(S_n\). The factor \(2\) in
$\delta_{b_n,\alpha}^{\rm raw}=2\,L_{b_n}^{-1}(\alpha)
$
is part of the subsampling argument of~\cite{fasy} and gives the asymptotic
confidence statement above; it also contributes to the finite-sample
conservativeness of the radius. In the notation used here, the bias comes from
using the subsample-to-sample covering scale \(d_H^E(S_{b_n,n}^j,S_n)\) to
control the sample-to-support covering scale \(d_H^E(S_n,X)\).

This motivates a rate correction by comparing two covering problems that occur
at different sample sizes.  The empirical quantile
\(L_{b_n}^{-1}(\alpha)\) estimates the radius needed for a set of
\(b_n\) sample points to cover the observed cloud \(S_n\).  What is needed for
constructing the estimator, however, is a radius at the scale of the full sample:
\(S_n\) has to cover the underlying support \(X\).  For samples on a
\(d\)-dimensional support, random covering radii are expected to scale as
$\left(\frac{\log m}{m}\right)^{1/d},
$
up to distribution- and geometry-dependent constants; see, for example,
\cite{janson1986randomCoverings,penrose2021randomEuclideanCoverage}.  Thus the
subsample-to-sample radius at sample size \(b_n\) can be converted into a
sample-to-support radius at sample size \(n\) by replacing the covering-rate
factor at \(b_n\) with the corresponding factor at \(n\).  This is the heuristic
behind the corrected radii used below.

Motivated by this comparison, our experiments also consider two corrected
Euclidean radii. The first uses the full logarithmic covering-rate correction:
\[
\delta_{b_n,\alpha}^{\rm log}
\coloneqq
\delta_{b_n,\alpha}^{\rm raw}
\left(
\frac{\log n/n}{\log b_n/b_n}
\right)^{1/d}.
\]
The second keeps only the power-law part of the same correction:
\[
\delta_{b_n,\alpha}^{\rm pow}
\coloneqq
\delta_{b_n,\alpha}^{\rm raw}
\left(
\frac{b_n}{n}
\right)^{1/d}.
\]
Here \(d\) is the intrinsic dimension used for the rate correction. The
logarithmic correction follows the usual covering-radius scale, while the
power-law correction ignores the logarithmic factor and gives a milder
calibration.

If a reach bound is available, the corresponding intrinsic corrected radii are
defined by the same reach conversion used above:
\[
\delta_{b_n,\alpha}^{X,{\rm log}}
\coloneqq
\psi_{\rch(X)}(\delta_{b_n,\alpha}^{\rm log}),
\qquad
\delta_{b_n,\alpha}^{X,{\rm pow}}
\coloneqq
\psi_{\rch(X)}(\delta_{b_n,\alpha}^{\rm pow}),
\]
whenever the arguments lie in the range where the reach comparison applies. If
an estimated intrinsic distance matrix is used instead, the same two rate
corrections can be applied directly to the intrinsic subsampling quantile
computed from that matrix.

\begin{oss}
The corrected radii are empirical calibrations of the raw subsampling radius,
not theorem-level certificates in this paper.  They are validated numerically in
\Cref{subsec:subsampling_validation_appendix}, and a theoretical analysis of the
correction is left for future work.
\end{oss}

\section{Numerical Experiments}\label{expes}

The experiments illustrate the workflow developed in \Cref{conf}. The
statistically controlled object is the PL--Reeb estimator
\[
\mathcal R_{\delta}^{S_n,\rho}
=
\mathcal F_{(|\Gamma_{\delta}^{S_n,\rho}|,\hat f_n^\rho)},
\]
whereas geometric interpretation is carried out on its Mapper coarsening
\[
\widetilde{\mathcal M}_{\delta,\mathcal U}^{S_n,\rho}
=
\mathcal M_{\mathcal U}(\mathcal R_{\delta}^{S_n,\rho}).
\]
Extended-persistence signatures and confidence radii are computed on the PL
object. Features selected at the PL level are then pushed forward to the Mapper
graph through the multivalued map \(\Xi_I\), which provides a coarser and more
readable representation of their support. Thus, unless explicitly stated
otherwise, the points in the extended diagrams below are features of the
PL--Reeb estimator, while the adjacent Mapper drawings display their coarsened
supports.

All experiments use the height coordinate as filter. The cover \(\mathcal U\)
is a regular interval cover of the observed filter range with overlap fraction
\(0.45\). If \(\chi\) denotes the interval length, adjacent interval centers
are spaced by \(0.55\chi\). Since every active interval has diameter at most
\(\chi\), \Cref{prop:mapper_coarsening_error} gives a Mapper confidence radius
obtained by adding at most \(\chi\) to the PL confidence radius.
Implementation details, runtime benchmarks, validation of the rate-corrected
subsampling radii, and visualization conventions are collected in
\Cref{sec:implementation_visualization_notes}.

As mentioned in the Introduction, the experiments also highlight a practical
difference from the confidence framework of~\cite{JMLR:v19:17-291}. That work
likewise derives confidence regions by combining probabilistic control of the
sampling error with a deterministic approximation bound. At the sample sizes
considered in its experiments, however, the resulting regions are reported to
be too conservative for interpretation, and the numerical analysis is instead
based on a bottleneck bootstrap whose validity is left open. Instead, the sharper
PL--Reeb bounds developed here, see \Cref{sec:jmlr_comparison} for a
quantitative comparison, remain informative at the same finite-sample
scales. Once a valid bound on the sampling radius is available, its propagation
to an interleaving confidence region and to persistence confidence bands is
fully theorem-supported. Thus, the only additional statistical issue in the
numerical pipeline is the data-driven calibration of the sampling radius. For
this step, we use practically validated procedures proposed by the authors of
the corresponding probabilistic frameworks, whose precise theoretical status
for the implementations adopted here is clarified in the following remark.

\begin{oss}[Confidence levels and significance]
Throughout the experiments, we set \(1-\alpha=0.85\). We call a persistence
feature \emph{significant} when it lies outside the band produced by the
corresponding calibration procedure.
For the \((a,b)\)-standard method used below, a formal coverage guarantee is
available when the local-mass constant is estimated on a sample independent of
the one used to construct the topological estimator, as in the split-sample
procedure of~\cite{fasy}. Splitting the data, however, substantially reduces
the effective sample size and weakens the resulting estimates. We therefore
use the full sample both to estimate the local-mass constant and to construct
the PL--Reeb estimator. This unsplit implementation is reported
in~\cite{fasy} to perform well in practice, but its coverage guarantee has not
been formally established.
For the rate-corrected subsampling method, the correction of the raw
subsampling radius is likewise calibrated empirically, as discussed in
\Cref{subsec:subsampling_validation_appendix}. In both cases, the deterministic
propagation from the resulting sampling radius to interleaving and persistence
bounds remains theorem-supported. Accordingly, for these numerical
implementations, the term \emph{significant} means that a feature lies outside
the corresponding calibrated band; it should not be read as asserting that the
precise finite-sample coverage of the implemented calibration has been proved
to equal \(1-\alpha\).
\end{oss}

\subsection{Data sets and deterministic calibration}

We use two data sets.  The first is a sample from a torus of major radius \(15\) and minor radius \(4\), whose reach is therefore \(4\).  The second is the ant point cloud from~\cite{10.1145/1531326.1531379}.  In the statistical experiments the torus sample has \(5000\) points, while the deterministic calibration uses \(3000\) torus points; the ant sample has \(6370\) points.  The random seed is \(7\).  Intrinsic computations use an Isomap-type approximation of the geodesic distance with \(10\) nearest neighbors \cite{isomap,scikit-learn}: the weighted graph distance approximates \(d_X\) by replacing paths in \(X\)
with chains of nearby sample points and summing the ambient lengths of the
resulting short segments.

\begin{figure}
    \centering
    \begin{subfigure}[b]{0.48\textwidth}
        \centering
        \includegraphics[width=\textwidth]{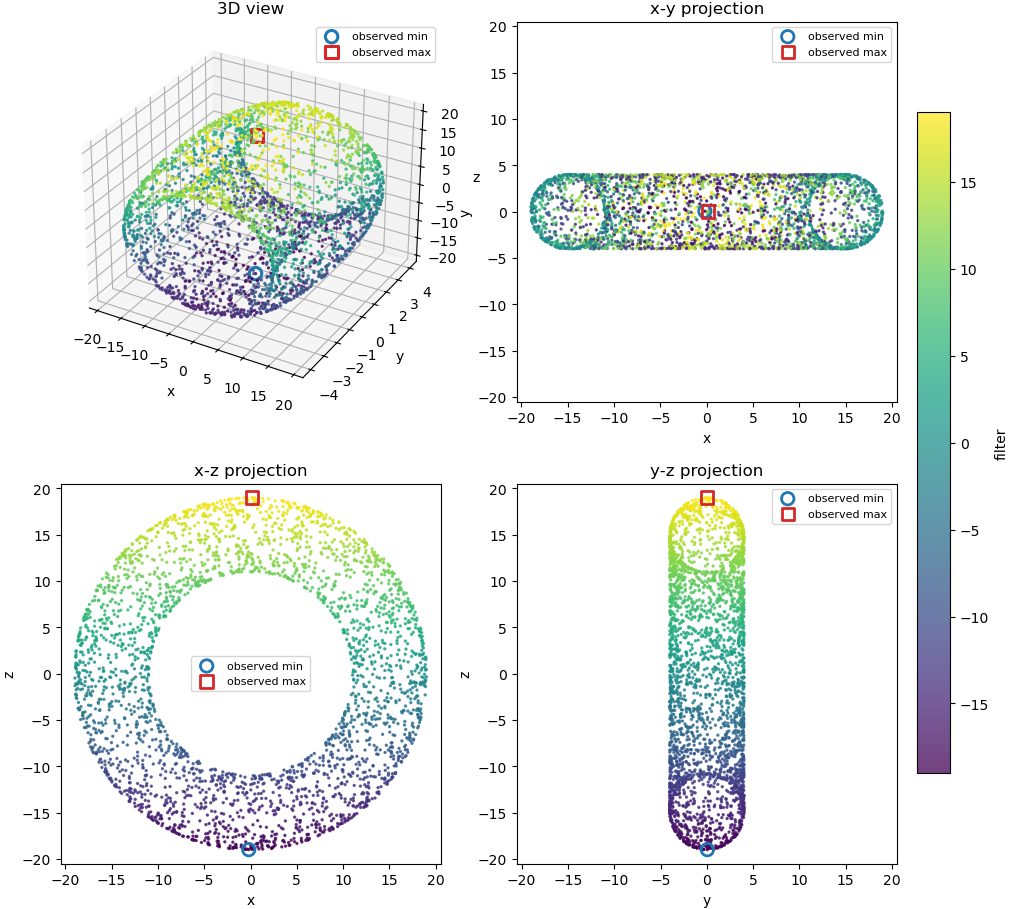}
        \caption{Torus.}
    \end{subfigure}
    \hfill
    \begin{subfigure}[b]{0.48\textwidth}
        \centering
        \includegraphics[width=\textwidth]{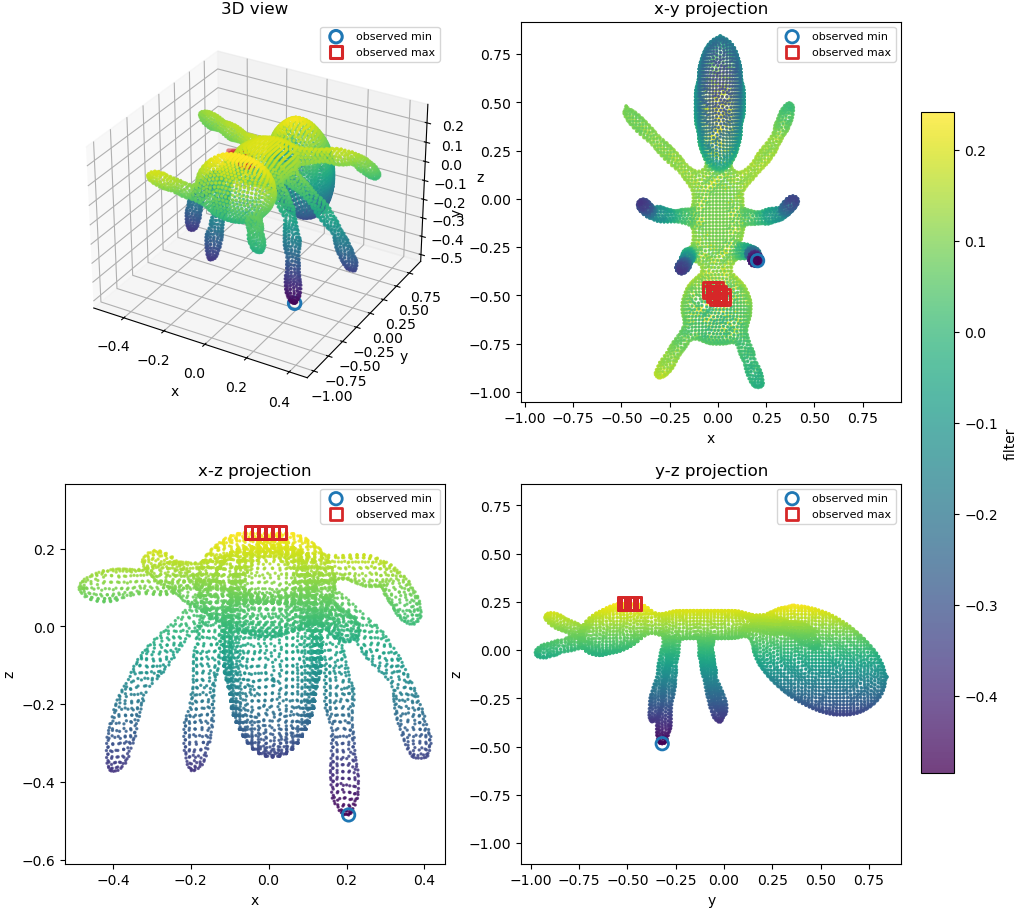}
        \caption{Ant.}
    \end{subfigure}
    \caption{Point clouds used in the experiments, coloured by the height filter.  The circled points are the observed global minimum and maximum of the filter.}
    \label{fig:exp_datasets}
\end{figure}

Before selecting scales statistically, we first inspect deterministic, hand-tuned scales.  These examples display the objects used later for inference and visualization.  For the torus we set \(\delta=1.2\) for both the Euclidean and intrinsic proximity graphs and take \(\chi=3.2\).  For the ant we use \(\delta_E=0.02902916\), \(\delta_X=0.04354374\), and \(\chi=0.10885935\).  The cover overlap fraction is again \(0.45\).  These hand-selected scales are not confidence radii; their role is to show the PL graph, the Mapper coarsening, and the PL-to-Mapper feature pushforward.

\begin{figure}
    \centering
    \begin{subfigure}[b]{0.30\textwidth}
        \centering
        \includegraphics[width=\textwidth]{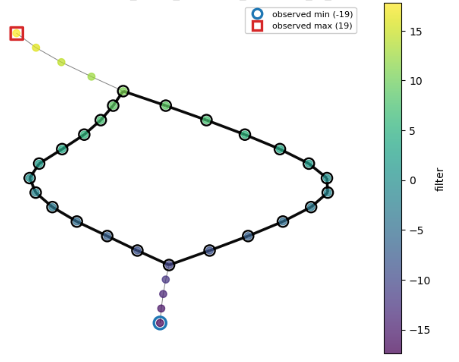}
        \caption{Mapper graph.}
    \end{subfigure}
    \hfill
    \begin{subfigure}[b]{0.30\textwidth}
        \centering
        \includegraphics[width=\textwidth]{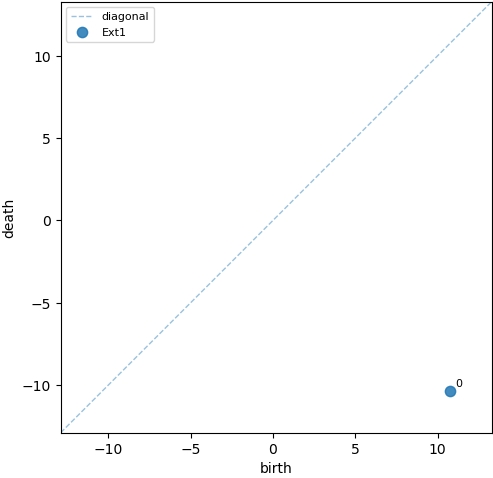}
        \caption{Mapper signature.}
    \end{subfigure}
    \hfill
    \begin{subfigure}[b]{0.30\textwidth}
        \centering
        \includegraphics[width=\textwidth]{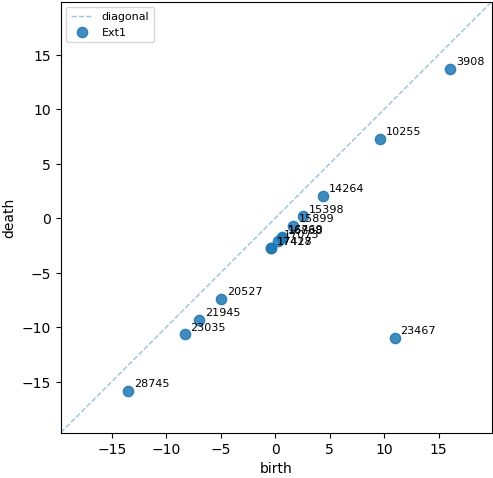}
        \caption{PL signature.}
    \end{subfigure}
    \caption{Hand-tuned extrinsic torus.  Both signatures display the dominant extended one-dimensional class in \(\operatorname{Ext}_1\), corresponding to the circular Reeb structure of the height function.  The Mapper graph highlights a representative cycle for this feature.}
    \label{fig:manual_torus_pipeline}
\end{figure}

\begin{figure}
    \centering
    \includegraphics[width=0.95\textwidth]{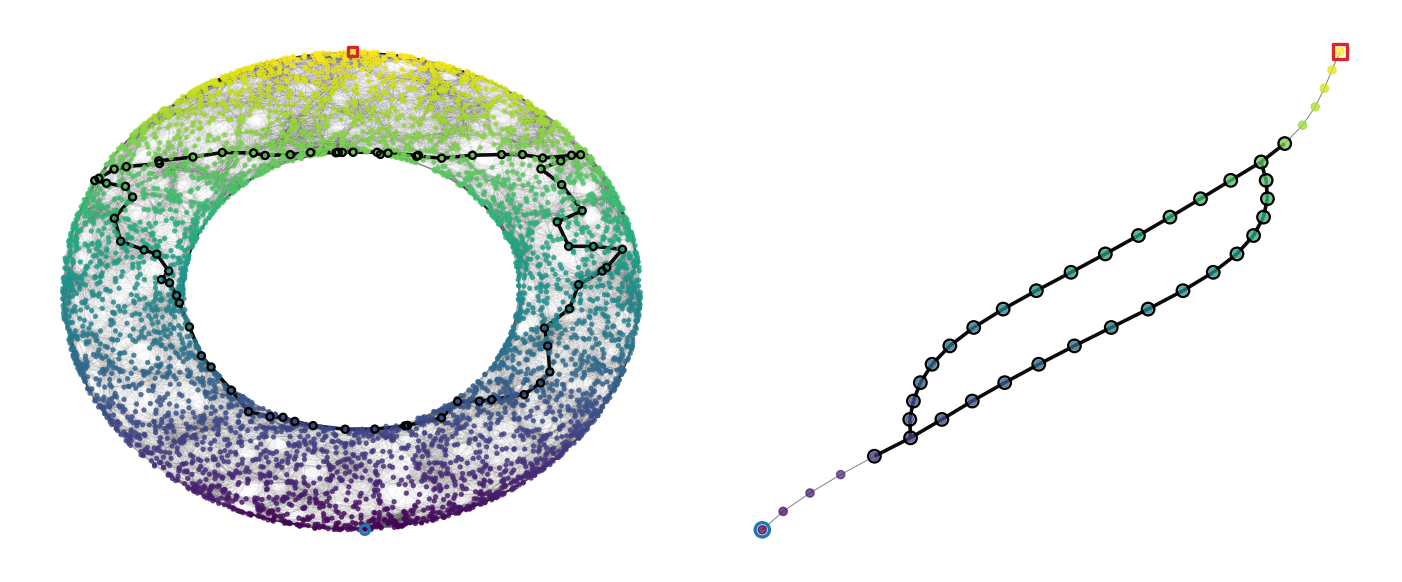}
    \caption{Hand-tuned extrinsic torus.  Left: a representative of the dominant \(\operatorname{Ext}_1\) PL feature drawn on the PL graph.  Right: the pushforward of the same feature to the Mapper graph via the multivalued map \(\Xi\).  The PL representative is difficult to parse geometrically, whereas the Mapper support makes the circular Reeb structure visually clear.  Some boundary effects in the displayed cycle may come from the multivalued nature of \(\Xi\), or from the gluing induced by the Mapper coarsening.}
    \label{fig:manual_torus_pl_to_mapper}
\end{figure}

\begin{figure}
    \centering
    \begin{subfigure}[b]{0.45\textwidth}
        \centering
        \includegraphics[width=\textwidth]{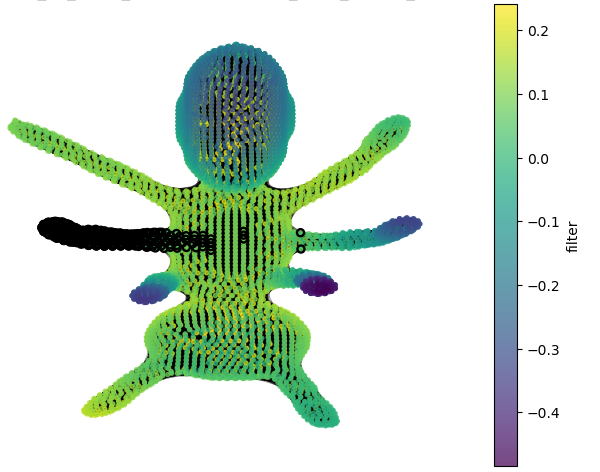}
        \caption{Intrinsic PL representative.}
    \end{subfigure}
    \hfill
    \begin{subfigure}[b]{0.45\textwidth}
        \centering
        \includegraphics[width=\textwidth]{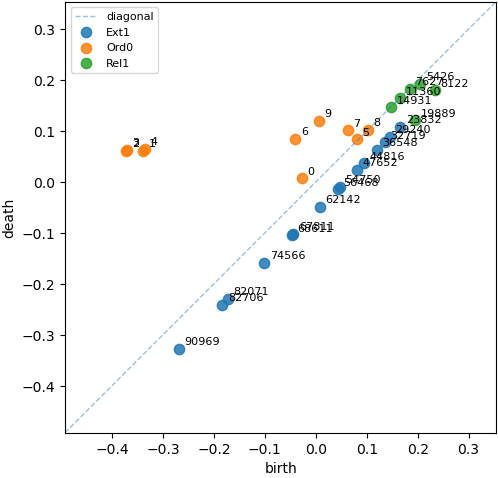}
        \caption{Intrinsic PL signature.}
    \end{subfigure}
    \caption{Hand-tuned intrinsic ant.  The highlighted support in the PL graph represents an ordinary zero-dimensional feature.  The PL signature displays the most persistent records of each type and already shows the richer branching structure of the ant.}
    \label{fig:manual_ant_pipeline}
\end{figure}

\begin{figure}
    \centering
    \begin{subfigure}[b]{\textwidth}
        \centering
        \includegraphics[width=0.82\textwidth]{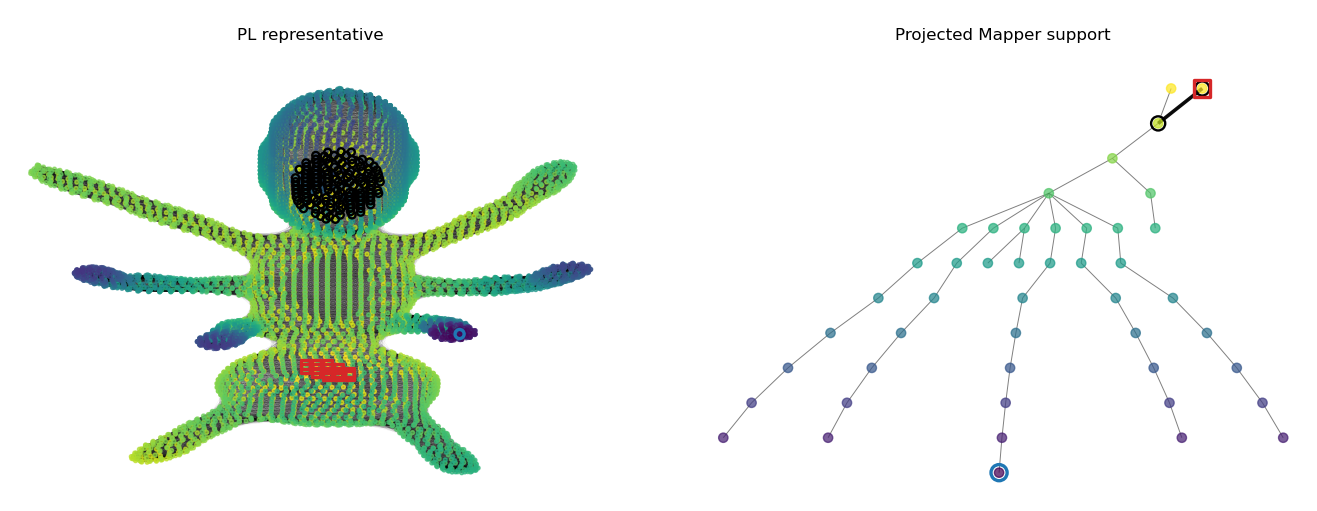}
        \caption{Euclidean metric.}
    \end{subfigure}

    \vspace{0.35em}

    \begin{subfigure}[b]{\textwidth}
        \centering
        \includegraphics[width=0.82\textwidth]{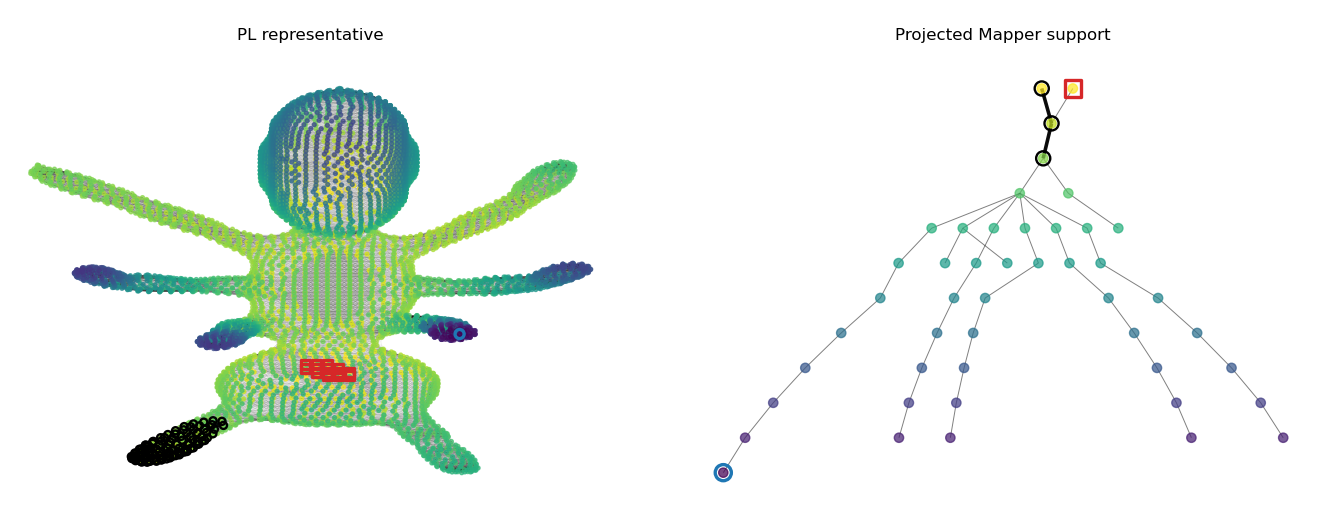}
        \caption{Intrinsic metric.}
    \end{subfigure}
    \caption{Hand-tuned ant, relative one-dimensional features on the PL graph and their Mapper pushforwards.  The intrinsic metric separates the geometry of the ant more faithfully: one selected feature is associated with the upward-pointing antenna, while the other corresponds to a local maximum in the abdomen.  After Mapper coarsening, the abdominal local maximum is merged with the component containing the global maximum at the head.}
    \label{fig:ant_pl_graph_metric_comparison}
\end{figure}

\Cref{fig:manual_torus_pipeline,fig:manual_torus_pl_to_mapper,fig:ant_pl_graph_metric_comparison} show the two roles of the pipeline.  The PL graph is the object on which approximation and statistical statements are made, while the Mapper graph is a simplified representation used to interpret selected PL features.  This distinction is especially visible for the torus cycle, whose PL representative is hard to read but whose Mapper support is clear.  The ant examples show the effect of the metric: intrinsic distances reduce ambient shortcuts and reveal features that the Euclidean graph can merge.

In the statistical experiments, persistence diagrams are drawn with confidence bands for the bottleneck distance; by \Cref{sec:dDelta_le_2dI}, the constants differ between zero- and one-dimensional features.  Persistence pairs outside the relevant band are interpreted as significant features of the unknown Reeb graph.  The stronger $d_I$-confidence statement also constrains how features may be rearranged: the interleaving maps compare the graphs through their smoothings, so features that remain distinct at the confidence scale cannot be identified by the interleaving maps.  Finally, features are interpreted as deviations from the trivial Reeb graph given by the interval \([\min f,\max f]\).  Components containing the global minimum or maximum are therefore not highlighted as features of interest: their existence is forced by the interval baseline.

\subsection{Using \texorpdfstring{\((a,b)\)}{(a,b)}-standard assumptions}

We now select \(\delta\) from the \((a,b)\)-standard tail bound.  The intrinsic dimension is fixed to \(b=2\).  Given the pilot radius
\[
r_n=C\left(\frac{\log n}{n}\right)^{1/(b+2)},
\]
with \(C=2\), we estimate the local mass constant by
\[
\widehat a_E
=
2^b\min_i\frac{P_n(B_E(X_i,r_n/2))}{r_n^b},
\qquad
\widehat a_X
=
2^b\min_i\frac{P_n(B_{d_X}(X_i,r_n/2))}{r_n^b}.
\]
The Euclidean estimate is the plug-in estimator used in the framework of~\cite{fasy}; the intrinsic version is the analogous numerical proxy computed from the Isomap distance matrix.  The dimension \(b\) is treated as known; if it were not known, one could use local PCA or related intrinsic-dimension estimators \cite{fan2010intrinsic}.  The selected scale \(\delta_\alpha\) is the smallest \(\delta\) such that
\[
\alpha_\delta
=\min\left\{1,\frac{4^b}{\widehat a\,\delta^b}
\exp\left(-\widehat a\left(\frac{\delta}{2}\right)^b n\right)\right\}
\le \alpha.
\]
The cover length is set to \(\chi=2\delta\), with overlap fraction \(0.45\).

\begin{table}[H]
\centering
\begin{tabular}{|c|c|c|c|c|c|c|}
\hline
Dataset & Metric & \(\widehat a\) & \(\delta\) & \(\chi\) & PL radius & Mapper radius\\
\hline
Torus & Euclidean & 0.004846 & 1.25528 & 2.51056 & 2.55371 & 5.06428\\
Torus & Intrinsic & 0.004846 & 1.25528 & 2.51056 & 1.25528 & 3.76584\\
Ant & Euclidean & 0.537648 & 0.106789 & 0.213578 & 0.213578 & 0.427156\\
Ant & Intrinsic & 0.508014 & 0.109860 & 0.219719 & 0.109860 & 0.329579\\
\hline
\end{tabular}
\caption{Parameters selected by the \((a,b)\)-standard plug-in procedure at confidence level \(0.85\).  The Mapper radius is the PL confidence radius plus the cover length \(\chi\).  For the torus, the Euclidean and intrinsic values of \(\widehat a\), \(\delta\), and \(\chi\) coincide because the pilot balls are very small: at this scale the Euclidean and intrinsic neighbourhood counts agree for the local-mass estimator.}
\label{tab:ab_parameters}
\end{table}

\begin{figure}
    \centering
    \begin{subfigure}[b]{0.24\textwidth}
        \centering
        \includegraphics[width=\textwidth]{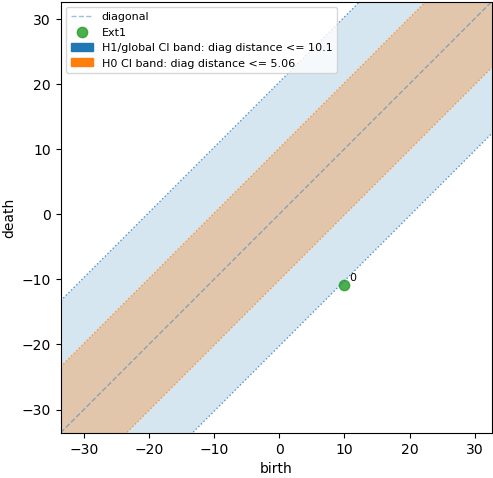}
        \caption{Ex. Mapper.}
    \end{subfigure}
    \begin{subfigure}[b]{0.24\textwidth}
        \centering
        \includegraphics[width=\textwidth]{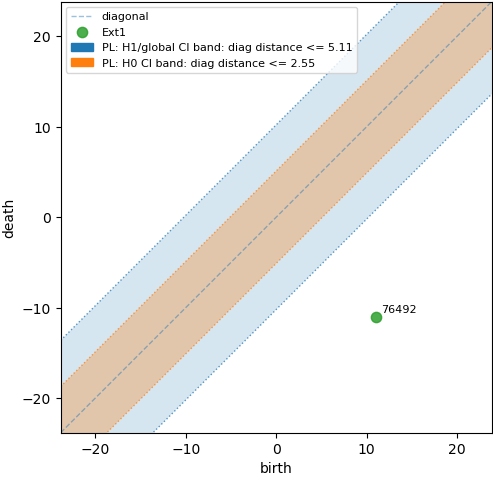}
        \caption{Ex. PL.}
    \end{subfigure}
    \begin{subfigure}[b]{0.24\textwidth}
        \centering
        \includegraphics[width=\textwidth]{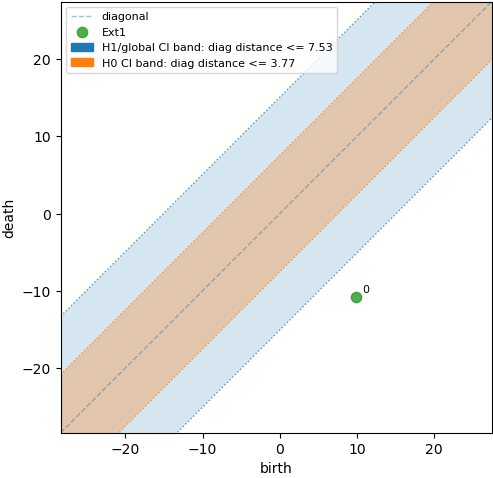}
        \caption{Int. Mapper.}
    \end{subfigure}
    \begin{subfigure}[b]{0.24\textwidth}
        \centering
        \includegraphics[width=\textwidth]{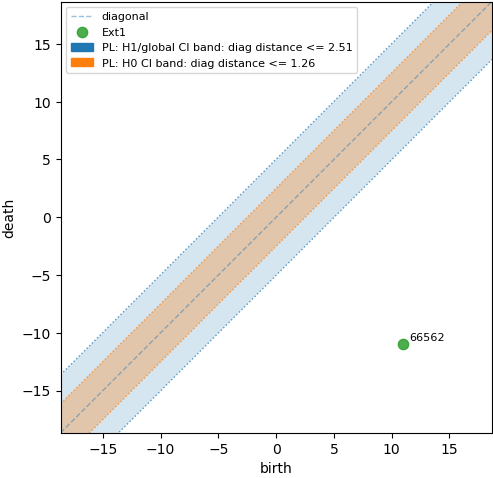}
        \caption{Int. PL.}
    \end{subfigure}
    \caption{Torus, \((a,b)\)-standard scale selection.  The panels compare Mapper and PL confidence bands for the Euclidean (Ex.) and intrinsic (Int.) metrics.  The dominant \(\operatorname{Ext}_1\) feature is separated from the diagonal in all settings, but the separation is much clearer in the PL diagrams than in the Mapper diagrams, and clearer for the intrinsic metric than for the Euclidean metric.}
    \label{fig:torus_ab_pipeline}
\end{figure}

For the torus, all four signatures in \Cref{fig:torus_ab_pipeline} identify the main loop in \(\operatorname{Ext}_1\), which is the expected topological feature of the height-filtered torus.  The equality of the Euclidean and intrinsic radii in \Cref{tab:ab_parameters} should not be over-interpreted: it is caused by the fixed-radius local-mass counts being identical at the small pilot scale.  The graph constructions remain different, and the intrinsic PL confidence band is still sharper.  This again illustrates why the statistical decision is made at the PL level, while the Mapper graph is used for visualization.

\begin{figure}
    \centering
    \begin{subfigure}[b]{0.24\textwidth}
        \centering
        \includegraphics[width=\textwidth]{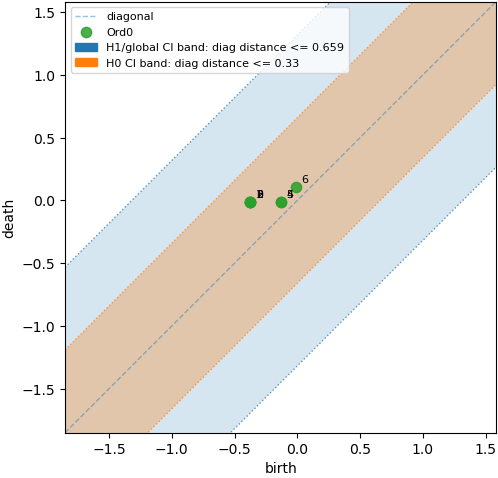}
        \caption{Int. Mapper.}
    \end{subfigure}
    \begin{subfigure}[b]{0.24\textwidth}
        \centering
        \includegraphics[width=\textwidth]{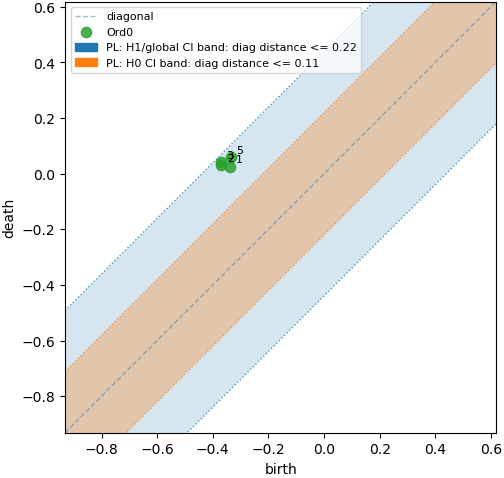}
        \caption{Int. PL.}
    \end{subfigure}
    \begin{subfigure}[b]{0.24\textwidth}
        \centering
        \includegraphics[width=\textwidth]{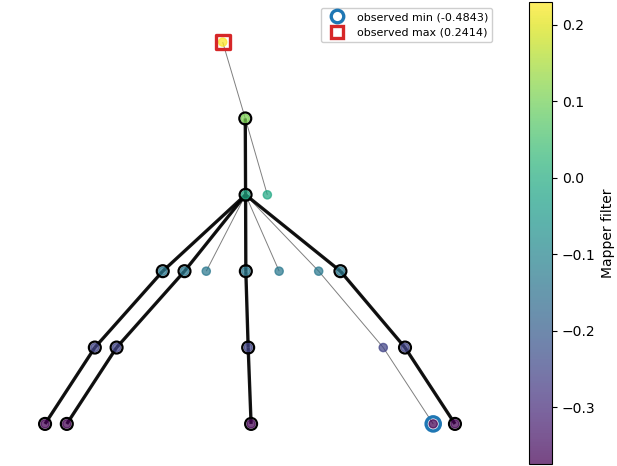}
        \caption{Int. pushforward.}
    \end{subfigure}
    \begin{subfigure}[b]{0.24\textwidth}
        \centering
        \includegraphics[width=\textwidth]{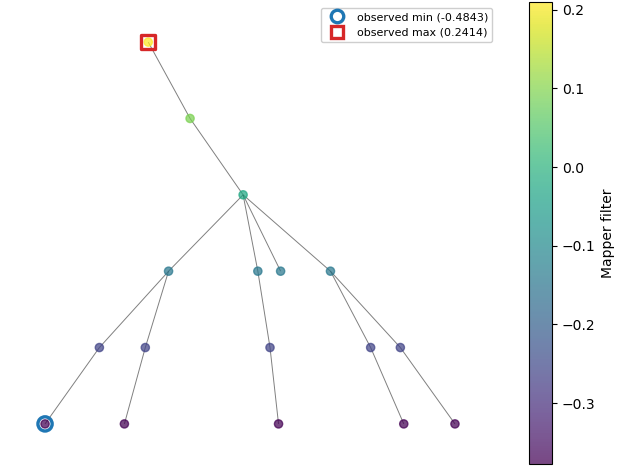}
        \caption{Ex. pushforward.}
    \end{subfigure}
    \caption{Ant, \((a,b)\)-standard scale selection.  Compared with \Cref{fig:torus_ab_pipeline}, significant features are harder to detect.  The Euclidean construction does not produce non-baseline significant features, and the intrinsic Mapper signature remains too coarse: its persistence pairs lie inside the confidence bands.  The intrinsic PL graph, however, detects several significant branching features in \(\operatorname{Ext}_0\), which are pushed forward to the Mapper graph in the intrinsic panel.  As expected from \Cref{fig:ant_pl_graph_metric_comparison}, these features are associated with the four legs with lower local minima and with the tail of the ant.}
    \label{fig:ant_ab_pipeline}
\end{figure}

The ant data set is more difficult.  The Euclidean construction gives a Mapper graph close to the hand-tuned picture, but the confidence radius is too large for non-baseline features to be significant.  The intrinsic Mapper confidence set is still too coarse, whereas the intrinsic PL graph is fine enough for the branching features visible in \Cref{fig:ant_ab_pipeline} to be significant.  This is the clearest instance in the experiments where the intrinsic metric and the PL-level confidence statement are both needed.

\subsection{Rate-corrected subsampling}

We next select the scale by subsampling.  For a subsample size \(b_n\), the raw Method-I radius of~\cite{fasy} is
\[
\delta_{b_n,\alpha}^{\rm raw}=2L_{b_n}^{-1}(\alpha),
\qquad
L_{b_n}(t)=\frac1N\sum_{j=1}^N\mathbbm 1_{\{d_H(S_{b_n,n}^j,S_n)>t\}},
\]
where the experiments use \(N=1000\) resamples and
\[
b_n=\left\lfloor \frac{n}{(\log n)^{1.001}}\right\rfloor .
\]
Thus \(b_n=585\) for the torus and \(b_n=725\) for the ant.  As discussed in \Cref{sub}, the raw radius is conservative in finite
samples. We therefore consider two rate corrections, using a slightly
simplified notation with respect to \Cref{sub}:
\[
\delta_{\log}
=
\delta_{b_n,\alpha}^{\rm raw}
\left(\frac{\log(n)/n}{\log(b_n)/b_n}\right)^{1/d},
\qquad
\delta_{\rm pow}
=
\delta_{b_n,\alpha}^{\rm raw}
\left(\frac{b_n}{n}\right)^{1/d}.
\]
The logarithmic correction \(\delta_{\log}\) is more conservative, whereas the no-log correction \(\delta_{\rm pow}\) is more aggressive.  In the two-dimensional supports used for the displayed experiments, the validation results of \Cref{subsec:subsampling_validation_appendix} show that \(\delta_{\rm pow}\) performs very well.  We therefore use \(\delta_{\rm pow}\) for the main figures, and we also check \(\delta_{\log}\) to rule out artifacts caused by an overly aggressive correction.  The resulting significant features are comparable.

\begin{table}[H]
\centering
\begin{tabular}{|c|c|c|c|c|c|c|}
\hline
Dataset & Metric & \(\delta_{\rm raw}\) & \(\delta_{\log}\) & \(\delta_{\rm pow}\) & PL radius & Mapper radius\\
\hline
Torus & Euclidean & 6.77331 & 2.67866 & 2.31683 & 4.94205 & 9.57570\\
Torus & Intrinsic & 8.07747 & 2.73142 & 2.35050 & 2.35050 & 7.05150\\
Ant & Euclidean & 0.186042 & 0.072382 & 0.062764 & 0.125528 & 0.251056\\
Ant & Intrinsic & 0.195055 & 0.075889 & 0.065805 & 0.065805 & 0.197414\\
\hline
\end{tabular}
\caption{Subsampling radii.  The displayed experiments use \(\delta_{\rm pow}\), the no-log rate correction.  The raw radius and the logarithmic correction are reported for comparison.}
\label{tab:sub_parameters}
\end{table}

\begin{figure}
    \centering
    \begin{subfigure}[b]{0.24\textwidth}
        \centering
        \includegraphics[width=\textwidth]{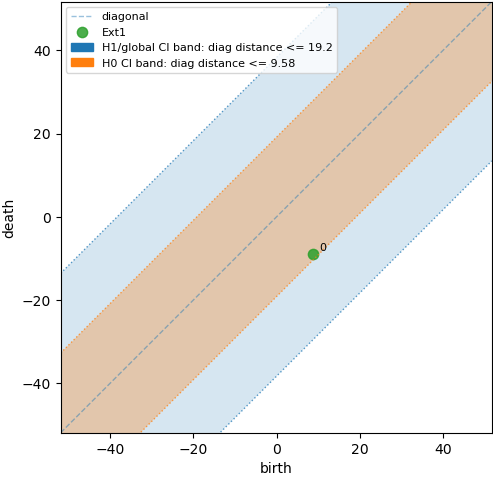}
        \caption{Ex. Mapper.}
    \end{subfigure}
    \begin{subfigure}[b]{0.24\textwidth}
        \centering
        \includegraphics[width=\textwidth]{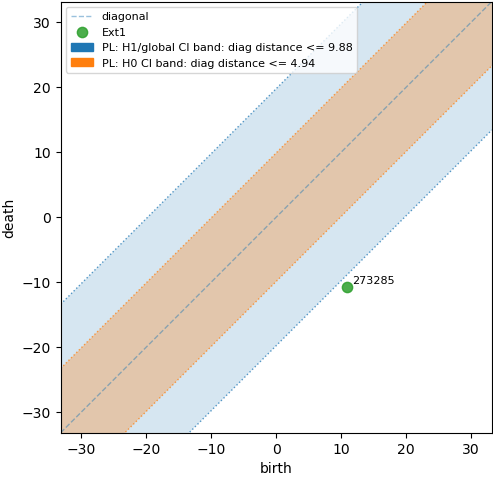}
        \caption{Ex. PL.}
    \end{subfigure}
    \begin{subfigure}[b]{0.24\textwidth}
        \centering
        \includegraphics[width=\textwidth]{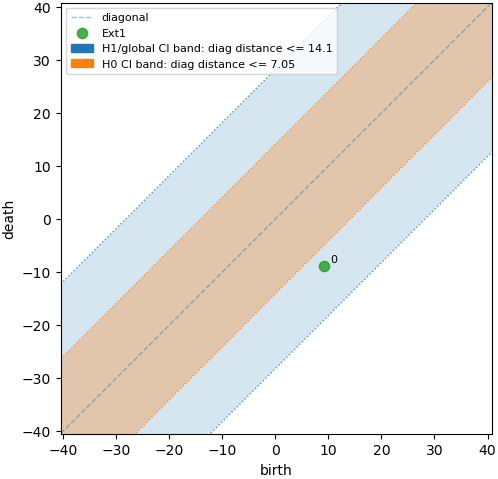}
        \caption{Int. Mapper.}
    \end{subfigure}
    \begin{subfigure}[b]{0.24\textwidth}
        \centering
        \includegraphics[width=\textwidth]{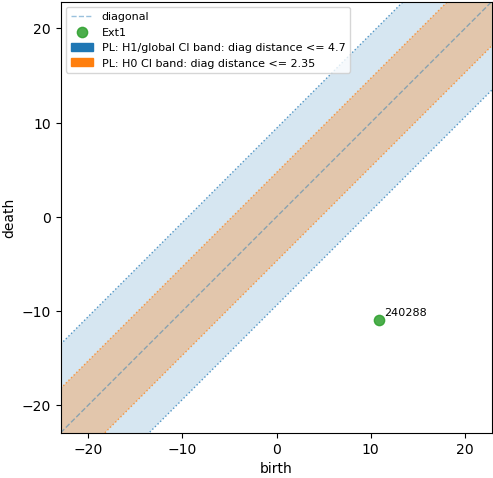}
        \caption{Int. PL.}
    \end{subfigure}
    \caption{Torus, subsampling with the no-log correction.  The dominant \(\operatorname{Ext}_1\) feature is separated from the diagonal in the PL diagrams and in the intrinsic Mapper diagram.  The Euclidean Mapper confidence band is too wide for it to be significant.}
    \label{fig:torus_sub_pipeline}
\end{figure}

\begin{figure}
    \centering
    \begin{subfigure}[b]{0.24\textwidth}
        \centering
        \includegraphics[width=\textwidth]{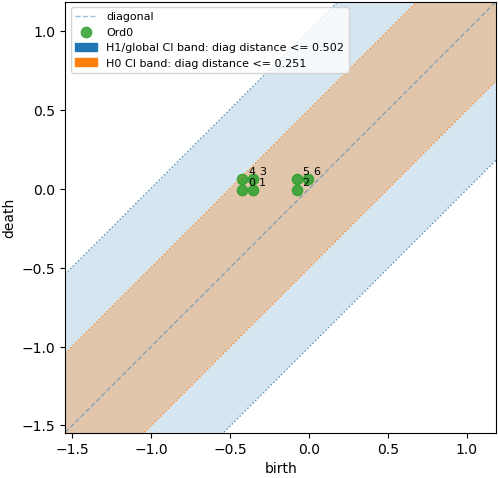}
        \caption{Ex. PL.}
    \end{subfigure}
    \begin{subfigure}[b]{0.24\textwidth}
        \centering
        \includegraphics[width=\textwidth]{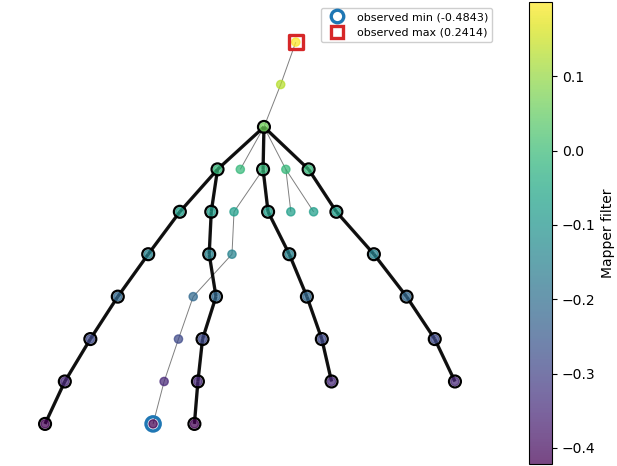}
        \caption{Ex. pushforward.}
    \end{subfigure}
    \begin{subfigure}[b]{0.24\textwidth}
        \centering
        \includegraphics[width=\textwidth]{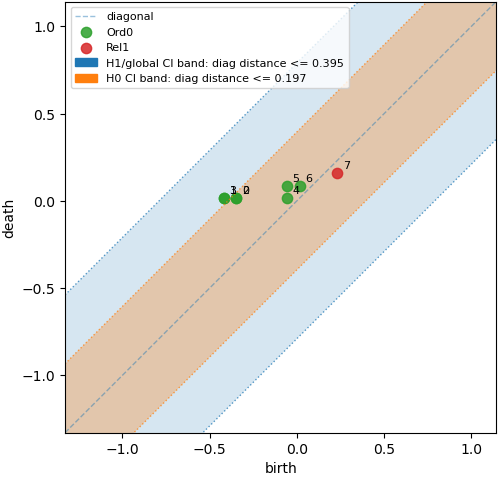}
        \caption{Int. PL.}
    \end{subfigure}
    \begin{subfigure}[b]{0.24\textwidth}
        \centering
        \includegraphics[width=\textwidth]{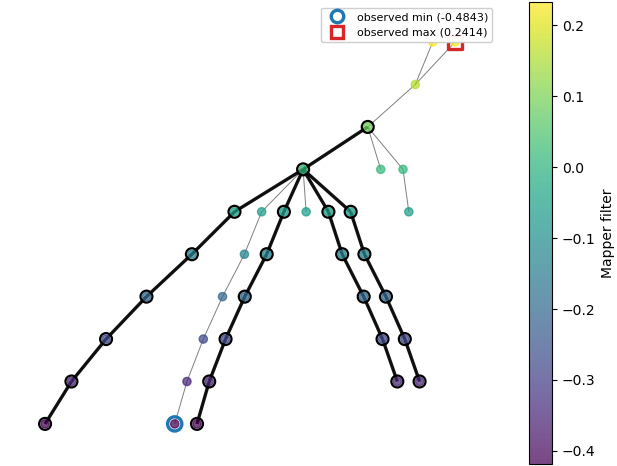}
        \caption{Int. pushforward.}
    \end{subfigure}
    \caption{Ant, subsampling with the no-log rate correction.  Both PL graphs capture several significant branching features in \(\operatorname{Ext}_0\), which are pushed forward to the Mapper graphs.  As before, those features are associated with the four legs with lower local minima, and the fifth one is associated with the tail of the ant.  No significant features are captured by the Mapper-induced confidence sets.}
    \label{fig:ant_sub_pipeline}
\end{figure}

For the torus, \Cref{fig:torus_sub_pipeline} gives the same qualitative conclusion as the \((a,b)\)-standard experiment.  The main \(\operatorname{Ext}_1\) loop is significant in the PL diagrams, and the intrinsic metric gives a stronger separation from the confidence band.  For the ant, \Cref{fig:ant_sub_pipeline} shows that the no-log subsampling correction is sufficiently selective for the relevant branching features to be significant on the PL graph.  The Mapper graph remains the visualization device: it shows where the significant PL features live, but its own confidence radius is too coarse for the same features to be significant directly.

\section{Conclusions and future development}

This paper develops confidence regions for Reeb graphs from finite samples using
the interleaving distance.  The main estimator is the Reeb cosheaf of a PL
filtered proximity graph built on the data.  This construction is finite and
computable, and the stability results proved above compare it with the target
Reeb cosheaf in both intrinsic and extrinsic metrics.  Mapper is then used as a
controlled coarsening of the PL estimator: it gives a readable graph for
visualization, while the coarsening error is bounded explicitly by the cover
resolution.  In this way the pipeline separates two tasks that are often
combined in applications: PL--Reeb graphs provide the object used for
approximation and statistical inference, while Mapper graphs provide the object
used for geometric interpretation.

The paper also relates this metric viewpoint to persistence-based summaries.  We
show that the extended-persistence pseudometric is controlled by the interleaving
distance, with sharper constants for the zero-dimensional components.  Therefore
interleaving confidence regions induce confidence statements for extended
persistence diagrams, while still retaining an object-level interpretation for
the underlying Reeb-type estimator.  The numerical experiments illustrate this
principle: significant features are detected from confidence bands in the PL
extended diagram and are then pushed forward to Mapper graphs for visualization.

Several directions remain open.  First, the rate-corrected subsampling radii used
in the experiments should be put on a theoretical footing.  The correction is
motivated by the distinction between two covering problems: the subsampling
quantile measures how well a subsample of size \(b_n\) covers the observed cloud
\(S_n\), whereas the estimator requires the full sample \(S_n\) to cover the
underlying support \(X\).  The experiments suggest that correcting for this
change of scale is effective, but the present paper validates the procedure only
empirically.  The runtime benchmarks in \Cref{subsec:runtime_benchmarks_appendix}
show that such improvements in the estimation of \(\delta\) are also
computationally valuable: the selected radius controls the edge density of the PL
graph and hence the cost of the persistence computation.  A full proof would
therefore strengthen both the statistical calibration of the confidence regions
and the practical efficiency of the pipeline, by justifying smaller but still
reliable graph-construction radii.  Second, one should develop theoretical
results for bootstrap estimates of the Hausdorff sampling scale.  Bootstrap
Hausdorff radii are already discussed in the Mapper-confidence literature as a
practical exploratory tool \cite{JMLR:v19:17-291}, but without a corresponding
validity theorem.  Establishing such results, in parallel with the theory needed
for the rate correction above, would make the data-driven calibration of the PL
estimator more robust.  Third, the pipeline would benefit from improved
estimation procedures for the other geometric quantities entering the bounds,
such as reach, moduli of continuity, and intrinsic covering constants.  Finally,
as recalled in the introduction, Mapper has already been useful in a range of
applications.  This suggests that the PL-for-inference and Mapper-for-visualization
pipeline developed here could provide a fertile framework for more complex data
sets, especially in settings where one wants both statistically controlled
features and an interpretable graph-level summary.

\section*{Acknowledgments}

This work originated in part from the Master’s thesis of A.C., supervised by Alessandra Micheletti. The authors thank Alessandra Micheletti for her guidance during the development of the thesis and for the discussions that contributed to this work. M.P. acknowledges support from the Fondo Istituzionale per la Ricerca of Università della Svizzera italiana through the project \emph{Using Topological Data Analysis to Understand Microglia Shape Variability in Space and Time}.

\section*{Use of Large Language Models}

Large language models were used occasionally to improve the clarity and style of the manuscript and to assist in checking the consistency of selected arguments. All mathematical content and final judgments remain the responsibility of the authors.

\appendix

\section{Comparison with~\cite{JMLR:v19:17-291} and
\cite{bjerkevik2025reeb}}
\label{sec:jmlr_comparison}

We compare our approximation--coarsening pipeline with the deterministic Mapper
bounds of~\cite[Theorem~7 and Remark~8]{JMLR:v19:17-291}, and our extrinsic
PL--Reeb estimator with the sample-thickening construction of
\cite[Corollary~5.6]{bjerkevik2025reeb}. The two comparisons emphasize different
features of our approach. The first shows how separating the PL approximation
from the Mapper coarsening improves the dependence on the cover resolution. The
second compares two estimators that use different amounts of information about
the filter away from the sample.

\subsection{Comparison with~\cite{JMLR:v19:17-291}}

Let \(X\subset\mathbb R^m\) be compact with positive reach
\(\tau=\rch(X)\), let \(S_n\subseteq X\) be finite, and assume that
\[
d_H^E(S_n,X)\le\delta<\tau.
\]
Set
\[
\delta_X
\coloneqq
\psi_\tau(\delta)
=
2\tau\arcsin\!\left(\frac{\delta}{2\tau}\right).
\]
By \Cref{thm:metric_distortion_reach},
\(d_H^X(S_n,X)\le\delta_X\).

Assume that \(f\colon X\to\mathbb R\) is \(L\)-Lipschitz with respect to the
Euclidean metric, and set
\[
R\coloneqq\mathfrak R(X,f).
\]
Since \(\|x-x'\|\le d_X(x,x')\), it is also
\(L\)-Lipschitz with respect to \(d_X\). Hence
\Cref{thm:intrinsic_reeb_stability,prop:mapper_coarsening_error} give
\begin{equation}\label{eq:our_comparison_intrinsic}
d_I\!\left(
\mathcal F_R,
\widetilde{\mathcal M}^{S_n,d_X}_{\delta_X,\mathcal U_X}
\right)
\le
L\delta_X+
\res_{\mathcal R_{\delta_X}^{S_n,d_X}}(\mathcal U_X).
\end{equation}
In particular, at the PL--Reeb level,
\begin{equation}\label{eq:our_comparison_intrinsic_PL}
d_I\!\left(
\mathcal F_R,
\mathcal R_{\delta_X}^{S_n,d_X}
\right)
\le
L\delta_X.
\end{equation}

We now recall only the part of
\cite[Theorem~7]{JMLR:v19:17-291} needed for the comparison. Its Mapper
estimator depends on a Euclidean neighborhood-graph scale
\(\delta_{\mathrm J}\), a regular interval cover of length \(r\), and a gain
\(g\in(0,1/2)\). We denote it by
\(M^{\mathrm J}_{r,g,\delta_{\mathrm J}}\). Under the geometric and
Morse-type assumptions of that theorem, and provided that
\(4d_H^E(S_n,X)\le\delta_{\mathrm J}\) and
\[
\max\left\{
|f(x)-f(x')|:
x,x'\in S_n,\ \|x-x'\|\le\delta_{\mathrm J}
\right\}
<gr,
\]
one has
\begin{equation}\label{eq:jmlr_theorem7_bound}
d_\Delta\!\left(
R,
M^{\mathrm J}_{r,g,\delta_{\mathrm J}}
\right)
\le
r+2\omega_f^E(\delta_{\mathrm J}),
\end{equation}
where \(\omega_f^E\) is the Euclidean modulus of continuity of \(f\).

At the common Euclidean sampling scale \(\delta\), take
\[
\delta_{\mathrm J}=4\delta.
\]
This satisfies the sampling condition
\(4d_H^E(S_n,X)\le\delta_{\mathrm J}\). The additional geometric scale condition
in~\cite[Theorem~7]{JMLR:v19:17-291} then becomes
\[
4\delta
\le
\frac14\min\{\rch(X),\rho(X)\}.
\]
Since \(f\) is Euclidean \(L\)-Lipschitz, the filter-admissibility condition is
satisfied, for every \(\alpha>0\), by choosing
\[
r=\frac{4L}{g}\delta+\alpha.
\]
Equation~\eqref{eq:jmlr_theorem7_bound} therefore yields
\begin{equation}\label{eq:jmlr_normalized_standard}
d_\Delta\!\left(
R,
M^{\mathrm J}_{r,g,4\delta}
\right)
\le
\left(\frac4g+8\right)L\delta+\alpha.
\end{equation}

For the edge-based MultiNerve construction,
\cite[Remark~8]{JMLR:v19:17-291} replaces the admissibility threshold \(gr\)
by \(r\), and the term \(r\) in
\eqref{eq:jmlr_theorem7_bound} by \(r/2\). Thus the choice
\(r=4L\delta+\alpha\) gives
\begin{equation}\label{eq:jmlr_normalized_multinerve}
d_\Delta\!\left(
R,
M^{\mathrm J,\mathrm{eMN}}_{r,g,4\delta}
\right)
\le
10L\delta+\frac{\alpha}{2}.
\end{equation}

We first compare the PL approximations. By
\Cref{thm:dDelta_le_2dI}, \eqref{eq:our_comparison_intrinsic_PL} gives
\[
d_\Delta\!\left(
\mathcal F_R,
\mathcal R_{\delta_X}^{S_n,d_X}
\right)
\le
2L\delta_X,
\]
with the sharper bound \(L\delta_X\) for the \(H_0\)-related components
\(\Ord_0\), \(\Ext_0\), and \(\Rel_1\). Since
\[
\delta_X
=
\delta+
O\!\left(\frac{\delta^3}{\tau^2}\right),
\]
the full-signature PL--Reeb bound is \(2L\delta\) to first order, whereas the
bounds in \eqref{eq:jmlr_normalized_standard} and
\eqref{eq:jmlr_normalized_multinerve} are, respectively,
\[
\left(\frac4g+8\right)L\delta+\alpha
\qquad\text{and}\qquad
10L\delta+\frac{\alpha}{2}.
\]

We now fix a common deterministic error level and compare the cover resolutions
required by the two approaches to attain it. Combining
\eqref{eq:our_comparison_intrinsic} with
\Cref{thm:dDelta_le_2dI}, our Mapper coarsening satisfies
\[
d_\Delta
\le
2\left(
L\delta_X+
\res_{\mathcal R_{\delta_X}^{S_n,d_X}}(\mathcal U_X)
\right).
\]
It therefore achieves the same deterministic full-signature bound as the Mapper
estimate \eqref{eq:jmlr_normalized_standard} whenever
\begin{equation}\label{eq:comparison_resolution_standard}
\res_{\mathcal R_{\delta_X}^{S_n,d_X}}(\mathcal U_X)
\le
\left(\frac2g+4\right)L\delta
-
L\delta_X
+
\frac{\alpha}{2}.
\end{equation}
Since \(\delta_X=\delta+O(\delta^3/\tau^2)\), the right-hand side is
\[
\left(\frac2g+3\right)L\delta
+
\frac{\alpha}{2}
+
O\!\left(\frac{L\delta^3}{\tau^2}\right).
\]
The corresponding cover length in~\cite{JMLR:v19:17-291} is
\[
r=\frac{4L}{g}\delta+\alpha.
\]

Thus the two methods allow cover scales of the same order. At the limiting
value \(g\uparrow1/2\) and for \(\alpha\downarrow0\), the admissible resolution
of our cover approaches \(7L\delta\), whereas the cover length required
in~\cite{JMLR:v19:17-291} approaches \(8L\delta\), up to the higher-order reach
correction in \eqref{eq:comparison_resolution_standard}. For the value \(g=0.4\)
used in the experiments of~\cite{JMLR:v19:17-291}, the admissible resolution of
our cover is asymptotically \(8L\delta\), whereas the corresponding cover length
in~\cite{JMLR:v19:17-291} is \(10L\delta\).

Similarly, our Mapper coarsening achieves the same deterministic
full-signature bound as the edge-based MultiNerve estimate
\eqref{eq:jmlr_normalized_multinerve} whenever
\begin{equation}\label{eq:comparison_resolution_multinerve}
\res_{\mathcal R_{\delta_X}^{S_n,d_X}}(\mathcal U_X)
\le
5L\delta
-
L\delta_X
+
\frac{\alpha}{4}.
\end{equation}
To first order, the admissible resolution is therefore
\[
4L\delta+\frac{\alpha}{4},
\]
whereas the corresponding MultiNerve cover length is
\[
r=4L\delta+\alpha.
\]
The two cover scales consequently agree to first order as
\(\delta/\tau\to0\) and \(\alpha\downarrow0\).

At the level of the final Mapper graphs, the two approaches therefore yield
bounds of the same order and comparable admissible cover resolutions, up to the
smaller differences induced by their respective constructions and experimental
setups. The relevant improvement in our framework lies instead at the PL--Reeb
level, where the sharper approximation bound is used for statistical
certification before any Mapper coarsening is introduced.

An analogous comparison may be carried out using the extrinsic PL--Reeb
estimator. In that case, one replaces the intrinsic approximation term
\(L\delta_X\) above by
\[
L\eta_\tau(\delta)
=
2\tau L\arcsin\!\left(\frac{\delta}{\tau}\right).
\]

\subsection{Comparison with~\cite{bjerkevik2025reeb}}

We finally compare our extrinsic PL--Reeb estimator with the sample-thickening
construction of~\cite{bjerkevik2025reeb}. Let \(X\subset\mathbb R^m\) and
\(S_n\subseteq X\) satisfy the assumptions of
\cite[Corollary~5.6]{bjerkevik2025reeb}, with its scale parameter equal to
\(2\delta\), and assume that \(d_H^E(S_n,X)\le\delta\).

Let
\[
\widetilde f\colon
\left\{
z\in\mathbb R^m:
\dist(z,S_n)\le2\delta
\right\}
\longrightarrow\mathbb R
\]
be Euclidean \(L\)-Lipschitz, and set \(f=\widetilde f|_X\). Write
\[
R\coloneqq\mathfrak R(X,f),
\qquad
\widetilde R
\coloneqq
\mathfrak R\!\left(
\{z\in\mathbb R^m:\dist(z,S_n)\le2\delta\},
\widetilde f
\right).
\]
Then \cite[Corollary~5.6]{bjerkevik2025reeb} gives
\begin{equation}\label{eq:bjerkevik_bound}
d_I\!\left(
\mathcal F_{\widetilde R},
\mathcal F_R
\right)
<
2L\delta.
\end{equation}

On the other hand, writing \(\tau=\rch(X)\), our extrinsic PL--Reeb estimate is
\begin{equation}\label{eq:bjerkevik_our_bound}
d_I\!\left(
\mathcal F_R,
\mathcal R_{\delta}^{S_n,\|\cdot\|}
\right)
\le
L\eta_\tau(\delta)
=
2\tau L\arcsin\!\left(\frac{\delta}{\tau}\right).
\end{equation}

The bound in \eqref{eq:bjerkevik_bound} is slightly sharper, but it relies on
additional functional information: the filter is assumed to be defined and
\(L\)-Lipschitz on the whole Euclidean offset of the sample. Its variation along
the relevant deformation can therefore be controlled directly by the ambient
displacement. Our estimator instead uses only the sampled values \(f|_{S_n}\).
An edge of \(\Gamma_{\delta}^{S_n,\|\cdot\|}\), whose endpoints may be at
Euclidean distance \(2\delta\), must be represented by a path in \(X\), and the
positive-reach distortion estimate gives the larger scale
\[
\eta_\tau(\delta)
=
2\tau\arcsin\!\left(\frac{\delta}{\tau}\right).
\]
Nevertheless,
\[
L\eta_\tau(\delta)
=
2L\delta+
O\!\left(\frac{L\delta^3}{\tau^2}\right),
\]
so the two bounds agree to first order as \(\delta/\tau\to0\).

\section{Additional implementation, visualization, validation, and runtime notes}\label{sec:implementation_visualization_notes}

This appendix records the implementation and display conventions needed to reproduce the numerical experiments in \Cref{expes}.  The numerical pipeline first constructs the PL graph, then computes persistence and confidence information on that graph, and only afterwards forms Mapper coarsenings for visualization.  We also report runtime benchmarks for the PL--Reeb/Mapper pipeline and validate the rate-corrected subsampling radii used in the experiments.

\subsection{Sparse PL graph construction and Mapper coarsening}

The first step is to build the proximity graph \(\Gamma_\delta^{S_n,\rho}\).  In the experiments the filter is the height function, and therefore an edge \((x_i,x_j)\) can occur only if the filter values of its endpoints are sufficiently close.  The implementation exploits this necessary condition before performing any metric-threshold test.  It sorts the sample by filter value and restricts distance computations to the candidate window
\[
\mathcal C_W=
\bigl\{(i,j): i<j,
 |f(x_i)-f(x_j)|\le W\bigr\},
\]
where \(W\) is chosen from the modulus-of-continuity bound used in the corresponding theorem.  Candidate pairs are then tested against the relevant distance threshold.  For the Euclidean estimator this is done by chunked radius queries, or equivalently by a single radius-neighbour query in the smaller runs.  For the intrinsic estimator, the implementation first builds a \(10\)-nearest-neighbour graph, computes the associated shortest-path distance matrix, and thresholds this matrix at the selected intrinsic radius.  The intrinsic construction is therefore more expensive, but it reduces the ambient shortcuts that can appear in the Euclidean graph.

The PL graph is stored initially as edge arrays, rather than immediately as a NetworkX object, to avoid unnecessary overhead.  Conversion to a NetworkX graph is delayed until it is needed for extended-persistence diagnostics, representative extraction, or plotting; NetworkX is used for these graph-level operations \cite{networkx}.  This is the first point at which the implementation relies on NetworkX.  In the benchmarked regimes reported below, the edge-array construction itself is inexpensive.  The dominant cost is the extended-persistence computation on the PL graph, while conversion to a NetworkX graph and Mapper coarsening are visible but secondary costs.

Mapper is computed from the PL graph, not directly from the raw point cloud.  Each graph edge is regarded as a PL interval with linearly interpolated filter value.  If \(U\) is a cover interval, the preimage of \(U\) in such an edge may be the whole open edge, a proper open subinterval, or the empty set.  These open-edge pieces are included when forming connected components of the pullback cover.  A vertex-only treatment would miss components supported in edge interiors and would give the wrong overlap graph near cover boundaries.

Persistence is computed once on the PL object.  For visualization, representatives are then restricted to a controlled family: at most fifteen records per persistence type, ordered by persistence, after optionally discarding the baseline \(\operatorname{Ext}_0\) class.  The PL-to-Mapper visualization pushes each chosen PL representative forward to the Mapper graph through the multivalued map \(\Xi_I\).

\subsection{Runtime benchmarks}\label{subsec:runtime_benchmarks_appendix}

We benchmarked the numerical pipeline on the same ant and torus examples used in \Cref{expes}.  The benchmark grid uses ten independent repetitions for each data set, sample size, metric, and scale-selection rule.  For the ant we use sample sizes \(n\in\{1500,2500,5000\}\); for the torus we use \(n\in\{1500,2500,5000,7500\}\).  Each run is evaluated with the Euclidean metric and with the intrinsic Isomap metric based on ten nearest neighbours.  We compare two scale choices.  The deterministic scale uses the hand-tuned values from \Cref{fig:manual_torus_pipeline,fig:manual_ant_pipeline}: for the torus, \(\delta_E=0.9\), \(\delta_X=1.2\), and \(\chi=3.2\); for the ant, \(\delta_E=0.04\operatorname{range}(f)\), \(\delta_X=0.06\operatorname{range}(f)\), and \(\chi=0.15\operatorname{range}(f)\).  The statistical scale is obtained by applying the \((a,b)\)-standard calibration to the sample generated in the current repetition: the local-mass profile is estimated from that sample, the corresponding constant \(a\) is selected with the same rule as in the numerical experiments, and \(\delta\) is then obtained from the \((a,b)\)-standard tail bound with \(b=2\), confidence level \(1-\alpha=0.85\), pilot constant \(C=2\), and cover length \(\chi=2\delta\).  Thus the statistical radius is recomputed by the benchmark code in each repetition, although in the torus runs the recorded values are identical across repetitions and across the two metrics because the small pilot balls give the same local-mass counts.  In all cases the overlap fraction is \(0.45\).

The timing measurements separate the intrinsic-distance preprocessing, the windowed PL edge-array construction, the conversion of the PL graph to a NetworkX graph, the extended-persistence computation on the PL graph, the Mapper coarsening, and the Mapper persistence computation.  \Cref{fig:runtime_total} shows the resulting total runtimes, while \Cref{tab:runtime_largest} reports a component-level breakdown for the largest sample size of each data set.

\begin{figure}[H]
    \centering
    \begin{subfigure}[b]{0.48\textwidth}
        \centering
        \includegraphics[width=\textwidth]{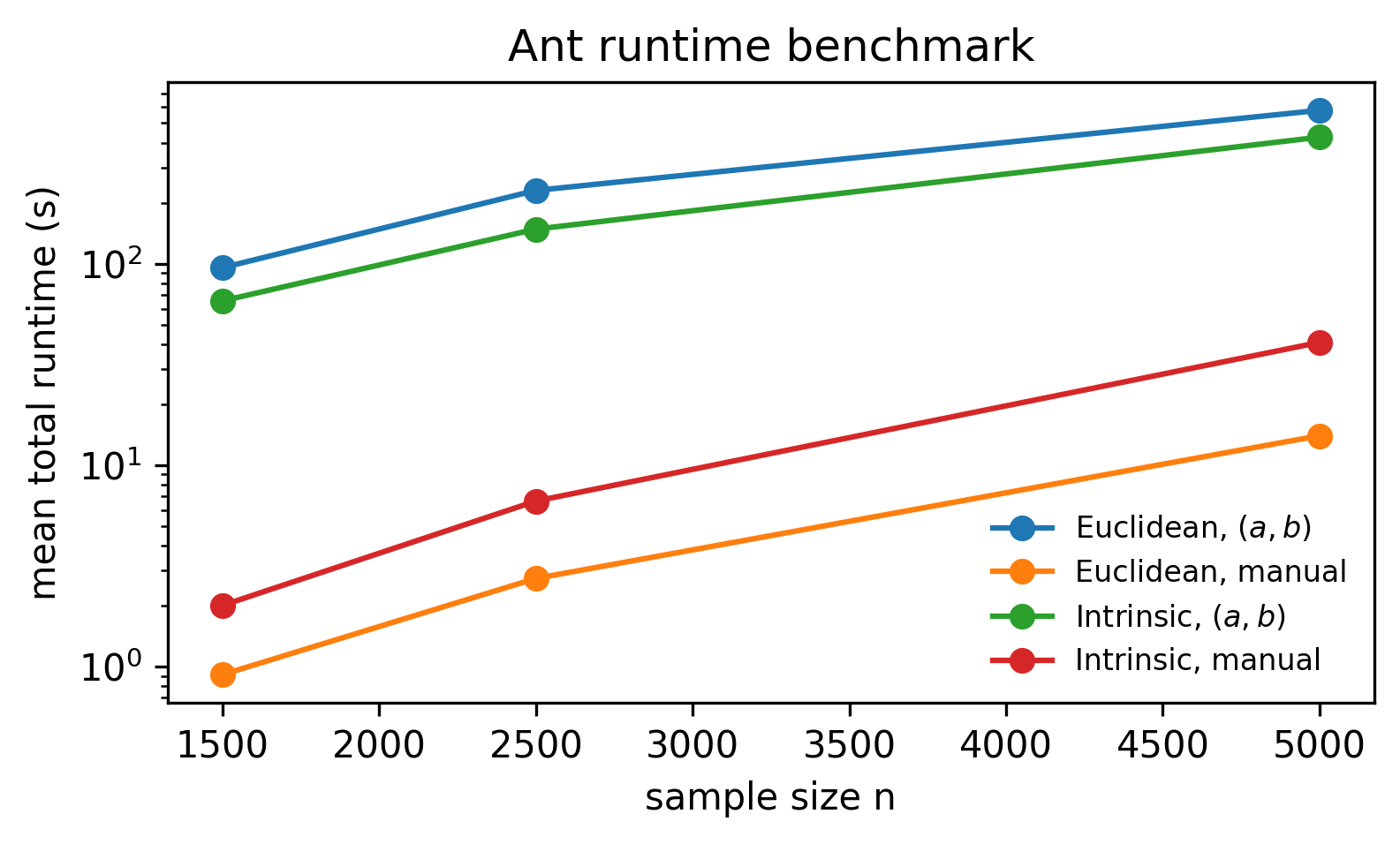}
        \caption{Ant.}
    \end{subfigure}
    \hfill
    \begin{subfigure}[b]{0.48\textwidth}
        \centering
        \includegraphics[width=\textwidth]{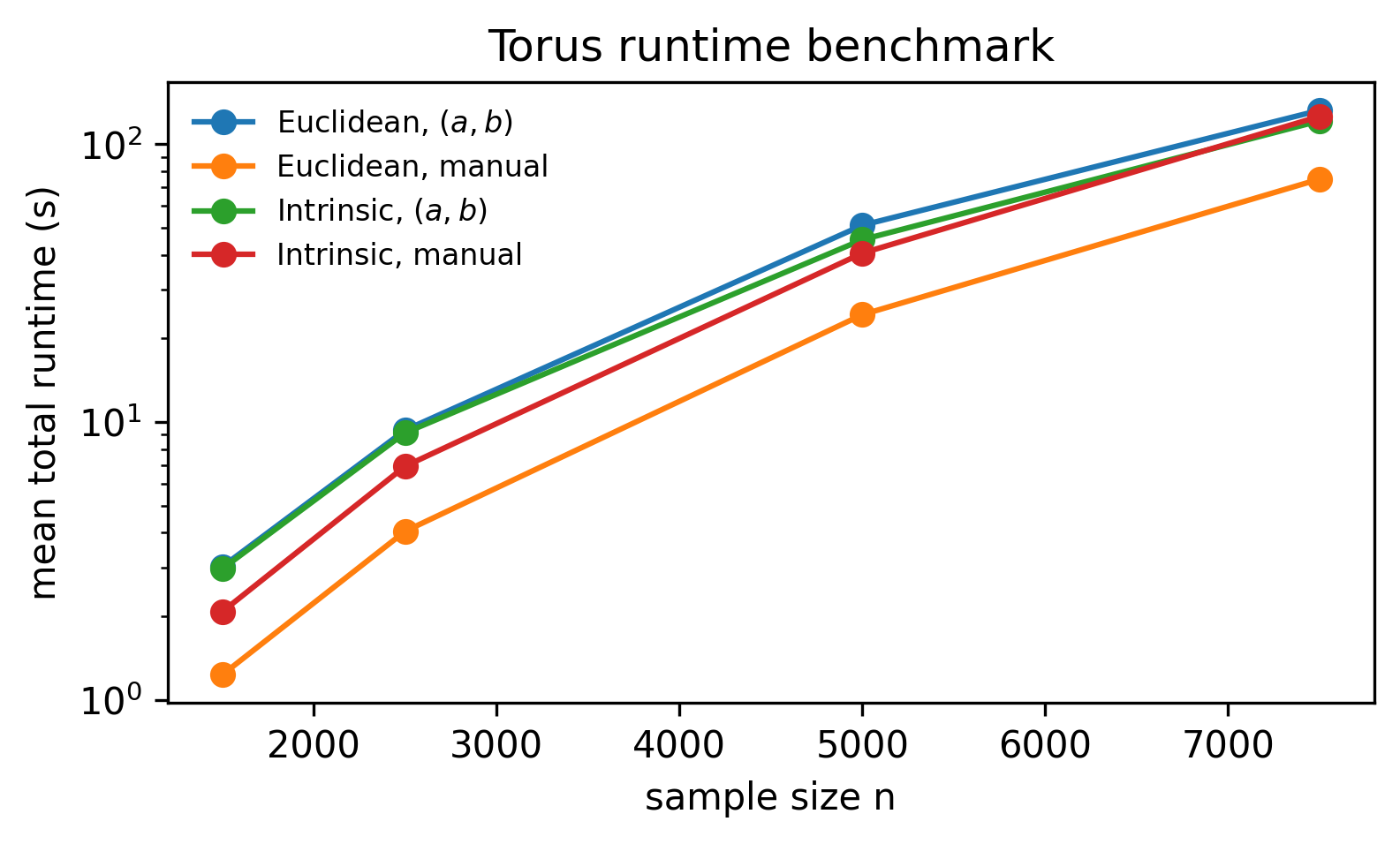}
        \caption{Torus.}
    \end{subfigure}
    \caption{Mean total runtime over ten repetitions.  The vertical axis is logarithmic because the deterministic and \((a,b)\)-standard scales can lead to very different graph densities, especially for the ant.}
    \label{fig:runtime_total}
\end{figure}

\begin{table}[H]
\centering
\begin{adjustbox}{max width=\textwidth}
\begin{tabular}{|c|c|c|c|c|c|c|c|c|c|c|}
\hline
Data & \(n\) & Metric & Scale & Edges \((10^3)\) & Edge arrays & Intrinsic & NetworkX & PL pers. & Mapper & Total\\
\hline
Ant & 5000 & Euclidean & manual & 35 & 0.02 & 0.00 & 0.15 & 13.5 & 0.34 & 14.0\\
Ant & 5000 & Euclidean & \((a,b)\) & 1235 & 0.39 & 0.00 & 4.27 & 563.7 & 10.12 & 578.7\\
Ant & 5000 & intrinsic & manual & 126 & 0.08 & 3.54 & 0.46 & 35.6 & 1.05 & 40.8\\
Ant & 5000 & intrinsic & \((a,b)\) & 965 & 0.30 & 3.54 & 3.40 & 410.3 & 7.95 & 425.7\\
Torus & 7500 & Euclidean & manual & 122 & 0.06 & 0.00 & 0.19 & 73.6 & 1.00 & 74.9\\
Torus & 7500 & Euclidean & \((a,b)\) & 206 & 0.08 & 0.00 & 0.38 & 130.1 & 1.90 & 132.5\\
Torus & 7500 & intrinsic & manual & 190 & 0.11 & 8.13 & 0.35 & 115.8 & 1.57 & 126.0\\
Torus & 7500 & intrinsic & \((a,b)\) & 180 & 0.11 & 8.14 & 0.34 & 111.0 & 1.61 & 121.3\\
\hline
\end{tabular}
\end{adjustbox}
\caption{Runtime breakdown at the largest sample size of each data set.  All times are means over ten repetitions and are reported in seconds.  ``Edge arrays'' is the windowed construction of the sparse PL proximity graph before NetworkX conversion.  ``NetworkX'' is the conversion of the edge-array representation to a NetworkX graph object for persistence and visualization routines.  ``Mapper'' is the Mapper coarsening time; Mapper persistence is omitted from the table because it is below \(0.003\) seconds in all displayed rows.}
\label{tab:runtime_largest}
\end{table}

The benchmark confirms the computational role played by the PL object.  The windowed edge-array construction is cheap relative to persistence: even in the largest ant \((a,b)\)-standard runs it takes less than half a second on average, whereas PL persistence takes several minutes.  Conversion to a NetworkX graph and Mapper coarsening are measurable but remain secondary.  The intrinsic pipeline adds the Isomap preprocessing step, whose cost increases with \(n\), but this is not the dominant term in the reported range.  Moreover, the intrinsic graph can have fewer accepted PL edges than the Euclidean graph at the same statistical scale, partially offsetting the cost of estimating geodesic distances.  The much larger difference between the deterministic and \((a,b)\)-standard ant timings also shows that the selected radius \(\delta\), through the edge density it induces, is the main practical determinant of runtime.  Overall, these timings support the methodological separation used throughout the paper: PL--Reeb graphs are the objects on which inference is performed, while Mapper coarsenings provide a substantially cheaper and more readable visualization layer.

\subsection{Residualized representatives for visualization}

The global component joining the observed minimum to the observed maximum is not highlighted among significant features, because it represents the interval baseline \([\min f,\max f]\) rather than a deviation from it.  In addition, ordinary zero-dimensional and relative one-dimensional representatives may be nested.  Drawing all nested supports literally would produce overlaid branches and unreadable figures.  The displayed supports therefore use a residualization convention, which affects only the visualization.

Fix a persistence type and let \(\sigma_1,\ldots,\sigma_m\) be the selected representatives of that type.  The representatives are ordered consistently with the elder-rule convention used to draw them: for ordinary zero-dimensional classes we use decreasing birth value, so later-born branches are processed first; for relative one-dimensional classes we use the dual order.  Let \(E(\sigma_i)\) be the set of graph edges in the support of \(\sigma_i\).  The support actually drawn for \(\sigma_i\) is
\[
E^{\rm disp}(\sigma_i)
=
E(\sigma_i)\setminus \bigcup_{j<i}E(\sigma_j).
\]
Thus each representative is shown after removing the edges already assigned to earlier representatives.  If \(E^{\rm disp}(\sigma_i)\) is empty, the feature is still present in the persistence diagram and in the statistical decision, but it is not drawn as a separate visible branch.  The persistence pairs, confidence bands, and significance decisions are always computed from the original PL graph, before residualization.

\subsection{Validation of rate-corrected subsampling radii}\label{subsec:subsampling_validation_appendix}

The validation experiment tests the raw Fasy-type subsampling radius and its two rate-corrected variants on supports for which the true covering radius can be approximated by a dense probe mesh.  The supports are a circle, a sphere, and a torus, with intrinsic dimensions \(1\), \(2\), and \(2\), respectively.  For each support we consider sample sizes \(n\in\{600,1200,2400\}\), three sampling densities (uniform, mild non-uniform, and strong non-uniform), and three logarithmic powers in the subsample-size rule.  The tables below report the default rule
\[
b_n=\left\lfloor \frac{n}{(\log n)^{1.001}}\right\rfloor,
\]
using \(500\) independent trials and \(1000\) subsamples per trial.  We report both the smallest and largest sample sizes, \(n=600\) and \(n=2400\), to show the finite-sample behaviour at the two ends of the validation grid.

For each trial, the implementation computes the mesh-based approximation of \(d_H(S_n,X)\), the raw subsampling radius \(\delta_{\rm raw}\), the logarithmic correction \(\delta_{\log}\), and the no-log correction \(\delta_{\rm pow}\).  A trial is counted as covered when the corresponding radius is at least the mesh-based covering radius.  The coverage column reports the empirical coverage together with the Wilson 95\% confidence interval.  The median ratio \(\delta/d_H(S_n,X)\) measures conservativeness; values closer to one indicate less overestimation of the covering radius.

\begin{table}[H]
\centering
\begin{adjustbox}{max width=\textwidth}
\begin{tabular}{|c|c|c|c|c|c|}
\hline
Support & Method & Coverage, \(n=600\) & Median ratio, \(n=600\) & Coverage, \(n=2400\) & Median ratio, \(n=2400\)\\
\hline
Circle & raw & 1.000 [0.992,1.000] & 11.67 & 1.000 [0.992,1.000] & 14.19\\
Circle & log & 1.000 [0.992,1.000] & 2.55 & 1.000 [0.992,1.000] & 2.47\\
Circle & no-log & 0.996 [0.986,0.999] & 1.81 & 0.998 [0.989,1.000] & 1.82\\
Sphere & raw & 1.000 [0.992,1.000] & 4.59 & 1.000 [0.992,1.000] & 5.15\\
Sphere & log & 1.000 [0.992,1.000] & 2.14 & 1.000 [0.992,1.000] & 2.15\\
Sphere & no-log & 1.000 [0.992,1.000] & 1.81 & 1.000 [0.992,1.000] & 1.84\\
Torus & raw & 1.000 [0.992,1.000] & 4.35 & 1.000 [0.992,1.000] & 5.11\\
Torus & log & 1.000 [0.992,1.000] & 2.03 & 1.000 [0.992,1.000] & 2.13\\
Torus & no-log & 1.000 [0.992,1.000] & 1.71 & 1.000 [0.992,1.000] & 1.83\\
\hline
\end{tabular}
\end{adjustbox}
\caption{Validation summary for uniform sampling with the default subsampling rule.  For \(n=600\), the subsample size is \(b_n=93\); for \(n=2400\), it is \(b_n=307\).}
\label{tab:subsampling_validation_uniform}
\end{table}

\begin{table}[H]
\centering
\begin{adjustbox}{max width=\textwidth}
\begin{tabular}{|c|c|c|c|c|c|}
\hline
Support & Method & Coverage, \(n=600\) & Median ratio, \(n=600\) & Coverage, \(n=2400\) & Median ratio, \(n=2400\)\\
\hline
Circle & raw & 1.000 [0.992,1.000] & 11.03 & 1.000 [0.992,1.000] & 13.62\\
Circle & log & 1.000 [0.992,1.000] & 2.41 & 1.000 [0.992,1.000] & 2.37\\
Circle & no-log & 0.992 [0.980,0.997] & 1.71 & 0.994 [0.983,0.998] & 1.74\\
Sphere & raw & 1.000 [0.992,1.000] & 4.47 & 1.000 [0.992,1.000] & 5.08\\
Sphere & log & 1.000 [0.992,1.000] & 2.09 & 1.000 [0.992,1.000] & 2.12\\
Sphere & no-log & 1.000 [0.992,1.000] & 1.76 & 1.000 [0.992,1.000] & 1.82\\
Torus & raw & 1.000 [0.992,1.000] & 4.11 & 1.000 [0.992,1.000] & 4.84\\
Torus & log & 1.000 [0.992,1.000] & 1.92 & 1.000 [0.992,1.000] & 2.02\\
Torus & no-log & 1.000 [0.992,1.000] & 1.62 & 1.000 [0.992,1.000] & 1.73\\
\hline
\end{tabular}
\end{adjustbox}
\caption{Validation summary for mild non-uniform sampling with the default subsampling rule.}
\label{tab:subsampling_validation_mild_nonuniform}
\end{table}

\begin{table}[H]
\centering
\begin{adjustbox}{max width=\textwidth}
\begin{tabular}{|c|c|c|c|c|c|}
\hline
Support & Method & Coverage, \(n=600\) & Median ratio, \(n=600\) & Coverage, \(n=2400\) & Median ratio, \(n=2400\)\\
\hline
Circle & raw & 1.000 [0.992,1.000] & 9.86 & 1.000 [0.992,1.000] & 13.20\\
Circle & log & 0.996 [0.986,0.999] & 2.16 & 1.000 [0.992,1.000] & 2.30\\
Circle & no-log & 0.932 [0.906,0.951] & 1.53 & 0.988 [0.974,0.994] & 1.69\\
Sphere & raw & 1.000 [0.992,1.000] & 4.22 & 1.000 [0.992,1.000] & 4.83\\
Sphere & log & 1.000 [0.992,1.000] & 1.97 & 1.000 [0.992,1.000] & 2.01\\
Sphere & no-log & 1.000 [0.992,1.000] & 1.66 & 1.000 [0.992,1.000] & 1.73\\
Torus & raw & 1.000 [0.992,1.000] & 3.68 & 1.000 [0.992,1.000] & 4.38\\
Torus & log & 1.000 [0.992,1.000] & 1.72 & 1.000 [0.992,1.000] & 1.83\\
Torus & no-log & 1.000 [0.992,1.000] & 1.45 & 1.000 [0.992,1.000] & 1.57\\
\hline
\end{tabular}
\end{adjustbox}
\caption{Validation summary for strong non-uniform sampling with the default subsampling rule.}
\label{tab:subsampling_validation_strong_nonuniform}
\end{table}

The raw radius is uniformly conservative.  The logarithmic correction substantially reduces the median ratio while keeping empirical coverage at or above the nominal level in these runs.  The no-log correction is more aggressive and often remains accurate, especially on the sphere and torus.  The circle is the most delicate support in this validation grid: because it is one-dimensional, the logarithmic factor is comparatively more relevant, and the strongly non-uniform circle at the smallest sample size is the main finite-sample failure mode of the no-log correction.  On the two-dimensional supports that drive the displayed torus and ant experiments, however, \(\delta_{\rm pow}\) performs very well.  We use \(\delta_{\rm pow}\) in the main figures for this reason, and we use \(\delta_{\log}\) as a conservative robustness check against possible artifacts of an overly aggressive correction.  In the displayed experiments this check does not reveal contradictory structure: the qualitative summaries obtained from the two corrected radii are comparable.

\section{Genericity for the newborn-cycle argument}
\label{subsec:h1_genericity_newborn}

\paragraph{Combinatorial genericity.}

We now isolate a simple combinatorial condition implying the
properties needed for \Cref{subsec:h1_component_newborn_corrected}, and then show that the corresponding class is dense.

For a vertex \(v\) of a Reeb graph \(R=(G,h_R)\), we denote by \(u(v)\) its up-degree
and by \(\ell(v)\) its down-degree.

We say that \(R\) is \emph{combinatorially generic} if:
\begin{enumerate}
    \item the values of \(h_R\) at the vertices of \(R\) are pairwise distinct;
    \item for every vertex \(v\) of \(R\),
    \[
    \min\{u(v),\ell(v)\}\le 1
    \qquad\text{and}\qquad
    \max\{u(v),\ell(v)\}\le 2.
    \]
\end{enumerate}

The next proposition shows that this condition implies exactly the distinctness
properties needed later.

\begin{prop}\label{prop:genericity_from_degree_pattern}
Let \(R=(G,h_R)\) be a finite Reeb graph. Assume that the values of \(h_R\) at the vertices
of \(R\) are pairwise distinct and that, for every vertex \(v\),
\[
\min\{u(v),\ell(v)\}\le 1,
\qquad
\max\{u(v),\ell(v)\}\le 2.
\]
Then:
\begin{enumerate}
    \item the ordinary \(H_1\)-birth times in the sublevel filtration of \(h_R\) are pairwise distinct;
    \item the ordinary \(H_1\)-birth times in the sublevel filtration of \(-h_R\) are pairwise distinct;
    \item no vertex can simultaneously support an ordinary \(H_1\)-birth for the
    sublevel filtration of \(h_R\) and an ordinary \(H_1\)-birth for the sublevel filtration of
    \(-h_R\).
\end{enumerate}
\end{prop}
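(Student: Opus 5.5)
The plan is to localize ordinary \(H_1\)-births at vertices and count them via the degree pattern. Fix the sublevel filtration of \(h_R\). Since \(R\) is a finite graph with \(h_R\) monotone on open edges, the homotopy type of \(R_{\le t}\) changes only at vertex values. The vertex values are pairwise distinct by assumption, so at a critical value \(c=h_R(v)\) exactly one vertex \(v\) enters. Passing from \(R_{\le c-\epsilon}\) to \(R_{\le c}\) amounts, up to homotopy, to adding the point \(v\) together with \(\ell(v)\) edges joining \(v\) to components of \(R_{\le c-\epsilon}\); the up-edges contribute only contractible germs at \(v\).

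The Euler characteristic changes by \(1-\ell(v)\). By the Mayer--Vietoris or rank argument, the number of \(H_1\)-births at \(c\) equals \(\ell(v)-1\) minus the number of merges of \(H_0\)-classes, and in any case is at most \(\max\{\ell(v)-1,0\}\). This reduces to counting new independent cycles in a graph obtained by attaching a cone vertex with \(\ell(v)\) edges. Since \(\ell(v)\le\max\{u(v),\ell(v)\}\le 2\), at most one \(H_1\)-class is born at \(c\). Distinct births therefore occur at distinct vertices, hence at distinct values, which proves item~1.

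Item~2 follows from the same argument applied to \(-h_R\), where the roles of up- and down-degree swap: births for \(-h_R\) at \(v\) number at most \(\max\{u(v)-1,0\}\le 1\).

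For item~3, a vertex supporting a sublevel birth for \(h_R\) needs \(\ell(v)\ge2\), and one supporting a birth for \(-h_R\) needs \(u(v)\ge2\). Both together contradict \(\min\{u(v),\ell(v)\}\le 1\).

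The main obstacle is making the local step rigorous: that the only topological change at level \(c\) is the attachment described above, and that the resulting number of births is bounded by \(\ell(v)-1\). I would handle this by working simplicially, subdividing so that vertices are exactly the critical points. Then \(R_{\le c}\) deformation retracts onto the subcomplex spanned by vertices of height \(\le c\) plus partial up-edges, which retract onto their lower endpoints. The relative homology \(H_1(R_{\le c},R_{\le c-\epsilon})\) then has rank \(\ell(v)-1\) when \(\ell(v)\ge1\), and \(0\) otherwise, being that of a wedge of \(\ell(v)\) arcs modulo their endpoints. The long exact sequence then bounds the rank of the cokernel of \(H_1(R_{\le c-\epsilon})\to H_1(R_{\le c})\) by this relative rank.
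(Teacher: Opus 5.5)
Your proposal is correct and follows essentially the paper's argument: crossing the value $h_R(v)$ amounts to adjoining the lower star of $v$ (the vertex and its $\ell(v)\le 2$ down-edges), which creates at most one new $H_1$-class; the same reasoning for $-h_R$ uses $u(v)\le 2$; and a vertex supporting both kinds of birth would force $\ell(v)=u(v)=2$, contradicting $\min\{u(v),\ell(v)\}\le 1$. The only difference is presentational. You bound the number of births by the rank $\max\{\ell(v)-1,0\}$ of $H_1(R_{\le c},R_{\le c-\epsilon})$, and this bound is also right when $\ell(v)=0$, unlike your Euler-characteristic ``equals'' formula, which is off by one in that case. The paper instead adjoins $v$, $e_1$, $e_2$ one at a time and checks the cases $\ell(v)\in\{0,1,2\}$ directly.
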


\begin{proof}
We first consider the filtration of \(h_R\). View \(R\) as a simplicial complex. Since \(h_R\)
is strictly monotone on every open edge and the vertex values are pairwise distinct,
the sublevel filtration of \(R\) is, up to levelwise homotopy equivalence, identified
with the lower-star filtration. Thus, when a vertex \(v\) with value \(t=h_R(v)\) enters
the filtration, the corresponding change is obtained by adjoining the lower star of
\(v\), namely the vertex \(v\) together with the edges for which \(v\) is the upper
endpoint. The number of such edges is exactly \(\ell(v)\).

Fix \(t\in\mathbb R\). Since the vertex values are pairwise distinct, there is at most one
vertex \(v\) with \(h_R(v)=t\). If there is no such vertex, then no topological change
occurs at level \(t\).

Assume now that \(h_R(v)=t\). Since \(\ell(v)\le 2\), there are three cases.

If \(\ell(v)=0\), then only the vertex \(v\) is added, so no \(H_1\)-class is created.

If \(\ell(v)=1\), then the lower star of \(v\) consists of the vertex \(v\) and one edge
having \(v\) as upper endpoint. Adjoining this lower star cannot create a cycle, so no
\(H_1\)-class is created.

If \(\ell(v)=2\), let \(e_1,e_2\) be the two edges for which \(v\) is the upper endpoint,
and let \(x_1,x_2\) be their other endpoints. Starting from the previous stage of the
lower-star filtration, first adjoin \(v\), then \(e_1\), and finally \(e_2\). After adjoining
\(v\), no \(H_1\)-class is created. After adjoining \(e_1\), the component containing \(v\)
is attached to the component of \(x_1\), so still no \(H_1\)-class is created. Finally,
adjoining \(e_2\), either \(x_2\) lies in a different connected component of the current
graph, in which case \(e_2\) merges components and creates no cycle, or \(x_2\) lies in
the same connected component as \(v\), in which case \(e_2\) creates exactly one
cycle.

Thus, at each level \(t\), the filtration of \(h_R\) creates at most one new ordinary
\(H_1\)-class. Since distinct vertices have distinct values, the ordinary \(H_1\)-birth
times in the sublevel filtration of \(h_R\) are pairwise distinct.

Applying the same argument to \(-h_R\), and observing that the down-degree for \(-h_R\)
is exactly the up-degree for \(h_R\), the inequality \(u(v)\le 2\) implies that the
ordinary \(H_1\)-birth times in the sublevel filtration of \(-h_R\) are pairwise distinct.

Finally, if a vertex \(v\) supported an ordinary \(H_1\)-birth both for the filtration of
\(h_R\) and for that of \(-h_R\), then necessarily
$\ell(v)=2
$ and
$u(v)=2,
$
contradicting \(\min\{u(v),\ell(v)\}\le 1\).
\end{proof}

To prove density, we use the functional distortion distance. Recall that for a finite
Reeb graph \(R=(G,h_R)\), the associated path-height metric is defined by
\[
d_{h_R}(x,x')
\coloneqq
\inf_{\pi:x\leadsto x'}
\bigl(\max h_R(\pi)-\min h_R(\pi)\bigr),
\]
where the infimum ranges over all continuous paths \(\pi\) in \(R\) joining \(x\) to \(x'\). Note that this is a metric as, if \(d_{h_R}(x,x')=0\), then \(x\) and \(x'\) lie in the same connected
component of a level set of \(h_R\), and therefore \(x=x'\) by definition of the Reeb
graph.

If \(R'=(G',h_{R'})\) is another Reeb graph and
\(\varphi\colon R\to R'\), \(\psi\colon R'\to R\) are continuous maps, let
\[
C(\varphi,\psi)\coloneqq
\{(x,\varphi(x))\mid x\in R\}\cup \{(\psi(y),y)\mid y\in R'\}
\subseteq R\times R'
\]
be the associated correspondence, and define
\[
D(\varphi,\psi)\coloneqq
\frac12
\sup_{(x,y),(x',y')\in C(\varphi,\psi)}
\bigl|d_{h_R}(x,x')-d_{h_{R'}}(y,y')\bigr|.
\]
The functional distortion distance is
\[
d_{FD}(R,R')
\coloneqq
\inf_{\varphi,\psi}
\max\Bigl\{
D(\varphi,\psi),\,
\|h_R-h_{R'}\circ \varphi\|_\infty,\,
\|h_R\circ \psi-h_{R'}\|_\infty
\Bigr\},
\]
where the infimum ranges over all continuous maps \(\varphi\colon R\to R'\) and
\(\psi\colon R'\to R\).

We shall use the following comparison estimates with the functional distortion
distance. The interleaving distance is bounded by \(d_{FD}\), and the bottleneck
distance between the extended-persistence signatures is bounded, componentwise,
by a constant multiple of \(d_{FD}\); in particular the estimates of
\cite[Theorem 14]{bauer2015strong} and \cite[Theorem 15]{bauer2014measuring}
give the uniform bound
\[
d_I,d_\Delta\le 3d_{FD}.
\]
The constant \(3\) is only used in the density argument below.

We now describe the local move used to reduce the combinatorial defect.

\begin{lem}[Local branch-sliding move]\label{lem:local_degree_reduction_fd}
Let \(R=(G,h_R)\) be a Reeb graph
and let \(v\in R\) be a vertex with value \(a\coloneqq h_R(v)\). Assume that either
\[
\ell(v)\ge 3
\qquad\text{or}\qquad
(\ell(v),u(v))=(2,2).
\]
Fix \(\eta>0\) so small that, on every edge incident to \(v\), the interval
\[
\left(a-\frac{\eta}{2},a\right)
\]
is contained in the image of that edge under \(h_R\). Then there exists a Reeb graph \(R'=(G',h_{R'})\), obtained from
\(R\) by a local modification supported in an arbitrarily small neighborhood of
\(v\), such that:
\begin{enumerate}
    \item one downward branch is detached from \(v\) and reattached at a new vertex
    \(w\) satisfying
    \[
    a-\frac{\eta}{2}<h_{R'}(w)<a;
    \]
    \item if \(v'\) denotes the old vertex \(v\), viewed in \(R'\), then
    \[
    \ell(v')=\ell(v)-1,
    \qquad
    u(v')=u(v);
    \]
    \item the new vertex \(w\) satisfies
    \[
    \ell(w)=2,
    \qquad
    u(w)=1;
    \]
    \item every other vertex keeps the same up-degree and down-degree;
    \item
    \[
    d_{FD}(R,R')\le \eta.
    \]
\end{enumerate}

The symmetric statement holds if one instead slides an upward branch upward after
replacing \(h_R\) by \(-h_R\).
\end{lem}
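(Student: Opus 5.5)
Pick two downward edges \(e_1,e_2\) at \(v\) (lower endpoints \(x_1,x_2\)), and fix \(c\) with \(a-\eta/2<c<a\), chosen different from every vertex value of \(R\). Since \(h_R\) is monotone on edges and \((a-\eta/2,a)\) lies in each edge image, there are unique points \(p_1\in e_1\) and \(p_2\in e_2\) with \(h_R(p_1)=h_R(p_2)=c\).

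Construct \(R'\) as follows. Delete the open segment of \(e_2\) between \(p_2\) and \(v\). Identify \(p_2\) with \(p_1\) to form the new vertex \(w\) at height \(c\), and keep \(h_{R'}=h_R\) on what remains. Then \(w\) has two downward edges (toward \(x_1\) and \(x_2\)) and one upward edge (the upper part of \(e_1\) up to \(v\)), so \(\ell(w)=2\) and \(u(w)=1\). The vertex \(v'\) loses exactly one down-edge and keeps its up-edges, and no other vertex is touched. Items 1–4 are therefore immediate. \(R'\) is a Reeb graph because \(h_{R'}\) is strictly monotone on each edge and \(w\) is a genuine vertex. Equivalently, \(R'\) is \(R\) with the strip of \(e_1\) and \(e_2\) above height \(c\) glued together, which is the Reeb quotient of a small modification.

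For item 5, define \(\varphi\colon R\to R'\) as the identity off \(e_2\). On \(e_2\), \(\varphi\) is the identity below \(p_2\) and sends the part of \(e_2\) above \(c\) onto the upper part of \(e_1\) by matching heights. This map is continuous at \(v\) and at \(p_2\mapsto w\), and it preserves heights, so \(\|h_R-h_{R'}\circ\varphi\|_\infty=0\). Define \(\psi\colon R'\to R\) as the identity on the copy of \(R\setminus(p_2,v)_{e_2}\), with \(w\mapsto p_1\), and the edge from \(w\) to \(x_2\) mapped by the identity except on a short terminal piece near \(w\). That piece has to be reparametrized so the image joins \(p_2\) to \(p_1\). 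To do this, pick a height \(c'<c\) slightly below \(c\), and let \(\psi\) run on \(e_2\) from height \(c'\) up to \(v\) and then down \(e_1\) to \(p_1\). This moves heights by at most \(a-c'<\eta\), so \(\|h_R\circ\psi-h_{R'}\|_\infty\le\eta\) after choosing \(c'\) close to \(c\) and \(c\) close to \(a\). Alternatively, use the linear-in-height squeeze.

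The remaining step is the distortion \(D(\varphi,\psi)\le\eta\), and I expect this to be the main obstacle. My plan is to compare path-height distances through the maps. Any path in \(R'\) lifts under \(\psi\) to a path in \(R\) whose height range grows by at most \(a-c\) (the detour up to \(v\)). Conversely, any path in \(R\) maps under \(\varphi\) to a path in \(R'\) with the same height range, since \(\varphi\) is height-preserving and continuous. Together these give \(|d_{h_R}(x,x')-d_{h_{R'}}(\varphi x,\varphi x')|\le a-c\), and similarly for the pairs coming from \(\psi\) and the mixed pairs, after accounting for \(d_{h_R}(x,\psi\varphi x)\le a-c\) and \(d_{h_{R'}}(y,\varphi\psi y)\le a-c\). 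Taking half the supremum yields \(D\le\eta\), so \(d_{FD}(R,R')\le\eta\).

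The symmetric statement for upward branches follows by applying the whole construction to \(-h_R\), since \(d_{FD}\) is invariant under negating both functions.
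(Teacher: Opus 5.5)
Your proposal is correct and follows essentially the paper's proof. The shared steps are: the same detach-and-reattach construction of \(R'\); the same height-preserving \(\varphi\), which folds the top of \(e_2\) onto \(e_1^{\uparrow}\); and the same distortion argument, in which \(\psi\varphi\) and \(\varphi\psi\) are close to the identity in path-height distance, \(\varphi\) is nonexpansive, and \(\psi\) enlarges height ranges by a bounded amount. The differences are cosmetic: your \(\psi\) fixes \(e_1^{\uparrow}\) and detours only the end of \(e_2'\) over \(v\), whereas the paper collapses \(e_1^{\uparrow}\) to \(v\) and stretches both lower branches; also, your distortion bound is really a small multiple of \(a-c'\) (about \(4(a-c')\) once the triangle inequalities are chained) rather than \(a-c\), which is harmless because \(c\) and \(c'\) can be taken arbitrarily close to \(a\).
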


\begin{proof}
We treat the downward case; the upward case is obtained by applying the same
argument to \(-h_R\). See also \Cref{fig:branch_sliding_move}.

Let \(e_1,e_2\) be two distinct downward edges incident to \(v\). Since \(h_R\) is
continuous and
$\left(a-\frac{\eta}{2},a\right)
$
is contained in the image of every downward edge incident to \(v\), and since the graph is
finite, we may choose a point \(w\) in the interior of \(e_1\) such that
\[
a-\frac{\eta}{2}<h_R(w)<a,
\]
and, writing
\[
\kappa\coloneqq a-h_R(w),
\]
the interval
$[h_R(w)-\kappa,h_R(w)]
$
contains no critical value of \(h_R\) on the portions of \(e_1\) and \(e_2\) below \(w\)
and below \(v\), respectively. By construction,
$0<\kappa<\frac{\eta}{2}.
$

Construct a new graph \(R'\) by detaching the edge \(e_2\) from \(v\) and
reattaching it at the point \(w\). Declare \(w\) to be a new vertex. Then the old edge
\(e_1\) is split by \(w\) into two edges of \(R'\), which we denote by
\[
e_1^{\uparrow}
\qquad\text{and}\qquad
e_1^{\downarrow},
\]
where \(e_1^{\uparrow}\) joins \(w\) to the old vertex \(v\), now denoted by \(v'\), and
\(e_1^{\downarrow}\) joins \(w\) to the lower endpoint of \(e_1\). We denote by \(e_2'\)
the reattached copy of \(e_2\) in \(R'\).

We now define a function
$h_{R'}\colon R'\to\mathbb R.
$

On the complement of the reattached edge \(e_2'\), we let \(h_{R'}\) coincide with \(h_R\)
under the natural identification of the underlying geometric realizations. On the
edge \(e_2'\), we define \(h_{R'}\) so that:
\begin{enumerate}
    \item \(h_{R'}(w)=h_R(w)\);
    \item \(h_{R'}\) agrees with \(h_R\) on the subsegment of the original edge \(e_2\)
    corresponding to the points whose \(h_R\)-values are at most \(h_R(w)\). 
\end{enumerate}
Equivalently, \(h_{R'}|_{e_2'}\) is obtained by truncating the top part of the old function
\(h_R|_{e_2}\) at the new upper value \(h_R(w)\).

By construction, \(h_{R'}\) is continuous on every edge and is strictly monotone on \(e_2'\). At the new vertex \(w\), the
restrictions of \(h_{R'}\) along the two incident branches \(e_1^{\downarrow}\) and \(e_2'\)
both take the value \(h_R(w)\), and the restriction along \(e_1^{\uparrow}\) also takes the
same value there. Thus \(h_{R'}\) is continuous at \(w\). Therefore \(h_{R'}\) is continuous on
\(R'\), and it is strictly monotone on every edge of \(R'\). Hence \(R'\) is
again a Reeb graph.

At the old vertex \(v'\), one downward edge has been removed and no upward edge
has been changed. Hence
\[
\ell(v')=\ell(v)-1,
\qquad
u(v')=u(v).
\]
At the new vertex \(w\), the edge \(e_1^{\uparrow}\) is the unique upward edge, while
\(e_1^{\downarrow}\) and the reattached edge \(e_2'\) are the two downward edges. Thus
\[
u(w)=1,
\qquad
\ell(w)=2.
\]
All other vertices are unchanged.

We now define the maps entering the functional distortion distance.

Let
\[
S_1\subset e_1,\qquad S_2\subset e_2
\]
be the closed segments in the original graph \(R\) consisting of the points whose
\(h_R\)-values lie in \([a-\kappa,a]\). By construction,
\[
h_R(S_1)=h_R(S_2)=h_{R'}(e_1^{\uparrow})=[a-\kappa,a].
\]

Define
$\varphi\colon R\to R'
$
as follows.
\begin{itemize}
    \item Outside \(S_1\cup S_2\), let \(\varphi\) be the evident identification with the
    unchanged part of \(R'\).
    \item If \(x\in S_1\cup S_2\), let \(\varphi(x)\) be the unique point of \(e_1^{\uparrow}\) such that
    \[
    h_{R'}(\varphi(x))=h_R(x).
    \]
\end{itemize}
Thus \(\varphi\) identifies the two upper segments \(S_1\) and \(S_2\) with the single
segment \(e_1^{\uparrow}\), preserving the height coordinate. In particular,
$h_{R'}\circ \varphi = h_R.
$

We next define
$\psi\colon R'\to R.
$
Outside the local neighborhood where the sliding move took place, i.e. outside \(e_1^{\uparrow},e_1^{\downarrow},\) and \(e_2'\), let \(\psi\) be the
evident identification with the unchanged part of \(R\). On the local neighborhood,
define \(\psi\) as follows:
\begin{itemize}

    \item for every \(y\in e_1^{\uparrow}\), set
$    \psi(y)=v;
$   
    \item on the lower part of \(e_1^{\downarrow}\), i.e. \(y\in e_1^{\downarrow}\) with \(h_{R'}(y)\le h_R(w)-\kappa\), let
    \(\psi\) be the evident identification with the corresponding part of the original
    edge \(e_1\);

    \item next we take the subarc of \(e_1^{\downarrow}\) with \(h_{R'}(y)\in [h_R(w)-\kappa,h_R(w)]\), and stretch it to cover \([h_R(w)-\kappa,h_R(w)+\kappa]\). That is, define
    \(\psi(y)\) to be the unique point of the original edge \(e_1\) with
    \[
    h_R(\psi(y))=2h_{R'}(y)-h_R(w)+\kappa;
    \]
    \item we act analogously on \(e_2'\): on the lower part of the reattached edge \(e_2'\) with \(h_{R'}(y)\le h_R(w)-\kappa\),
    let \(\psi\) be the evident identification with the corresponding part of the
    original edge \(e_2\);
    \item on the subarc of \(e_2'\) with \(h_{R'}(y)\in [h_R(w)-\kappa,h_R(w)]\), define
    \(\psi(y)\) to be the unique point of the original edge \(e_2\) with
    \[
    h_R(\psi(y))=2h_{R'}(y)-h_R(w)+\kappa.
    \]
\end{itemize}

By construction, \(\psi\) is continuous on each of the modified edges, it matches
continuously with the identity at the level \(h_R(w)-\kappa\), and along both
\(e_1^{\downarrow}\) and \(e_2'\) one has \(\psi(y)\to v\) as \(y\to w\). Hence \(\psi\) is
continuous on \(R'\).

We now estimate the height error of \(\psi\). On \(e_1^{\uparrow}\), one has
\[
h_R(\psi(y))-h_{R'}(y)=a-h_{R'}(y)\in [0,\kappa].
\]
On the lower parts of \(e_1^{\downarrow}\) and \(e_2'\) where \(\psi\) is the identity, one has
\[
h_R(\psi(y))-h_{R'}(y)=0.
\]
On the stretched subarcs of \(e_1^{\downarrow}\) and \(e_2'\), where
\[
h_R(\psi(y))=2h_{R'}(y)-h_R(w)+\kappa,
\]
one obtains
\[
h_R(\psi(y))-h_{R'}(y)=h_{R'}(y)-h_R(w)+\kappa.
\]
Since \(h_{R'}(y)\in [h_R(w)-\kappa,h_R(w)]\), it follows that
\[
0\le h_R(\psi(y))-h_{R'}(y)\le \kappa.
\]
Therefore
\[
0\le h_R(\psi(y))-h_{R'}(y)\le \kappa
\qquad\text{for all }y\in R'.
\]
Since \(h_{R'}\circ\varphi=h_R\), we conclude that
\[
\|h_R-h_{R'}\circ\varphi\|_\infty=0,
\qquad
\|h_R\circ\psi-h_{R'}\|_\infty\le \kappa<\frac{\eta}{2}.
\]

It remains to estimate \(D(\varphi,\psi)\). Let
\[
(x,y),(x',y')\in C(\varphi,\psi).
\]
We claim that
\[
\bigl|d_{h_R}(x,x')-d_{h_{R'}}(y,y')\bigr|\le 4\kappa.
\]

To describe the modified part more precisely, let
$T_1\subset e_1, T_2\subset e_2
$
be the segments in the original graph \(R\) consisting of the points whose
\(h_R\)-values lie in \([h_R(w)-\kappa,h_R(w))\). Thus
\[
h_R(T_1)=h_R(T_2)=[h_R(w)-\kappa,h_R(w)).
\]

We first record two elementary estimates.

For every \(x\in R\), one has
\[
d_{h_R}\bigl(x,\psi(\varphi(x))\bigr)\le \kappa.
\]
Indeed, if \(x\notin S_1\cup S_2\cup T_1\cup T_2\), then \(\varphi(x)=x\), and
\(\psi(\varphi(x))=x\), because \(\varphi(x)\) lies outside \(e_1^{\uparrow}\) and outside
the stretched subarcs of \(e_1^{\downarrow}\) and \(e_2'\), where \(\psi\) is the evident
identification with \(R\).

Assume next that \(x\in T_1\cup T_2\). Then \(\varphi(x)=x\), and by construction
\(\psi(\varphi(x))\) lies on the same original edge \(e_1\) or \(e_2\) as \(x\), with
\[
0\le h_R(\psi(\varphi(x)))-h_R(x)\le \kappa.
\]
Therefore \(x\) and \(\psi(\varphi(x))\) are joined inside that same edge by a
path whose \(h_R\)-range has length at most \(\kappa\), so
\[
d_{h_R}\bigl(x,\psi(\varphi(x))\bigr)\le \kappa.
\]

Finally, if \(x\in S_1\cup S_2\), then \(\varphi(x)\in e_1^{\uparrow}\), hence
\(\psi(\varphi(x))=v\). Since \(x\) lies on one of the two original edges \(e_1,e_2\)
and has \(h_R\)-value in \([a-\kappa,a]\), the points \(x\) and \(v\) can be joined inside
that edge by a path whose \(h_R\)-range has length at most \(\kappa\). Thus again
\[
d_{h_R}\bigl(x,\psi(\varphi(x))\bigr)\le \kappa.
\]

Similarly, for every \(y\in R'\), one has
\[
d_{h_{R'}}\bigl(y,\varphi(\psi(y))\bigr)\le \kappa.
\]
Indeed, if \(y\) lies outside \(e_1^{\uparrow}, e_1^{\downarrow}\), and \(e_2'\), then \(\psi(y)=y\) and
\(\varphi(\psi(y))=y\).

If \(y\in e_1^{\uparrow}\), then \(\psi(y)=v\) and \(\varphi(v)=v'\), so \(y\) and
\(\varphi(\psi(y))=v'\) lie in \(e_1^{\uparrow}\), whose \(h_{R'}\)-range is exactly
\([a-\kappa,a]\); hence
\[
d_{h_{R'}}\bigl(y,\varphi(\psi(y))\bigr)\le \kappa.
\]

It remains to consider \(y\in e_1^{\downarrow}\cup e_2'\). If \(y\) lies on the unchanged
lower part of one of these edges, then \(\psi(y)\) is the corresponding point of the
original edge with the same height, and \(\varphi(\psi(y))=y\), so the distance is \(0\).

If instead \(y\) lies on one of the stretched subarcs, then by construction
\(\varphi(\psi(y))\) lies in the same local modified neighborhood and satisfies
\[
h_{R'}\bigl(\varphi(\psi(y))\bigr)=h_R(\psi(y)).
\]
Since
\[
0\le h_R(\psi(y))-h_{R'}(y)\le \kappa,
\]
the two points \(y\) and \(\varphi(\psi(y))\) can be joined inside the local modified
neighborhood by a path whose \(h_{R'}\)-range is contained in an interval of length at
most \(\kappa\). Therefore
\[
d_{h_{R'}}\bigl(y,\varphi(\psi(y))\bigr)\le \kappa.
\]

Next observe that \(\varphi\) does not increase path-height distance. Indeed, since
\(h_{R'}\circ\varphi=h_R\), the image under \(\varphi\) of any path in \(R\) has exactly the same
range of height values, and therefore
\[
d_{h_{R'}}\bigl(\varphi(x),\varphi(x')\bigr)\le d_{h_R}(x,x')
\qquad\text{for all }x,x'\in R.
\]
Likewise, since \(0\le h_R\circ\psi-h_{R'}\le \kappa\), the image under \(\psi\) of any path in
\(R'\) enlarges the height range by at most \(\kappa\), so
\[
d_{h_R}\bigl(\psi(y),\psi(y')\bigr)\le d_{h_{R'}}(y,y')+\kappa
\qquad\text{for all }y,y'\in R'.
\]

We now estimate the distortion. Let \((x,y)\in C(\varphi,\psi)\). Then either
\(y=\varphi(x)\) or \(x=\psi(y)\). In the first case,
\[
d_{h_R}\bigl(x,\psi(y)\bigr)=d_{h_R}\bigl(x,\psi(\varphi(x))\bigr)\le \kappa
\]
by the first estimate. In the second case, \(x=\psi(y)\), so
\[
d_{h_R}\bigl(x,\psi(y)\bigr)=0.
\]
Hence in all cases
\[
d_{h_R}\bigl(x,\psi(y)\bigr)\le \kappa.
\]
Similarly,
\[
d_{h_R}\bigl(x',\psi(y')\bigr)\le \kappa.
\]
Therefore, by the triangle inequality,
\[
d_{h_R}\bigl(\psi(y),\psi(y')\bigr)\le d_{h_R}(x,x')+2\kappa.
\]
Using the previous bound for \(\psi\), we obtain
\[
d_{h_R}(x,x')\le d_{h_R}\bigl(\psi(y),\psi(y')\bigr)+2\kappa
\le d_{h_{R'}}(y,y')+3\kappa.
\]

On the other hand, applying the nonexpansiveness of \(\varphi\) to the points
\(\psi(y)\) and \(\psi(y')\), we get
\[
d_{h_{R'}}\bigl(\varphi(\psi(y)),\varphi(\psi(y'))\bigr)\le d_{h_R}\bigl(\psi(y),\psi(y')\bigr).
\]
Since
\[
d_{h_{R'}}\bigl(y,\varphi(\psi(y))\bigr)\le \kappa,
\qquad
d_{h_{R'}}\bigl(y',\varphi(\psi(y'))\bigr)\le \kappa,
\]
the triangle inequality yields
\[
d_{h_{R'}}(y,y')\le d_{h_R}\bigl(\psi(y),\psi(y')\bigr)+2\kappa
\le d_{h_R}(x,x')+4\kappa.
\]

Thus
\[
\bigl|d_{h_R}(x,x')-d_{h_{R'}}(y,y')\bigr|\le 4\kappa.
\]
In particular,
\[
D(\varphi,\psi)\le 2\kappa<\eta.
\]

Combining this with
\[
\|h_R-h_{R'}\circ\varphi\|_\infty=0
\qquad\text{and}\qquad
\|h_R\circ\psi-h_{R'}\|_\infty\le \kappa<\frac{\eta}{2},
\]
we conclude that
\[
d_{FD}(R,R')\le \eta.
\]
\end{proof}

\begin{figure}
\centering
\begin{tikzpicture}[scale=1.05, every node/.style={font=\small}]
    \begin{scope}[xshift=0cm]
        \node at (0,3.35) {(before)};
        
        \draw[->] (-1.45,-0.15) -- (-1.45,3.0);
        \node[left] at (-1.45,2.9) {$h_R$};
        \draw (-1.52,1.8) -- (-1.38,1.8);
        \node[left] at (-1.55,1.8) {$a$};

        \filldraw (0,1.8) circle (1.5pt);
        \node[above right=1pt] at (0,1.8) {$v$};

        \draw (0,1.8) -- (0,2.8);

        \draw (0,1.8) -- (-1.0,0.35);
        \draw (0,1.8) -- (0,-0.1);
        \draw (0,1.8) -- (1.0,0.45);

        \node[left=2pt] at (-0.56,0.97) {$e_1$};
        \node[right=2pt] at (0.015,0.88) {$e_2$};
        \node[right=2pt] at (0.63,1.00) {$e_3$};
    \end{scope}

    \node at (3.55,1.55) {$\leadsto$};

    \begin{scope}[xshift=7.1cm]
        \node at (0,3.35) {(after)};
        
        \draw[->] (-1.45,-0.15) -- (-1.45,3.0);
        \node[left] at (-1.45,2.9) {$h_R$};
        \draw (-1.52,1.8) -- (-1.38,1.8);
        \node[left] at (-1.55,1.8) {$a$};
        \draw (-1.52,1.16) -- (-1.38,1.16);
        \node[left] at (-1.55,1.16) {$h_R(w)$};

        \filldraw (0,1.8) circle (1.5pt);
        \node[above right=1pt] at (0,1.8) {$v'$};

        \draw (0,1.8) -- (0,2.8);

        \draw (0,1.8) -- (-0.48,1.16);
        \filldraw (-0.48,1.16) circle (1.5pt);
        \node[left=3pt] at (-0.48,1.16) {$w$};
        \draw (-0.48,1.16) -- (-1.0,0.35);

        \draw (-0.48,1.16) -- (0,-0.1);

        \draw (0,1.8) -- (1.0,0.45);

        \node[left=2pt] at (0.0,1.80) {$e_1^{\uparrow}$};        \node[left=2pt] at (-0.7,0.70) {$e_1^{\downarrow}$};
        \node[right=2pt] at (-0.1,0.50) {$e_2'$};
        \node[right=2pt] at (0.63,1.00) {$e_3$};
    \end{scope}
\end{tikzpicture}
\caption{Local downward branch-sliding move. One downward branch \(e_2\) is detached
from the vertex \(v\) and reattached at a nearby point \(w\) on another downward branch
\(e_1\). The original vertex \(v\) loses one downward branch, while the new vertex
\(w\) has type \((u,\ell)=(1,2)\).}
\label{fig:branch_sliding_move}
\end{figure}

To iterate the local move, define the combinatorial defect of a Reeb graph
\(R=(G,h_R)\) by
\[
\Delta(R)\coloneqq
\sum_{v\in V(R)}
\Bigl(
\max\{\ell(v)-2,0\}
+
\max\{u(v)-2,0\}
+
\max\{\min\{u(v),\ell(v)\}-1,0\}
\Bigr).
\]
Then \(\Delta(R)=0\) if and only if
\[
\min\{u(v),\ell(v)\}\le 1
\qquad\text{and}\qquad
\max\{u(v),\ell(v)\}\le 2
\quad\text{for every vertex }v.
\]

\begin{prop}[Density of the combinatorially generic class]\label{prop:dense_generic_class}
For every Reeb graph \(R=(G,h_R)\) and every \(\varepsilon>0\), there exists a
combinatorially generic Reeb graph \(R_\varepsilon\) such that
\[
d_{FD}(R,R_\varepsilon)\le \varepsilon.
\]
Consequently,
\[
d_\Delta(R,R_\varepsilon),
d_I(R,R_\varepsilon)
\le 3\varepsilon.
\]
\end{prop}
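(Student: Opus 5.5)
The plan is to reach a combinatorially generic graph by finitely many small moves, each of controlled functional distortion cost, and then sum the costs with the triangle inequality for \(d_{FD}\). Fix \(\varepsilon>0\). I will apply \Cref{lem:local_degree_reduction_fd} repeatedly to decrease the combinatorial defect \(\Delta(R)\) to zero. After that, a final perturbation of vertex heights will make all vertex values pairwise distinct. If \(N\) moves are needed in total, I run the \(k\)-th move with parameter \(\eta_k\le\varepsilon/(N+1)\), and reserve the remaining \(\varepsilon/(N+1)\) for the value perturbation. The triangle inequality for \(d_{FD}\) then gives \(d_{FD}(R,R_\varepsilon)\le\varepsilon\). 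The second claim follows at once from the quoted bounds \(d_I,d_\Delta\le 3d_{FD}\).

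\textbf{Termination and a priori bound on \(N\).} Each application of the lemma at a vertex \(v\) with \(\ell(v)\ge 3\) or \((\ell(v),u(v))=(2,2)\) produces a new vertex \(w\) of type \((u,\ell)=(1,2)\), which contributes \(0\) to \(\Delta\). It decreases \(\ell(v)\) by one and leaves all other vertices unchanged. If \(\ell(v)\ge3\), the term \(\max\{\ell(v)-2,0\}\) drops by one and the other terms of \(v\) do not increase. If \((\ell,u)=(2,2)\), the term \(\max\{\min\{u,\ell\}-1,0\}\) drops from \(1\) to \(0\). So \(\Delta\) strictly decreases. The symmetric upward move treats \(u(v)\ge 3\) the same way.

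These cases are exhaustive. If \(\Delta(R)>0\), some vertex has \(\ell\ge3\), or \(u\ge3\), or \(\min\{u,\ell\}\ge2\) with \(\max\le2\), which forces \((2,2)\). Hence \(N\le\Delta(R)\), a number fixed in advance, so the \(\eta_k\) can be chosen before starting. At each step \(\eta_k\) must also satisfy the smallness hypothesis of the lemma for the current graph. This is harmless, since shrinking \(\eta\) only helps and every graph involved is finite.

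\textbf{Distinct vertex values.} Once \(\Delta=0\), I perturb vertex values. I move each vertex \(v\) to height \(h(v)+t_v\), with the \(t_v\) distinct and \(|t_v|\) smaller than \(\varepsilon/(N+1)\) and smaller than half the minimal height difference between the endpoints of any edge. I then reparametrize each edge monotonically so that strict monotonicity is preserved. This changes neither the graph nor the up- and down-degrees, so \(\Delta\) stays \(0\).

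For the \(d_{FD}\) estimate I would take \(\varphi=\psi=\) the identity on the underlying space. Then the height errors are at most \(\max|t_v|\). For the distortion, note that \(|d_{h}(x,x')-d_{h'}(x,x')|\le 2\|h-h'\|_\infty\), because every path's height range changes by at most \(2\|h-h'\|_\infty\). This gives \(D\le\|h-h'\|_\infty\), so \(d_{FD}\le\max_v|t_v|\). A careful choice of the reparametrization (affine in height on each edge) makes \(\|h-h'\|_\infty\le\max|t_v|\).

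\textbf{Expected obstacle.} The main point needing care is that \(\Delta\) genuinely decreases under each move. In particular the new vertex \(w\) must not itself be defective, and the detached edge's lower endpoint must keep its degrees. Both are guaranteed by items 2–4 of \Cref{lem:local_degree_reduction_fd}, so the obstacle reduces to the bookkeeping above. A secondary subtlety is that the new vertex heights \(h_{R'}(w)\) created by the moves may coincide with existing values. This is why the value-separation step is done last rather than interleaved with the sliding moves.
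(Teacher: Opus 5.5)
Your proposal is correct and is essentially the paper's argument: the paper inducts on \(\Delta(R)\), spending \(\varepsilon/2\) on one branch-sliding move from \Cref{lem:local_degree_reduction_fd} and the remaining \(\varepsilon/2\) recursively, whereas you fix a uniform budget \(\varepsilon/(\Delta(R)+1)\) per move. Your identity-map bound \(d_{FD}\le\|h-h'\|_\infty\) also makes explicit the sup-norm stability that the paper only invokes for the final value perturbation. One small fix: distinct \(t_v\) alone do not make the new values \(h(v)+t_v\) pairwise distinct, since two non-adjacent vertices with different heights could collide. You should additionally require \(|t_v|\) to be less than half the smallest positive gap between any two vertex values, or simply choose the \(t_v\) generically.
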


\begin{proof}
We argue by induction on \(\Delta(R)\).

If \(\Delta(R)=0\), then \(R\) already satisfies the degree condition
\[
\min\{u(v),\ell(v)\}\le 1,
\qquad
\max\{u(v),\ell(v)\}\le 2
\quad\text{for every }v.
\]
Since the graph is finite, one may perturb the vertex values by an arbitrarily small
amount so as to make them pairwise distinct without changing the combinatorics.
By the standard stability of the functional distortion distance under sup-norm perturbations of
the function on a fixed space, this changes the distance by at most the
size of the perturbation. Hence \(R\) can be approximated arbitrarily well, in
particular arbitrarily well in \(d_{FD}\), by a combinatorially generic graph.

Assume now \(\Delta(R)>0\). Then there exists a vertex \(v\) such that either
\[
\ell(v)\ge 3,\qquad\text{or}\qquad u(v)\ge 3,\qquad\text{or}\qquad (\ell(v),u(v))=(2,2).
\]
Choose \(0<\eta\le\varepsilon/2\) sufficiently small that the hypotheses of
\Cref{lem:local_degree_reduction_fd} hold, and apply that lemma with parameter
\(\eta\). We obtain a Reeb graph \(R'\) such that
\[
d_{FD}(R,R')\le \eta\le \varepsilon/2
\]
and
\[
\Delta(R')\le\Delta(R)-1.
\]
Indeed, the local move decreases by \(1\) at least one, and at most two, of the
positive summands contributing to \(\Delta\), while none of the remaining summands
increase. The move creates one new vertex, of type \((1,2)\) or \((2,1)\), which
contributes \(0\) to \(\Delta\).

By the induction hypothesis applied to \(R'\) with parameter \(\varepsilon/2\),
there exists a combinatorially generic graph \(R_\varepsilon\) such that
\[
d_{FD}(R',R_\varepsilon)\le \varepsilon/2.
\]
By the triangle inequality,
\[
d_{FD}(R,R_\varepsilon)\le \varepsilon.
\]

Finally, since \(d_\Delta,d_I\le 3d_{FD}\), we obtain
\[
d_\Delta(R,R_\varepsilon),
d_I(R,R_\varepsilon)
\le 3\varepsilon.
\]
\end{proof}

This density statement is the ingredient used in the main comparison section to pass
from the combinatorially generic case to arbitrary Reeb graphs.

\printbibliography

\end{document}